\documentclass[a4paper,11pt]{amsart}
\usepackage{placeins}
\usepackage[utf8]{inputenc}
\usepackage[T1]{fontenc}
\usepackage[english]{babel}
\usepackage{amssymb,amsfonts,amsmath,amsthm}
\usepackage{mathtools}
\usepackage{graphicx}
\usepackage{xcolor}
\usepackage[margin=3.5cm]{geometry}
\usepackage[hidelinks]{hyperref}

\makeatletter
\newcommand{\datahoje}{\two@digits{\day}-\two@digits{\month}-\number\year}
\def\@setdate{\@date\@addpunct.}
\makeatother

\newtheorem{theorem}{Theorem}[section]
\numberwithin{equation}{section}
\newtheorem{proposition}[theorem]{Proposition}
\newtheorem{lemma}[theorem]{Lemma}
\newtheorem{corollary}[theorem]{Corollary}

\newtheorem{lettertheorem}{Theorem}

\newtheorem{lettercorollary}[lettertheorem]{Corollary}
\theoremstyle{definition}
\newtheorem{definition}[theorem]{Definition}
\theoremstyle{remark}
\newtheorem{remark}[theorem]{Remark}

\newcommand{\R}{\mathbb{R}}

\newcommand{\s}{\mathbb{S}}
\newcommand{\N}{\mathbb{N}}

\newcommand{\QQ}{\mathcal{Q}}

\newcommand{\ran}{\rangle}
\newcommand{\lan}{\langle}

\newcommand{\Z}{\mathbb{Z}}
\DeclareMathOperator{\sgn}{sgn}
\DeclareMathOperator{\dive}{div}
\DeclareMathOperator{\Hess}{Hess}
\newcommand{\Gpq}{O(p{+}1)\times O(q{+}1)}

\title[Bi-rotational self-shrinkers]{The global structure of bi-rotational self-shrinkers}
\author{Gregório Silva Neto}
\address{Instituto de Matemática, Universidade Federal de Alagoas, Maceió, 57072-900, Brazil}
\email{gregorio@im.ufal.br}
\date{}

\subjclass[2020]{53E10; 53C42, 34C05, 37C40}
\keywords{Mean curvature flow, self-shrinker, equivariant
hypersurface, halfspace theorem, asymptotic behavior}

\begin{document}

\begin{abstract}
We describe the global structure of the self-shrinkers of the mean curvature flow in $\R^{p+q+2}$ invariant under $\Gpq$, without assuming properness or embeddedness. The main tool is a pair of piecewise monotone quantities along the profile curves, in the spirit of Hsiang's analysis of constant mean curvature hypersurfaces. We prove that every such self-shrinker other than the two invariant cylinders intersects both of them transversely, and all of them except the Clifford minimal cone are complete. Apart from those of the cylinders, each end is either asymptotic to a cone or meets both cylinders infinitely many times. Consequently, a complete noncompact example is proper if and only if all its ends are asymptotically conical, and this is equivalent to polynomial volume growth, to finite entropy, and to finitely many intersections with the cylinders. The reduced flow preserves a finite measure, and for almost every initial condition the profile curve is either closed or generates a complete nonproper self-shrinker of infinite entropy. We also bound the winding of the profile curve by its number of cylinder crossings and localize its self-intersections.
\end{abstract}

\maketitle

\tableofcontents

\section{Introduction}\label{sec:intro}

The mean curvature flow of an immersion $X_0:\Sigma\to\R^{n+1}$ is a family of immersions $X:\Sigma\times[0,T)\to\R^{n+1}$ satisfying
\[
\left\{\begin{aligned}
\left(\dfrac{\partial X}{\partial t}(p,t)\right)^\perp&= H(p,t)\nu(p,t)\\
X(p,0) &= X_0,
\end{aligned}\right.
\]
where $H$ is the mean curvature and $\nu$ the inner unit normal. By the first variation formula
\[
\left.\frac{d}{dt}\mbox{Area}(t)\right|_{t=0}=-\int_\Sigma f H d\Sigma
\]
where $f$ is the normal speed of the variation, the flow is the gradient flow of area, and since $H\nu=\Delta_{g(t)}X$ it is a parabolic system of heat type.

Self-similar solutions move by homotheties or isometries. They are the simplest solutions of the flow and model its singularities: suitable parabolic rescalings at a singularity converge to a self-shrinker, smoothly and subsequentially at type I singularities by Huisken's monotonicity formula \cite[Theorem~3.5]{Huisken}, and in the weak sense of tangent flows in general \cite{Ilmanen95,White97} (see also \cite{CM}).

An immersion $X_0$ moves homothetically if
\begin{equation}\label{eq:soliton}
H=c\lan X,\nu\ran, \ c\in\R.
\end{equation}
It is a self-shrinker if $c<0$, a self-expander if $c>0$, and minimal if $c=0$. We drop the subscript $0$ from now on.

Let $G=\Gpq$, $p,q\ge1$, act in the standard way on $\R^{p+1}\times\R^{q+1}=\R^{n+1}$. The orbit through $(U,V)$ is $\s^{p}(|U|)\times\s^{q}(|V|)$, and the orbit space is
\begin{equation}
\overline{\QQ}:=\pi(\R^{n+1})=\{(x,y)\in\R^2;\ x\geq 0,\ y\geq 0\},
\end{equation}
where $\pi(U,V)=(|U|,|V|)$ (see \cite[Sections~2--4]{BdCH} and \cite{Hsiang}). 
Let
\[
\QQ:=\{(x,y)\in\R^{2};\ x>0,\ y>0\}
\]
be the interior of $\overline{\QQ}$, where the system below is defined. 

A $G$-invariant hypersurface can be parametrized by
\begin{equation}\label{eq:param}
X(s,\theta_1,\theta_2)=(x(s)\Psi_1(\theta_1),y(s)\Psi_2(\theta_2)),
\end{equation}
where $\Psi_1$ and $\Psi_2$ parametrize $\s^{p}\subset\R^{p+1}$ and $\s^{q}\subset\R^{q+1}$, and $\gamma(s)=(x(s),y(s))$ is the profile curve. We call such hypersurfaces bi-rotational.

If $\gamma$ is parametrized by the arclength $s$, then \eqref{eq:soliton} is equivalent to
\begin{equation}\label{lambda-s}
x'(s)y''(s)-x''(s)y'(s)=-y'(s)\left(\frac{p}{x(s)}+cx(s)\right) + x'(s)\left(\frac{q}{y(s)}+cy(s)\right),
\end{equation}
and, writing $\gamma'(s)=(\cos(\sigma(s)),\sin(\sigma(s)))$, to the system
\begin{equation}\label{eq:flow}
\left\{
\begin{aligned}
x' &= \cos\sigma;\\
y' &= \sin\sigma;\\
\sigma' &= -\left(\frac{p}{x}+cx\right)\sin\sigma+\left(\frac{q}{y}+cy\right)\cos\sigma,
\end{aligned}\right.
\end{equation}
on the phase space $M:=\QQ\times\s^1$. A solution $\Gamma(s)=(x(s),y(s),\sigma(s))$, called an orbit or trajectory, satisfies $\frac{d}{ds}\Gamma=F(\Gamma)$, where \[F(x,y,\sigma)=\left(\cos\sigma,\sin\sigma,-\left(\frac{p}{x}+cx\right)\sin\sigma+\left(\frac{q}{y}+cy\right)\cos\sigma\right),\] and the profile curve $\gamma$ is its projection on $\QQ$. Throughout, a bi-rotational self-shrinker is the hypersurface generated by a maximal trajectory, closed off smoothly where its profile curve meets an axis (Proposition~\ref{lem:suave}).

The classification of self-shrinkers is a central problem and remains difficult even in codimension one. On the rigidity side, halfspace theorems for properly immersed shrinkers go back to Pigola and Rimoldi \cite{PR}, who assumed a bound on the mean curvature and a contact point at finite distance. Cavalcante and Espinar \cite[Theorems~1.2 and~1.3]{CE} removed these assumptions, inside and outside the solid cylinder, Impera, Pigola and Rimoldi \cite[Theorem~A]{IPR} gave a potential-theoretic proof, and Colding and Minicozzi \cite{CMfrankel} proved a strong Frankel theorem, which implies that every properly embedded shrinker meets every generalized cylinder. On the construction side, Angenent \cite{Angenent} produced the first nontrivial closed rotational shrinker (see also \cite{DN}), Kleene and M\o ller \cite{KM} analyzed the rotational class, and Drugan and Kleene \cite{DrKl} constructed infinitely many complete immersed rotational examples. McGrath \cite{McGrath} constructed closed $SO(n)\times SO(n)$-invariant examples of type $\s^{n-1}\times\s^{n-1}\times\s^1$, and Riedler \cite[Theorem~A]{Riedler} obtained closed embedded examples of type $\s^1\times M_0$ for every isoparametric hypersurface $M_0\subset\s^n$ with equal multiplicities. Chang and Spruck \cite{ChangSpruck} constructed self-shrinking ends asymptotic to the cone over any non-minimal isoparametric hypersurface of the sphere, which are bi-rotational for the products $\s^p(\cos\psi)\times\s^q(\sin\psi)$. Park \cite[Theorem~1.3]{Park-fol} constructed bi-rotational asymptotically conical trumpets and closing disks which, together with the cylinders and the Clifford cone, foliate the complement of a compact region. This foliation describes the far field, whereas the recurrent nonproper dynamics established here occurs in the bounded core. Mean curvature flow near bi-rotational minimal quadratic cones was studied in \cite{Velazquez,Park-Simons}. For $p=q$, the examples of McGrath and the examples of Riedler give closed orbits of \eqref{eq:flow}, and for $p\ne q$ existence remains open. These works do not describe every maximal trajectory of the bi-rotational phase space, in particular the nonproper recurrent ones.

For $c<0$, system \eqref{eq:flow} has the following explicit solutions:

\begin{itemize}
\item[(i)] the sphere $\s^{p+q+1}(r_0),$ for 
\[
r_0=\sqrt{\frac{p+q+1}{-c}}\quad \mbox{and}\quad (x,y,\sigma)=(r_0\cos(r_0^{-1}s),r_0\sin(r_0^{-1}s),\tfrac\pi2+r_0^{-1}s),\quad s\in\left(0,\tfrac{\pi r_0}{2}\right);
\]
\item[(ii)] the cylinders $\mathcal{C}_{\mathrm v}=\s^p(x_*)\times\R^{q+1}$ and $\mathcal{C}_{\mathrm h}=\s^q(y_*)\times \R^{p+1},$ for 
\[
(x,y,\sigma)=(x_*,s,\pi/2)\quad\mbox{and}\quad (x,y,\sigma)=(s,y_*,0),
\]
respectively, where
\[
x_*:=\sqrt{-p/c},\quad y_*:=\sqrt{-q/c};
\]
\item[(iii)] the minimal Clifford cone 
\[
\mathcal{C}=\{(u,v)\in \R^{p+1}\times\R^{q+1};|u|=\sqrt{p/q}|v|\},
\] 
for 
\[(x,y,\sigma)=(s\cos\psi_0,s\sin\psi_0,\psi_0),
\] 
where $\tan\psi_0=\sqrt{q/p}.$
\end{itemize}

Here the Clifford cone means its regular part, and its vertex accounts for the exception in the completeness theorem.

The cylinders $\mathcal C_{\mathrm v}$ and $\mathcal C_{\mathrm h}$ are represented in $\QQ$ by the lines
\[
x=x_* \quad\mbox{and}\quad y=y_*,
\] 
respectively, called separatrices, which meet at the vertex $V=(x_*,y_*)$.

\subsection{The results}

The picture that emerges is the following. The two cylinders organize the whole phase space: every other trajectory crosses both of them, and an end that crosses them only finitely often is asymptotic to a cone. The remaining ends return infinitely often to a bounded region, and this recurrent behavior is the prevalent one. The crossings also control the winding of the profile curve and the location of its self-intersections.

No equivariant self-shrinker other than the two distinguished cylinders avoids them. Intersection with the cylinders is known for properly embedded shrinkers \cite{CMfrankel}, and non-confinement for properly immersed ones \cite{CE,IPR}. Theorem~\ref{thmA} requires neither properness nor embeddedness, and it also gives transversality.

\begin{lettertheorem}[Equivariant halfspace]\label{thmA}
Let $\Sigma$ be a bi-rotational self-shrinker distinct from the two cylinders
\[
\mathcal C_{\mathrm h}=\s^{q}(y_*)\times\R^{p+1},
\qquad
\mathcal C_{\mathrm v}=\s^{p}(x_*)\times\R^{q+1}.
\]
Then $\Sigma$ intersects both, and every intersection is transverse. Equivalently, the only equivariant self-shrinker contained in one of the two closed sides of $\mathcal C_{\mathrm h}$, that is, in the closure of a connected component of $\R^{n+1}\setminus\mathcal C_{\mathrm h}$, is $\mathcal C_{\mathrm h}$ itself, and analogously for $\mathcal C_{\mathrm v}$. 
\end{lettertheorem}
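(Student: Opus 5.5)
The proof has two parts: transversality, which follows from uniqueness for \eqref{eq:flow}, and existence of intersections, which comes from a convexity (monotonicity) property of the profile curve in a halfspace plus an analysis of how a maximal trajectory can end. By the symmetry $(x,y,p)\leftrightarrow(y,x,q)$ it suffices to treat $\mathcal C_{\mathrm h}$, i.e. the line $y=y_*$.

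\emph{Transversality.} Suppose $\gamma$ is tangent to $\{y=y_*\}$ at some $s_0$. Then $y(s_0)=y_*$ and $\sigma(s_0)\in\{0,\pi\}$. The curves $(s,y_*,0)$ and $(-s,y_*,\pi)$ are solutions of \eqref{eq:flow}, so by uniqueness $\Gamma$ is a reparametrization of one of them, and $\Sigma=\mathcal C_{\mathrm h}$. Hence every intersection with $\mathcal C_{\mathrm h}$ of a $\Sigma\neq\mathcal C_{\mathrm h}$ is transverse. This also shows that $\Sigma\neq\mathcal C_{\mathrm h}$ cannot touch $\mathcal C_{\mathrm h}$ while staying on one closed side, so the ``equivalently'' clause reduces to showing that $\gamma$ meets $\{y=y_*\}$.

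\emph{Intersection.} Suppose $\gamma$ lies in $\{y<y_*\}$; the case $\{y>y_*\}$ is handled with the reversed signs below. From \eqref{eq:flow},
\[
\tfrac{d}{ds}\sin\sigma=-\Big(\tfrac px+cx\Big)\sin\sigma\cos\sigma+\Big(\tfrac qy+cy\Big)\cos^2\sigma .
\]
Since $c<0$, we have $\frac qy+cy>0$ exactly when $y<y_*$. So at every critical point of $y$ we get $y''>0$, and every critical point is a strict minimum. Hence $y$ has at most one critical point, and $y$ is monotone on each end. The same holds at an endpoint on the $y$-axis. There smoothness (Proposition~\ref{lem:suave}) forces $\sigma\in\{0,\pi\}$, and passing to the limit in \eqref{eq:flow} gives $y''=\big(\frac qy+cy\big)/(p+1)>0$, again a minimum.

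I then split into cases according to how each end of the maximal trajectory behaves: it reaches the $x$-axis, reaches the $y$-axis, escapes to infinity, or stays in a compact part of $\QQ$ for all time.
\begin{itemize}
\item Reaching the $x$-axis requires $y\searrow0$ along that end. This is compatible with the ``only minima'' structure for at most one end. The other end then has $y$ increasing, so it cannot also reach the $x$-axis, and it cannot reach the $y$-axis, because there $y$ must attain a minimum.
\item Hence at least one end has $y$ increasing to a limit $L\le y_*$, with $x$ either bounded or tending to infinity.
\item If $L<y_*$, then $\frac qy+cy\ge\delta>0$ along the end. Writing the end as a graph $y=y(x)$ (or $x=x(y)$) and using the equation for $u=dy/dx$, namely
\[
\frac{u_x}{1+u^2}=-\Big(\frac px+cx\Big)u+\Big(\frac qy+cy\Big)(1+u^2),
\]
the positive forcing term prevents $u\to0$. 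So $y$ cannot converge, a contradiction.
\item If $L=y_*$, put $y=y_*+w$ with $w<0$, $w\to0$. Since $\frac qy+cy=2cw+O(w^2)$, the linearization is
\[
w''+\Big(\frac px+cx\Big)w'-2cw=0 .
\]
For $x\to\infty$ its solutions either grow like a quadratic polynomial (Hermite type) or like $e^{|c|x^2/2}$, so none tends to $0$. For bounded $x$ the end would accumulate on a compact set of $\overline\QQ$, which is excluded because $\Gamma$ cannot accumulate onto the invariant line $\{y=y_*,\ \sigma\in\{0,\pi\}\}$ except along it.
\end{itemize}

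\emph{Main obstacle.} The hardest step is this last asymptotic exclusion. Passing from the linearization to the nonlinear system needs a comparison or Grönwall argument in the variable $x$, valid uniformly for large $x$, and the bounded-$x$ recurrent case needs a separate argument. For the latter I would first try the monotonicity of $y$ itself: a monotone bounded $y$ forces $\sigma\to\{0,\pi\}$, so $x$ is eventually monotone and cannot stay bounded unless the curve exits $\QQ$, a case already excluded above. The explicit solutions (i) and (iii) cross both separatrices, as one checks from $r_0^2>x_*^2+y_*^2$ and from $\psi_0$, so they cause no exception.
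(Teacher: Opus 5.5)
Your transversality argument, and your reduction of the closed-side formulation to ``$\gamma$ meets $\{y=y_*\}$'', are correct; they are exactly Lemma~\ref{lem:transv}. The gap is in the intersection part. There your only tool is qualitative: at a zero of $\sin\sigma$ one has $(\sin\sigma)'=B(y)$, so $y$ has at most one critical point. This controls $y$ but not $x$. Your claim that, along the end where $y$ increases to $L$, the coordinate $x$ ``is either bounded or tends to infinity'' is unjustified. Critical points of $x$ on that end occur at $\sigma=\tfrac\pi2$, maxima with $x\ge x_*$ and minima with $x\le x_*$. Nothing you prove prevents $x$ from oscillating across $x_*$ with $\limsup x=\infty$, or from drifting toward the $y$-axis with $\liminf x=0$. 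In those cases the end is not a graph $y=y(x)$, so the forcing argument for $u=dy/dx$ is unavailable, and the precompactness/limit-set argument also fails. There are further holes:
\begin{itemize}
\item for bounded $x$, the statement ``$u\not\to0$'' does not contradict convergence of $y$;
\item the case $L=y_*$, $x\to\infty$ rests on a linearization you yourself leave unjustified;
\item your exclusion of a $y$-axis endpoint via $y''=B(y_0)/(p+1)>0$ breaks down at $(0,y_*)$, where $B(y_*)=0$.
\end{itemize}

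The missing idea is a quantitative version of your sign observation: the integrating factor $f(x)=x^{p}e^{cx^{2}/2}$ and the function $I=f(x)\sin\sigma$, with $I'=B(y)f(x)\cos^{2}\sigma$. Its zeros are exactly the critical points of $y$, which is where your ``only minima'' comes from. It also satisfies $|I|\le f(x)$, with $f\to0$ both at $0$ and at $\infty$. The paper uses the limits of $I$ at the two endpoints (Proposition~\ref{lem:Jfront}, with Proposition~\ref{lem:fuga} excluding vertical escape). It then derives a sign contradiction with the strict monotonicity of $I$ (Theorem~\ref{thm:cross}).

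Combined with your choice of end, $I$ closes your gap directly. Take the end toward which $y$ moves to $y_*$, and call it $\omega$ after reversal. On a tail there, $I$ has the sign of $\sin\sigma\neq0$ and $|I|$ is nondecreasing, so $|I|\ge\eta>0$. Then:
\begin{itemize}
\item $f(x)\ge\eta$ confines $x$ to a compact interval of $(0,\infty)$;
\item $|y'|=|\sin\sigma|\ge\eta/M_f$, with $y$ monotone and unable to pass $y_*$, forces $s_+<\infty$;
\item meanwhile $\Gamma$ stays in a compact subset of $\QQ\times\s^1$, contradicting maximality.
\end{itemize}
This removes the oscillating, $x\to0$, $x\to\infty$ and $L=y_*$ cases at once, without any linearization.
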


To describe the behavior at infinity, we use polar coordinates in the profile plane
\[
x=\rho\cos\psi,\quad\mbox{and}\quad y=\rho\sin\psi,
\]
for $\rho>0$ and $\psi\in(0,\tfrac\pi2)$.

\begin{lettertheorem}[Completeness and dichotomy of the ends]\label{thmB}
Every bi-rotational self-shrinker $\Sigma$, associated to a maximal solution of \eqref{eq:flow} and distinct from the Clifford cone, is complete. Moreover, if $\Sigma$ is distinct from the cylinders, then $\Sigma$ is compact or each end of $\Sigma$ is of one of two types:
\begin{enumerate}
\item[(a)] asymptotically conical: $\rho\to\infty$, the polar angle converges to a limit $\psi_\infty\in(0,\tfrac\pi2)$, and the tangent becomes radial;
\item[(b)] non-escaping: infinite arclength with $\rho\not\to\infty$.
\end{enumerate}
Moreover, in case (a):
\begin{itemize}
\item[(i)] the end is a nondegenerate graph over each of the two coordinate axes, hence embedded;

\item[(ii)] parametrizing the end toward infinity, both coordinates are eventually strictly increasing, with
\[
\frac{dy}{dx}\longrightarrow\tan\psi_\infty\in(0,\infty),
\quad\mbox{and}\quad
\frac{dx}{dy}\longrightarrow\cot\psi_\infty\in(0,\infty).
\]
\end{itemize}
\end{lettertheorem}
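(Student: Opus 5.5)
We prove the completeness statement first and then the structure of the ends. Completeness of $\Sigma$ means that every maximal trajectory of \eqref{eq:flow} either has infinite arclength in each direction or reaches $\partial\QQ$, where Proposition~\ref{lem:suave} closes $\Sigma$ off smoothly. The orbits of $G$ are compact spheres of bounded size along a bounded piece of profile, so it suffices to rule out a trajectory that is defined only on a finite interval $(s_0,s_1)$ and does not reach an axis. Since $|\gamma'|=1$, $\gamma(s)$ converges as $s\to s_1$ to some point $\gamma_1\in\overline{\QQ}$.

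If $\gamma_1\in\QQ$, the vector field $F$ is smooth and bounded near $\{\gamma_1\}\times\s^1$. The trajectory therefore extends past $s_1$, contradicting maximality. If $\gamma_1$ lies on an open axis, say $y=0$ with $x>0$, we argue as in the proof of Proposition~\ref{lem:suave}. The term $(q/y+cy)\cos\sigma$ forces $\sigma\to -\pi/2$ in the limit (orthogonal arrival), and the curve is closed off smoothly, so the hypersurface is complete there.

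The remaining case is $\gamma_1=(0,0)$. This is the main obstacle. We would show that the only trajectory reaching the origin is the Clifford cone. In polar coordinates $(\rho,\psi)$, with the angle $\alpha=\sigma-\psi$ between the tangent and the radial direction, we blow up by setting $\rho=e^{t}$. Near $\rho=0$ the term $cx$, $cy$ is negligible, and the system becomes an asymptotically autonomous system in $(\psi,\alpha)$: the minimal-cone system. Its only equilibrium in the relevant region is the cone point $(\psi_0,0)$. We would show that this equilibrium is hyperbolic with a one-dimensional stable direction realized exactly by the cone itself, for instance by writing the linearization and checking that its eigenvalues are $(\,-(n-1)\pm\sqrt{(n-1)^2-8(n-1)}\,)/2$ type quantities of the Simons-cone computation. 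All other orbits of the limit system hit an axis before $\rho\to0$. A self-shrinker curve approaching $0$ in finite arclength must have $\alpha\to\pi$ (pointing into the origin) and $\psi\to\psi_0$. A monotonicity or Lyapunov argument using $\frac{d}{ds}(\rho\sin\alpha)$ then forces it to coincide with the cone by uniqueness of the stable manifold.

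For the dichotomy we assume $\Sigma$ is noncompact and not a cylinder, and we take an end, i.e.\ a direction of infinite arclength. Either $\rho\to\infty$ or not; the second alternative is type (b). Suppose $\rho\to\infty$. We use the radial equation
\[
\frac{d\psi}{d\rho}=\frac{\tan\alpha}{\rho},
\]
together with the equation for $\alpha$. For large $\rho$ the dominant term is $c\rho\sin\alpha$ in $\alpha'$, so
\[
\alpha'\approx -c\rho\sin\alpha+O(1/\rho).
\]
Since $c<0$, $\alpha$ is driven exponentially in $\rho^2$ toward $0$ and away from $\pi$, because $\rho$ increases. This gives $\alpha=O(\rho^{-2})$, hence $\int\tan\alpha\,d\rho/\rho<\infty$. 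It follows that $\psi\to\psi_\infty$ and the tangent becomes radial.

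We must still show $\psi_\infty\in(0,\pi/2)$. If $\psi\to0$, then $y$ would stay bounded relative to $\rho$. We would compare with the horizontal cylinder, using the oscillation of $y-y_*$ about the separatrix coming from the $q/y+cy$ term and Theorem~\ref{thmA}'s transversality, to contradict the radial limit. The case $\psi\to\pi/2$ is symmetric.

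Items (i) and (ii) then follow directly. Since $\sigma=\psi+\alpha\to\psi_\infty\in(0,\pi/2)$, we eventually have $\cos\sigma,\sin\sigma>0$. Both coordinates are therefore strictly increasing, the end is a graph over each axis, and $dy/dx=\tan\sigma\to\tan\psi_\infty$.
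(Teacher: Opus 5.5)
Your completeness argument has the paper's skeleton. A finite-arclength endpoint has a limit in $\overline{\QQ}$, interior limits contradict maximality, and points on an open axis are handled by Propositions~\ref{lem:cusp} and~\ref{lem:suave}. However, the origin case, which you correctly flag as the main obstacle, is not proved.

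The claim that a curve reaching the origin has $\psi\to\psi_0$ and $\alpha\to\pi$ is exactly what has to be shown. Hyperbolicity of the cone point only tells you which solutions converge to it, not that yours does. Your dynamical picture is also off. In the planar limit system in $(\psi,\alpha)$, with $\log\rho$ as time, the cone point has trace $-n$ and determinant $2(n-1)$. It is therefore a sink as $\rho$ increases and a source as $\rho\to0$. The one-dimensional invariant manifold you want, namely the Clifford ray, appears only after adjoining the $\rho$-direction, whose eigenvalue is $+1$.

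The quantity you suggest does not help: $\rho\sin\alpha=-\langle\gamma,N\rangle$ has derivative $\sigma'\rho\cos\alpha$, which has no fixed sign. The missing idea is the Lyapunov function $L=h(\psi)\cos w$, with $h=\cos^p\psi\sin^q\psi$, obtained by normalizing $\mathcal F=\Phi\langle\gamma,\gamma'\rangle$. It satisfies $\rho\,dL/ds=(n+c\rho^2)h\sin^2w\ge0$ near the origin (Lemma~\ref{lem:terceira}). The paper combines this with $\sigma\in(0,\tfrac\pi2)$, which comes from the monotonicity of $I$ and $J$ near the origin. A limit-set argument in logarithmic time then gives $L\to h(\psi_0)$. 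The squeeze $h(\psi_0)\le L\le h(\psi)\le h(\psi_0)$ forces the Clifford ray without any invariant-manifold theorem. Accumulation at $\psi\in\{0,\tfrac\pi2\}$ must be excluded separately, which the paper does with an integral estimate for $I$ (Proposition~\ref{lem:origem}).

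Your treatment of escaping ends has a sign error that reverses the mechanism. From $\alpha'=(|c|\rho-\rho^{-1})\sin\alpha+O(\cdot)$, the radial direction $\alpha=0$ is \emph{repelling} in forward arclength, and the anti-radial direction $\alpha=\pi$ is attracting. Curves coming from far away turn around and head back in. So an outgoing end cannot be produced by attraction. Instead you must show it stays in a thin neighborhood $|\sin\alpha|\lesssim(p/x+q/y)/(|c|\rho)$ of the unstable direction. Otherwise $\alpha$ is pushed monotonically to $\pm\tfrac\pi2$ and $\rho'=\cos\alpha$ becomes negative. That contradiction requires the eventual monotonicity of $\rho$, which the paper gets from Theorem~\ref{thm:esfera}: outside the sphere, $\rho$ has no local minima.

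Your $O(1/\rho)$ error term also presupposes that $\psi$ stays in a compact subset of $(0,\tfrac\pi2)$, since the perturbation is only bounded by $p/x+q/y$. A priori the end could run along an axis with one coordinate bounded or tending to $0$. Excluding this is the content of Propositions~\ref{lem:fuga}, \ref{prop:nao-eixo} and~\ref{lem:ambas}, and your sketch does not replace them. The transversality in Theorem~\ref{thmA} says nothing about asymptotic approach to a cylinder.

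Finally, $\psi\to0$ does not force $y$ to be bounded. An end with $y\to\infty$ and $y/x\to0$ is not excluded by any comparison with the horizontal cylinder. The paper rules it out by integrating $|d(\psi^2)/ds|\le16(p+q)/(|c|\rho^3)$, which follows from the alignment bound, together with $y\to\infty$ (Theorem~\ref{cor:conico-direto}). Once these steps are in place, items (i) and (ii) do follow from $\sigma\to\psi_\infty\in(0,\tfrac\pi2)$ as you say.
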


In the rotational case, Kleene and M\o ller \cite[Theorem~3]{KM} proved that every self-shrinking end which is a graph over a half-line of the axis of rotation is either the cylinder or one of their asymptotically conical trumpets, and Park \cite{Park-fol} obtained the bi-rotational analogue. Theorem~\ref{thmB} assumes no graphical structure, properness or embeddedness: the graphical structure in (i) is part of the conclusion, and no end is cylindrical. For general shrinkers in $\R^3$ of finite topology, every end is asymptotic to a regular cone or to a round cylinder \cite{WangAsym}.

\begin{remark}[Which alternative is typical]\label{rem:generic}
Alternative (b) is the complement of (a) among the ends of infinite arclength, and it is the prevalent alternative in the sense of measure: by Theorem~\ref{thmH} below, for $\mathfrak m$-almost every phase point the trajectory is either closed or has two non-escaping ends, and escaping ends form an $\mathfrak m$-null set. Theorem~\ref{thmB} should therefore be read conditionally: an escaping end is asymptotically conical. Whether non-escaping ends are bounded, or can be injective, is not decided here.
\end{remark}

\begin{remark}[The family of escaping ends]\label{rem:escaping}
For each $\psi_\infty\in(0,\tfrac\pi2)$ there is, up to reversal, exactly one maximal trajectory with an escaping end of direction $\psi_\infty$ (Proposition~\ref{prop:fins}): the Clifford ray for $\psi_0$, and the continuation of an end of Chang and Spruck \cite{ChangSpruck} otherwise, which is a complete boundaryless self-shrinker, not necessarily proper (Corollary~\ref{cor:completion}). The escaping ends therefore form a one-parameter family, parametrized by their limit direction, and a trajectory with two escaping ends of distinct directions is assigned to both. Uniqueness follows from an elementary argument for the profile equation, analogous to the uniqueness theorem of Wang \cite{Wang}.
\end{remark}

A crossing is an intersection of $\gamma$ with a separatrix. Let $N_y$ and $N_x$ be the numbers of crossings of $\{y=y_*\}$ and $\{x=x_*\}$, possibly infinite (over one period, for a closed orbit), and $N_y[a,b]$, $N_x[a,b]$ those at instants $s\in(a,b)$. The crossings divide $\gamma$ into excursions, terminal when they contain an endpoint of $\gamma$ and intermediate otherwise. 

Along a subarc $\gamma|_{[a,b]}$, we write $\Delta\sigma:=\sigma(b)-\sigma(a)$ for the variation of the tangent angle, with $\sigma$ taken as a continuous real-valued function along $\gamma$, and we set $n[a,b]:=\lfloor|\Delta\sigma|/2\pi\rfloor$, and $n(\gamma)$ denotes this quantity over one period of a closed orbit, or between the endpoints when the tangent angle has limits there (Definition~\ref{def:winding}).

A closed orbit returns to the same phase point and generates a compact hypersurface. An apparent loop returns to the same position with a distinct tangent (Definition~\ref{def:lacos}), and its simple subarc is a loop (Definition~\ref{def:loop}). Apparent loops occur because a trajectory, which never self-intersects in $\QQ\times\s^1$, may have a self-intersecting projection. The parametrization \eqref{eq:param} is injective if and only if $\gamma$ is simple, and an embedding if, in addition, it is proper.

\begin{lettertheorem}[Winding and crossings]\label{thmC}
Let $\gamma$ be the profile curve of a bi-rotational self-shrinker $\Sigma$, with $N_x$ and $N_y$ crossings of the separatrices $\{x=x_*\}$ and $\{y=y_*\}$, respectively. Then,
\begin{enumerate}
\item[(i)] whenever the global winding number $n(\gamma)$ is defined (for a closed orbit, $n(\gamma)$, $N_x$ and $N_y$ are computed over one period),
\begin{equation}\label{eq:cont-intro}
n(\gamma)\ \le\ \min(N_x,N_y)-1.
\end{equation}
In particular $\min(N_x,N_y)\le1$ forbids a single full turn;
\item[(ii)] between two consecutive crossings of a separatrix the tangent sweeps less than a full turn (i.e., $|\Delta\sigma|<2\pi$), and along a terminal excursion less than half a turn (i.e., $|\Delta\sigma|<\pi$);
\item[(iii)] every non-escaping end crosses both separatrices infinitely often and, along every compact subarc $\gamma|_{[a,b]}$, the number of full turns of the tangent satisfies
\[
n[a,b]\le\min(N_x[a,b],N_y[a,b]).
\]
\end{enumerate}
\end{lettertheorem}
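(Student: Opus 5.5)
The plan is to reduce all three items to one local statement, the core of (ii): \emph{between two consecutive crossings of a fixed separatrix, $|\Delta\sigma|<2\pi$.} This should follow from the sign structure of $\sigma'$ in \eqref{eq:flow}. Write
\[
A(x)=\frac{p}{x}+cx,\qquad B(y)=\frac{q}{y}+cy .
\]
Since $c<0$, $A$ vanishes exactly at $x=x_*$ and is positive for $x<x_*$, negative for $x>x_*$; likewise $B$ vanishes exactly at $y=y_*$. Take an excursion between consecutive crossings of $\{y=y_*\}$. On it $B$ has a fixed sign, say $B>0$ (the case $B<0$ is symmetric).

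At points where $\cos\sigma=0$ we get $\sigma'=-A\sin\sigma$. Suppose $x$ stays on one side of $x_*$, so $A$ also has a fixed sign. Then at each of the two vertical directions $\sigma=\pm\pi/2$ the sign of $\sigma'$ is fixed. In each case the rotation can pass each vertical direction only in one sense. Similarly, at horizontal directions $\sigma\in\{0,\pi\}$ we have $\sigma'=B\cos\sigma$, which has a fixed sign at each. A full turn would have to pass all four cardinal directions in the same rotational sense. However, the fixed signs (for example $\sigma'>0$ at $0$ and $\sigma'<0$ at $\pi$) say that the circle $\s^1$ has an \emph{attracting barrier} at one horizontal direction and a \emph{repelling barrier} at the other. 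Hence $\sigma$ cannot cross the same cardinal direction twice in the same sense within an excursion. A transversality argument at the barriers then gives $|\Delta\sigma|<2\pi$, in fact $<\pi$ on a subarc with both $A$ and $B$ of fixed sign.

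In general the excursion of $\{y=y_*\}$ may cross $\{x=x_*\}$. Here I would use the piecewise monotone quantities of the paper, the Hsiang-type functions, which I assume are proved earlier. They should say that between consecutive crossings the curve turns in one direction only around the vertex $V$, with the turning angle of $\gamma$ bounded by the angle subtended. Summing gives $<2\pi$. For terminal excursions, Theorem~\ref{thmB} controls the free end: either it closes at an axis with $\sigma$ perpendicular to the axis (Proposition~\ref{lem:suave}), or it is conical with $\sigma\to\psi_\infty\in(0,\pi/2)$. Combining this with the barrier argument on a single quadrant relative to $V$ gives $|\Delta\sigma|<\pi$.

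Item (i) then follows by additivity. Cut $\gamma$ at its $N_x$ crossings of $\{x=x_*\}$, giving $N_x-1$ intermediate excursions and two terminal ones, or $N_x$ excursions over a period for a closed orbit. Intermediate excursions contribute $<2\pi$ each and terminal ones $<\pi$ each. The strict inequalities, together with integrality of $n$ and the endpoint information, yield $|\Delta\sigma|<2\pi N_x$, and a sharpened count using the terminal half-turns gives $n\le N_x-1$. The same holds for $N_y$. The closed case needs care: the total turning there is $2\pi k$ exactly, so the strictness must absorb one turn.

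For (iii), a non-escaping end has infinite arclength by Theorem~\ref{thmB}. If it crossed some separatrix only finitely often, it would eventually lie in a half-plane. The monotone quantity there, combined with Theorem~\ref{thmA}'s transversality and a limit-set argument (an $\omega$-limit would be an invariant set avoiding the cylinder), would force escape or convergence to a cylinder, which is excluded. So both separatrices are crossed infinitely often, and the bound on $[a,b]$ follows from the local lemma plus at most two partial excursions at the ends.

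I expect the main obstacle to be the local lemma when the excursion crosses the other separatrix, and getting the exact constant $-1$ in (i), especially for closed orbits.
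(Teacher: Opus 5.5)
Your proposal has genuine gaps, although the key mechanism is already in your hands.

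\textbf{The local lemma in (ii).} The barrier at the horizontal directions does not involve $A$ at all. Where $\sin\sigma=0$ one has $(\sin\sigma)'=\cos\sigma\,\sigma'=B(y)\cos^{2}\sigma=B(y)$. So along a whole horizontal excursion, however often it crosses $\{x=x_*\}$, every zero of $\sin\sigma$ is simple and crossed in the same direction. Hence $\sin\sigma$ vanishes at most once, and $|\Delta\sigma|<2\pi$. This is exactly the paper's proof of Lemma~\ref{lem:excursao}, phrased through the strict monotonicity of $I=f(x)\sin\sigma$, whose derivative $Bf\cos^{2}\sigma$ does not contain $A$. By also asking $A$ to have a fixed sign you created the ``mixed'' case. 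The patch you propose for it (turning around $V$ bounded by the subtended angle) is not something $I$ and $J$ provide, and it is not needed.

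\textbf{The terminal bound in (ii).} Your claim that $|\Delta\sigma|<\pi$ on a subarc inside a quadrant does not follow from the four barriers, which only give $|\Delta\sigma|<\tfrac{3\pi}{2}$ (cf.\ Proposition~\ref{thm:cota32}). It is also incompatible with loops confined to a quadrant, which have $|\Delta\sigma|>\pi$ by the Umlaufsatz and appear numerically (Remark~\ref{rem:lacoquadrante}). Your terminal bound rests on that claim, and on the terminal excursion lying in one quadrant, which it need not. So it fails as written.

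What is needed is the limit value of $I$ at the free end. By Proposition~\ref{lem:Jfront}, $I\to0$ if $x\to0$ or $x\to\infty$ along a sequence, and $I\to\pm f(x_0)$ at an endpoint on the $x$-axis. Together with the monotonicity of $I$, this forces $\sin\sigma$ to keep a constant sign on the whole terminal excursion. Then $\sigma$ stays in one interval $[k\pi,(k+1)\pi]$, strictly inside at the crossing by transversality. The orthogonality at an axis that you cite does not give this sign when the free end lies on the $y$-axis, since there $\sin\sigma\to0$. Also, Theorem~\ref{thmB} alone does not rule out a non-escaping free end; in your route that is the first half of (iii), which would have to come first.

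\textbf{Item (i).} This is then the paper's count. The nonclosed case needs $N_y\ge1$ from Theorem~\ref{thmA}. The closed case needs no extra care: $|\Delta\sigma|<2\pi N_y$ already gives $n\le N_y-1$.

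\textbf{Item (iii).} An end with finitely many crossings of $\{y=y_*\}$ eventually lies in a strip, where $y$ is eventually monotone. A limit-set argument of the kind you sketch (as in (F2) of Proposition~\ref{lem:Jfront} and Corollary~\ref{prop:limites}) shows at most that the end leaves every compact subset of $\mathcal Q$. That is not ``escape or convergence to a cylinder''.

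In $\mathcal S^-$ the end could a priori sink to the $x$-axis, $y\searrow0$, while $x$ oscillates across $x_*$ with maxima $\hat x_k\to\infty$. Once $I\to0$, this is what oscillation must look like: at the extrema of $x$ one has $|I|=f(x)$, and $x''=A(x)$ places maxima to the right of $x_*$ and minima to the left. Such an end has infinite arclength and $\liminf\rho<\infty$, yet its $\omega$-limit set in $\mathcal Q\times\s^1$ is empty, so no invariant-set argument reaches it.

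The paper excludes this case in Theorem~\ref{prop:recorrente} by a quantitative estimate. On the stretch $[u_k,\tau_k]$ where $x$ rises from $x_*$ to $\hat x_k$, the net change of $\sigma$ is negative: it ends at $\tfrac{3\pi}{2}$, coming from $(\tfrac{3\pi}{2},2\pi)$. Integrating $\sigma'=-|A|\eta+B\cos\sigma$, with $\eta=-\sin\sigma$, $|A|\le|c|\hat x_k$ and $B\ge b_0>0$, gives $|c|\hat x_k\Delta_k>b_0(\hat x_k-x_*)$. So every oscillation with $\hat x_k\ge2x_*$ costs a drop $\Delta_k>b_0/(2|c|)$ of $y$, which is impossible for a monotone bounded $y$. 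This estimate, or a substitute for it, is the missing idea. The bound on $n[a,b]$ is then the simple count of Corollary~\ref{cor:porcao}.
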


\begin{remark}[Scope of Theorem~\ref{thmC}]\label{rem:escopoC}

Item (i) applies to closed orbits and to trajectories whose tangent angle has limits at both endpoints, that is, to compact and to proper examples. For the recurrent trajectories of Theorem~\ref{thmH} only item (iii) is available. We do not know whether the bound \eqref{eq:cont-intro} is attained.

\end{remark}

Theorems~\ref{thmA}, \ref{thmB} and \ref{thmC} combine into the following dichotomy: a complete noncompact bi-rotational self-shrinker other than the cylinders either has only conical ends or meets both cylinders infinitely often. The dichotomy coincides with the standard finiteness invariants of the theory.

\begin{lettercorollary}[Tame or wild]\label{thmD}
Let $\Sigma$ be a complete, noncompact bi-rotational self-shrinker, distinct from the two cylinders. Then the following are equivalent:
\begin{enumerate}
\item[(a)] every end of $\Sigma$ is asymptotically
conical, and then each is embedded;
\item[(b)] $\Sigma$ is proper;
\item[(c)] $\Sigma$ has polynomial volume growth;
\item[(d)] $\Sigma$ has finite entropy;
\item[(e)] $\Sigma$ meets each of the two cylinders in finitely many connected components.
\end{enumerate}
When they fail, $\Sigma$ has a non-escaping end, is not proper, has infinite entropy, and meets both cylinders transversely in infinitely many connected components. 

In particular, a properly immersed complete noncompact bi-rotational self-shrinker has only asymptotically conical embedded ends.
\end{lettercorollary}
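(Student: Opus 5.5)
The plan is to route every implication through Theorems~\ref{thmB} and \ref{thmC} and the end dichotomy. By Theorem~\ref{thmB}, $\Sigma$ is complete. Since it is noncompact and not a cylinder, each of its finitely many ends (at most two, one per endpoint of the maximal trajectory) is either asymptotically conical or non-escaping. I would prove the cycle
\[
(a)\Rightarrow(b)\Rightarrow(c)\Rightarrow(d)\Rightarrow(a),
\]
and separately $(a)\Leftrightarrow(e)$.

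\emph{$(a)\Rightarrow(b)$ and embeddedness.} On a conical end $\rho\to\infty$ along the profile. The profile is then a proper map on a neighbourhood of each endpoint, and a compact middle arc contributes nothing to properness. Properness of the profile in $\overline{\QQ}$ gives properness of $\Sigma$, because $|X|=\rho$ and the orbits are compact. Item (i) of Theorem~\ref{thmB} shows that each conical end is a graph over an axis, hence embedded.

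\emph{$(b)\Rightarrow(c)$.} This is the known result that properly immersed shrinkers have Euclidean volume growth (Ding--Xin, Cheng--Zhou). Alternatively it can be checked directly here: by $(b)\Rightarrow(a)$, which is the contrapositive of the non-escaping case treated below, the ends are graphs with asymptotically radial tangent. The profile length inside $\{\rho\le R\}$ is therefore $O(R)$, and the area is at most $C\int \rho^{p+q}\,ds=O(R^{n+1})$.

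\emph{$(c)\Rightarrow(d)$.} This is the standard fact that polynomial volume growth implies finite entropy. The Gaussian density $\int e^{-|X-x_0|^2/4t}(4\pi t)^{-n/2}$ is bounded uniformly in $(x_0,t)$ once volume growth is polynomial; here one uses the Colding--Minicozzi reduction to the $F$-functional centred at the origin at the shrinker's scale, together with the monotonicity of the $F$-functionals.

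\emph{$(d)\Rightarrow(a)$.} I would argue by contrapositive. Suppose some end is non-escaping. Then there are $R$ and a sequence of disjoint arcs of the profile in $\{\rho\le R\}$. Theorem~\ref{thmC}(iii) says this end crosses both separatrices infinitely often. In fact infinite arclength together with $\rho\not\to\infty$ already gives infinitely many visits to a compact subset $K\subset\QQ$, and each visit has length bounded below by a uniform constant: $|\gamma'|=1$, and on $K$ the field $F$ is bounded, so an arc entering a slightly smaller compact set stays in $K$ for a definite time. Since $x,y$ are bounded below on $K$, each such arc contributes at least a fixed positive amount of area within the ball $B_{R'}$. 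Thus $\Sigma$ has infinite area in a fixed ball. Then the Gaussian integral $F_{0,1}(\Sigma)=\infty$, so the entropy is infinite, and $\Sigma$ is not proper. This argument also gives $(b)\Rightarrow(a)$ directly.

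\emph{$(a)\Leftrightarrow(e)$.} By Theorem~\ref{thmA}, every intersection with a cylinder is transverse, so connected components of $\Sigma\cap\mathcal C$ correspond to crossings of the profile. Theorem~\ref{thmC}(iii) says that a non-escaping end has infinitely many crossings. Conversely, a conical end is eventually a graph with $\psi\to\psi_\infty\in(0,\tfrac\pi2)$ and both coordinates strictly increasing. Once $x>x_*$ and $y>y_*$ it never crosses again, so it has finitely many crossings. The compact middle arc also has finitely many crossings, by transversality and compactness.

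The last sentences of the statement then follow by collecting the contrapositives. I expect the main obstacle to be the step $(d)\Rightarrow(a)$ and its uniform lower bound. One must ensure that the infinitely many returns of a non-escaping end accumulate at an interior point of $\QQ$, not only near the axes, where the orbit radii $x,y$ degenerate. If the $\omega$-limit set lay on $\partial\QQ$, I would use Proposition~\ref{lem:suave} together with the infinitely many crossings of $\{x=x_*\}$ and $\{y=y_*\}$: these crossings force returns to the points $(x_*,y)$ with $y$ bounded, which stay away from the $y$-axis, and similarly for the other axis.
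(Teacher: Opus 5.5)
Your steps (a)$\Rightarrow$(b), (b)$\Rightarrow$(c), (c)$\Rightarrow$(d) and (a)$\Leftrightarrow$(e) are fine. They essentially match the paper, whose proofs are Corollaries~\ref{cor:dicotomia} and~\ref{cor:entropia}.

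\textbf{The gap.} The step that closes your cycle, (d)$\Rightarrow$(a), has a genuine gap, and it is the one you flag yourself. Infinite area in a fixed ball requires a non-escaping end to return infinitely often to a compact $K\subset\QQ$ that stays away from \emph{both} axes. This is essential, because the area element $\omega_p\omega_q x^py^q\,ds$ degenerates on $\partial\QQ$.

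Your patch does not supply such a $K$.
\begin{itemize}
\item A crossing of $\{x=x_*\}$ lies at a point $(x_*,y_j)$. It is away from the $y$-axis, but nothing bounds $y_j$: the non-escaping hypothesis only gives $\liminf\rho<\infty$ along \emph{some} sequence, not along the crossings. So $y_j\to0$ or $y_j\to\infty$ is not excluded.
\item Symmetrically, the crossings $(x_k,y_*)$ of $\{y=y_*\}$ may have $x_k\to0$ or $x_k\to\infty$.
\item Hence you have not ruled out an end whose bounded returns accumulate only on the axes. For example, the crossings of $\{x=x_*\}$ could cluster near $(x_*,0)$, the crossings of $\{y=y_*\}$ near $(0,y_*)$, and the profile could eventually enter every neighbourhood of $\partial\QQ\cup\{\infty\}$. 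In that scenario the contributions $\int x^py^q\,ds$ of successive returns could be summable.
\item Proposition~\ref{lem:suave} does not help. It concerns trajectories that actually reach an axis, not trajectories passing close to one. Excluding this scenario would need a real near-axis analysis, which you do not give.
\end{itemize}

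\textbf{How the paper avoids this.} The paper never needs interior accumulation.
\begin{enumerate}
\item Non-properness of a non-escaping end is elementary. The end has infinite arclength, so $s_+=\infty$, and $\rho(s_k)\le R$ along some $s_k\to\infty$. Hence the preimage of a closed ball is closed and unbounded in the parameter interval, so not compact.
\item One then uses the \emph{other} direction of the Cheng--Zhou theorem you already cite for (b)$\Rightarrow$(c): a complete immersed self-shrinker with finite weighted volume $\int_\Sigma e^{c|X|^2/2}\,dV$ is proper.
\item Finite entropy bounds $F_{0,t_c}(\Sigma)=(4\pi t_c)^{-n/2}\int_\Sigma e^{c|X|^2/2}\,dV$, where $t_c=-1/(2c)$. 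This gives (d)$\Rightarrow$(b), and (b)$\Rightarrow$(a) is the elementary argument in step 1.
\item The same chain gives infinite entropy in the failure clause.
\end{enumerate}
One minor point: Lemma~7.10 of Colding--Minicozzi is stated for embedded shrinkers. As the paper remarks, you should check that it applies verbatim to immersed ones with polynomial volume growth.
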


The equivalence (b)$\Leftrightarrow$(c)$\Leftrightarrow$(d) is a particular case of the theorem of Cheng and Zhou \cite[Theorem~1.3]{ChengZhou}, valid for every complete self-shrinker, combined with the standard relation between Gaussian area and entropy \cite[\S7.2]{CM}. The content specific to the bi-rotational class is the equivalence of (a), (b) and (e), whose nontrivial direction (e)$\Rightarrow$(a) rests on Theorem~\ref{prop:recorrente}.

The self-intersections of $\gamma$ are controlled by the same combinatorics and depend on the horizontal strips
\[
\mathcal S^+=\{(x,y)\in\QQ;y>y_*\}\quad\mbox{and}\quad \mathcal S^-=\{(x,y)\in\QQ;0<y<y_*\},
\]
and the vertical strips
\[
\mathcal V^+=\{(x,y)\in\QQ;x>x_*\}\quad\mbox{and}\quad \mathcal V^-=\{(x,y)\in\QQ;0<x<x_*\},
\]
whose intersections are the four quadrants.

\begin{lettertheorem}[Immersion versus embedding]\label{thmE}
Let $\Sigma$ be a bi-rotational self-shrinker distinct from the cylinders and $\gamma$ its profile curve. Then,
\begin{enumerate}
\item[(i)] any loop of $\gamma$ confined to a quadrant has $\pi<|\Delta\sigma|<\tfrac{3\pi}{2}$;
\item[(ii)] no two loops of $\gamma$ lie in the same excursion of a strip, and \emph{a fortiori} not in the same excursion within a quadrant;
\item[(iii)] no loop of $\gamma$ lies in a terminal excursion, whence a loop confined to a horizontal strip forces $N_y\ge2$, a loop confined to a vertical strip forces $N_x\ge2$, a quadrant loop forces $\min(N_x,N_y)\ge2$, and the numbers of loops confined to horizontal and to vertical strips are at most $N_y-1$ and $N_x-1$, respectively ($N_y$ and $N_x$ if $\gamma$ is closed).
\end{enumerate}
\end{lettertheorem}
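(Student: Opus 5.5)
The plan rests on two elementary features of \eqref{eq:flow}, which I would establish first as lemmas. The first concerns the sign of $\sigma'$ at a horizontal or vertical tangency. Where $\sigma=0$ or $\pi$ we have $\sigma'=\pm(q/y+cy)$, and this vanishes exactly on $\{y=y_*\}$. It is positive below the separatrix and negative above it when $\sigma=0$, with the signs reversed when $\sigma=\pi$. Analogously, where $\sigma=\pm\pi/2$ we have $\sigma'=\mp(p/x+cx)$, whose sign is fixed by the side of $\{x=x_*\}$.

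Hence inside a fixed quadrant the rotation direction is forced at each of the four axis-parallel tangencies. In every quadrant this forcing is compatible with turning in only one sense (counterclockwise in $\mathcal S^-\cap\mathcal V^+$, and so on). Equivalently, $\sigma$ can pass each of the values $0,\pi/2,\pi,3\pi/2$ (mod $2\pi$) only in one direction. This is the Hsiang-type piecewise monotonicity.

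The second feature is the monotone quantity along excursions, which is what the paper calls its pair of piecewise monotone quantities. Within a horizontal strip, $\sigma$ crosses $0$ and $\pi$ monotonically, so that $\lfloor\sigma/\pi\rfloor$ is monotone on each excursion of $\mathcal S^\pm$. The same holds in a vertical strip with $\lfloor(\sigma-\pi/2)/\pi\rfloor$.

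For (i), a loop is a simple closed subarc $\gamma|_{[a,b]}$ with $\gamma(a)=\gamma(b)$ and a corner there. By the Umlaufsatz for a piecewise smooth simple closed curve, $\Delta\sigma=\pm2\pi-\theta$, where $\theta\in(-\pi,\pi)$ is the exterior angle at the corner. This gives $|\Delta\sigma|\in(\pi,3\pi)$.

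The upper bound $3\pi/2$ comes from the quadrant monotonicity. Since $\sigma$ passes each multiple of $\pi/2$ in only one direction, it can never come back to one of those values once past it. So the range of $\sigma$ on the loop contains at most one full set of these crossings in one direction. I would argue that $\Delta\sigma\ge3\pi/2$ forces $\sigma$ to cross some multiple of $\pi/2$ twice, unless the corner angle compensates. To exclude that residual case, I would check geometrically that a closed loop turning counterclockwise, together with $\gamma'(a)$ and $\gamma'(b)$ at the corner, is incompatible with the forced sign in the quadrant. I expect this corner bookkeeping, showing $|\Delta\sigma|<3\pi/2$ rather than $<2\pi$, to be the main obstacle. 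My first attempt would be to use that the quadrant permits $\sigma$ to be in a given quarter-interval at most once.

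For (ii), within one excursion of a strip, say $\mathcal S^+$, the quantity $\lfloor\sigma/\pi\rfloor$ is monotone. By Theorem~\ref{thmC}(ii), the total $|\Delta\sigma|$ over the excursion is less than $2\pi$. Two disjoint loops would each need $|\Delta\sigma|>\pi$ by the Umlaufsatz argument of (i). I would show the turning between them cannot be negative enough to compensate: by monotonicity, $\sigma$ cannot backtrack across a multiple of $\pi$. Hence the total exceeds $2\pi$, a contradiction. The quadrant statement is immediate from this.

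For (iii), Theorem~\ref{thmC}(ii) gives $|\Delta\sigma|<\pi$ on a terminal excursion, while every loop needs $|\Delta\sigma|>\pi$. So no loop lies in a terminal excursion. A loop confined to a horizontal strip therefore sits in an intermediate excursion, which requires at least two crossings of $\{y=y_*\}$, so $N_y\ge2$. The vertical case is the same, and the quadrant case combines both.

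Finally, the counting follows from (ii). There is at most one loop per intermediate excursion, and there are $N_y-1$ such excursions, or $N_y$ for a closed orbit, where every excursion is intermediate. The same count gives $N_x-1$ (or $N_x$) for vertical strips.
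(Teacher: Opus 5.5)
Your sign computation at the four cardinal directions is correct, but the conclusion you draw from it is not, and this is why you cannot reach $\tfrac{3\pi}{2}$ in (i). The forcing does not turn the tangent in one sense. In $\mathcal S^-\cap\mathcal V^+$ one has $\sigma'=B>0$ at $\sigma=0$ and $\sigma'=-A>0$ at $\sigma=\tfrac\pi2$, but $\sigma'=-B<0$ at $\sigma=\pi$ and $\sigma'=A<0$ at $\sigma=\tfrac{3\pi}2$. In every quadrant, the open quarter where $\sin\sigma$ has the sign of $B$ and $\cos\sigma$ the sign of $A$ is absorbing, and the opposite quarter is repelling. A genuinely one-sense forcing would in fact allow unbounded winding. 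Since $(\sin\sigma)'=B$ at zeros of $\sin\sigma$ and $(\cos\sigma)'=A$ at zeros of $\cos\sigma$, each of $\sin\sigma$ and $\cos\sigma$ vanishes at most once on a subarc inside a quadrant; this is exactly the strict monotonicity of $I$ and $J$. Hence the range of $\sigma$ contains at most two multiples of $\tfrac\pi2$, and $|\Delta\sigma|<\tfrac{3\pi}2$ follows at once. The corner plays no role in the upper bound; it only supplies the lower bound $\pi$. The same correction applies to your strip feature. In $\mathcal S^-$, $\sigma$ crosses $2k\pi$ only upward and $(2k+1)\pi$ only downward. What you need is that $\sin\sigma$ vanishes at most once per horizontal excursion, not that all multiples of $\pi$ are crossed in one direction.

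For (ii) there is a genuinely missing case. You treat only loops with disjoint parameter intervals, and even there the claim that the total turning exceeds $2\pi$ does not follow: monotonicity of $\lfloor\sigma/\pi\rfloor$ only gives a net turning $>\pi$. The direct argument is that each loop contains a zero of $\sin\sigma$, while an excursion contains at most one. But loops may be interlaced, $a_1<a_2<b_1<b_2$. Then both can contain the same zero $t_I\in(a_2,b_1)$ of $\sin\sigma$, so neither counting zeros nor adding turnings gives a contradiction.

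The paper (Theorem~\ref{thm:unico}) handles this case geometrically. In $\mathcal S^-$, the single zero forces $\sin\sigma<0$ on $[a_1,a_2]$ and $\sin\sigma>0$ on $[b_1,b_2]$. The common chord $\gamma(a_2)-\gamma(a_1)=\gamma(b_2)-\gamma(b_1)$ is nonzero by simplicity of the first loop. By Lemma~\ref{lem:cone} it would then point into both the lower and the upper half-plane, which is absurd. Nested intervals and loops with a common initial instant must also be excluded, using the injectivity in the definition of a loop. Your counting in (iii) inherits this gap.

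Your exclusion of loops from terminal excursions via the turning bound is a valid alternative to the paper's argument with the limit values of $I$. For it, use what the proof of Lemma~\ref{lem:excterminal} actually gives: $\sin\sigma$ has constant sign there. This bounds the turning of every subarc of the terminal excursion by $\pi$, not only that of the whole excursion.
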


The round sphere $\s^{p+q+1}(r_0)$, $r_0=\sqrt{(p+q+1)/|c|}$, also organizes the trajectories. Its profile arc $\{\rho=r_0\}$ separates the region where $\rho$ can only have local minima from the region where it can only have local maxima.

\begin{lettertheorem}[The sphere as a third separatrix]\label{thmF}
Let $\Sigma$ be a bi-rotational self-shrinker distinct from the round sphere, $\gamma$ be its profile curve, and let $\rho=|\gamma|$. Then $\gamma$ meets the circle $\{\rho=r_0\}$ transversely, every critical point of $\rho$ is nondegenerate, every local minimum of $\rho$ lies inside the sphere and every local maximum outside it, and between two consecutive crossings of the sphere, $\rho$ has exactly one critical point. In particular:
\begin{enumerate}
\item[(i)] the crossings of the sphere and the critical points of $\rho$ alternate along $\gamma$;
\item[(ii)] every loop of $\gamma$ contained in the open ball bounded by the sphere contains exactly one critical point of $\rho$, a minimum, and every loop outside the closed ball contains exactly one, a maximum;
\item[(iii)] if $\Sigma\cong\s^{1}\times\s^{p}\times\s^{q}$, it meets the sphere in an even number $\ge2$ of connected components.
\end{enumerate}
\end{lettertheorem}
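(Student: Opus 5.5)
The whole argument rests on one identity for the second derivative of $\rho^2/2$ along $\gamma$, evaluated at critical points of $\rho$. Along an arclength-parametrized trajectory of \eqref{eq:flow} we have $\tfrac12(\rho^2)'=\lan\gamma,\gamma'\ran=x\cos\sigma+y\sin\sigma$. Differentiating once more,
\[
\tfrac12(\rho^2)''=1+\sigma'\,\tau,\qquad \tau:=-x\sin\sigma+y\cos\sigma .
\]
At a critical point of $\rho$ the tangent is orthogonal to $\gamma$, so $(\cos\sigma,\sin\sigma)=\varepsilon(-y,x)/\rho$ with $\varepsilon=\pm1$. Substituting gives $\tau=-\varepsilon\rho$ and $\sigma'=-\varepsilon\big((p+q)/\rho+c\rho\big)$. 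Hence, at every critical point,
\[
\rho\,\rho''=\tfrac12(\rho^2)''=(p+q+1)+c\rho^2=|c|\,(r_0^2-\rho^2).
\]
I would make this identity the first step, since everything else follows from it by one-variable calculus along $\gamma$.

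The second step is transversality. Suppose $\gamma$ is tangent to the circle $\{\rho=r_0\}$ at some point, that is, $\rho=r_0$ and $\rho'=0$. Then the phase point $(x,y,\sigma)$ coincides with a phase point of the sphere solution (i), up to the orientation reversal $\sigma\mapsto\sigma+\pi$, $s\mapsto -s$, under which \eqref{eq:flow} is invariant. By uniqueness for \eqref{eq:flow} on $M$, $\gamma$ is then the sphere profile, which is excluded. So every crossing of the sphere is transverse, and in particular crossings are isolated. Combined with the identity, every critical point of $\rho$ off the sphere has $\rho''\neq0$: it is nondegenerate, a strict local minimum if $\rho<r_0$ and a strict local maximum if $\rho>r_0$.

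The third step counts critical points between crossings. Between two consecutive crossings, $\rho-r_0$ has constant sign, so all critical points there are of the same type. Two consecutive strict minima of a $C^1$ function with isolated critical points would force a maximum between them, so there is at most one critical point. Since $\rho=r_0$ at both ends of the interval, Rolle's theorem gives at least one, hence exactly one.

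For (i), I would also show that two consecutive critical points are of opposite type, by the same argument, and therefore lie on opposite sides of the sphere. The intermediate value theorem then gives a crossing between them, which yields alternation.

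For (ii), the endpoints of a loop have the same position, hence the same $\rho$. So $\rho$ has an interior critical point, and if the loop lies in the open ball every such point is a strict minimum. There cannot be two, since a maximum would then lie between them inside the ball, which is impossible. The exterior case is symmetric.

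For (iii), on a closed orbit the function $\rho$ is periodic and nonconstant, because $\gamma$ is not the sphere. So $\rho$ attains a maximum outside the ball and a minimum inside, and $\gamma$ crosses the sphere. Each crossing is transverse, so $\rho-r_0$ changes sign there. Periodicity therefore forces an even number $\ge2$ of crossings per period. Each crossing point of $\gamma\subset\QQ$ corresponds to one orbit $\s^p\times\s^q$ in $\Sigma\cap\s^{p+q+1}(r_0)$, which gives the count of connected components.

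The main obstacle I expect is the uniqueness step in transversality. One has to check carefully that both orientations $\varepsilon=\pm1$ reduce to the sphere solution via the reversal symmetry. One must also handle the possibility that the tangency occurs where the sphere profile meets an axis. I would rule this out by noting that a critical point of the radius in $\QQ$ lies at a point with $x,y>0$, where \eqref{eq:flow} is regular, and that the curve $\gamma$ is smooth up to the axes by Proposition~\ref{lem:suave}.
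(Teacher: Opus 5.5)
Your local arguments are correct and essentially the paper's. Your identity $\rho\rho''=(p+q+1)+c\rho^2$ at critical points is what the paper reads off from $\mathcal F'=\Phi(n+c\rho^2)$ (Lemma~\ref{lem:terceira}), and transversality is the same uniqueness argument. Your ``two minima force a maximum'' count replaces the paper's use of the strict monotonicity of $\mathcal F$ on each side of the sphere.

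\textbf{The gap.} The existence half of the first assertion is missing. The statement says that $\gamma$ \emph{meets} the circle $\{\rho=r_0\}$, transversely; this is the sphere analogue of Theorem~\ref{thmA}, and the paper proves it first. You only show that every contact is transverse, and you obtain a crossing only for closed orbits in (iii). Your identity cannot supply existence, because it constrains critical points only. A trajectory lying on one side of the sphere with $\rho$ strictly monotone has no critical points, and nothing in your argument excludes it.

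\textbf{The missing idea is global.} Since $\mathcal F=\Phi\rho\rho'$ is strictly monotone on each side of the sphere, suppose $\gamma$ never met the circle.
\begin{enumerate}
\item Choose $s_0$ with $\mathcal F(s_0)\neq0$, and follow $\gamma$ in the time direction in which $|\mathcal F|\ge|\mathcal F(s_0)|$.
\item In each of the four cases (inside or outside the ball, either sign of $\mathcal F(s_0)$), $\rho$ moves monotonically toward $r_0$ without reaching it.
\item Its speed satisfies $|\rho'|=|\mathcal F|/(\Phi\rho)\ge|\mathcal F(s_0)|/K$, where $K:=\sup_{\QQ}\Phi\rho<\infty$.
\item Hence that end is reached in finite arclength with $\rho$ bounded. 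By maximality it is reached at a point of an axis, possibly the origin.
\item There $\Phi\to0$, so $\mathcal F\to0$, contradicting $|\mathcal F|\ge|\mathcal F(s_0)|>0$.
\end{enumerate}

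\textbf{A side remark on the axes.} Your worry about tangency at an axis is unnecessary for this statement, since every contact of $\gamma$ with the circle is a point of $\QQ$, where uniqueness applies directly. Note also that ``smoothness up to the axis'' would not exclude such a tangency anyway. By Proposition~\ref{lem:suave}, a profile ending at $(r_0,0)$ agrees with the circle to second order there.
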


The topology of $\Sigma$ is determined by the endpoints of its profile curve. Each endpoint on an axis closes off one factor of $\s^{p}\times\s^{q}$ smoothly (Proposition~\ref{lem:suave}), a closed orbit contributes a circle factor, and the remaining endpoints are ends of infinite length. This gives the following list, together with two constraints on the compact examples.

\begin{lettertheorem}[Topology]\label{thmG}
Every bi-rotational self-shrinker $\Sigma$ is diffeomorphic to one of $\s^{1}\times\s^{p}\times\s^{q}$, $\s^{p+q+1}$, $\s^{q+1}\times\s^{p}$, $\s^{p+1}\times\s^{q}$ in the compact case, the last two coinciding when $p=q$, and to one of $\R\times\s^{p}\times\s^{q}$, $\R^{p+1}\times\s^{q}$, $\R^{q+1}\times\s^{p}$ otherwise, according to the number of endpoints on the axes. Moreover,
\begin{enumerate}
\item[(i)] every compact $\Sigma$ encircles the vertex: its profile curve crosses both separatrices, so that $V$ lies in the interior of the rectangle $[x_{\min},x_{\max}]\times[y_{\min},y_{\max}]$, where $x_{\min}$, $x_{\max}$, $y_{\min}$ and $y_{\max}$ are the extreme values of the coordinates along the profile curve;
\item[(ii)] if $\Sigma\cong\s^{1}\times\s^{p}\times\s^{q}$, each separatrix is crossed an even number $\ge2$ of times.
\end{enumerate}
\end{lettertheorem}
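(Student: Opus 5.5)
Our plan is to read off the topology from the two endpoints of the maximal trajectory. By Theorem~\ref{thmB}, a bi-rotational self-shrinker other than the Clifford cone is complete. So each end of the profile curve $\gamma$ falls into one of three cases: it reaches an axis in finite arclength, or it has infinite arclength (escaping or non-escaping), or the orbit is closed. Proposition~\ref{lem:suave} gives the axis case. If $\gamma$ reaches $\{x=0\}$ at some $y_0>0$, it does so orthogonally, and the orbit $\s^p(x)\times\s^q(y_0)$ collapses smoothly to $\{0\}\times\s^q(y_0)$. Near this point, $\Sigma$ is then a $(p+1)$-disk bundle over $\s^q$, and symmetrically at $\{y=0\}$. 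We still have to rule out the origin as an endpoint. Our plan is to argue that a trajectory reaching $(0,0)$ must be the Clifford cone, via the uniqueness of the limit direction (Remark~\ref{rem:escaping}, Proposition~\ref{prop:fins}, applied near the vertex instead of at infinity). If that is not available, we treat the origin as a singular endpoint excluded by the completeness statement.

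We then enumerate cases. A closed orbit gives $\s^1\times\s^p\times\s^q$. Two infinite ends give $\R\times\s^p\times\s^q$. One axis endpoint and one infinite end give $\R^{p+1}\times\s^q$ or $\R^{q+1}\times\s^p$. Two axis endpoints give a compact manifold. If both endpoints lie on the same axis, say $\{x=0\}$, we glue two $D^{p+1}\times\s^q$ along their boundary by the identity and obtain $\s^{p+1}\times\s^q$. If the endpoints lie on different axes, we glue $D^{p+1}\times\s^q$ to $\s^p\times D^{q+1}$ and obtain the join $\s^p*\s^q=\s^{p+q+1}$.

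For (i), Theorem~\ref{thmA} says every such $\Sigma$ other than the cylinders crosses both lines $x=x_*$ and $y=y_*$ transversely. The cylinders are noncompact, so a compact $\Sigma$ has points with $x<x_*$ and with $x>x_*$, and likewise for $y$. This gives $x_{\min}<x_*<x_{\max}$ and $y_{\min}<y_*<y_{\max}$.

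For (ii), a closed orbit gives a closed curve $\gamma$ in $\QQ$. Every crossing is transverse by Theorem~\ref{thmA}, so the function $x-x_*$ changes sign at each of its zeros along the period, which forces the number of zeros to be even. By Theorem~\ref{thmA} this number is at least one, hence at least two. The same argument applies to $y$.

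The main obstacle is excluding the origin as an endpoint. We also expect to need care with the smoothness and gluing identifications, but Proposition~\ref{lem:suave} should handle those.
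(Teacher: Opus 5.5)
Your proposal is correct and follows the paper's route: the list of models is read off from the endpoints as in Proposition~\ref{prop:topcompacta} and Theorem~\ref{thm:completas}, and item (i) is Theorem~\ref{thmA} as in Corollary~\ref{thm:compacta-vertice}; item (ii) is transversality plus parity, and the only difference is that you get the first crossing from Theorem~\ref{thmA}, while the paper gets it from the strict monotonicity of $I$ and $J$ along a closed orbit (Lemma~\ref{prop:noloop}, Corollary~\ref{cor:noloop2}). The origin is not an open obstacle, and Proposition~\ref{prop:fins} would not settle it (it concerns ends with $\rho\to\infty$ and its uniqueness argument relies on the coefficient $|c|x(1+u'^2)\to\infty$); Proposition~\ref{lem:origem} already shows that only the Clifford ray reaches the origin, which is what underlies the completeness in Theorem~\ref{thmB}, so your fallback is exactly the paper's argument.
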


The previous theorems describe the possible behaviors of a single trajectory. The last one describes the typical trajectory. Its main tool is an invariant measure of finite total mass on the phase space, which makes the Poincaré recurrence theorem available. Let
\[
\mathfrak m:=\Phi dx dy d\sigma
\]
on the phase space $\mathcal Q\times\s^1$, where
\begin{equation}\label{geodesic}
\Phi:=x^{p}y^{q}e^{c(x^{2}+y^{2})/2}.
\end{equation}

\begin{lettertheorem}[Almost every trajectory is recurrent]\label{thmH}
The flow of \eqref{eq:flow} preserves $\mathfrak m$, and $\mathfrak m(\mathcal Q\times\s^1)<\infty$. For $\mathfrak m$-almost every initial condition $v=(x_0,y_0,\sigma_0)\in\mathcal Q\times\s^1$, the solution through $v$ is defined for all $s\in\R$ and is recurrent in both time directions. Consequently, for $\mathfrak m$-almost every $v$, the hypersurface $\Sigma_v$ generated by the trajectory of $v$ satisfies exactly one of:
\begin{enumerate}
\item[(i)] its profile curve is closed and $\Sigma_v\cong\s^{1}\times\s^{p}\times\s^{q}$;
\item[(ii)] $\Sigma_v$ is complete, noncompact, not proper, and of infinite entropy, and both of its ends are non-escaping, each meeting both cylinders transversely in infinitely many connected components.
\end{enumerate}
Moreover, 
\begin{enumerate}
    \item[(a)] the initial conditions whose trajectories reach an axis or the origin, or have an asymptotically conical end, form an $\mathfrak m$-null set;
    \item[(b)] the union of the closed orbits is either $\mathfrak m$-null or contains a nonempty open set.
 \end{enumerate}
\end{lettertheorem}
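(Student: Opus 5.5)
Three ingredients: invariance of $\mathfrak m$ by a divergence computation, finiteness of its mass, and Poincaré recurrence combined with Theorems~\ref{thmB} and \ref{thmG} and Corollary~\ref{thmD}.

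\emph{Invariance.} Since $\mathfrak m$ has density $\Phi$ with respect to Lebesgue measure, it is invariant iff $\dive(\Phi F)=0$. Write $\partial$ for partial derivatives. Then
\[
\partial_x(\Phi\cos\sigma)+\partial_y(\Phi\sin\sigma)=\Phi\Bigl[\Bigl(\tfrac px+cx\Bigr)\cos\sigma+\Bigl(\tfrac qy+cy\Bigr)\sin\sigma\Bigr].
\]
Also $\partial_\sigma\bigl(\Phi\,\sigma'\bigr)=\Phi\bigl[-(\tfrac px+cx)\cos\sigma-(\tfrac qy+cy)\sin\sigma\bigr]$. The two cancel. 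This is expected, because \eqref{eq:flow} is the geodesic flow of the conformal metric $\Phi^{2/n}(dx^2+dy^2)$ reparametrized, and $\Phi\,dx\,dy\,d\sigma$ is its Liouville measure in these coordinates; hence the label \eqref{geodesic}.

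\emph{Finite mass.} Since $c<0$,
\[
\mathfrak m(\QQ\times\s^1)=2\pi\int_0^\infty x^pe^{cx^2/2}dx\int_0^\infty y^qe^{cy^2/2}dy<\infty .
\]

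\emph{Recurrence.} The flow is only a local flow, since trajectories may reach $\partial\QQ$ or escape to $\rho=\infty$ in finite or infinite time. First show that the set $E^+$ of points whose forward trajectory ceases to exist in finite time is null. For $T>0$ consider $E_T=\{v:\ \text{forward time of existence}<T\}$. The time-$t$ map is measure preserving where defined. The sets $\phi_{t}(E_T\setminus E_{T-t})$ are disjoint translates, which forces $\mathfrak m(E_T)=0$ by a standard argument: for a volume-preserving local flow on a finite-measure space, the escape set has measure zero. This is the same argument as for billiards and geodesic flows of incomplete metrics, comparing $\mathfrak m(\phi_t(U))=\mathfrak m(U)$ with a monotone exhaustion. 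An alternative is to use Theorem~\ref{thmB} directly. Trajectories of finite arclength end at an axis or at the origin, which is the Clifford cone vertex. Those reaching an axis do so with $\sigma$ perpendicular to the axis by Proposition~\ref{lem:suave}, so through each boundary point there is one trajectory. Hence this set is the union of a one-parameter family of curves and is $\mathfrak m$-null (dimension count: a 2-dimensional union in a 3-manifold).

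Similarly, escaping ends form a one-parameter family by Remark~\ref{rem:escaping} and Proposition~\ref{prop:fins}, so their union is 2-dimensional and null. This proves (a). On the complement, which is invariant and of full measure, the flow is global and measure preserving with finite mass, so Poincaré recurrence applies in both time directions. Thus almost every $v$ is recurrent forward and backward.

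\emph{Dichotomy.} For such $v$, either the orbit is periodic, giving (i) by Theorem~\ref{thmG}, or it is not. If it is not, both ends have infinite arclength and are not conical, so by Theorem~\ref{thmB} they are non-escaping. Corollary~\ref{thmD} and Theorem~\ref{thmC}(iii) then give noncompactness, nonproperness, infinite entropy, and infinitely many transverse cylinder crossings, which is (ii).

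\emph{Item (b).} The set $P$ of periodic points is invariant. Suppose $\mathfrak m(P)>0$. I would use analyticity: the flow is real analytic, so the return map on a transversal section, for instance the crossings of $\{x=x_*\}$, is analytic. By Theorem~\ref{thmA} this crossing is transverse, and by Theorem~\ref{thmG}(ii) closed orbits cross it. Periodic points of period $k$ for the return map form the zero set of the analytic function $P^k-\mathrm{id}$ on a connected 2-dimensional domain. If $\mathfrak m(P)>0$, some $k$ gives a positive-measure zero set. An analytic function vanishing on a positive-measure set vanishes identically on the connected component, which yields an open set of closed orbits.

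The main obstacle is the domain of the return map. It is only defined on an open subset, which may be disconnected, and periods may be unbounded. Countable additivity handles the second issue; the first needs restricting to a component meeting the positive-measure set.
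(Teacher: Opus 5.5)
Your invariance and finite-mass computations are correct. The genuine gap is the step that makes Poincar\'e recurrence applicable at all: showing that the set $C^{+}\cup C^{-}$ of initial conditions whose solution ceases to exist at finite arclength is $\mathfrak m$-null.

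The principle you invoke, that the escape set of a volume-preserving local flow on a finite-measure space is null, is false. The flow $\dot x=1$ on $(0,1)$ preserves Lebesgue measure, yet every point escapes in finite time. The disjoint-translates argument only shows that $C^{+}\setminus C^{-}$ and $C^{-}\setminus C^{+}$ are null, because there the backward (resp.\ forward) flow is defined for all time. It says nothing about $C^{+}\cap C^{-}$, which here contains every arc joining two axis points, e.g.\ the profile of the round sphere.

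Your fallback does not close the gap either. Orthogonality at the axis (Proposition~\ref{lem:suave}) does not imply that a single trajectory ends at a given point $(x_0,0)$. The system is singular at $y=0$, and uniqueness for this regular-singular initial value problem is a separate fact, not proved in the paper. Even granting it, a union of curves indexed by $x_0$ is null only if the dependence on $x_0$ is, say, locally Lipschitz, which requires differentiable dependence at the singular boundary. A mere bijection with an interval says nothing about measure. The same objection applies to your dimension count for the escaping ends, but that part is dispensable. Once recurrence holds almost everywhere, $\rho(\varphi_{n_k}(v))\to\rho(v)$ already excludes $\rho\to\infty$, which is how the paper obtains that half of (a).

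The missing idea is that measure cannot leak through $\partial\QQ$, because $\Phi$ vanishes there to order $\ell:=\min(p,q)\ge1$. The paper makes this quantitative in Proposition~\ref{lem:nulo}:
\begin{enumerate}
\item[(1)] If $-s_-(u)\le\varepsilon$, then by unit speed $(x(u),y(u))$ lies within $\varepsilon$ of $\partial\QQ$. So these $u$ form a set $G_\varepsilon$ with $\mathfrak m(G_\varepsilon)\le K\varepsilon^{\ell+1}$.
\item[(2)] Every $v$ with $-s_-(v)\le T$ equals $\varphi_{k\varepsilon}(u)$ for some $u\in G_\varepsilon$ and some $0\le k<\lceil T/\varepsilon\rceil$.
\item[(3)] Invariance then gives $\mathfrak m(\{-s_-\le T\})\le(T/\varepsilon+1)K\varepsilon^{\ell+1}\to0$.
\item[(4)] The reversal $\sigma\mapsto\sigma+\pi$ handles $C^{+}$.
\end{enumerate}
After this, $\varphi_1$ is a measure-preserving bijection of the full-measure invariant set on which solutions are global, and your dichotomy argument goes through.

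Your treatment of (b) is the paper's route, using $\{x=x_*\}$ instead of $\{y=y_*\}$ as the section. You should add one step. To turn $\mathfrak m(P)>0$ into a positive-measure fixed-point set on the section, use that a null set of section points has null flow-out, because $(z,t)\mapsto\varphi_t(z)$ is $C^1$, hence locally Lipschitz.

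A minor slip: the relevant metric is $g_\Phi=\Phi^{2}(dx^{2}+dy^{2})$, not $\Phi^{2/n}(dx^{2}+dy^{2})$. Your divergence computation does not depend on this.
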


Whether the second alternative for the closed orbits can occur is left open. If it does not occur, (ii) holds for $\mathfrak m$-almost every initial phase point. Complete nonproper self-shrinkers were already known: Halldorsson \cite[Theorem~5.1]{Halldorsson} showed that a shrinking plane curve other than a line through the origin or the circle is either closed \cite{AbreschLanger} or dense in an annulus, and for a dense curve $\Gamma$ the product $\Gamma\times\R^{n-1}$ is a complete self-shrinker without polynomial volume growth \cite[Proposition~1.1]{ChengOgata}.

\subsection{The method}\label{sec:metodo}

Equation \eqref{lambda-s} is the geodesic equation of the conformal metric
\[
g_\Phi=\Phi^{2}\left(dx^{2}+dy^{2}\right),
\]
with $\Phi$ as in \eqref{geodesic}. For $c<0$, the potential $u=\log\Phi$ is strictly concave with maximum at $V$, and $\Hess u(V)=2c\,\mathrm{Id}$. If $r$ denotes the Euclidean distance to $V$, its Taylor expansion gives $u=u(V)+cr^{2}+O(r^{3})$. This suggests the auxiliary rotationally symmetric metric
\[
g_{\mathrm{rad}}=\Phi(V)^{2}e^{2cr^{2}}(dr^{2}+r^{2}d\theta^{2}),
\]
whose Clairaut integral is $\mathcal I(r)\sin\vartheta$, where $\vartheta$ is the angle with the meridians and $\mathcal I(r)=\Phi(V)r e^{cr^{2}}$. Since $\mathcal I(r)$ tends to zero both as $r\to0$ and as $r\to\infty$, geodesics of this auxiliary model with nonzero Clairaut constant remain in an annulus, whereas only meridians can reach its radial ends. The auxiliary model thus gives geometric intuition for confinement, and for $g_\Phi$ itself the corresponding statement is the recurrence of Theorem~\ref{thmH}.

In the exact system, the role of the Clairaut integral is played by two piecewise monotone quantities. Writing $\Phi=f(x)g(y)$, with
\[
f(x)=x^{p}e^{cx^{2}/2}
\quad\mbox{and}\quad
g(y)=y^{q}e^{cy^{2}/2},
\]
they are
\[
I=f(x)\sin\sigma
\quad\mbox{and}\quad
J=g(y)\cos\sigma,
\]
linked by the mirror symmetry. Their derivatives along the flow are
\begin{equation}\label{eq:cruzadas}
I'=B(y) f(x)\cos^{2}\sigma
\quad
\mbox{and}
\quad
J'=A(x) g(y)\sin^{2}\sigma,
\end{equation}
where
\[
A(x)=cx+\frac{p}{x} \qquad \mbox{and}\qquad B(y)=cy+\frac{q}{y}.
\]
Thus $I$ and $J$ are monotone in the horizontal and vertical strips, respectively, and the signs of their derivatives change at the separatrices. They are not conserved, but the bounds $|I|\le f(x)$ and $|J|\le g(y)$, with $f$ and $g$ vanishing at both ends of $(0,\infty)$, reproduce the turning mechanism.

The functions $I$ and $J$ are modeled on the monotone functions introduced by Hsiang, Teng and Yu \cite[Proposition~3]{HTY} and used by Hsiang \cite[Proposition~4]{Hsiang} for the three-dimensional system of bi-rotational hypersurfaces of constant mean curvature. In that system the terms $-cx\sin\sigma+cy\cos\sigma$ are replaced by a constant, and no sign change occurs. The sign changes of $A$ and $B$, which occur only for $c<0$, produce simultaneously the two cylinders, the winding, and the recurrent trajectories. The case $c>0$ is treated in the companion paper \cite{SNexpanders}.

A third quantity, $\mathcal F=\Phi\lan\gamma,\gamma'\ran$, satisfies $\mathcal F'=(p+q+1+c\rho^{2})\Phi$ (Lemma~\ref{lem:terceira}). It increases inside the round sphere $\rho^{2}<(p+q+1)/|c|$ and decreases outside it, so the sphere is a third separatrix for the radial motion (Theorem~\ref{thmF}). It also governs the behavior near the origin (Proposition~\ref{lem:origem}).

\subsection{Organization}
Section~\ref{sec:prelim} sets up the profile system, the functions $I$ and $J$, the four regions and the behavior at the axes. Sections~\ref{sec:winding} and~\ref{sec:excursoes} prove Theorems~\ref{thmE} and~\ref{thmC}, Section~\ref{sec:cross} proves Theorems~\ref{thmA} and~\ref{thmB}, Section~\ref{sec:completas} proves Corollary~\ref{thmD} and Theorems~\ref{thmF} and~\ref{thmG}, and Section~\ref{sec:generico} proves Theorem~\ref{thmH}.

The statements of the introduction are proved as follows.

\begin{itemize}\setlength{\itemsep}{1pt}
\item[]\textbf{Theorem~\ref{thmA}} is Theorem~\ref{thm:cross}, for the horizontal cylinder, Corollary~\ref{cor:crossI}, for the vertical one, and Lemma~\ref{lem:transv}, for the
transversality.
\item[]\textbf{Theorem~\ref{thmB}.} The completeness is
Theorem~\ref{thm:completude}. Item (a) is Theorem~\ref{cor:conico-direto}. The exclusion of cylindrical and
coordinate ends is given by Propositions~\ref{lem:fuga} and \ref{prop:nao-eixo}. The embeddedness is Corollary~\ref{cor:end-mergulhado} and the graph property is proven in Corollary~\ref{cor:grafico}. Along an asymptotically conical end one also has $C\rho^{-2}\le|\mathcal A|^{2}=O(\rho^{-2})$ for some $C>0$ (Proposition~\ref{prop:decaimento}).
\item[]\textbf{Theorem~\ref{thmC}.} Item \textup{(i)} is Theorem~\ref{prop:contagem}. Item
\textup{(ii)} is given by Lemmas~\ref{lem:excursao} and \ref{lem:excterminal} and item \textup{(iii)} is Theorem~\ref{prop:recorrente}, together with Corollary~\ref{cor:porcao}.
\item[]\textbf{Corollary~\ref{thmD}.} The equivalences (a)$\Leftrightarrow$(b)$\Leftrightarrow$(e) are given by
Corollary~\ref{cor:dicotomia} and the equivalences
(b)$\Leftrightarrow$(c)$\Leftrightarrow$(d) are proven in Corollary~\ref{cor:entropia}.
\item[]\textbf{Theorem~\ref{thmE}.} Item (i) is Proposition~\ref{thm:cota32}, item (ii) is Theorem~\ref{thm:unico} and item (iii) is
Proposition~\ref{thm:terminal}, with Corollaries~\ref{cor:localiza} and \ref{cor:superpos}.
\item[]\textbf{Theorem~\ref{thmF}} is Theorem~\ref{thm:esfera}.
\item[]\textbf{Theorem~\ref{thmG}.} The diffeomorphism class is given by Proposition~\ref{prop:topcompacta} and Theorem~\ref{thm:completas}. Item \textup{(i)} is
Corollary~\ref{thm:compacta-vertice} and the parity statement in item (ii) is Corollary~\ref{cor:noloop2}.
\item[]\textbf{Theorem~\ref{thmH}} is Theorem~\ref{thm:generico}, with Proposition~\ref{prop:fechadas}.
\end{itemize}

\subsection*{Acknowledgements}
The author was partially supported by CNPq, Brazil, grant number 306189/2023-2.

\subsection*{AI statement}
Generative AI tools (Claude, Anthropic, and ChatGPT, OpenAI) were used to assist with language editing, organization of the exposition, numerical experiments, suggestions for some arguments, and the discussion and checking of selected calculations. All AI-assisted material was reviewed and edited by the author, who takes full responsibility for the mathematical statements, proofs, and references.

\subsection*{Data availability statement}
No data are associated with this manuscript. The figures that display computed trajectories are based on numerical solutions of \eqref{eq:flow}. These computations are not certified, and no proof relies on them.

\subsection*{Conflict of interest statement}
The author declares that he has no conflict of interest.

\section{The profile system}\label{sec:prelim}

This section collects the tools used throughout the paper: the system \eqref{eq:flow} and its symmetries, the functions $I$ and $J$ and the signs of their derivatives, the partition of $\mathcal Q$ by the two separatrices, and the behavior of a trajectory that approaches a coordinate axis.

\subsection{The system and its symmetries}\label{sec:sistema}

We parametrize $\gamma$ by arclength and write $\sigma$ for the tangent angle, so that
$x'=\cos\sigma$, $y'=\sin\sigma$. The unit normal is $N=(-\sin\sigma,\cos\sigma)$ in the orbit
plane.

\begin{lemma}[The system]\label{prop:sistema}
The hypersurface \eqref{eq:param} satisfies \eqref{eq:soliton} if and only if its profile curve $\gamma(s)=(x(s),y(s))$, parametrized by arclength, satisfies \eqref{lambda-s} and the three-dimensional curve $\Gamma(s)=(x(s),y(s),\sigma(s))$ is a solution of the system \eqref{eq:flow} on the domain $\mathcal Q\times\s^1$.
\end{lemma}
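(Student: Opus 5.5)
Our plan is to compute the mean curvature and the support function $\lan X,\nu\ran$ of the parametrization \eqref{eq:param} directly in terms of the profile data, and then to compare. The unit normal of $\Sigma$ at $X(s,\theta_1,\theta_2)$ is
\[
\nu=(-\sin\sigma\,\Psi_1,\ \cos\sigma\,\Psi_2),
\]
which is the lift of $N=(-\sin\sigma,\cos\sigma)$. This follows because $\nu$ is orthogonal to $X_s=(\cos\sigma\,\Psi_1,\sin\sigma\,\Psi_2)$ and to the tangent vectors $x\,\partial_{\theta_1}\Psi_1$ and $y\,\partial_{\theta_2}\Psi_2$, the latter being orthogonal to $\Psi_1$ and $\Psi_2$ respectively. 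Consequently
\[
\lan X,\nu\ran=-x\sin\sigma+y\cos\sigma=\lan\gamma,N\ran .
\]

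Next we compute the principal curvatures with respect to $\nu$. In the $s$-direction the principal curvature is the curvature of $\gamma$ in the meridian plane spanned by $\Psi_1,\Psi_2$, namely $\sigma'=x'y''-x''y'$, up to a sign fixed by the orientation. In the $\s^p$ directions, differentiating $\nu$ along $\theta_1$ gives $-\sin\sigma\,\partial_{\theta_1}\Psi_1=-(\sin\sigma/x)\,X_{\theta_1}$. Hence these $p$ principal curvatures all equal $\sin\sigma/x$, up to the same sign convention. Likewise the $q$ curvatures in the $\s^q$ directions equal $-\cos\sigma/y$. Summing, with the sign chosen so that $H\nu=\Delta X$ (the convention making the round sphere and the cylinders solutions for $c<0$), we expect
\[
H=\sigma'+\frac{p}{x}\sin\sigma-\frac{q}{y}\cos\sigma .
\]
The sign convention is the only delicate point, and we will fix it by checking against the cylinder $(x_*,s,\pi/2)$. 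There $\sigma'=0$ and $H=p/x_*$, while $c\lan X,\nu\ran=-cx_*$, and these agree precisely when $x_*^2=-p/c$, which is the stated value.

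Substituting into $H=c\lan X,\nu\ran$ and solving for $\sigma'$ yields
\[
\sigma'=-\Big(\frac px+cx\Big)\sin\sigma+\Big(\frac qy+cy\Big)\cos\sigma .
\]
This is the third equation of \eqref{eq:flow}, and since $\sigma'=x'y''-x''y'$ with $x'=\cos\sigma$ and $y'=\sin\sigma$, it is exactly \eqref{lambda-s}. Each step is an equivalence, because the correspondence between an arclength-parametrized curve and its continuous tangent angle $\sigma$ is bijective once $\sigma(0)$ is chosen. This gives both directions of the lemma on $\mathcal Q\times\s^1$, where $x,y>0$ and the parametrization is an immersion.

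We expect the main obstacle to be purely bookkeeping: keeping the orientation of $\nu$ and the sign of $H$ consistent. We will verify the final sign convention also against the sphere solution (i) and the cone (iii) as a cross-check.
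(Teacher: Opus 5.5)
Your proposal is correct and follows the paper's proof: both read off the principal curvatures $\sigma'$, $\sin\sigma/x$ ($p$ times) and $-\cos\sigma/y$ ($q$ times), compute $\langle X,\nu\rangle=-x\sin\sigma+y\cos\sigma$, and solve $H=c\langle X,\nu\rangle$ for $\sigma'$. One small point: the cylinder check fixes the sign of the sphere-factor curvatures but not the sign of $\sigma'$ relative to them, since $\sigma'=0$ on the cylinder. Computing $\partial_s\nu=-\sigma' X_s$ directly settles this, and so does your planned sphere cross-check.
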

\begin{proof}
The principal curvatures of \eqref{eq:param} are $\kappa_0=\sigma'$ (the curvature of the profile curve), plus $p$ copies of $\kappa_1=\tfrac{\sin\sigma}{x}$ (from the directions tangent to $\s^{p}(x)$) and $q$ copies of $\kappa_2=-\tfrac{\cos\sigma}{y}$ (from the tangents to $\s^{q}(y)$). Hence
\[
H=\sigma'+p\frac{\sin\sigma}{x}-q\frac{\cos\sigma}{y}.
\]
On the other hand, $\langle X,N\rangle=-x\sin\sigma+y\cos\sigma$, so that \eqref{eq:soliton},
which in scalar form is $H=c\langle X,N\rangle$, is equivalent to
\[
\sigma'+p\frac{\sin\sigma}{x}-q\frac{\cos\sigma}{y}=c(-x\sin\sigma+y\cos\sigma),
\]
i.e.
\[
\sigma'=-\sin\sigma\left(\frac{p}{x}+cx\right)+\cos\sigma\left(\frac{q}{y}+cy\right),
\]
which is \eqref{eq:flow}.
\end{proof}

We also record the norm of the second fundamental form in terms of the profile curve, which is used in the analysis of the ends.

\begin{lemma}[Second fundamental form in terms of the profile curve]\label{prop:2forma}
The squared norm $|\mathcal A|^{2}$ of the second fundamental form of a bi-rotational hypersurface $\Sigma^n$ has the expression
\begin{equation}\label{eq:A2}
|\mathcal A|^{2}=\kappa_0^2+p\kappa_1^2+q\kappa_2^2
=(\sigma')^{2}+p\frac{\sin^{2}\sigma}{x^{2}}+q\frac{\cos^{2}\sigma}{y^{2}}.
\end{equation}
Along the horizontal and the vertical cylinders, $|\mathcal A|^{2}\equiv|c|$  is constant. \end{lemma}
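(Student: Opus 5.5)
The plan is to compute the second fundamental form directly in an adapted orthonormal frame and then evaluate it on the two explicit cylinder solutions.

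For the first identity, work with the parametrization \eqref{eq:param}. The $G$-action acts by isometries of $\R^{n+1}$ and preserves $\Sigma$, so it commutes with the shape operator. At a point with $x,y>0$ the tangent space splits orthogonally as $\mathrm{span}\{\gamma'\}\oplus T\s^{p}(x)\oplus T\s^{q}(y)$, and each summand is invariant under the shape operator. The isotropy group $O(p)\times O(q)$ acts irreducibly on the last two summands, which have different dimensions or are distinguished by the factor they live in. Hence the shape operator is diagonal in this splitting, with eigenvalues $\kappa_0$ (multiplicity one), $\kappa_1$ (multiplicity $p$) and $\kappa_2$ (multiplicity $q$).

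These principal curvatures were already identified in the proof of Lemma~\ref{prop:sistema}:
\[
\kappa_0=\sigma',\qquad \kappa_1=\frac{\sin\sigma}{x},\qquad \kappa_2=-\frac{\cos\sigma}{y}.
\]
If one wants to recheck them, $\kappa_0$ is the curvature of the plane curve $\gamma$ with normal $N=(-\sin\sigma,\cos\sigma)$. The values of $\kappa_1$ and $\kappa_2$ come from differentiating $N$, extended equivariantly as $(-\sin\sigma\,\Psi_1,\cos\sigma\,\Psi_2)$, along $\theta_1$ and $\theta_2$. Since $|\mathcal A|^2$ is the sum of the squared eigenvalues counted with multiplicity, we get
\[
|\mathcal A|^{2}=\kappa_0^2+p\kappa_1^2+q\kappa_2^2=(\sigma')^2+p\frac{\sin^2\sigma}{x^2}+q\frac{\cos^2\sigma}{y^2},
\]
which is \eqref{eq:A2}.

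For the cylinders, substitute the explicit solutions.
\begin{itemize}
\item On $\mathcal C_{\mathrm v}$ we have $(x,y,\sigma)=(x_*,s,\pi/2)$, so $\sigma'=0$, $\sin\sigma=1$ and $\cos\sigma=0$. This gives $|\mathcal A|^2=p/x_*^2=p/(-p/c)=-c=|c|$.
\item On $\mathcal C_{\mathrm h}$ we have $(x,y,\sigma)=(s,y_*,0)$, which gives $|\mathcal A|^2=q/y_*^2=|c|$.
\end{itemize}

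The only step needing any care is the sign and multiplicity bookkeeping for $\kappa_1$ and $\kappa_2$. The signs are irrelevant here because only squares enter, so the argument is essentially routine.
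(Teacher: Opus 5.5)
Your proof is correct and follows the same route as the paper. You take $\kappa_0=\sigma'$, $\kappa_1=\sin\sigma/x$, $\kappa_2=-\cos\sigma/y$ from the proof of Lemma~\ref{prop:sistema}, sum their squares with multiplicities $1,p,q$, and substitute $(x_*,s,\tfrac\pi2)$ and $(s,y_*,0)$ using $x_*^2=-p/c$ and $y_*^2=-q/c$. Your isotropy (Schur-type) argument for why the shape operator is diagonal in the splitting $\mathrm{span}\{\gamma'\}\oplus T\s^p(x)\oplus T\s^q(y)$ is a justification the paper leaves implicit, and it is sound: the three summands are pairwise inequivalent irreducible $O(p)\times O(q)$-representations, and the shape operator is self-adjoint.
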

\begin{proof}
The curvatures are those of Lemma~\ref{prop:sistema} and \eqref{eq:A2} is the sum of the squares with the multiplicities. On the horizontal cylinder, $\sigma\equiv0$ and $y\equiv y_*$, which gives $\kappa_0=0$, $\kappa_1=0$, and $\kappa_2=-1/y_*$. Thus, $|\mathcal A|^{2}=q/y_*^2$ and
$y_*^2=-q/c$ implies $|\mathcal A|^{2}=-c$. The case of the vertical cylinder is analogous.
\end{proof}

The field
\[
F(x,y,\sigma)= \left(\cos\sigma,\sin\sigma,-\left(\frac{p}{x}+cx\right)\sin\sigma+\left(\frac{q}{y}+cy\right)\cos\sigma\right)
\]
associated with the system \eqref{eq:flow} (i.e., $\frac{d}{ds}(x,y,\sigma)=F(x,y,\sigma)$) is smooth on $\mathcal Q\times\s^1$, with uniqueness of solutions. 

Every solution extends to a maximal one, defined on an open interval $(s_-,s_+)$ with $-\infty\le s_-<s_+\le\infty$. We call $\alpha$ and $\omega$ the two endpoints of the trajectory, corresponding to $s\to s_-$ and to $s\to s_+$, respectively. They are ends of the parameter interval, not necessarily points of the plane, and a statement at $\alpha$ or at $\omega$ refers to the limit $s\to s_-$ or $s\to s_+$. When the profile curve converges as $s\to s_\pm$, the same letter also denotes its limit point. The notation is borrowed from the $\alpha$- and $\omega$-limit sets of dynamical systems, which describe the same two directions of time.

The field $F$ has two symmetries we will use repeatedly.

\begin{lemma}[Reversibility and mirror]\label{lem:simetrias}
System \eqref{eq:flow} is invariant under:
\begin{enumerate}
\item[(i)] \emph{(reversibility)} $s\mapsto-s$, $\sigma\mapsto\sigma+\pi$, which preserves the
image of $\gamma$ and swaps the endpoints $\alpha$ and $\omega$;
\item[(ii)] \emph{(mirror)} $(x,y,\sigma;p,q)\mapsto(y,x,\tfrac\pi2-\sigma;q,p)$, which swaps
the roles of the two axes.
\end{enumerate}
\end{lemma}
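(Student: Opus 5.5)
The plan is to verify both invariances directly: apply each change of variables to a solution of \eqref{eq:flow} and check that the transformed curve again solves \eqref{eq:flow}. For the mirror, the exponents $p,q$ are exchanged as well. Since $F$ is smooth with uniqueness of solutions, it suffices to check the three equations of \eqref{eq:flow} for the transformed curve.

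For (i), I would set $\tilde x(s)=x(-s)$, $\tilde y(s)=y(-s)$ and $\tilde\sigma(s)=\sigma(-s)+\pi$. Then $\tilde x'(s)=-\cos\sigma(-s)=\cos\tilde\sigma(s)$, and likewise $\tilde y'=\sin\tilde\sigma$. For the angle equation, $\tilde\sigma'(s)=-\sigma'(-s)$. Replacing $\sin\sigma$ and $\cos\sigma$ by $-\sin\tilde\sigma$ and $-\cos\tilde\sigma$ on the right-hand side of \eqref{eq:flow} flips its sign, which cancels the sign coming from the chain rule. So $(\tilde x,\tilde y,\tilde\sigma)$ solves \eqref{eq:flow}. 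Its projection has the same image as $\gamma$ with the orientation reversed. Hence a maximal interval $(s_-,s_+)$ is sent to $(-s_+,-s_-)$, and $\alpha$ and $\omega$ are swapped.

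For (ii), I would set $\tilde x=y$, $\tilde y=x$, $\tilde\sigma=\tfrac\pi2-\sigma$, and swap $p$ and $q$. Then $\tilde x'=\sin\sigma=\cos\tilde\sigma$ and $\tilde y'=\cos\sigma=\sin\tilde\sigma$. For the angle,
\[
\tilde\sigma'=-\sigma'=\left(\frac{p}{x}+cx\right)\sin\sigma-\left(\frac{q}{y}+cy\right)\cos\sigma .
\]
Rewriting in the new variables, this is $-\bigl(\frac{q}{\tilde x}+c\tilde x\bigr)\sin\tilde\sigma+\bigl(\frac{p}{\tilde y}+c\tilde y\bigr)\cos\tilde\sigma$, which is \eqref{eq:flow} with $(p,q)$ replaced by $(q,p)$. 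The map also preserves $\mathcal Q\times\s^1$, since it only exchanges the two coordinates.

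There is no real obstacle here. The only care needed is bookkeeping: the sign in the chain rule for (i), and the exchange of the exponents together with the orientation convention $\tilde\sigma=\tfrac\pi2-\sigma$ in (ii).
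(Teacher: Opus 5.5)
Your proposal is correct and follows the same route as the paper: a direct substitution check. In (i) the sign changes of $\cos\sigma$, $\sin\sigma$ and $\sigma'$ cancel, and in (ii) the swap exchanges $\cos\sigma\leftrightarrow\sin\sigma$ and $A\leftrightarrow B$, with the sign of the angle equation absorbed by $\sigma\mapsto\tfrac\pi2-\sigma$; your computations carry this out correctly, just more explicitly than the paper does.
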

\begin{proof}
(i) Under $s\mapsto-s$ and $\sigma\mapsto\sigma+\pi$: $\cos\sigma\mapsto-\cos\sigma$ and
$\sin\sigma\mapsto-\sin\sigma$, so that $x'=\cos\sigma$ and $y'=\sin\sigma$ are preserved (two
signs that cancel), and the right-hand side of $\sigma'$ changes sign, as does $\sigma'$ itself.
(ii) Under the swap, $\cos\sigma\leftrightarrow\sin\sigma$ and $A\leftrightarrow B$, which
permutes the first two equations and preserves the third up to sign, absorbed by
$\sigma\mapsto\tfrac\pi2-\sigma$.
\end{proof}

Since the first two components of the field form a unit vector, the flow has no stationary points.

\begin{lemma}[Absence of equilibria]\label{lem:noeq}
The field \eqref{eq:flow} has no equilibrium points in $\mathcal Q\times\s^1$.
\end{lemma}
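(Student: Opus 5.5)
The plan is to argue directly from the form of the vector field $F$. An equilibrium of \eqref{eq:flow} in $\mathcal Q\times\s^1$ would be a point $(x_0,y_0,\sigma_0)$ with $x_0>0$, $y_0>0$ at which all three components of $F$ vanish. I would use only the first two components of $F$, namely $\cos\sigma$ and $\sin\sigma$, and never need the third.

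Suppose such a point existed. Then $\cos\sigma_0=0$ and $\sin\sigma_0=0$ would hold simultaneously. This contradicts $\cos^2\sigma_0+\sin^2\sigma_0=1$. Equivalently, the projection $(\cos\sigma,\sin\sigma)$ of $F$ onto the $(x,y)$-plane is a unit vector, so $|F|\ge 1$ everywhere on $\mathcal Q\times\s^1$. The third component is therefore irrelevant, and I would not examine the signs of $A(x)=cx+p/x$ or $B(y)=cy+q/y$ at all.

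The only point needing care is to confirm that the domain is the open set $\mathcal Q\times\s^1$, so that $F$ is defined and smooth there. The singular terms $p/x$ and $q/y$ are then finite, and nothing degenerates at the axes. There is no real obstacle beyond this. As a consequence worth recording, every trajectory has a profile curve with $|\gamma'|=1$, so it is a regular curve and is never stationary. This is the property later used in place of the absence of equilibria.
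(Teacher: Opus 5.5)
Your proposal is correct and takes the same approach as the paper: the first two components of $F$ are $\cos\sigma$ and $\sin\sigma$, which cannot vanish simultaneously because $\cos^2\sigma+\sin^2\sigma=1$. As you note, the third component of $F$ is never needed.
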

\begin{proof}
Indeed, since $(x')^2+(y')^2=\cos^2\sigma+\sin^2\sigma=1\ne0,$ $x'$ and $y'$ cannot vanish simultaneously.
\end{proof}

\subsection{The functions \texorpdfstring{$I$}{I} and \texorpdfstring{$J$}{J}}\label{sec:duas}

This subsection contains the engine of the whole text. We introduce the two functions, compute their derivatives along the flow, and read off the sign diagram. As introduced above, we write
\[
A(x)=cx+\frac{p}{x}\quad\mbox{and}\quad B(y)=cy+\frac{q}{y},
\]
and so, the third equation of system \eqref{eq:flow} becomes
\[
\sigma'=-A(x)\sin\sigma + B(y)\cos\sigma.
\]
The functions $I$ and $J$ are motivated by the two planar subsystems of \eqref{eq:flow}. If $y$ is frozen, the pair $(\sigma,x)$ evolves by the field $W_1=(-A(x)\sin\sigma,\cos\sigma)$, and $f(x)=x^{p}e^{cx^{2}/2}$ is an integrating factor of $W_1$: since $f'=Af$,
\[
\dive(fW_1)=\partial_\sigma\left(-fA\sin\sigma\right)+\partial_x\left(f\cos\sigma\right)=\cos\sigma\,(f'-Af)=0,
\]
and the corresponding first integral is $f(x)\sin\sigma$. In the same way, $g(y)=y^{q}e^{cy^{2}/2}$ satisfies $g'=Bg$ and is an integrating factor of the field $W_2=(B(y)\cos\sigma,\sin\sigma)$ of the $(\sigma,y)$ subsystem, with first integral $g(y)\cos\sigma$. For the full system these quantities are no longer conserved, but the signs of their derivatives are controlled (Lemma~\ref{prop:dIdJ}). This motivates the following

\begin{definition}[The functions $I$ and $J$]\label{def:IJ}
Along a trajectory of \eqref{eq:flow}, we define
\begin{equation}\label{eq:defIJ}
\left\{\begin{aligned}    
I(s)&:=f(x(s))\sin(\sigma(s))=(x(s))^{p}e^{c(x(s))^{2}/2}\sin(\sigma(s)),\\
J(s)&:=g(y(s))\cos(\sigma(s))=(y(s))^{q}e^{c(y(s))^{2}/2}\cos(\sigma(s)).
\end{aligned}\right.
\end{equation}
\end{definition}

The following elementary relation between $I$ and $J$ has some fundamental consequences.

\begin{lemma}[Pythagorean identity]\label{prop:pit}
At every point of $\mathcal Q\times\s^1$,
\begin{equation}\label{eq:pit}
\frac{I^{2}}{f(x)^{2}}+\frac{J^{2}}{g(y)^{2}}=\sin^{2}\sigma+\cos^{2}\sigma=1 .
\end{equation}
In particular,
\begin{itemize}
\item[(i)] $|I|\le f(x)$ and $|J|\le g(y)$;
\item[(ii)] $I$ and $J$ never vanish simultaneously;
\item[(iii)] $I=0$ forces $\cos\sigma=\pm1$ (horizontal tangent) and $J=0$ forces $\sin\sigma=\pm1$ (vertical tangent).
\end{itemize}
\end{lemma}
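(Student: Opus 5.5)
The identity is a direct consequence of the definitions, and the plan is to verify it pointwise before reading off the three consequences. The key observation is that on $\mathcal Q\times\s^1$ both $x>0$ and $y>0$. Since $f(x)=x^{p}e^{cx^{2}/2}$ and $g(y)=y^{q}e^{cy^{2}/2}$, both $f$ and $g$ are strictly positive there, so the quotients in \eqref{eq:pit} are well defined.

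By Definition~\ref{def:IJ} we have $I/f(x)=\sin\sigma$ and $J/g(y)=\cos\sigma$. Squaring these and adding gives $\sin^{2}\sigma+\cos^{2}\sigma=1$, which is \eqref{eq:pit}. Nothing here uses the flow; the identity holds at every phase point, and therefore along every trajectory.

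For the consequences, I would argue as follows.
\begin{itemize}
\item[(i)] Each of the two nonnegative terms in \eqref{eq:pit} is at most $1$. Hence $I^{2}\le f(x)^{2}$ and $J^{2}\le g(y)^{2}$, and taking square roots (using $f,g>0$) gives $|I|\le f(x)$ and $|J|\le g(y)$.
\item[(ii)] If $I=J=0$ at some point, the left side of \eqref{eq:pit} would vanish, which contradicts its being equal to $1$.
\item[(iii)] If $I=0$, then, since $f(x)>0$, we get $\sin\sigma=0$, hence $\cos\sigma=\pm1$ and the tangent $\gamma'=(\cos\sigma,\sin\sigma)$ is horizontal. Symmetrically, $J=0$ together with $g(y)>0$ gives $\cos\sigma=0$, so $\sin\sigma=\pm1$ and the tangent is vertical.
\end{itemize}

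There is no real obstacle. The only point needing care is the positivity of $f$ and $g$, which holds because the phase space excludes the axes. This is what makes the division legitimate and lets us conclude $\sin\sigma=0$ from $I=0$ in (iii).
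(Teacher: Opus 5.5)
Your proof is correct and follows the paper's proof, which simply says the identity is immediate from the definitions of $I$ and $J$. Your remark that $f,g>0$ on $\mathcal Q$ is the one point that needs stating: it justifies the division and the deduction $\sin\sigma=0$ from $I=0$.
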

\begin{proof}
Immediate from \eqref{eq:defIJ}.
\end{proof}

The next lemma gives the derivatives of $I$ and $J$ along the trajectories of \eqref{eq:flow}.

\begin{lemma}[Fundamental identities]\label{prop:dIdJ}
Along the trajectories of \eqref{eq:flow}, we have
\begin{equation}\label{eq:dJ}
\left\{\begin{aligned}
\frac{dI}{ds}&=B(y)f(x)\cos^2\sigma,\\
\frac{dJ}{ds}&=A(x)g(y)\sin^2\sigma.
\end{aligned}\right.
\end{equation}
\end{lemma}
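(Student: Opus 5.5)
The identities follow by differentiating $I$ and $J$ along a solution with the chain rule, using the system \eqref{eq:flow} to replace $x'$, $y'$ and $\sigma'$, and then using the relations $f'=Af$ and $g'=Bg$. These relations come from differentiating the logarithms, $\log f=p\log x+cx^2/2$ and $\log g=q\log y+cy^2/2$. No earlier result beyond the form of the system is needed; the only step needing care is the cancellation described below.

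For $I=f(x)\sin\sigma$, differentiate:
\[
I'=f'(x)x'\sin\sigma+f(x)\cos\sigma\,\sigma'=A f\cos\sigma\sin\sigma+f\cos\sigma\bigl(-A\sin\sigma+B\cos\sigma\bigr).
\]
The two terms containing $A f\sin\sigma\cos\sigma$ cancel, leaving $I'=Bf\cos^2\sigma$.

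For $J=g(y)\cos\sigma$, the computation is the same with the roles exchanged:
\[
J'=g'(y)y'\cos\sigma-g(y)\sin\sigma\,\sigma'=Bg\sin\sigma\cos\sigma-g\sin\sigma\bigl(-A\sin\sigma+B\cos\sigma\bigr)=Ag\sin^2\sigma.
\]

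Alternatively, the second identity follows from the first by applying the mirror symmetry of Lemma~\ref{lem:simetrias}(ii). This symmetry exchanges $f\leftrightarrow g$ and $A\leftrightarrow B$, and sends $\sin\sigma\leftrightarrow\cos\sigma$, so it carries $I$ to $J$ and the formula for $I'$ to the formula for $J'$.

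The main point to check is the cancellation of the $A$-term in $I'$ (respectively the $B$-term in $J'$). This cancellation is exactly the statement that $f$ (respectively $g$) is an integrating factor of the frozen planar field $W_1$ (respectively $W_2$), as computed before Definition~\ref{def:IJ}. What survives is the coupling term from the other variable, which carries the sign of $B(y)$ (respectively $A(x)$).
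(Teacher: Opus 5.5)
Your proposal is correct and follows the paper's own proof: differentiate $I$ and $J$ by the chain rule, substitute \eqref{eq:flow}, and use $f'=Af$ and $g'=Bg$ to cancel the cross terms. Your alternative derivation of the $J$-identity from the $I$-identity via the mirror symmetry of Lemma~\ref{lem:simetrias}(ii) is also valid, though the paper simply carries out the second computation directly.
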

\begin{proof}
From $f'=Af$, $x'=\cos\sigma$, and $\sigma'=-A\sin\sigma+B\cos\sigma$, we have
\[
\begin{aligned}
\frac{dI}{ds}&=f'(x)x'\sin\sigma+f(x)(\cos\sigma)\sigma'\\
&=f[A\cos\sigma\sin\sigma+(\cos\sigma)\sigma']\\
&= f[A\cos\sigma\sin\sigma+\cos\sigma(-A\sin\sigma+B\cos\sigma)]\\
&=Bf\cos^2 \sigma.
\end{aligned}
\]
Analogously, from $g'=Bg$ and $y'=\sin\sigma$,
\[
\begin{aligned}
\frac{dJ}{ds}
&=g'(y)y'\cos\sigma-g(y)\sin\sigma\sigma'\\
&=g\sin\sigma[B\cos\sigma-\sigma']\\
&=g\sin\sigma[A\sin\sigma]\\
&=Ag\sin^{2}\sigma,
\end{aligned}
\]
where we used
\[
B\cos\sigma-\sigma'=B\cos\sigma-(-A\sin\sigma+B\cos\sigma)=A\sin\sigma.
\]
\end{proof}

The following elementary result will play a fundamental role in the study of the monotonicity of the functions $I$ and $J$:

\begin{lemma}\label{lem:bound}
Let $c<0$, $f(x)=x^{p}e^{cx^{2}/2}$ and $g(y)=y^{q}e^{cy^{2}/2}$. Then $f$ increases on $(0,x_*)$ and decreases on $(x_*,\infty)$, with global
maximum
\[
M_f:=f(x_*)=\left(\frac{-p}{c}\right)^{p/2}e^{-p/2}.
\]
Hence, $f(x)\to0$ as $x\to0^{+}$ and as $x\to\infty$, and $|I|\le M_f$ along every trajectory. Symmetrically for $g$, with $M_g=g(y_*)=(-q/c)^{q/2}e^{-q/2}$ and $|J|\le M_g$.
\end{lemma}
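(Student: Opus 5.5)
The plan is to reduce everything to the logarithmic derivative, which the paper has already recorded in the form $f'=Af$. Since $f(x)=x^{p}e^{cx^{2}/2}>0$ on $(0,\infty)$, differentiation gives
\[
f'(x)=\Bigl(\frac{p}{x}+cx\Bigr)f(x)=A(x)f(x)=\frac{p+cx^{2}}{x}\,f(x).
\]
Because $c<0$, the numerator $p+cx^{2}$ is positive for $x<x_*=\sqrt{-p/c}$, vanishes at $x_*$, and is negative for $x>x_*$. So $f$ is strictly increasing on $(0,x_*)$ and strictly decreasing on $(x_*,\infty)$, and $x_*$ is its unique global maximum. Evaluating there, with $x_*^{2}=-p/c$ and $cx_*^{2}/2=-p/2$, gives
\[
M_f=\Bigl(\frac{-p}{c}\Bigr)^{p/2}e^{-p/2}.
\]

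Next, the limits. As $x\to0^{+}$ we have $x^{p}\to0$ (since $p\ge1$) and $e^{cx^{2}/2}\to1$, so $f\to0$. As $x\to\infty$, the Gaussian factor $e^{cx^{2}/2}$ with $c<0$ dominates any polynomial, so $x^{p}e^{cx^{2}/2}\to0$. This is the only step needing any comment, and it is a standard fact.

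For the bound on $I$, Lemma~\ref{prop:pit}(i) gives $|I|=f(x)|\sin\sigma|\le f(x)\le M_f$ at every point of a trajectory. The statements for $g$, $M_g=g(y_*)$ and $|J|\le M_g$ follow by the same computation with $g'=Bg$ and $B(y)=(q+cy^{2})/y$. Alternatively, they follow directly from the mirror symmetry of Lemma~\ref{lem:simetrias}(ii), which exchanges $(x,p)$ with $(y,q)$. There is no real obstacle here.
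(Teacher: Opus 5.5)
Your proposal is correct and follows essentially the same route as the paper: the sign of $f'=A f=\frac{p+cx^{2}}{x}f$, evaluation at $x_*$, the two limits, and $|I|\le f(x)\le M_f$ from $|\sin\sigma|\le1$, with $g$ handled symmetrically. The only difference is cosmetic: at infinity the paper writes $\log f=p\log x-\tfrac{|c|}{2}x^{2}\to-\infty$ explicitly, where you appeal to the Gaussian factor dominating the polynomial.
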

\begin{proof}
$f'=A(x)f=\tfrac{p+cx^{2}}{x}f$ with $f>0$, so the sign of $f'$ is that of $p+cx^{2}$, which is
positive on $(0,x_*)$ and negative on $(x_*,\infty)$, with
\[
f(x_*)=\left(\frac{-p}{c}\right)^{p/2}e^{c(-p/c)/2}=\left(\frac{-p}{c}\right)^{p/2}e^{-p/2}.
\]
The limits come from the fact that $f\sim x^{p}\to0$ as $x\to0^{+}$, and $\log f=p\log x-\tfrac{|c|}{2}x^{2}\to-\infty$
as $x\to\infty$. The bounds on $I$ and $J$ follow from
$|\sin\sigma|,|\cos\sigma|\le1$. The weights $f$ and $g$ are shown in Figure~\ref{fig:gauss}.
\end{proof}

\begin{figure}[ht]
\centering
\includegraphics[width=\textwidth]{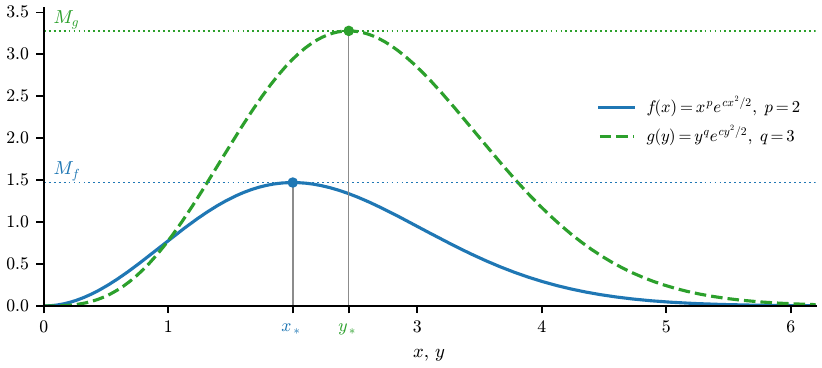}
\caption{The weights $f(x)=x^{p}e^{cx^{2}/2}$ and $g(y)=y^{q}e^{cy^{2}/2}$ for $c<0$, here with
$p=2$, $q=3$ and $c=-\tfrac12$: each has an interior maximum, at $x_*$ and at $y_*$, and tends
to $0$ at both ends. Hence the uniform bounds $|I|\le M_f$ and $|J|\le M_g$ of
Lemma~\ref{lem:bound}, which are what confines the trajectories.}
\label{fig:gauss}
\end{figure}

\subsection{The four regions and the two cylinders}\label{sec:regioes}

\begin{definition}[Partition of the phase space]\label{def:regioes}
For $c<0$, the lines $\{x=x_*\}$ and $\{y=y_*\}$ split $\mathcal Q$ into four open regions, which we denote according to the signs of $(A,B)$:
\[
\begin{array}{lcccc}
\text{region} & \text{definition} & A & B & (I',J')\\[2pt]
\hline
\rule{0pt}{2.6ex}%
\mathrm{Q_{++}} & x<x_*,\ y<y_* & >0 & >0 & (\ge0,\ \ge0)\\[2pt]
\mathrm{Q_{-+}} & x>x_*,\ y<y_* & <0 & >0 & (\ge0,\ \le0)\\[2pt]
\mathrm{Q_{+-}} & x<x_*,\ y>y_* & >0 & <0 & (\le0,\ \ge0)\\[2pt]
\mathrm{Q_{--}} & x>x_*,\ y>y_* & <0 & <0 & (\le0,\ \le0).
\end{array}
\]
We also write
\[
\mathcal S^{+}:=\{y>y_*\}\quad\mbox{and}\quad \mathcal S^{-}:=\{0<y<y_*\}
\]
for the two horizontal strips determined by $y_*$ alone (a union of two quadrants each), and similarly 
\[
\mathcal V^{+}:=\{x>x_*\}\quad\mbox{and}\quad\mathcal V^{-}:=\{0<x<x_*\}
\]
for the vertical strips. The superscript records the sign of $y-y_*$ (resp. $x-x_*$), and therefore the sign opposite to that of $B$ (resp.\ $A$).
\end{definition}

The signs of $A$ and $B$ in the four quadrants determine the monotonicity of $I$ and $J$.

\begin{lemma}[Sign diagram]\label{prop:sinais}
In each of the four regions, $I$ and $J$ are monotone (in the weak sense) with the signs above.
Moreover, $I'=0$ exactly on $\{y=y_*\}\cup Z_I$, where $Z_I:=\{\cos\sigma=0\}$, and $J'=0$
exactly on $\{x=x_*\}\cup Z_J$, where $Z_J:=\{\sin\sigma=0\}$.
\end{lemma}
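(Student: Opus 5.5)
The plan is to read everything off the fundamental identities of Lemma~\ref{prop:dIdJ}, namely $I'=B(y)f(x)\cos^2\sigma$ and $J'=A(x)g(y)\sin^2\sigma$. Since $f,g>0$ on $(0,\infty)$ and $\cos^2\sigma,\sin^2\sigma\ge0$, the sign of $I'$ is the sign of $B(y)$ wherever $\cos\sigma\neq0$, and the sign of $J'$ is the sign of $A(x)$ wherever $\sin\sigma\neq0$. So the proof is essentially a sign computation for $A$ and $B$.

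First I would determine the signs of $A$ and $B$. Write $A(x)=(p+cx^2)/x$ with $c<0$. Then $A>0$ for $x<x_*=\sqrt{-p/c}$, $A<0$ for $x>x_*$, and $A=0$ only at $x=x_*$; this is the same computation as in Lemma~\ref{lem:bound}. Symmetrically, $B$ is positive for $y<y_*$, negative for $y>y_*$, and vanishes only at $y=y_*$. Substituting into the table of Definition~\ref{def:regioes} gives the claimed weak signs of $(I',J')$ in each of $\mathrm{Q_{++}}$, $\mathrm{Q_{-+}}$, $\mathrm{Q_{+-}}$ and $\mathrm{Q_{--}}$.

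Weak monotonicity along any time interval on which the trajectory stays in one region then follows from the mean value theorem. Since the derivative is of one sign, $I$ and $J$ are monotone in the weak sense.

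For the zero sets, I would use the fact that $f(x)g(y)\neq0$ in $\mathcal Q$. This gives $I'=0$ if and only if $B(y)=0$ or $\cos\sigma=0$, that is, exactly on $\{y=y_*\}\cup Z_I$. Likewise, $J'=0$ exactly on $\{x=x_*\}\cup Z_J$.

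I do not expect a real obstacle here. The one point to state carefully is that "monotone" is meant only in the weak sense. Strictness would require an extra step: the zeros of $\cos\sigma$ along a trajectory inside a region are isolated. This holds because at such a zero $\sigma'=-A\sin\sigma\neq0$ when $A\neq0$, so the zeros are isolated and $I$ is in fact strictly monotone there. I would mention this only as a remark, since the lemma claims weak monotonicity.
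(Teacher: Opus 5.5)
Your proposal is correct and matches the paper's proof: both read the signs and zero sets directly off $I'=B(y)f(x)\cos^2\sigma$ and $J'=A(x)g(y)\sin^2\sigma$, using $f,g>0$ and the fact that $A$ and $B$ vanish only at $x_*$ and $y_*$. Your closing remark on strict monotonicity is also right, and the paper proves exactly that separately in Lemma~\ref{lem:Z} and Corollary~\ref{cor:estrita}.
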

\begin{proof}
Direct from \eqref{eq:dJ}, since $f,g>0$ and the zeros of $A$ and $B$ are $x_*$ and $y_*$. The resulting sign diagram is shown in Figure~\ref{fig:quatro}.
\end{proof}

\begin{figure}[ht]
\centering
\includegraphics[width=0.62\textwidth]{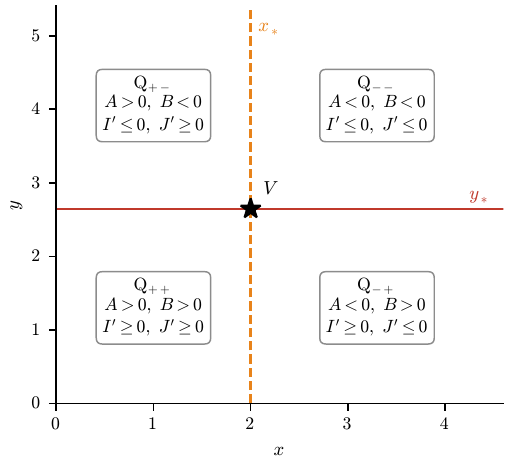}
\caption{The four regions determined by $x=x_*$ and $y=y_*$ ($c<0$), with the signs of $I'$ and $J'$ in each, the vertex $V$ and the two cylinders (horizontal in red, vertical dashed in
orange). This sign diagram underlies the monotonicity arguments used throughout the paper.}
\label{fig:quatro}
\end{figure}

The next result states that the trajectories $\{y=y_*\}$ and $\{x=x_*\}$, corresponding to the horizontal and vertical cylinders, respectively, are the only trajectories of the system \eqref{eq:flow} along which $I$ or $J$ is constant.

\begin{proposition}[The two cylinders]\label{prop:cyl}
Let $c<0$. Then:
\begin{enumerate}
\item[(a)] $\{y=y_*,\ \sigma=0\}$ is a trajectory of \eqref{eq:flow}, with image
$\s^{q}(y_*)\times\R^{p+1}$, along which $I\equiv0$ and $J\equiv M_g$;
\item[(b)] $\{x=x_*,\ \sigma=\tfrac\pi2\}$ is a trajectory of \eqref{eq:flow}, with image
$\s^{p}(x_*)\times\R^{q+1}$, along which $I\equiv M_f$ and $J\equiv0$.
\end{enumerate}
These are the only trajectories along which $I$ or $J$ is constant.
\end{proposition}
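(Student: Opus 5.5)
Parts (a) and (b) are direct verifications, and (b) follows from (a) by the mirror symmetry of Lemma~\ref{lem:simetrias}(ii). For (a), I will substitute $(x,y,\sigma)=(s,y_*,0)$ into \eqref{eq:flow}. Then $x'=1=\cos 0$ and $y'=0=\sin0$. Also $\sigma'=-A(x)\cdot 0+B(y_*)\cdot 1=0$, since $B(y_*)=cy_*+q/y_*=0$ by $y_*^2=-q/c$. Along this trajectory $I=f(x)\sin0=0$ and $J=g(y_*)\cos0=M_g$ by Lemma~\ref{lem:bound}. The image is $\s^q(y_*)\times\R^{p+1}$ by \eqref{eq:param}: the profile is the whole horizontal line $y=y_*$, $x>0$, and it closes off at $x=0$. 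Case (b) is the mirror image, with $A(x_*)=0$, $I\equiv f(x_*)=M_f$ and $J\equiv0$.

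For uniqueness, suppose $I$ is constant on some open interval of a trajectory. By Lemma~\ref{prop:dIdJ}, $B(y)f(x)\cos^2\sigma\equiv0$ there. Since $f>0$, each point satisfies $y=y_*$ or $\cos\sigma=0$.

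\emph{Case 1: the open set where $\cos\sigma\neq0$ is nonempty.} On it $y\equiv y_*$, so $y'=\sin\sigma\equiv0$ on an open subinterval. Then $\sigma\in\{0,\pi\}$ there, and the trajectory locally coincides with $(x,y_*,0)$ or its reversal $(x,y_*,\pi)$. By uniqueness of solutions, it is the horizontal cylinder trajectory (possibly reversed, which has the same image).

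\emph{Case 2: $\cos\sigma\equiv0$ on the whole interval.} Then $\sigma'=0$, so $-A(x)\sin\sigma=0$ with $\sin\sigma=\pm1$. Hence $A(x)=0$, i.e.\ $x\equiv x_*$, and the trajectory is the vertical cylinder, on which $I\equiv M_f$ is indeed constant.

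The first case is unavoidable whenever the second fails. Pointwise, either $\cos\sigma=0$ everywhere on the interval or $\cos\sigma\ne0$ on a nonempty open subset, so the dichotomy is exhaustive.

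The case where $J$ is constant follows by the mirror symmetry, exchanging $I\leftrightarrow J$ and $A\leftrightarrow B$. Thus the trajectories with $I$ or $J$ constant are exactly the two cylinders. Read globally ("constant along the whole trajectory"), the statement also follows, since any open interval suffices.

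The only delicate step is deciding whether "constant" means on an open interval or along the whole trajectory. Taking the open-interval version and concluding via uniqueness of ODE solutions handles both readings. I expect no genuine obstacle beyond the careful case split above, which prevents a mixture of the two alternatives.
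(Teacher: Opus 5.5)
Your proof is correct and follows essentially the paper's argument: the identity $I'=B(y)f(x)\cos^2\sigma$, a case split on whether $\cos\sigma$ vanishes identically, and ODE uniqueness. The paper instead gets $y\equiv y_*$ on the whole interval from the density of $\{\cos\sigma\ne0\}$ and continuity, rather than from a subinterval plus uniqueness. One minor nit: on the reversed vertical cylinder ($\sigma\equiv\tfrac{3\pi}{2}$) one has $I\equiv -M_f$, which does not affect the conclusion.
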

\begin{proof}
The verification of items (a) and (b) is immediate. To prove the uniqueness, if $I\equiv$ const., then $B(y)\cos^{2}\sigma\equiv0$ along the trajectory, by \eqref{eq:dJ}. If $\cos\sigma\equiv0$ on an interval, then $x'\equiv0.$ Thus, by the arclength equation $(x')^2+(y')^2=1$, we have $(x,y)=(d_0,s+d_1)$ for some constants $d_0,d_1\in\R$ and $\gamma$ is a vertical line. This implies that $\sigma\equiv\pi/2$ and $\sigma'\equiv\mp A(x)$ must vanish, forcing $x\equiv x_*$: this is (b). Otherwise $B(y)=0$ on a dense set, hence $y\equiv y_*$ and
$y'=\sin\sigma\equiv0$: this is (a). The case of $J$ is the mirror image.
\end{proof}

\begin{remark}[The vertex]\label{rem:vertex}
Let $c<0$. At the vertex $V=(x_*,y_*)$ one has $A(x_*)=B(y_*)=0$, and therefore $\sigma'=0$. It is the only point of $\mathcal Q$ at which $\sigma'=0$ for every tangent direction $\sigma$. Both cylinders pass through $V$, the horizontal one with $\sigma=0$ and the vertical one with $\sigma=\tfrac\pi2$. In particular, the trajectory of the vertical cylinder crosses the level $\{y=y_*\}$ at $V$, and therefore is contained neither in $\mathcal S^{+}$ nor in $\mathcal S^{-}$.
\end{remark}

The next lemma shows that any trajectory crosses the separatrices transversely.

\begin{lemma}[Automatic transversality of crossings]\label{lem:transv}
Let $c<0$ and let $\gamma$ be a trajectory distinct from the horizontal (resp.\ vertical)
cylinder. Then every contact of $\gamma$ with $\{y=y_*\}$ (resp.\ $\{x=x_*\}$) is transverse,
i.e.\ $\sin\sigma\ne0$ (resp.\ $\cos\sigma\ne0$) at the contact point. In particular, the
crossings are isolated.
\end{lemma}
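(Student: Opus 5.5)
The plan is to argue by uniqueness of solutions of the smooth system \eqref{eq:flow}. Treat the horizontal case; the vertical one follows from the mirror symmetry of Lemma~\ref{lem:simetrias}(ii), which exchanges $\{y=y_*\}$ with $\{x=x_*\}$ and $\sin\sigma$ with $\cos\sigma$.

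Suppose $\gamma$ touches $\{y=y_*\}$ at an instant $s_0$ with $\sin\sigma(s_0)=0$, so $\sigma(s_0)\in\{0,\pi\}$. At that instant the phase point is $(x_0,y_*,0)$ or $(x_0,y_*,\pi)$. The point $(x_0,y_*,0)$ lies on the trajectory of the horizontal cylinder from Proposition~\ref{prop:cyl}(a), which is $(s,y_*,0)$ for $s>0$. The point $(x_0,y_*,\pi)$ lies on its reversal from Lemma~\ref{lem:simetrias}(i), namely $(-s,y_*,\pi)$. This reversal is again a solution, since at $y=y_*$ we have $B=0$ and $\sin\pi=0$, so $\sigma'=0$.

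Because $F$ is smooth on $\QQ\times\s^1$, uniqueness gives that $\Gamma$ coincides with one of these two trajectories on its whole maximal interval. Its image is then the horizontal line $\{y=y_*\}$, and it generates $\mathcal C_{\mathrm h}$, which is excluded. Hence $\sin\sigma\ne0$ at every contact, so $y'=\sin\sigma\ne0$ there.

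Isolation follows from $y'(s_0)\ne0$. The function $y(s)-y_*$ has a simple zero at $s_0$, so it is nonzero on a punctured neighbourhood of $s_0$, and the contact is in fact a genuine crossing, with a sign change of $y-y_*$.

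The only point needing care is that the reversed parametrization is the same hypersurface $\mathcal C_{\mathrm h}$. This is immediate, since the image of $\gamma$ is unchanged. There is no real obstacle here; the vertical case is identical, using $A(x_*)=0$ and $\cos\sigma=0$.
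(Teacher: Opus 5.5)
Your proposal is correct and follows the paper's proof: a tangential contact places the phase point on one of the two orientations of the horizontal-cylinder trajectory, $(s,y_*,0)$ or its reversal $(-s,y_*,\pi)$ (the latter defined for $s<0$). Uniqueness of solutions then forces $\gamma$ to be that cylinder, and the vertical case follows from the mirror symmetry. Your observation that $y-y_*$ has a simple zero at each contact is the justification of isolation that the paper leaves implicit.
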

\begin{proof}
A tangential contact with $\{y=y_*\}$ would have $\sin\sigma=0$, i.e.\ $\sigma\in\{0,\pi\}$. But
$\{y=y_*,\sigma=0\}$ and $\{y=y_*,\sigma=\pi\}$ are the two orientations of the trajectory of the
horizontal cylinder (Proposition~\ref{prop:cyl}(a) and Lemma~\ref{lem:simetrias}(i)). By uniqueness of
solutions, $\gamma$ would coincide with it. The case of the level $\{x=x_*\}$ is the mirror
image.
\end{proof}

In the next lemma we show that $I$ and $J$ are in fact strictly monotone in the regions determined by the separatrices.

\begin{lemma}[Structure of $Z_I$ and $Z_J$]\label{lem:Z}
Let $c<0$. On $Z_I=\{\cos\sigma=0\}$ one has $\sin\sigma=\pm1$ and, along \eqref{eq:flow},
\begin{equation}\label{eq:sigmaZ}
\left.\sigma'\right|_{Z_I}=\mp A(x)=\mp\frac{p+cx^{2}}{x}.
\end{equation}
Consequently,
\begin{enumerate}
\item[(i)] $\sigma'|_{Z_I}=0$ if and only if $x=x_*$ and the largest invariant subset of $Z_I$ is the trajectory of the vertical cylinder, on which $I\equiv\pm M_f$;
\item[(ii)] off the vertical cylinder, each trajectory meets $Z_I$ only at isolated instants,
with simple zeros of $\cos\sigma$.
\end{enumerate}
The mirror statement holds for $Z_J=\{\sin\sigma=0\}$ and the horizontal cylinder.
\end{lemma}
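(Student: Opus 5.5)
The plan is to compute directly on $Z_I$ and then use uniqueness of solutions. On $Z_I=\{\cos\sigma=0\}$ one has $\sin\sigma=\pm1$. Substituting $\cos\sigma=0$ into the third equation of \eqref{eq:flow}, $\sigma'=-A(x)\sin\sigma+B(y)\cos\sigma$, gives $\sigma'=\mp A(x)$, which is \eqref{eq:sigmaZ}. Since $A(x)=(p+cx^2)/x$ with $c<0$ vanishes only at $x=x_*$, the first assertion of (i) follows.

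For the invariant-set part of (i), suppose a trajectory stays in $Z_I$ on some interval. Then $\sigma$ is constant there, equal to $\pm\pi/2$, so $\sigma'\equiv0$ and hence $A(x)\equiv0$, that is $x\equiv x_*$. Thus the trajectory is $\{x=x_*,\sigma=\pm\tfrac\pi2\}$. This set is the trajectory of the vertical cylinder of Proposition~\ref{prop:cyl}(b), for $\sigma=\pi/2$, or its reversal under Lemma~\ref{lem:simetrias}(i), for $\sigma=-\pi/2$. Conversely, this set is invariant and contained in $Z_I$. On it, $I=f(x_*)\sin\sigma=\pm M_f$ by Lemma~\ref{lem:bound}. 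Hence the largest invariant subset of $Z_I$ is exactly this trajectory.

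For (ii), take a trajectory other than the vertical cylinder and an instant $s_0$ with $\cos\sigma(s_0)=0$. If $x(s_0)=x_*$, then the phase point $(x_*,y(s_0),\pm\pi/2)$ lies on the vertical cylinder trajectory, and uniqueness of solutions would force the two trajectories to coincide. Hence $x(s_0)\ne x_*$, so by \eqref{eq:sigmaZ} $\sigma'(s_0)=\mp A(x(s_0))\ne0$. Then
\[
\frac{d}{ds}\cos\sigma\Big|_{s_0}=-\sin\sigma(s_0)\,\sigma'(s_0)=A(x(s_0))\ne0,
\]
so $s_0$ is a simple zero of $\cos\sigma$. Simple zeros are isolated. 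The mirror statement follows from Lemma~\ref{lem:simetrias}(ii), which exchanges $Z_I$ with $Z_J$ and the vertical cylinder with the horizontal one.

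No step here is a genuine obstacle. The only point needing care is the use of uniqueness in (ii) to exclude the degenerate contact at $x=x_*$, exactly as in Lemma~\ref{lem:transv}.
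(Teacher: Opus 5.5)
Your proposal is correct and follows the paper's proof essentially step by step. You substitute $\cos\sigma=0$ into the third equation of \eqref{eq:flow}, then note that $\cos\sigma\equiv0$ on an interval forces $\sigma'\equiv0$ and hence $x\equiv x_*$, and finally check that $\frac{d}{ds}\cos\sigma=A(x)\neq0$ at each contact. Your explicit appeal to uniqueness to rule out a contact at $x=x_*$ in (ii) spells out a step the paper leaves implicit, and your sign $A(x(s_0))$ is the accurate one (the paper writes $\pm A(x)$, which is harmless since only nonvanishing matters).
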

\begin{proof}
On $Z_I$, $\cos\sigma=0$ and $\sigma'=-A\sin\sigma+B\cdot0=\mp A(x)$, which is \eqref{eq:sigmaZ}.
This vanishes if and only if $p+cx^{2}=0$, i.e.\ $x=x_*$. For a trajectory to remain in $Z_I$ on an
interval it is necessary that $\cos\sigma\equiv0$, whence $x'=\cos\sigma\equiv0$ ($x$ constant
$=x_0$) and $\sigma'\equiv0$. By \eqref{eq:sigmaZ} this forces $A(x_0)=0$, i.e., $x_0=x_*$ and the hypersurface is the vertical cylinder, on which $I=f(x_*)\sin(\pm\tfrac\pi2)=\pm M_f$. This proves (i).
Off it, $\sigma'|_{Z_I}=\mp A(x)\ne0$ and hence
\[
\frac{d}{ds}\cos\sigma=-(\sin\sigma)\sigma'=\pm A(x)\ne0
\]
at the points of $Z_I$. Therefore, the zeros of $\cos\sigma$ are simple and isolated, proving (ii).
\end{proof}

Since these zeros are isolated, the monotonicity of Lemma~\ref{prop:sinais} is strict inside the strips.

\begin{corollary}[Strict monotonicity]\label{cor:estrita}
Let $c<0$ and let $\gamma$ be a trajectory of \eqref{lambda-s} distinct from the two separatrices.
\begin{enumerate}
\item[(i)] On an interval where $\gamma$ remains in a horizontal strip $\mathcal S^{\pm}$, $I$
is strictly monotone: increasing in $\mathcal S^{-}$, decreasing in $\mathcal S^{+}$.
\item[(ii)] On an interval where $\gamma$ remains in a vertical strip $\mathcal V^{\pm}$, $J$
is strictly monotone: increasing in $\mathcal V^{-}$, decreasing in $\mathcal V^{+}$.
\item[(iii)] On an interval where $\gamma$ remains in a quadrant, both are strictly
monotone, with the signs of Definition~\ref{def:regioes}.
\end{enumerate}
In particular $\gamma$ is neither periodic nor recurrent within a strip or quadrant.
\end{corollary}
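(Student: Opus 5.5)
The plan is to combine the derivative formulas of Lemma~\ref{prop:dIdJ} with the discreteness of the zero sets from Lemma~\ref{lem:Z}.

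\emph{Item (i).} Take an interval $[a,b]$ on which $\gamma$ stays in a horizontal strip, say $\mathcal S^{-}$. There $B(y)>0$, and $f(x)>0$ everywhere on $\mathcal Q$, so $I'=B f\cos^2\sigma\ge0$ on $[a,b]$. Equality holds only where $\cos\sigma=0$, that is, at instants where the trajectory meets $Z_I$. Since $\gamma$ is not a separatrix, and in particular not the vertical cylinder, Lemma~\ref{lem:Z}(ii) shows these instants are isolated. The vertical cylinder lies in neither strip anyway, by Remark~\ref{rem:vertex}. So the zeros of $I'$ in $[a,b]$ form a discrete, hence finite, set. A continuous function with nonnegative derivative that vanishes only at finitely many points is strictly increasing. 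The case $\mathcal S^{+}$ is the same with $B<0$, which gives strictly decreasing.

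\emph{Items (ii) and (iii).} Item (ii) is the mirror image of (i) under Lemma~\ref{lem:simetrias}(ii), using $J'=Ag\sin^2\sigma$ and the mirror version of Lemma~\ref{lem:Z} for $Z_J$; here the horizontal cylinder is the one excluded. Item (iii) follows because a quadrant is the intersection of a horizontal strip and a vertical strip, so (i) and (ii) apply at the same time.

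\emph{The final claim.} Suppose $\gamma$ were periodic while staying in a strip, say $\mathcal S^{\pm}$. Then $I$ would be periodic and, by (i), strictly monotone on a full period, which is impossible. For recurrence within a strip, suppose there are $s_k\to\infty$ with $\Gamma(s_k)\to\Gamma(s_0)$. Since $I$ is a continuous function of the phase point, $I(s_k)\to I(s_0)$. But for $s_k>s_1>s_0$, monotonicity gives $I(s_k)$ on the far side of $I(s_1)$, which is strictly separated from $I(s_0)$, a contradiction. The vertical strips are handled the same way with $J$ in place of $I$.

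The only delicate point is to check that Lemma~\ref{lem:Z}(ii) applies. This needs the trajectory to differ from the vertical cylinder for $I$, and from the horizontal cylinder for $J$, which is exactly the hypothesis that $\gamma$ is distinct from the two separatrices.
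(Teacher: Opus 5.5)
Your proposal is correct and follows the paper's proof. Both argue that the signs of $A$ and $B$ are fixed in each strip and that $\cos\sigma$ (resp.\ $\sin\sigma$) has only isolated zeros by Lemma~\ref{lem:Z}(ii), which gives strict monotonicity; item (ii) then follows by the mirror symmetry and item (iii) by intersecting the strips. Your recurrence step uses the strict gap $I(s_1)$ between $I(s_0)$ and $I(s_k)$ instead of an exact return of the value, which makes the paper's one-line remark slightly more careful.
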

\begin{proof}
In each strip or quadrant, the signs of $A$ and $B$ are fixed (Lemma~\ref{prop:sinais}), and the
derivatives vanish only on $Z_I$ and $Z_J$, visited at isolated instants (Lemma~\ref{lem:Z}(ii)).
Now, a continuous monotone function whose derivative vanishes only at isolated points is
strictly monotone. Recurrence or periodicity would make the function reassume a value.
\end{proof}

\subsection{Endpoints on the axes}\label{sec:cusp}

We now describe the behavior at an endpoint reaching one of the axes, in the signed form used in the boundary arguments below.

\begin{proposition}[Orthogonal endpoint, signed]\label{lem:cusp}
Let $c\ne0$ and let $\gamma$ be a maximal trajectory of \eqref{eq:flow} defined in an open interval $(s_-,s_+).$ Suppose that $\gamma$ approaches the $x$-axis at the endpoint $\omega$,
i.e., $y\to0^{+}$ and $x\to x_0>0$ as $s\to s_+$. Then
\[
\sin\sigma\to-1\quad(\mbox{downward vertical tangent}),\quad s_+<\infty,
\]
and therefore $I\to-f(x_0)<0$. If $\gamma$ approaches the $x$-axis at the endpoint $\alpha$, i.e., $y\to0^{+}$ and $x\to x_0>0$ as $s\to s_-$, then $\sin\sigma\to+1$,
$s_->-\infty$ and $I\to f(x_0)>0$. 

Symmetrically, approaching the $y$-axis ($x\to0^{+}$,$y\to y_0>0$), one has $\cos\sigma\to\mp1$ at the endpoints $\omega$/$\alpha$, with $J\to\mp g(y_0)$, also in finite time.
\end{proposition}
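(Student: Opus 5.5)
The plan is to use the monotone quantity $I=f(x)\sin\sigma$ of Lemma~\ref{prop:dIdJ} together with the singular size of $B(y)\sim q/y$ near the axis. I treat the endpoint $\omega$ first. The statement at $\alpha$ then follows from the reversibility of Lemma~\ref{lem:simetrias}(i): $s\mapsto -s$, $\sigma\mapsto\sigma+\pi$ swaps $\alpha$ and $\omega$ and changes the signs of $\sin\sigma$ and of $I$. The statement for the $y$-axis follows from the mirror symmetry of Lemma~\ref{lem:simetrias}(ii), which exchanges $I$ with $J$ and $\sin\sigma$ with $\cos\sigma$. Since $c\ne0$ is allowed, I will only use that $B(y)=q/y+cy\ge q/(2y)$ for $y$ small, which holds for either sign of $c$.

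\emph{Step 1: $I$ converges.} Since $y\to0^+$ and $x\to x_0>0$, there is $s_1$ such that for $s\in(s_1,s_+)$ we have $B(y(s))\ge q/(2y(s))>0$ and $f(x(s))\ge f(x_0)/2>0$. By \eqref{eq:dJ}, $I'=Bf\cos^2\sigma\ge0$ on $(s_1,s_+)$. Moreover $|I|\le f(x)$ is bounded there, so $I$ is monotone and bounded and converges to some $L$. Integrating $I'$ gives
\[
\int_{s_1}^{s_+}\frac{\cos^2\sigma}{y}\,ds\;\le\;\frac{4}{q\,f(x_0)}\,\bigl(L-I(s_1)\bigr)<\infty .
\]
Since $f(x)\to f(x_0)>0$, it also follows that $\sin\sigma=I/f(x)\to\lambda:=L/f(x_0)\in[-1,1]$, and hence $\cos^2\sigma\to 1-\lambda^2$.

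\emph{Step 2: $\lambda^2=1$.} This is the main point, and I argue by contradiction. If $\lambda^2<1$, then $\cos^2\sigma\ge\delta>0$ near $s_+$, so the integral above forces $\int^{s_+}ds/y<\infty$. This is impossible. If $s_+=\infty$, then $1/y\to\infty$, so the integral diverges. If $s_+<\infty$, then $|y'|\le1$ and $y\to0$ give $y(s)\le s_+-s$, and $\int^{s_+}ds/(s_+-s)=\infty$. Hence $\sin\sigma\to\pm1$. The sign must be $-1$: if $\sin\sigma\to+1$, then $y'>1/2$ near $s_+$, so $y$ would be increasing and could not tend to $0$.

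\emph{Step 3: conclusions.} From $y'=\sin\sigma\to-1$ we get $y'\le-1/2$ eventually, and since $y>0$ this forces $s_+<\infty$. Finally $I=f(x)\sin\sigma\to-f(x_0)$. The only delicate step is Step 2, namely excluding an oblique or horizontal limit tangent. There the divergence of $\int ds/y$ must be checked in both cases $s_+$ finite and infinite, and the uniform Lipschitz bound $|y'|\le1$ handles the finite case.
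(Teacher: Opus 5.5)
Your proof is correct, and it has the same skeleton as the paper's. In both, $I$ is nondecreasing near the axis because $B(y)>0$ there, so $\sin\sigma=I/f(x)$ converges. A positive limit is excluded because $y$ would eventually increase. Everything rests on the divergence of $\int^{s_+}ds/y$.

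The difference is in how the intermediate limits $|\zeta|<1$ are ruled out. The paper uses the equation for $\sigma$. It fixes the sign $\cos\sigma\ge k_0>0$ and bounds $|A(x)|\le M_A$, which gives $\sigma'\ge qk_0/(4y)$. The divergence of $\int ds/y$ then makes the tangent rotate without bound, contradicting the constant sign of $\cos\sigma$. To obtain the divergence, the paper also splits into the subcases $\zeta<0$ and $\zeta=0$.

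You instead use the boundedness of $I$ quantitatively. Integrating $I'=Bf\cos^2\sigma\ge\tfrac{q f(x_0)}{4}\cos^2\sigma/y$ gives $\int^{s_+}\cos^2\sigma/y\,ds<\infty$, which is incompatible with $\cos^2\sigma\ge\delta>0$. This is shorter. It needs no bound on $A$, no sign choice for $\cos\sigma$, and no case split in $\zeta$. Your single estimate $y(s)\le s_+-s$, from $|y'|\le1$, covers every finite $s_+$ at once.

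What the paper's mechanism buys is locality. The bound $\sigma'\gtrsim 1/y$ only needs $\cos\sigma$ to stay away from $0$ on some interval. In that localized form it is reused in Lemma~\ref{rem:limites}, where $x$ is only confined to a compact interval. There $f(x)$, and hence $\sin\sigma=I/f(x)$, need not converge, so your Step~1 is not available in that setting.
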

\begin{proof}
We treat the endpoint $\omega$ on the $x$-axis. The endpoint $\alpha$ follows by reversibility (Lemma~\ref{lem:simetrias}(i)) and the $y$-axis by the mirror image, with $J$ in place of $I$.

Near the $x$-axis one has $B(y)=\tfrac{q}{y}+cy>0$, as the term $q/y$ dominates. Hence, by \eqref{eq:dJ}, $I$ is nondecreasing and it is bounded, since $|I|\le f(x)$, with $x\to x_0$. Therefore, $I_+=\lim I$ exists and, as $f(x)\to f(x_0)>0$,
\[
\sin\sigma \longrightarrow \zeta:=\frac{I_+}{f(x_0)}\in[-1,1].
\]
We rule out the values $\zeta\ne-1$.

\emph{Case $\zeta>0$.} Then $y'=\sin\sigma\ge\zeta/2>0$ near the endpoint, so $y$ is increasing and cannot tend to $0^{+}$, a contradiction.

\emph{Case $|\zeta|<1$.} We show that this case is impossible. Since $\sin\sigma\to\zeta$ with $|\zeta|<1$, there exist $\varepsilon>0$ and a neighborhood $(s_1,s_+)$ of the endpoint in which
\begin{equation}\label{eq:coslb}
|\cos\sigma(s)| \ge k_0:=\sqrt{1-\zeta^2}-\varepsilon >\ 0,
\end{equation}
since $\cos^2\sigma=1-\sin^2\sigma\to1-\zeta^2>0$. In particular, $\cos\sigma$ has constant sign in $(s_1,s_+)$, shrinking the neighborhood if necessary. Then fix the sign, say $\cos\sigma\ge k_0>0$ in $(s_1,s_+)$ (the other case is identical).

We estimate $\sigma'$ in this neighborhood. From $A(x)\to A(x_0)$ it follows that $|A(x)|\le M_A$ for some constant $M_A$, while
\[
B(y)=\frac{q}{y}+cy \ge \frac{q}{2y}
\]
for $y$ small. Hence, using $|\sin\sigma|\le1$ and $\cos\sigma\ge k_0$,
\[
\sigma'=-A(x)\sin\sigma+B(y)\cos\sigma \ge -M_A+\frac{q}{2y}k_0 .
\]
As soon as $y<\dfrac{q k_0}{4M_A}$ (which occurs near the endpoint, since $y\to0$), we obtain
\begin{equation}\label{eq:sigmalb}
\sigma'(s) \ge \frac{qk_0}{4y(s)}>0
\end{equation}
in this neighborhood. In particular $\sigma$ is increasing, with derivative of fixed sign. It remains to show that 
\begin{equation}\label{eq:infinity}
\displaystyle\int^{s_+}\frac{ds}{y(s)}=+\infty,
\end{equation}
 
which by \eqref{eq:sigmalb} implies $\sigma(s)\to+\infty$, making $\cos\sigma(s)$ oscillate and contradicting its constant sign on $(s_1,s_+)$.
We distinguish the subcases by the value of $\zeta$.

\smallskip
\emph{Subcase $\zeta<0$.} Since $y'=\sin\sigma\to\zeta<0$, there exists $s_2\geq s_1$ such that, for
$s\in(s_2,s_+)$,
\begin{equation}\label{eq:ylinha}
\frac{3\zeta}{2}\le y'(s) \le \frac{\zeta}{2}<0.
\end{equation}
In particular, $y$ is strictly decreasing with derivative bounded away from $0$. As $y>0$ and decreasing, the limit $\lim_{s\to s_+^-}y(s)\ge0$ exists, and the final parameter $s_+$ is finite. Indeed, if $s_+=\infty$, integrating the upper bound $y'\le\zeta/2$ on $(s_2,s)$ would give $y(s)\le y(s_2)+\tfrac{\zeta}{2}(s-s_2)\to-\infty$, contradicting $y>0$. Hence $s_+<\infty$, and since $y\to0$ there (it is the endpoint on the axis), we may integrate \eqref{eq:ylinha} from $s$ to $s_+$ to obtain, using $y(s_+)=0$,
\[
y(s)=y(s)-y(s_+)=-\int_s^{s_+}y'(u)du.
\]
From $\tfrac{3\zeta}{2}\le y'\le\tfrac{\zeta}{2}<0$ (that is, $\tfrac{|\zeta|}{2}\le -y'\le\tfrac{3|\zeta|}{2}$) we obtain the two-sided bound
\begin{equation}\label{eq:yenq}
\frac{|\zeta|}{2}(s_+-s) \le y(s) \le \frac{3|\zeta|}{2}(s_+-s).
\end{equation}
This gives
\[
\int^{s_+}\frac{ds}{y(s)} \ge \frac{2}{3|\zeta|}\int^{s_+}\frac{ds}{s_+-s}=+\infty.
\]
\smallskip
\emph{Subcase $\zeta=0$.} Now $y'=\sin\sigma\to0$ with $y\to0^{+}$. We will prove that the divergence of the integral holds whether $s_+$ is finite or not. Fix $\varepsilon\in(0,1)$ and take $s_2\geq s_1$ such that
\[
|y'(s)|=|\sin\sigma(s)|\le \varepsilon
\]
for all $s\in(s_2,s_+)$.
\begin{itemize}
\item If $s_+<\infty$, since $y(s)\to0$ as $s\to s_+$, the fundamental theorem of
calculus gives, for $s\in(s_2,s_+)$,
\[
\begin{aligned}
y(s)&=y(s)-\lim_{u\to s_+}y(u)=-\int_{s}^{s_+}y'(u)du\\
& \le \int_s^{s_+}|y'(u)|du \le \varepsilon(s_+-s),
\end{aligned}
\]
whence
\[
\int_{s_2}^{s_+}\frac{ds}{y(s)} \ge \frac{1}{\varepsilon}\int_{s_2}^{s_+}\frac{ds}{s_+-s}
=+\infty.
\]
\item \emph{If $s_+=\infty$}: from $y(s)\to0$ it follows that, for each $\delta>0$, there exists
$s_\delta$ with $0<y(s)\le\delta$ for all $s\ge s_\delta$, and hence
\[
\int_{s_\delta}^{\infty}\frac{ds}{y(s)} \ge \frac1\delta\int_{s_\delta}^{\infty}ds=+\infty.
\]
\end{itemize}

In either alternative, \eqref{eq:infinity} holds. This eliminates the case $|\zeta|<1$.

There remains $\zeta=-1$: $\sin\sigma\to-1$, $\cos\sigma\to0$, and $y'\to-1$ gives
$y(s)\sim(s_+-s)$ with $s_+<\infty$. Finally $I\to f(x_0)\cdot(-1)=-f(x_0)$.
\end{proof}

The next lemma shows that the hypothesis $x\to x_0$ of Proposition~\ref{lem:cusp} is automatic when $x$ stays away from $0$ and from $\infty$.

\begin{lemma}\label{rem:limites}
Let $c\ne0$ and let $\gamma$ be a maximal trajectory of \eqref{eq:flow}. If $y\to0$ as $s\to s_+$ while $x$ remains in a compact interval $[a,b]\subset(0,\infty)$, then $s_+<\infty$ and $x\to x_0$ for some $x_0\in[a,b]$. In particular, Proposition~\ref{lem:cusp} applies at $\omega$.
\end{lemma}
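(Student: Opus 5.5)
We need to show two things: that $s_+<\infty$, and that $x$ has a limit as $s\to s_+$. Once both hold, $\gamma$ converges to $(x_0,0)$ with $x_0\in[a,b]\subset(0,\infty)$, and Proposition~\ref{lem:cusp} applies.

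\emph{Monotone quantity.} The idea is to reuse the monotone quantity $I$ from the proof of Proposition~\ref{lem:cusp}, this time without assuming that $x$ converges. Since $y\to0$, there is $s_1$ such that for $s>s_1$ we have $y<y_*$, indeed $y$ as small as we like. Hence $B(y)>0$, and by Lemma~\ref{prop:dIdJ} the function $I$ is nondecreasing on $(s_1,s_+)$. Since $|I|\le \max_{[a,b]}f$, the limit $I_+=\lim I$ exists. We also have $f(x)\ge m:=\min_{[a,b]}f>0$. In addition $|A(x)|\le M_A$ on $[a,b]$, and $\sigma'=-A\sin\sigma+B\cos\sigma$ with $B\ge q/(2y)\to\infty$.

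\emph{Finite time.} I would argue as follows.
\begin{itemize}
\item Wherever $|\cos\sigma|\ge k>0$ and $y<qk/(4M_A)$, the quantity $\sigma'$ has the sign of $\cos\sigma$ and $|\sigma'|\ge qk/(4y)$.
\item Suppose $\cos\sigma>0$ at such a point. Then $\sigma$ increases and pushes the tangent toward $\sigma=\pi/2$. So $\sin\sigma$ becomes positive and $y$ increases.
\item The same thing happens for $\cos\sigma<0$: $\sigma$ decreases toward $\pi/2$, again making $y'>0$.
\end{itemize}
The drift therefore rotates the tangent toward vertical-up, which is incompatible with $y\to0$ unless the curve is heading straight down.

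To make this quantitative, I would use $\int_{s_1}^{s_+}I'\,ds\le 2\max f<\infty$, i.e.
\[
\int^{s_+} B f\cos^2\sigma\,ds<\infty, \quad\text{hence}\quad \int^{s_+}\frac{\cos^2\sigma}{y}\,ds<\infty .
\]
If $s_+=\infty$, then $y\to0$ gives $1/y\to\infty$, so $\int^{\infty}\cos^2\sigma\,ds$ is finite, and in fact it is $o(1)$ relative to the arclength elapsed. Then $|\sin\sigma|$ is close to $1$ on most of the interval, so $|y'|\approx1$ most of the time. This alone does not contradict $y\to0$, since $y$ could oscillate.

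The oscillation is ruled out by $\sin\sigma\to I_+/f(x)$. This has a sign once $I_+\neq0$:
\begin{itemize}
\item If $I_+>0$, then $y'\ge c_0>0$ eventually, contradicting $y\to0$.
\item If $I_+<0$, then $y'\le -c_0<0$ eventually, which forces $s_+<\infty$.
\item If $I_+=0$, then $\sin\sigma\to0$, so $\cos^2\sigma\to1$. Then $\int \cos^2\sigma/y\,ds=\infty$, because $\int^{s_+} ds/y=\infty$ as in the $\zeta=0$ subcase of Proposition~\ref{lem:cusp}, with $y'\to0$ bounding $y\le\varepsilon(s_+-s)$ or $y\le\delta$. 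This is a contradiction.
\end{itemize}
So $I_+<0$ and $s_+<\infty$.

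\emph{Limit of $x$.} With $s_+<\infty$ the limit is immediate: $|x'|\le1$ makes $x$ Lipschitz, so $x(s)$ is Cauchy as $s\to s_+$ and converges to some $x_0\in[a,b]$.

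\emph{Main obstacle.} The delicate step is the case $I_+=0$ when $x$ is not known to converge, since $f(x)$ only has two-sided bounds. The divergence of $\int ds/y$ from $\sin\sigma\to0$ is the key point, and the argument of the $\zeta=0$ subcase transfers verbatim.
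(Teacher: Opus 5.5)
Your proof is correct, but it takes a different route from the paper's.

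The paper does not use the limit of $I$ here. It notes that at a zero of $\sin\sigma$ one has $(\sin\sigma)'=B(y)>0$, so $\sin\sigma$ vanishes at most once near the endpoint and is eventually negative. It then proves $\cos\sigma\to0$ by a rotation argument: if $|\cos\sigma(s_k)|\ge2\kappa$ along a sequence, the term $B\cos\sigma$, of size at least $q\kappa/(4y)$, drives $\sigma$ to $2\pi$ before $s_+$, creating a forbidden zero of $\sin\sigma$. This gives $y'\to-1$, hence $s_+<\infty$.

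You keep the monotone quantity $I$ instead. Proposition~\ref{lem:cusp} uses the convergence of $x$ to get $\sin\sigma\to\zeta$; you replace this by the two-sided bound $m\le f(x)\le M$ on $[a,b]$.
\begin{itemize}
\item This gives $\sin\sigma-I_+/f(x)\to0$, which is the correct reading of your ``$\sin\sigma\to I_+/f(x)$''. So the sign of $I_+$ fixes the eventual sign of $y'$ with a uniform margin.
\item You exclude $I_+=0$ by comparing $\int^{s_+}I'\,ds<\infty$, where $I'\ge\frac{qm}{2}\cos^2\sigma/y$, against $\int^{s_+}ds/y=\infty$.
\end{itemize}

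What each route buys:
\begin{itemize}
\item The paper's argument is a self-contained ODE estimate. It yields more than needed, namely the vertical limiting tangent itself.
\item Your argument only yields a uniform downward slope $\sin\sigma\le-c_0$ and finite arclength, and leaves the vertical tangent to Proposition~\ref{lem:cusp}. This is not circular, since that proposition only needs the convergence of $x$, which you establish first.
\item In exchange, your version reuses the $I$-monotonicity already in place. It also handles the degenerate case $I_+=0$ by a single integral bound rather than a pointwise rotation estimate, so it does not need $\cos\sigma$ to keep a constant sign.
\end{itemize}

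Two small points. First, the heuristic bullets under ``Finite time'' play no role in the proof and can be dropped. Second, the lemma allows $c>0$, where $y_*$ is undefined. You should obtain $B(y)\ge q/(2y)$ for small $y$ directly from $B=q/y+cy$, not via $y<y_*$.
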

\begin{proof}
Near the endpoint we have $B(y)\ge q/(2y)$ and $|A(x)|\le M_A:=\max_{[a,b]}|A|$. At a zero of $\sin\sigma$ we have $\cos^{2}\sigma=1$, and \eqref{eq:flow} gives
\[
(\sin\sigma)'=\cos\sigma\,\sigma'=B(y)>0,
\]
so $\sin\sigma$ has at most one zero near the endpoint and $y$ is eventually monotone. Since $y\to0$, eventually $\sin\sigma<0$.

We claim that $\cos\sigma\to0$. Otherwise there are $\kappa\in(0,\tfrac12)$ and $s_k\to s_+$ with $|\cos\sigma(s_k)|\ge2\kappa$. Assume $\cos\sigma(s_k)\ge2\kappa$, so that $\sigma(s_k)\in(\tfrac{3\pi}2,2\pi)$ modulo $2\pi$. As long as $\cos\sigma\ge\kappa$ and $\sigma\in(\tfrac{3\pi}2,2\pi)$,
\[
\sigma'\ \ge\ -M_A+\frac{q\kappa}{2y}\ \ge\ \frac{q\kappa}{4y}\ \ge\ \frac{q\kappa}{4\delta_k},
\qquad \delta_k:=\sup_{s\ge s_k}y(s),
\]
for $k$ large, and $\cos\sigma$ increases with $\sigma$ on that interval. If $s_+=\infty$, then $\sigma$ reaches $2\pi$ after arclength at most $2\pi\delta_k/(q\kappa)$. If $s_+<\infty$, then $y(s)=-\int_s^{s_+}y'(\tau)\,d\tau\le s_+-s$, so that $\sigma'\ge q\kappa/(4(s_+-s))$ as long as $\sigma\in(\tfrac{3\pi}2,2\pi)$. This lower bound is not integrable up to $s_+$. In both cases $\sigma$ reaches $2\pi$ before $s_+$, where $\sin\sigma=0$, contradicting $\sin\sigma<0$. The case $\cos\sigma(s_k)\le-2\kappa$ is symmetric, with $\sigma$ decreasing to $\pi$.

Therefore $\cos\sigma\to0$ and $y'=\sin\sigma\to-1$, so $y$ reaches $0$ in finite arclength and $s_+<\infty$. Since $|x'|=|\cos\sigma|\le1$, the integral in $x(s)=x(\tilde s)+\int_{\tilde s}^{s}\cos\sigma\,d\tau$ converges as $s\to s_+$, for any fixed $\tilde s<s_+$, and $x$ converges to some $x_0\in[a,b]$.
\end{proof}

\section{Apparent loops}\label{sec:winding}

Unless stated otherwise, $c<0$ from now on. This section and the next study loops, excursions and crossings, and prove Theorems~\ref{thmE} and~\ref{thmC}.

\subsection{Notions of loop and some elementary results}\label{sec:lacos}
In order to study the winding of the trajectories of \eqref{lambda-s}, we distinguish three kinds of return:

\begin{definition}[Returns of the profile curve]\label{def:lacos}
Let $\Gamma\subset\QQ\times\s^1,$ given by $\Gamma(s)=(x(s),y(s),\sigma(s))$, be a trajectory of \eqref{eq:flow} and $\gamma$ be the profile curve of a bi-rotational self-shrinker given by the projection of $\Gamma$ on $\QQ,$ i.e., $\gamma(s)=(x(s),y(s))$ (in this case $\sigma(s)$ is the tangent angle of $\gamma(s)$). We distinguish three kinds of return:
\begin{enumerate}
\item[(L1)] \emph{Closed trajectory}: there exist $s_1<s_2$ with $\Gamma(s_1)=\Gamma(s_2)$, i.e., a return to the same point with the same tangent.
\item[(L2)] \emph{Apparent loop}: there exist $s_1<s_2$ with $\gamma(s_1)=\gamma(s_2)$ but
$\sigma(s_1)\ne\sigma(s_2)$ (mod $2\pi$), that is, a self-intersection of the projection with distinct tangents (i.e., $\Gamma(s_1)\ne\Gamma(s_2)$).
\item[(L3)] \emph{Full-turn segment}: $|\Delta\sigma|=2\pi$ on an interval, with no requirement of return.
\end{enumerate}
\end{definition}

The next two results show that closed orbits cannot occur arbitrarily, but are rather confined to a well-defined region of the phase plane $\QQ$.

\begin{lemma}[Closed trajectories do not fit in a strip]\label{prop:noloop}
Let $c<0$. No closed orbit is contained in $\mathcal S^{+}$ or in $\mathcal S^{-}$.
Moreover, every closed orbit crosses $\{y=y_*\}$ at an even number $\ge2$ of points.
\end{lemma}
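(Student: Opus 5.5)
The first assertion follows from the strict monotonicity of $I$ inside a horizontal strip. Suppose a closed orbit $\Gamma$ lies in $\mathcal S^{+}$ (the case $\mathcal S^{-}$ is identical with the opposite sign). It is not the horizontal cylinder, since that trajectory lies on $\{y=y_*\}$. It is not the vertical cylinder either: that one crosses $\{y=y_*\}$ at $V$ (Remark~\ref{rem:vertex}) and is not closed. By Corollary~\ref{cor:estrita}(i), $I=f(x)\sin\sigma$ is then strictly decreasing along the whole trajectory. But $\Gamma(s_1)=\Gamma(s_2)$ for some $s_1<s_2$, so $I(s_1)=I(s_2)$, which is a contradiction. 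This is the final sentence of Corollary~\ref{cor:estrita}, made explicit. I will also rule out a closed orbit lying on the line $\{y=y_*\}$ itself, since any trajectory contained in that line is the cylinder.

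For the parity statement, let $\Gamma$ be closed with period $T$. By the first part, the profile curve $\gamma$ takes values of $y$ on both sides of $y_*$, or at least it is not contained in one open strip. Combined with Lemma~\ref{lem:transv}, it must meet $\{y=y_*\}$ and cannot merely touch it. Concretely, if $\gamma$ stayed in the closed half $\{y\ge y_*\}$ while touching the line, the contact would be tangential, which Lemma~\ref{lem:transv} excludes. Hence there is a point with $y>y_*$ and a point with $y<y_*$, and so at least one crossing.

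Next, Lemma~\ref{lem:transv} shows that the zeros of $y-y_*$ on $[0,T)$ are isolated. By compactness of the period interval, they are finite in number. Each zero is transverse, since $\sin\sigma\neq0$ there, so $y-y_*$ changes sign at each one. The function $s\mapsto y(s)-y_*$ is $T$-periodic and continuous, and its sign alternates at every zero as we go once around the circle $\R/T\Z$ and return to the starting sign. The number of sign changes is therefore even. Since there is at least one, there are at least two.

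The only delicate point is making sure the first argument applies, that is, that a closed orbit in a strip is not one of the separatrices. This is immediate because the cylinders are non-closed lines. It is also worth noting that "crosses at an even number of points" counts instants in one period. Distinct instants could in principle project to the same point of the line, but with transverse crossings this does not affect the parity count of sign changes.
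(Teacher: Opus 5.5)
Your proof is correct and follows the paper's argument. Strict monotonicity of $I$ in a strip (Corollary~\ref{cor:estrita}(i)) rules out a closed orbit inside $\mathcal S^{\pm}$, and transversality of contacts (Lemma~\ref{lem:transv}) turns the forced contact with $\{y=y_*\}$ into sign changes of $y-y_*$, whose number over one period is even and hence at least two. You spell out the touching-implies-crossing and parity-on-the-circle steps that the paper compresses into one sentence, and counting crossings as instants per period matches the paper's convention.
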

\begin{proof}
Along a closed orbit, $I$ (which is single-valued on $\mathcal Q\times\s^1$) reassumes its value. If the trajectory were contained in $\mathcal S^{-}$ (the case $\mathcal S^{+}$ is analogous) and were distinct from the vertical cylinder, by Corollary~\ref{cor:estrita}, item (i), $I$ would be strictly increasing, which is a contradiction. Hence, every closed orbit touches $\{y=y_*\}$. Since it returns to the same side of the level and the crossings are transverse and isolated by Lemma~\ref{lem:transv}, their number is even and $\ge2$.
\end{proof}
Combining this with the mirror statement, we obtain
\begin{corollary}[Closed trajectories cross both cylinders]\label{cor:noloop2}
Let $c<0$. Every closed orbit crosses each of the levels $\{y=y_*\}$ and $\{x=x_*\}$ at an even number $\ge2$ of points.
\end{corollary}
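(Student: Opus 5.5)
The plan is to derive the corollary from Lemma~\ref{prop:noloop} and the mirror symmetry of Lemma~\ref{lem:simetrias}(ii). The statement about $\{y=y_*\}$ is exactly Lemma~\ref{prop:noloop}, so the work is in transferring it to $\{x=x_*\}$.

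\emph{First route: use $J$ directly.} Along a closed orbit $\Gamma(s_1)=\Gamma(s_2)$, so the single-valued function $J=g(y)\cos\sigma$ on $\mathcal Q\times\s^1$ takes the same value at $s_1$ and $s_2$. Suppose the orbit lies in $\mathcal V^{-}$ or $\mathcal V^{+}$. Then two cases arise.
\begin{itemize}
\item If it is distinct from the horizontal cylinder, Corollary~\ref{cor:estrita}(ii) says $J$ is strictly monotone, which contradicts the return of its value.
\item The horizontal cylinder is not a closed orbit, since $x=s$ is unbounded. It also crosses $\{x=x_*\}$ at $V$ (Remark~\ref{rem:vertex}), so it lies in neither vertical strip.
\end{itemize}
Hence every closed orbit touches $\{x=x_*\}$. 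The vertical cylinder is itself not a closed orbit, since $y=s$ is unbounded, so Lemma~\ref{lem:transv} applies: every contact with $\{x=x_*\}$ is transverse, and the contacts are isolated.

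\emph{Counting the crossings.} Over one period $[s_1,s_2]$ the contacts form a compact set of isolated points, hence a finite set. Choose the base point off the line. Each transverse crossing switches the sign of $x-x_*$. The orbit returns to the same point, so the number of switches is even. It is nonzero by the previous step, so it is at least $2$.

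\emph{Alternative route: apply the mirror.} The map $(x,y,\sigma;p,q)\mapsto(y,x,\tfrac\pi2-\sigma;q,p)$ sends closed orbits of the $(p,q)$ system to closed orbits of the $(q,p)$ system, and it exchanges $x_*$ with $y_*$. Lemma~\ref{prop:noloop} holds for all $p,q\ge1$, so applying it to the mirrored orbit gives the statement for $\{x=x_*\}$.

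The only point needing care is that the crossings counted over one period are finite in number and all transverse. This follows from isolation together with compactness of the period interval, as used above.
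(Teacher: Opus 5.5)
Your proposal is correct and follows the paper's proof. The $\{y=y_*\}$ case is Lemma~\ref{prop:noloop}, and the $\{x=x_*\}$ case is its mirror image: the strict monotonicity of $J$ in $\mathcal V^{\pm}$ (Corollary~\ref{cor:estrita}(ii)) contradicts $J$ returning to its value along a closed orbit, and transversality (Lemma~\ref{lem:transv}) gives the even count, with your explicit exclusion of the cylinders and finiteness of crossings per period only spelling out details the paper leaves implicit.
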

\begin{proof}
The level $\{y=y_*\}$ is Lemma~\ref{prop:noloop}. For $\{x=x_*\}$, the argument is the mirror image, with the function $J$ in place of $I$: if the orbit were contained in $\mathcal V^{+}$ or
$\mathcal V^{-}$, then $J$ would be strictly monotone (Corollary~\ref{cor:estrita}(ii)) and could not
reassume its value.
\end{proof}

We also need the following two elementary lemmas:

\begin{lemma}[Umlaufsatz with one corner]\label{lem:umlauf}
Let $\gamma|_{[s_1,s_2]}$ be a simple regular arc in the plane with $\gamma(s_1)=\gamma(s_2)$, i.e., a loop with a single corner of exterior angle $\theta\in(-\pi,\pi)$. Then the total variation of the tangent angle satisfies $\Delta\sigma+\theta=\pm2\pi$. In particular,
\[
|\Delta\sigma|=|{\pm}2\pi-\theta|\ge 2\pi-|\theta|>\pi.
\]
\end{lemma}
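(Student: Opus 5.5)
The statement is the classical Hopf Umlaufsatz (theorem of turning tangents) for a piecewise smooth simple closed curve with exactly one corner. The plan is to reduce it to the smooth Umlaufsatz, or to apply directly Hopf's version for piecewise regular simple closed curves.

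First, since $\gamma|_{[s_1,s_2]}$ is simple except at the endpoints and $\gamma(s_1)=\gamma(s_2)$, its image is a Jordan curve $C$ with one vertex at $P=\gamma(s_1)$. The exterior angle $\theta\in(-\pi,\pi)$ is the signed angle from the incoming tangent $\gamma'(s_2)$ to the outgoing tangent $\gamma'(s_1)$. The hypothesis $|\theta|<\pi$ excludes cusps: the two tangents are not antiparallel.

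Second, I would invoke Hopf's Umlaufsatz for piecewise regular simple closed curves. It states that the total rotation of the tangent, counting the smooth variation along the arcs plus the signed exterior angles at the vertices, equals $\pm2\pi$, with the sign given by the orientation of the curve. Here there is a single arc and a single vertex, which gives $\Delta\sigma+\theta=\pm2\pi$.

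If a self-contained argument is preferred, I would instead round the corner. In a small disk $D_\varepsilon$ around $P$, replace the two short subarcs entering and leaving $P$ by a smooth convex arc. This arc joins the tangent direction $\gamma'(s_2)$ to $\gamma'(s_1)$ with monotone turning of total amount exactly $\theta$. It stays in $D_\varepsilon$, and it does not meet the rest of $C$ because $C\cap D_\varepsilon$ consists of the two subarcs only, by simplicity and compactness. The result is a smooth simple closed curve, whose rotation index is $\pm1$ by the smooth Umlaufsatz. Its total turning equals $\theta$, plus the turning along $\gamma$ outside $D_\varepsilon$, which tends to $\Delta\sigma$ as $\varepsilon\to0$ by continuity of $\sigma$. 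Letting $\varepsilon\to0$ then gives $\Delta\sigma+\theta=\pm2\pi$.

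The main, though mild, obstacle is making the rounding construction embedded and checking that its turning is exactly $\theta$. Both facts rely on $|\theta|<\pi$: in a $C^1$ chart the two subarcs are graphs over transverse directions, or over the same line when $\theta=0$, so a convex interpolation exists.

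The inequality then follows directly: $|\Delta\sigma|=|\pm2\pi-\theta|\ge2\pi-|\theta|>\pi$, using $|\theta|<\pi$.
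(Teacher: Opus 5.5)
Your proposal is correct and takes the same approach as the paper, which simply invokes Hopf's theorem of turning tangents for piecewise regular simple closed curves (do Carmo, \S4-5) with a single vertex. Your optional corner-rounding argument also works, with one correction: the rounding arc is attached tangentially at $\gamma(s_2-\delta)$ and $\gamma(s_1+\delta)$, so it turns by the angle from $\gamma'(s_2-\delta)$ to $\gamma'(s_1+\delta)$ rather than exactly $\theta$, and this discrepancy disappears in the limit $\varepsilon\to0$ that you already take.
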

\begin{proof}
This is Hopf's rotation-index theorem for simple closed curves with vertices: the total
variation of the tangent angle along the smooth arcs, plus the exterior angles at the vertices,
equals $\pm2\pi$. See \cite[\S4-5, Theorem of Turning Tangents]{doCarmoCurvas}.
\end{proof}

\begin{remark}[Applicability to profile curves]\label{rem:umlauf}
Let $\gamma(s_1)=\gamma(s_2)$, $s_1<s_2$, with $\sigma(s_1)\ne\sigma(s_2)$ mod $2\pi$.
\begin{enumerate}
\item[(i)] \emph{A simple subarc exists.} Since $|\gamma'|=1$ and the curvature $\sigma'$ is bounded on $[s_1,s_2]$, there is $\eta>0$ such that $\gamma$ is injective on every subinterval of length $\le\eta$. The set of pairs $(u,v)$ with $s_1\le u<v\le s_2$, $v-u\ge\eta$ and $\gamma(u)=\gamma(v)$ is compact and nonempty, and a pair minimizing $v-u$ gives a simple subarc $\gamma|_{[u,v]}$ with $\gamma(u)=\gamma(v)$. Moreover, $\sigma(u)\ne\sigma(v)$ mod $2\pi$. Otherwise $\Gamma(u)=\Gamma(v)$, and the trajectory is periodic, with minimal period $T\le v-u$. Since $\sigma(s_1)\ne\sigma(s_2)$ mod $2\pi$, the difference $s_2-s_1$ is not a multiple of $T$. With $k:=\lfloor(s_2-s_1)/T\rfloor\ge1$, the pair $(s_1,s_2-kT)$ lies in $[s_1,s_2]$, satisfies $\gamma(s_1)=\gamma(s_2-kT)$, and has gap $r:=s_2-s_1-kT\in(0,T)$. By the choice of $\eta$, $r\ge\eta$, and $r<T\le v-u$ contradicts the minimality of $v-u$.
\item[(ii)] \emph{The exterior angle satisfies $|\theta|<\pi$.} Suppose $\sigma(s_2)=\sigma(s_1)+\pi$ mod $2\pi$, i.e.\ $\Gamma(s_2)=R(\Gamma(s_1))$, with $R(x,y,\sigma)=(x,y,\sigma+\pi)$. By Lemma~\ref{lem:simetrias}, item (i), $s\mapsto R(\Gamma(-s))$ is a solution, and by uniqueness $\Gamma(s_2+t)=R(\Gamma(s_1-t))$ for all admissible $t$. For $t=(s_1-s_2)/2$ this gives $\Gamma(m)=R(\Gamma(m))$, for $m=(s_1+s_2)/2$, that is, $\sigma(m)=\sigma(m)+\pi$ mod $2\pi$, which is absurd. The same argument applies to the pair $(u,v)$ of item (i). The value $\theta=0$ is excluded for that pair by item (i).
\end{enumerate}
Hence Lemma~\ref{lem:umlauf} applies to the simple subarc of every apparent loop.
\end{remark}

\begin{definition}[Loop]\label{def:loop}
A \emph{loop} of $\gamma$ is a subarc $\gamma|_{[u,v]}$, $u<v$, with $\gamma(u)=\gamma(v)$ and $\sigma(u)\ne\sigma(v)$ (mod $2\pi$), such that $\gamma$ is injective on $[u,v)$. Two loops are distinct if their parameter intervals are distinct, and a loop is confined to a subset of $\mathcal Q$ if its image is contained in it.
\end{definition}

By Remark~\ref{rem:umlauf}, every apparent loop contains a loop, and Lemma~\ref{lem:umlauf} applies to every loop.

\begin{lemma}[Cardinal points: lower and upper bounds]\label{lem:cardeais}
Let $(\sigma_1,\sigma_2)\subset\R$ be an open interval of length $L$.
\begin{enumerate}
\item[(i)] If $L>\pi$, the interval contains at least two integer multiples of
$\tfrac\pi2$, and two consecutive ones contained in it have distinct parities, i.e., one
is of the form $m\pi$ (a zero of $\sin$) and the other $\tfrac\pi2+m\pi$ (a zero of $\cos$). In particular, the interval contains a zero of $\sin$ and a zero of $\cos$.
\item[(ii)] If the interval contains three multiples of $\tfrac\pi2$, then two of them
have the same parity. Hence, one of the functions $\sin$ or $\cos$ has two zeros in
the interval.
\item[(iii)] If $L\ge\tfrac{3\pi}{2}$ and neither endpoint $\sigma_1,\sigma_2$ is a multiple of
$\tfrac\pi2$, then the interval contains at least three multiples of $\tfrac\pi2$.
\end{enumerate}
\end{lemma}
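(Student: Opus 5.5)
The statement is purely about the lattice $\tfrac\pi2\Z$ inside an open interval of $\R$, so the plan is to argue directly with integer parts. Writing $k_1:=\lfloor 2\sigma_1/\pi\rfloor$, the multiples of $\tfrac\pi2$ lying in $(\sigma_1,\sigma_2)$ are exactly the numbers $m\tfrac\pi2$ with $m$ an integer satisfying $2\sigma_1/\pi<m<2\sigma_2/\pi$. Such an $m\tfrac\pi2$ is a zero of $\sin$ when $m$ is even and a zero of $\cos$ when $m$ is odd. With this dictionary each item becomes a statement about integers in an open real interval of length $2L/\pi$.

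For item (i), the rescaled interval $(2\sigma_1/\pi,2\sigma_2/\pi)$ has length $2L/\pi>2$. The integer $k_1+1$ exceeds $2\sigma_1/\pi$ and satisfies $k_1+1\le 2\sigma_1/\pi+1$. The integer $k_1+2$ satisfies $k_1+2\le 2\sigma_1/\pi+2<2\sigma_2/\pi$. Hence both $k_1+1$ and $k_1+2$ lie in the interval. Two consecutive integers have distinct parities, which gives the claim about consecutive multiples and in particular produces one zero of $\sin$ and one zero of $\cos$.

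For item (ii), this is pigeonhole: among three integers, two have the same parity. Those two correspond to two zeros of the same function, $\sin$ if they are even and $\cos$ if they are odd.

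For item (iii), I will rescale the same way, so the interval has length $2L/\pi\ge3$. Since $2\sigma_1/\pi\notin\Z$, we have $k_1<2\sigma_1/\pi<k_1+1$. Then
\[
k_1+3<2\sigma_1/\pi+3\le2\sigma_2/\pi,
\]
so $k_1+1$, $k_1+2$ and $k_1+3$ all lie in the interval. The only delicate point is the case of equality $L=\tfrac{3\pi}{2}$. There, the hypothesis that $\sigma_1$ is not a multiple of $\tfrac\pi2$ is exactly what makes the inequality $k_1+3<2\sigma_2/\pi$ strict. Without it, $(0,\tfrac{3\pi}{2})$ would contain only $\tfrac\pi2$ and $\pi$. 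I expect this equality case to be the main, though minor, point to check. The hypothesis on $\sigma_2$ is then not needed.
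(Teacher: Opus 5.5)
Your proof is correct and matches the paper's approach: the paper's entire proof is the remark ``Elementary from real analysis,'' and rescaling by $2/\pi$ with $k_1=\lfloor 2\sigma_1/\pi\rfloor$ to count integers in an open interval of length $2L/\pi$ supplies exactly the omitted routine details. You are also right that (iii) uses only the condition on $\sigma_1$. In the only case where that condition matters, $L=\tfrac{3\pi}{2}$, the two endpoint conditions are equivalent anyway, since then $\sigma_2-\sigma_1$ is itself a multiple of $\tfrac\pi2$.
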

\begin{proof}
Elementary from real analysis.
\end{proof}

\subsection{Apparent loops confined to a quadrant}

Along a loop the tangent turns by more than $\pi$, which forces zeros of $\sin\sigma$ and of $\cos\sigma$. Inside a quadrant, the monotonicity of $I$ and $J$ determines how these zeros occur.

\begin{lemma}[Necessary constraints]\label{prop:restricoes}
Let $c<0$ and let $\gamma|_{[s_1,s_2]}$ be a loop, so that $\gamma(s_1)=\gamma(s_2)$. Then there exist $t_I,t_J\in(s_1,s_2)$ with $\sin\sigma(t_I)=0$ and $\cos\sigma(t_J)=0$. 
If, in addition, the subarc is contained in a single region $\mathrm{Q_{\pm\pm}}$, then both functions $I$ and $J$ change sign on the subarc, each exactly once, in the sense dictated by the quadrant. For instance, in $\mathrm{Q_{++}}$, where $I$ and $J$ are strictly increasing, both pass from $-$ to $+$, whence
\begin{equation}\label{eq:sin-cos}    
\sin\sigma(s_1)<0<\sin\sigma(s_2),\qquad \cos\sigma(s_1)<0<\cos\sigma(s_2).
\end{equation}

\end{lemma}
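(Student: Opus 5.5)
The first claim follows from the turning-tangent bound. By Remark~\ref{rem:umlauf} and Lemma~\ref{lem:umlauf}, the loop satisfies $|\Delta\sigma|>\pi$. Since $\sigma$ is continuous, it sweeps the open interval between $\sigma(s_1)$ and $\sigma(s_2)$, whose length exceeds $\pi$. Lemma~\ref{lem:cardeais}(i) then gives a multiple $m\pi$ and a value $\tfrac\pi2+m'\pi$ strictly inside this interval. The intermediate value theorem produces $t_I,t_J\in(s_1,s_2)$ with $\sin\sigma(t_I)=0$ and $\cos\sigma(t_J)=0$. The endpoints of the interval are excluded because the interval is open.

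For the quadrant statement, assume the loop lies in $\mathrm{Q_{++}}$; the other three quadrants follow by the same argument with the monotonicity signs of Definition~\ref{def:regioes}. The loop is not a cylinder trajectory, since those are lines whose projections do not self-intersect. Hence Corollary~\ref{cor:estrita}(iii) applies, and $I$ and $J$ are strictly increasing on $[s_1,s_2]$.

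At $t_J$ we have $\cos\sigma=0$, so $|I(t_J)|=f(x(t_J))>0$ and $I(t_J)=\pm f$. At $t_I$ we have $\sin\sigma=0$, so $I(t_I)=0$. The two remaining tools are the Pythagorean identity of Lemma~\ref{prop:pit} and the fact that the endpoints share the same position, $\gamma(s_1)=\gamma(s_2)=P$, which gives $f(x(s_1))=f(x(s_2))$ and $g(y(s_1))=g(y(s_2))$.

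The main step uses strict monotonicity, $I(s_1)<I(s_2)$ and $J(s_1)<J(s_2)$. Because the weights agree at the two endpoints, this gives $\sin\sigma(s_1)<\sin\sigma(s_2)$ and $\cos\sigma(s_1)<\cos\sigma(s_2)$. In addition, $I$ vanishes at $t_I$, so a strictly increasing $I$ satisfies $I(s_1)<0<I(s_2)$, that is, $\sin\sigma(s_1)<0<\sin\sigma(s_2)$. Likewise $J(t_J)=0$ forces $\cos\sigma(s_1)<0<\cos\sigma(s_2)$. This is \eqref{eq:sin-cos}.

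A strictly monotone function has at most one zero, so each of $I$ and $J$ changes sign exactly once, from $-$ to $+$ in $\mathrm{Q_{++}}$. In the other quadrants the decreasing functions pass from $+$ to $-$, and the inequalities in \eqref{eq:sin-cos} reverse accordingly.

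The only delicate point is whether a zero of $I$ could sit at an endpoint. The open-interval argument of the first step rules this out, since it places $t_I$ and $t_J$ in the open interval $(s_1,s_2)$. There is no serious obstacle beyond ensuring that the hypotheses of Corollary~\ref{cor:estrita} hold, that is, that the trajectory is not a separatrix.
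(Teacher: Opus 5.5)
Your proof is correct and follows the paper's argument. You obtain the interior zeros $t_I,t_J$ from the bound $|\Delta\sigma|>\pi$ together with Lemma~\ref{lem:cardeais}(i). Then, inside the quadrant, the strict monotonicity of Corollary~\ref{cor:estrita}(iii) gives exactly one sign change of each of $I$ and $J$, and hence \eqref{eq:sin-cos}. It is the interior zeros that give the strict sign inequalities; the equality of the weights at $s_1,s_2$ and the Pythagorean identity are not needed.
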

\begin{proof}
The existence of the zeros follows from Lemma~\ref{lem:cardeais}(i), because Lemma~\ref{lem:umlauf} gives $|\Delta\sigma|>\pi$ along a loop. Figure~\ref{fig:laco2} shows a loop with exactly one zero of each. Now assume that the subarc lies in $\mathrm{Q_{\pm\pm}}$. In a quadrant, the signs of the functions $A(x)=\frac{p}{x}+cx$ and $B(y)=\frac{q}{y}+cy$ are fixed. Hence, by Corollary~\ref{cor:estrita}, item (iii), $I$ and $J$ are strictly monotone, as the subarc cannot lie on a cylinder, which has no self-intersections. Since $f,g>0$, one has $\sgn I=\sgn(\sin\sigma)$ and $\sgn J=\sgn(\cos\sigma)$. As a strictly monotone function changes sign at most once and, as we proved, it changes sign at least once, these functions change sign exactly once. As the point is the same at the instants $s_1$ and $s_2$, the weights coincide there, $f(x(s_1))=f(x(s_2))$ and $g(y(s_1))=g(y(s_2))$, so the inequalities $I(s_2)>I(s_1)$ and $J(s_2)>J(s_1)$ translate directly into the inequalities \eqref{eq:sin-cos}.
\end{proof}

\begin{figure}[htbp]
\centering
\includegraphics[width=0.9\textwidth]{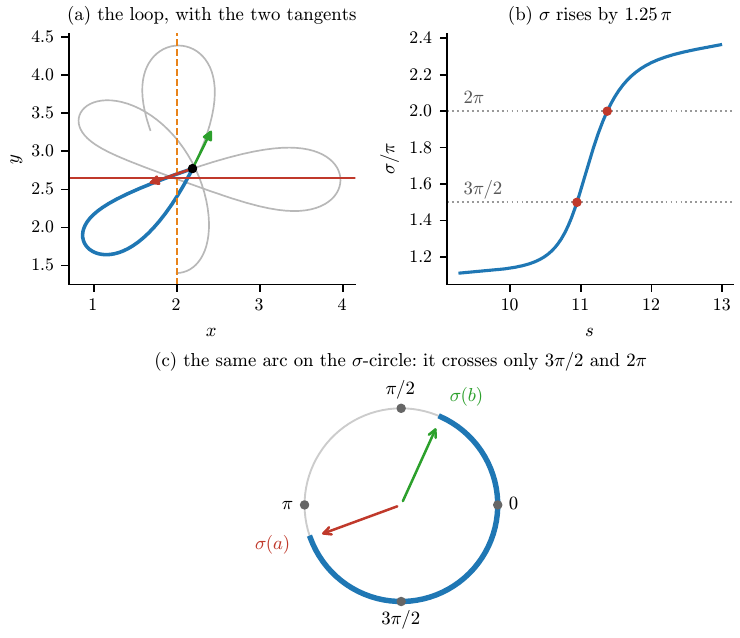}
\caption{A numerical simulation of an apparent loop ($p=4$, $q=7$, $c=-1$, initial condition $(2,1.4,0)$,
$s\in[9.29,12.97]$) whose tangent angle crosses exactly two cardinal points. In (a), the loop in
the plane, in bold, inside the trajectory, with the two tangents at the corner. The separatrices
$\{x=x_*\}$ and $\{y=y_*\}$ are the dashed and the solid lines. In (b), $\sigma(s)$ rising by
$1.2525\pi$ and crossing only $\tfrac{3\pi}{2}$ and $2\pi$. In (c), the same arc on the
$\sigma$-circle, leaving $\tfrac{\pi}{2}$ and $\pi$ out. By Lemma~\ref{prop:restricoes}, along an apparent loop each of $\sin\sigma$ and $\cos\sigma$ (equivalently, each of $I$ and $J$) vanishes at least once. This loop attains the minimum, with exactly one zero of each. It crosses both separatrices, so it is not confined to a quadrant.
}
\label{fig:laco2}
\end{figure}

In a quadrant these zeros are unique, and this bounds the turning of the tangent from above.

\begin{proposition}[Winding bound in a quadrant]\label{thm:cota32}
Let $c<0$ and let $\gamma|_{[s_1,s_2]}$ be a subarc contained in a single region $\mathrm{Q_{\pm\pm}}$, along which both functions $I$ and $J$ change sign. Then
\begin{equation}\label{eq:cota32}
|\Delta\sigma|<\frac{3\pi}{2}.
\end{equation}
Moreover, combined with Lemma~\ref{lem:umlauf}, this shows that a loop confined to a
quadrant satisfies
\begin{equation}\label{eq:janela}
\pi<|\Delta\sigma|<\frac{3\pi}{2}.
\end{equation}
\end{proposition}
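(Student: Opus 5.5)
In a quadrant the signs of $A$ and $B$ are fixed and the subarc lies on neither cylinder, so Corollary~\ref{cor:estrita}(iii) makes $I$ and $J$ strictly monotone on $[s_1,s_2]$. Since $f,g>0$, we have $\sgn I=\sgn(\sin\sigma)$ and $\sgn J=\sgn(\cos\sigma)$. A strictly monotone function vanishes at most once, so $\sin\sigma$ and $\cos\sigma$ each have exactly one zero on $[s_1,s_2]$, at the instant where $I$, respectively $J$, changes sign. We prove \eqref{eq:cota32} by contradiction, counting the multiples of $\tfrac\pi2$ that $\sigma$ must pass through.

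Assume $|\Delta\sigma|\ge\tfrac{3\pi}{2}$. By continuity, $\sigma([s_1,s_2])$ contains the closed interval between $\sigma(s_1)$ and $\sigma(s_2)$, of length at least $\tfrac{3\pi}2$. Any closed interval of length $\ge\tfrac{3\pi}{2}$ contains at least three consecutive multiples of $\tfrac\pi2$, and among three consecutive multiples two have the same parity. This is Lemma~\ref{lem:cardeais}(ii), applied to a closed interval, so no hypothesis on the endpoints is needed. Thus $\sigma$ attains two distinct values $m\pi$ and $(m+1)\pi$, or two distinct values $\tfrac\pi2+m\pi$ and $\tfrac\pi2+(m+1)\pi$. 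These are attained at distinct instants, so $\sin\sigma$ or $\cos\sigma$ has two zeros on $[s_1,s_2]$, contradicting uniqueness. Hence $|\Delta\sigma|<\tfrac{3\pi}2$.

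The only delicate point is the endpoint case $|\Delta\sigma|=\tfrac{3\pi}2$ exactly, say with $\sigma(s_1)$ a multiple of $\tfrac\pi2$. There the closed interval still contains the four multiples $\sigma(s_1),\dots,\sigma(s_1)+\tfrac{3\pi}2$, so two zeros of the same function occur, one of them at an endpoint. The zero of $I$ or $J$ at an endpoint still counts: $I$ and $J$ are strictly monotone on the closed interval, so they cannot vanish at two distinct instants of $[s_1,s_2]$. The argument is therefore unaffected.

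For the loop statement, let $\gamma|_{[s_1,s_2]}$ be a loop confined to a quadrant. By Lemma~\ref{prop:restricoes}, both $I$ and $J$ change sign along it, so \eqref{eq:cota32} applies and gives the upper bound. Remark~\ref{rem:umlauf} shows that Lemma~\ref{lem:umlauf} applies to every loop, and that lemma gives $|\Delta\sigma|>\pi$. Together these yield \eqref{eq:janela}.
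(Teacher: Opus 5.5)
Your proof is correct and is essentially the paper's argument. Strict monotonicity of $I$ and $J$ in the quadrant allows at most one zero each of $\sin\sigma$ and $\cos\sigma$, while a rotation $|\Delta\sigma|\ge\tfrac{3\pi}{2}$ would force $\sigma$ through two multiples of $\tfrac\pi2$ of the same parity.

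The one difference is that you count multiples of $\tfrac\pi2$ in the closed interval between $\sigma(s_1)$ and $\sigma(s_2)$, whereas the paper uses the open one. This removes the need for Lemma~\ref{lem:cardeais}(iii) and for the sign-change hypothesis, which the paper uses only to keep the endpoint values off the multiples of $\tfrac\pi2$. Your argument therefore gives $|\Delta\sigma|<\tfrac{3\pi}{2}$ for every subarc contained in a quadrant.
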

\begin{proof}
In a quadrant, the functions $I$ and $J$ are strictly monotone along the subarc (Corollary~\ref{cor:estrita}(iii)). Since $f,g>0$, one has $\sgn I=\sgn(\sin\sigma)$ and $\sgn J=\sgn(\cos\sigma)$. Therefore each of the functions $\sin\sigma$, $\cos\sigma$ has at most one zero in the interior of $[s_1,s_2]$, since a second zero would force the corresponding monotone function to reassume the value $0$.

Suppose, for contradiction, that $|\Delta\sigma|\ge\tfrac{3\pi}{2}$. The open interval with endpoints $\sigma(s_1)$ and $\sigma(s_2)$ has length $|\Delta\sigma|\ge\tfrac{3\pi}2$, and every value in it is attained by $\sigma$ on $(s_1,s_2)$. Its endpoints are not multiples of $\tfrac\pi2$. Indeed, since $I$ and $J$ are strictly monotone and each changes sign on the subarc, their values at $s_1$ and $s_2$ are nonzero (and have opposite signs in each case). As $f,g>0$, neither $\sin\sigma$ nor $\cos\sigma$ vanishes at $s_1$ or $s_2$. By Lemma~\ref{lem:cardeais}(iii), the
interval then contains at least three multiples of $\tfrac\pi2$, and by
Lemma~\ref{lem:cardeais}(ii), two of them have the same parity, i.e.\ $\sin\sigma$ or
$\cos\sigma$ vanishes at two interior points. This contradicts the strict monotonicity of
$I$ or of $J$.
\end{proof}

\subsection{Uniqueness per strip}\label{sec:unico}
The next goal is to prove that each excursion of a strip contains at most one apparent loop.

\begin{definition}[Chord vector and its angle]\label{def:corda}
Let $\gamma|_{[u,v]}$ be a regular arc in the plane, parametrized by arclength, with tangent angle $\sigma$, that is, $\gamma'(s)=(\cos\sigma(s),\sin\sigma(s))$. We call the chord vector of $\gamma|_{[u,v]}$ the vector
\[
\gamma(v)-\gamma(u)=\int_u^{v}\gamma'(s)ds,
\]
which joins the initial point of the arc to the final one. When it is nonzero, its angle is the unique $\varphi\in\R/2\pi\Z$ such that
\[
\gamma(v)-\gamma(u)=|\gamma(v)-\gamma(u)|(\cos\varphi,\sin\varphi),
\]
that is, the polar angle of the direction of the segment $\overline{\gamma(u)\gamma(v)}$, measured in the same frame in which $\sigma$ is measured. 
\end{definition}

Note that the chord is the average of the tangents, in the literal sense of the integral equality above, while the following lemma is the precise version of that intuition.

\begin{lemma}[Direction of the chord: cone lemma]\label{lem:cone}
Let $\gamma|_{[u,v]}$ be a regular arc parametrized by arclength whose tangent angle stays in an interval $[\theta_-,\theta_+]$ with $\theta_+-\theta_-<\pi$. Then the chord vector $\gamma(v)-\gamma(u)$ is nonzero and its angle belongs to $[\theta_-,\theta_+]$ ($\mbox{mod}\ 2\pi$).
\end{lemma}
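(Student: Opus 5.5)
The plan is to project the chord onto the bisector of the admissible cone. Set $\theta_0:=\tfrac12(\theta_-+\theta_+)$ and $\delta:=\tfrac12(\theta_+-\theta_-)<\tfrac\pi2$. Here the tangent angle $\sigma$ is read as a continuous real-valued function with values in $[\theta_-,\theta_+]$. For every $s\in[u,v]$ we then have $|\sigma(s)-\theta_0|\le\delta$, hence
\[
\lan\gamma'(s),(\cos\theta_0,\sin\theta_0)\ran=\cos(\sigma(s)-\theta_0)\ge\cos\delta>0 .
\]
Integrating over $[u,v]$ gives $\lan\gamma(v)-\gamma(u),(\cos\theta_0,\sin\theta_0)\ran\ge(v-u)\cos\delta>0$. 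In particular the chord vector is nonzero, so its angle $\varphi$ is defined, and it lies within less than $\tfrac\pi2$ of $\theta_0$.

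To locate $\varphi$ exactly, I would use supporting half-planes at the two edges of the cone. Let $n_\pm$ be the unit vectors at angles $\theta_\pm\mp\tfrac\pi2$; they are the inward normals of the edge rays $\theta_+$ and $\theta_-$. The sector $\{\varphi\in[\theta_-,\theta_+]\}$ is exactly the set of nonzero vectors $w$ with $\lan w,n_-\ran\ge0$ and $\lan w,n_+\ran\ge0$, and this description uses $\theta_+-\theta_-<\pi$, which makes the sector convex. For the tangent itself,
\[
\lan\gamma'(s),n_-\ran=\sin(\sigma(s)-\theta_-)\ge0
\quad\mbox{and}\quad
\lan\gamma'(s),n_+\ran=\sin(\theta_+-\sigma(s))\ge0,
\]
since both arguments lie in $[0,\theta_+-\theta_-]\subset[0,\pi)$. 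Integrating these inequalities shows that the chord vector has nonnegative inner product with $n_-$ and with $n_+$. Because the chord is nonzero, its angle belongs to $[\theta_-,\theta_+]$ modulo $2\pi$.

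The main point to get right is the identification of the sector with the intersection of two half-planes. This holds only because the sector's opening is less than $\pi$. Without that condition, the intersection of the two half-planes is the opposite (reflex) sector, or the half-planes only cut out a line. The nonzero-chord step above excludes the degenerate case.

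One could also observe that the chord is $(v-u)$ times the average of unit vectors lying in a closed convex cone. That average lies in the cone, and it is nonzero because the cone is pointed. Either way the proof is elementary and needs no earlier results from the paper.
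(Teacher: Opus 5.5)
Your proof is correct and follows essentially the same route as the paper: the projection onto the bisector gives a nonzero chord exactly as in the paper's argument. Your two half-planes with inward normals $n_\pm$ are an explicit description of the closed convex cone $\mathcal C$, which the paper uses when it says that the chord, being an integral of tangent vectors lying in $\mathcal C$, also lies in $\mathcal C$.
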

\begin{proof}
The set of vectors whose polar angle lies in $[\theta_-,\theta_+]$, together with the origin, is a closed
convex cone (since $\theta_+-\theta_-<\pi$) 
\[
\mathcal C=\{w;\ \langle w,u_0\rangle\geq|w|\cos\theta_0\},
\]
where
\[
u_0:=\left(\cos\left(\frac{\theta_-+\theta_+}{2}\right),\sin\left(\frac{\theta_-+\theta_+}{2}\right)\right) \quad\mbox{and}\quad \theta_0:=\frac{\theta_+-\theta_-}{2}<\frac{\pi}{2}
\]
are the axis and the half-angle of the cone, respectively. For $\sigma(s)\in[\theta_-,\theta_+]$ we have
\[
\langle T(s),u_0\rangle=\cos\left(\sigma(s)-\frac{\theta_++\theta_-}{2}\right)\geq\cos\theta_0>0,
\]
for every $s\in[u,v]$. After integrating, we obtain
\[
\langle\gamma(v)-\gamma(u),u_0\rangle = \int_u^v \langle T(s),u_0\rangle ds\geq (v-u)\cos\theta_0>0
\]
and the chord vector $\gamma(v)-\gamma(u)$ is nonzero. As each tangent $T(s)$ lies in $\mathcal C$ and $\gamma(v)-\gamma(u)=\int_u^vT ds$ is a limit of positive combinations of such vectors, it lies in $\mathcal C$.
\end{proof}

We can now prove the goal stated at the beginning of this subsection. The monotonicity of $I$ separates loops with disjoint parameter intervals, and Lemma~\ref{lem:cone} separates interlaced ones.

\begin{theorem}[At most one loop per strip excursion]\label{thm:unico}
Let $c<0$ and let $\gamma$ be a profile curve of a bi-rotational self-shrinker distinct from the cylinders. Then $\gamma$ cannot have two distinct loops contained in the same excursion of a horizontal strip $\mathcal S^{+}$ or $\mathcal S^{-}$, that is, in the same connected component of $\gamma^{-1}(\mathcal S^{\pm})$. The same holds for excursions of the vertical strips $\mathcal V^{\pm}$, and \emph{a fortiori} for each quadrant.
\end{theorem}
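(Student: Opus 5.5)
The plan is to exploit that, inside a single excursion of a horizontal strip, $I$ is strictly monotone (Corollary~\ref{cor:estrita}(i)), so $\sin\sigma$ vanishes at most once there. Every loop must contain such a zero (Lemma~\ref{prop:restricoes}). Hence all loops in the excursion share a common instant, and a monotonicity argument on $y$ then excludes two of them. I treat an excursion of $\mathcal S^-$ in detail. The case $\mathcal S^+$ is identical with the monotonicity of $I$ reversed. The vertical strips follow by the mirror symmetry of Lemma~\ref{lem:simetrias}(ii), with $J$ and $x$ in place of $I$ and $y$. The statement for quadrants is a special case, since a quadrant excursion lies inside a strip excursion.

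\emph{Step 1: a common zero.} Let the excursion be the parameter interval $E$, and suppose $\gamma|_{[u_1,v_1]}$ and $\gamma|_{[u_2,v_2]}$ are distinct loops with $[u_i,v_i]\subset E$. Since $\gamma$ is not the vertical cylinder, $I=f(x)\sin\sigma$ is strictly monotone on $E$ by Corollary~\ref{cor:estrita}(i), and $f>0$. Hence $\sin\sigma$ has at most one zero $t_0\in E$, with $\sin\sigma$ of constant sign on each side of $t_0$. By Lemma~\ref{prop:restricoes}, each loop has a zero of $\sin\sigma$ in $(u_i,v_i)$. Therefore $t_0$ exists and $t_0\in(u_1,v_1)\cap(u_2,v_2)$. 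Say $I$ increases on $E$ (the $\mathcal S^-$ case). Then $\sin\sigma<0$ on $E\cap\{s<t_0\}$ and $\sin\sigma>0$ on $E\cap\{s>t_0\}$. Since $y'=\sin\sigma$, $y$ is strictly decreasing before $t_0$ and strictly increasing after it.

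\emph{Step 2: the intervals must interlace.} Assume $u_1\le u_2$.
\begin{itemize}
\item If $u_1=u_2$, then $\gamma(v_1)=\gamma(u_1)=\gamma(v_2)$. Injectivity of $\gamma$ on $[u_1,\max(v_1,v_2))$ forces $v_1=v_2$, contradicting distinctness.
\item If $u_1<u_2$ and $v_2\le v_1$, then $u_2,v_2\in[u_1,v_1]$. If $v_2<v_1$, the equality $\gamma(u_2)=\gamma(v_2)$ contradicts injectivity on $[u_1,v_1)$. If $v_2=v_1$, then $\gamma(u_2)=\gamma(v_1)=\gamma(u_1)$ with $u_1<u_2<v_1$, again contradicting injectivity.
\end{itemize}
So $u_1<u_2<t_0<v_1<v_2$, where the middle inequalities come from Step 1.

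\emph{Step 3: contradiction via $y$.} On $[u_1,t_0]$ the function $y$ is strictly decreasing, so $y(u_1)>y(u_2)$. On $[t_0,v_2]$ it is strictly increasing, so $y(v_1)<y(v_2)$. The loop conditions give $y(v_1)=y(u_1)$ and $y(v_2)=y(u_2)$. Combining,
\[
y(u_1)=y(v_1)<y(v_2)=y(u_2)<y(u_1),
\]
which is absurd. This proves the theorem for horizontal strips.

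The only delicate point I anticipate is Step 2: using the injectivity in Definition~\ref{def:loop} correctly to exclude nested intervals. The cone lemma (Lemma~\ref{lem:cone}) should not be needed, because the unique zero of $\sin\sigma$ already makes $y$ unimodal along the whole excursion.
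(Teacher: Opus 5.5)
Your proof is correct, and it has the same skeleton as the paper's. Strict monotonicity of $I$ on the excursion gives a single zero $t_0$ of $\sin\sigma$. Nested intervals and a shared initial point are excluded by the injectivity in Definition~\ref{def:loop}. The interlaced configuration is then refuted by computing the displacement from $P=\gamma(u_1)$ to $Q=\gamma(u_2)$ along two different arcs.

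The differences are in the bookkeeping and in the final tool:
\begin{itemize}
\item The paper treats disjoint parameter intervals as a separate case, using two distinct zeros of $I$. You absorb this into the observation that $t_0$ must lie in both open intervals.
\item More substantially, in the interlaced case the paper applies the cone lemma (Lemma~\ref{lem:cone}) to $\gamma|_{[u_1,u_2]}$ and $\gamma|_{[v_1,v_2]}$. This places the whole chord $Q-P$ in the lower half-plane and also in the upper half-plane, and it requires checking $P\neq Q$ separately beforehand.
\item You use only the vertical component $y(Q)-y(P)$ of that same chord. Its two opposite signs follow directly from the strict unimodality of $y$ on the excursion, and $P\neq Q$ comes out for free.
\end{itemize}

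Your version is shorter and makes the cone lemma dispensable for this theorem, which is its only use in the paper. The paper's version gives finer information, namely the full direction of the chord, but that extra information is not needed here.
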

\begin{proof}
Let $\gamma|_{[a_1,b_1]}$ and $\gamma|_{[a_2,b_2]}$ be two distinct loops contained, say, in the same excursion $(e_1,e_2)$ of $\mathcal S^{-}$ (the remaining cases are identical, switching signs and, in the vertical case, replacing $I$ with $J$). We write $P:=\gamma(a_1)=\gamma(b_1)$ and $Q:=\gamma(a_2)=\gamma(b_2)$. Notice that it is possible to have $P=Q$, since distinct loops can have the same vertex. We may assume $a_1\le a_2$. If $a_1\le a_2<b_2\le b_1$ (nested intervals), then $\gamma(a_2)=\gamma(b_2)$ with $a_2\ne b_2$ in $[a_1,b_1]$, and simplicity of $\gamma|_{[a_1,b_1]}$ forces $\{a_2,b_2\}=\{a_1,b_1\}$, i.e.\ the loops coincide. If $a_1=a_2$ and $b_1<b_2$, then $\gamma(b_1)=\gamma(a_1)=\gamma(a_2)$ with $b_1\in(a_2,b_2)$, contradicting the simplicity of $\gamma|_{[a_2,b_2]}$. It remains to treat disjoint and interlaced intervals with $a_1<a_2$.

\emph{Case 1: disjoint intervals} ($b_1\le a_2$). By Lemma~\ref{prop:restricoes}, each subarc contains in its interior a zero of $\sin\sigma$, and these zeros are distinct by disjointness (note that this uses only the parameter intervals, not the
position of the vertices, so it covers the case $P=Q$ as well). Hence $I=f\sin\sigma$ vanishes at two distinct instants of the interval $(e_1,e_2)$. But $\gamma((e_1,e_2))\subset\mathcal S^{-}$, so by Corollary~\ref{cor:estrita}, item (i), $I$ is strictly increasing on $(e_1,e_2)$, hence injective there. This is a contradiction.

\emph{Case 2: interlaced intervals} ($a_1<a_2<b_1<b_2$). Here the zeros may coincide and the counting argument fails, so we use the geometry instead. Note first that, by hypothesis, the arcs  $\gamma([a_1,b_1])$ and $\gamma([a_2,b_2])$ both lie in $\mathcal S^{-}$. Since $a_2<b_1$, the two parameter intervals overlap, so that $[a_1,b_2]=[a_1,b_1]\cup[a_2,b_2]$ and therefore $\gamma([a_1,b_2])\subset\mathcal S^{-}$ as well. By Corollary~\ref{cor:estrita}, item (i), $I$ is strictly increasing on the whole parameter interval $[a_1,b_2]$ and has at most one zero $t_I$ there. By Lemma~\ref{lem:cardeais}, each of the two subarcs contains a zero of $\sin\sigma$ in its interior, hence a zero of $I$. Therefore, these two zeros coincide, and
\[
t_I\in(a_1,b_1)\cap(a_2,b_2)=(a_2,b_1).
\]
Since $\sgn I=\sgn(\sin\sigma)$ and $I$ is increasing with the single zero $t_I$, we get  $\sin\sigma<0$ on $[a_1,a_2]$ and $\sin\sigma>0$ on $[b_1,b_2]$. Consequently, $\sigma([a_1,a_2])$ is a compact connected subset of the open interval $(-\pi,0)$ modulo $2\pi$, so it is contained in some $[\theta_-,\theta_+]$ with $-\pi<\theta_-\le \theta_+<0$ and $\theta_+-\theta_-<\pi$. Analogously, $\sigma([b_1,b_2])\subset(0,\pi)$ has length $<\pi$. Lemma~\ref{lem:cone} therefore applies to both, and the chord vectors
\[
\gamma(a_2)-\gamma(a_1)=Q-P
\quad\text{and}\quad
\gamma(b_2)-\gamma(b_1)=Q-P,
\]
which are the same nonzero vector (if $P=Q$, then
$\gamma(a_1)=\gamma(a_2)$ with $a_1\ne a_2$, both in $[a_1,b_1)$, contradicting the simplicity of $\gamma|_{[a_1,b_1]}$), have angle $\varphi$ in $\sigma([a_1,a_2])$, where $\sin\varphi<0$, and in $\sigma([b_1,b_2])$, where $\sin\varphi>0$, respectively, which is a contradiction. The vertical case is identical with $J$, $\cos\sigma$ and the left and right half sectors (where $\cos\varphi<0$ and $\cos\varphi>0$, respectively).
\end{proof}

Counting the excursions, the previous theorem bounds the number of loops by the number of crossings.

\begin{corollary}[Counting the loops confined to a strip]\label{cor:superpos}
Let $c<0$ and let $\gamma$ be a trajectory distinct from the separatrices, with $N_x$ and $N_y$ crossings of $\{x=x_*\}$ and $\{y=y_*\}$. If $\gamma$ is not closed, then
\begin{equation}\label{eq:cotalacos}
\#\{\text{loops confined to a horizontal strip}\}\le N_y-1,
\end{equation}
and symmetrically the number of those confined to a vertical strip is at most $N_x-1$. If $\gamma$ is closed, the bounds are $N_y$ and $N_x$. In particular a trajectory with $N_y\le1$ has no loop confined to a horizontal strip.
\end{corollary}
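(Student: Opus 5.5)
The plan is to combine Theorem~\ref{thm:unico} with a count of the excursions of each strip, and then to discard the terminal excursions. By Lemma~\ref{lem:transv}, the $N_y$ crossings of $\{y=y_*\}$ are transverse and isolated. For a non-closed $\gamma$ they split the parameter interval $(s_-,s_+)$ into $N_y+1$ consecutive excursions, each contained in $\mathcal S^+$ or in $\mathcal S^-$ and alternating between them. A loop confined to a horizontal strip is a connected parameter interval whose image avoids $\{y=y_*\}$, so it lies in a single excursion. By Theorem~\ref{thm:unico}, each excursion contains at most one loop. This already gives the crude bound $N_y+1$.

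To reach $N_y-1$, I will show that neither terminal excursion (the first or the last) contains a loop. Take the last excursion $(e,s_+)$, say in $\mathcal S^-$. There $I$ is strictly increasing by Corollary~\ref{cor:estrita}(i). A loop $[a,b]\subset(e,s_+)$ contains a zero $t_I$ of $\sin\sigma$ by Lemma~\ref{prop:restricoes}. Hence $\sin\sigma<0$ before $t_I$ and $\sin\sigma>0$ after it, throughout the remainder of the excursion. I then need a contradiction from the behavior at $\omega$, and I see the following options.
\begin{itemize}
\item If $\omega$ lies on the $x$-axis, Proposition~\ref{lem:cusp} gives $I\to-f(x_0)<0$. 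This contradicts $I>0$ after $t_I$ combined with monotonicity.
\item If $\gamma$ tends to the $y$-axis, the mirror argument with $J$ does not apply directly, so the terminal excursion lemma (Lemma~\ref{lem:excterminal} in the paper's plan, giving $|\Delta\sigma|<\pi$ along terminal excursions) is what I would invoke. A loop has $|\Delta\sigma|>\pi$ by Lemma~\ref{lem:umlauf}, and its angle interval lies inside the excursion's, which is a contradiction.
\end{itemize}
So the cleanest route is: the loop forces $|\Delta\sigma|>\pi$ on a subarc of a terminal excursion, while Theorem~\ref{thmC}(ii) bounds the sweep of the whole terminal excursion by $\pi$. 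Strictly speaking this needs the oscillation of $\sigma$ over the excursion, not just its net change. I expect to adapt the proof of that lemma, which controls $\sigma$ monotonically through the single sign change of $I$.

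For $N_y=0$ the only excursion is both initial and terminal, so there are no loops, consistent with $N_y-1<0$ read as $0$. For a closed $\gamma$, Lemma~\ref{prop:noloop} makes $N_y$ even and $\ge2$. Over one period the crossings cut the circle into exactly $N_y$ excursions, none terminal, so Theorem~\ref{thm:unico} gives the bound $N_y$ directly. The vertical statements follow by the mirror symmetry Lemma~\ref{lem:simetrias}(ii), with $J$ in place of $I$. The main obstacle is the exclusion of loops from terminal excursions. If the terminal-excursion winding bound is not yet available at this point, I would prove it directly: after the unique zero of $I$, $\sin\sigma$ keeps one sign, so $\sigma$ stays in an open half-circle. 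Before that zero, $\sigma$ stays in the opposite half-circle. A loop then needs the chord argument of Lemma~\ref{lem:cone} to fail, much as in Case~2 of Theorem~\ref{thm:unico}.
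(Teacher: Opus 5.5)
Your skeleton is the paper's: $N_y+1$ excursions ($N_y$ over one period of a closed orbit), at most one loop in each by Theorem~\ref{thm:unico}, and none in the two terminal ones. The gap is in that last step, where what you write only handles an endpoint on the $x$-axis.

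\emph{The endpoint case analysis is incomplete.} In $\mathcal S^-$ your list omits escape to infinity, and it omits the case in which $x$ stays in a compact subinterval of $(0,\infty)$ while $\gamma$ reaches no axis. In $\mathcal S^+$ the possible endpoints are different. The $y$-axis case, by contrast, needs no winding bound: $|I|\le f(x)\to0$ forces $I\to0$, which contradicts $I\ge I(b)>0$.

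\emph{The fallback through Lemma~\ref{lem:cone} fails outright.} It never uses that the excursion is terminal. The pattern $\sin\sigma<0$ before $t_I$ and $\sin\sigma>0$ after it is the pattern of every loop confined to $\mathcal S^-$. For a single loop with vertex $P$, the chords of $\gamma|_{[a,t_I]}$ and $\gamma|_{[t_I,b]}$ are $\gamma(t_I)-P$ and $P-\gamma(t_I)$, pointing down and up, which is consistent. Case~2 of Theorem~\ref{thm:unico} reaches a contradiction only because two interlaced loops produce the same chord $Q-P$ from both sign regimes. An argument insensitive to terminality would also exclude loops from intermediate excursions, which the bound does not claim.

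\emph{The winding bound alone is not enough.} As you noticed, the net bound $|\Delta\sigma|<\pi$ of Theorem~\ref{thmC}(ii) does not suffice by itself. Also, Lemma~\ref{lem:excterminal} is not driven by the sign change of $I$ inside the excursion. It is driven by the limit value of $I$ at the endpoint of $\gamma$ (Proposition~\ref{lem:Jfront}).

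\emph{How to close the gap.} The missing ingredient is the constant sign of $\sin\sigma$ along a terminal excursion. This is part of the statement of Lemma~\ref{lem:excterminal}, and it is proved by the same argument as Proposition~\ref{thm:terminal}, which is what the paper cites here. Neither result depends on this corollary. Constant sign contradicts the strict sign change at $t_I$ directly, with no comparison of angles.

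It also has a short self-contained proof. In $\mathcal S^-$ at $\omega$, fix $t\in(t_I,s_+)$ and set $\kappa:=I(t)>0$. On $[t,s_+)$, monotonicity gives $\kappa\le I=f(x)\sin\sigma\le f(x)\le M_f$. Hence $x$ stays in the compact set $\{f\ge\kappa\}\subset(0,\infty)$, and $y'=\sin\sigma\ge\kappa/M_f$.
\begin{itemize}
\item If $s_+=\infty$, then $y\to\infty$, contradicting $y<y_*$.
\item If $s_+<\infty$, the phase point stays in the compact set $\{f\ge\kappa\}\times[y(t),y_*]\times\s^1\subset\mathcal Q\times\s^1$, contradicting maximality.
\end{itemize}
So $\gamma$ crosses $\{y=y_*\}$ after $t$, contradicting terminality. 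In $\mathcal S^+$ the same works with $I\le I(t)<0$, which drives $y$ down toward $y_*$. The endpoint $\alpha$ follows by reversibility. This treats all endpoint types at once, and it avoids the limit-set argument and the exclusion of vertical escape on which the paper's route through Proposition~\ref{lem:Jfront} relies.

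A last small point: for $N_y=0$ the bound would read $0\le-1$. You need $N_y\ge1$ from Theorem~\ref{thm:cross} rather than reading $-1$ as $0$.
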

\begin{proof}
The $N_y$ crossings divide $\gamma$ into exactly $N_y+1$ horizontal excursions, and a loop
confined to a strip has its subarc inside a single one. By Theorem~\ref{thm:unico}, each excursion contains at most one loop. This would give at most $N_y+1$ loops. But two of the excursions are terminal, the one containing $\alpha=\lim_{s\to s_-}\gamma(s)$ and the one containing $\omega=\lim_{s\to s_+}\gamma(s)$ (for the maximal interval of definition $(s_-,s_+)$), and by Proposition~\ref{thm:terminal}, neither contains a loop. There remain $N_y-1$ intermediate excursions, which gives \eqref{eq:cotalacos}. Note that $N_y\ge1$ always holds by Theorem~\ref{thm:cross} and that for $N_y=1$ both excursions are terminal and the bound is zero. If $\gamma$ is closed, the $N_y$ crossings of one period, taken to start at a crossing, divide it into $N_y$ excursions, none of them terminal, and Theorem~\ref{thm:unico} gives at most $N_y$ loops. The vertical case is the mirror image.
\end{proof}
\begin{remark}\label{rem:lacoquadrante}
    Numerical experiments suggest the existence of loops confined to a quadrant, as we can see in the middle and right panels of \cite[Figure~26, p.~28]{DLN}, which show numerical profile curves of immersed bi-rotational self-shrinkers in $\R^4$ ($p=q=1$). We performed some of these experiments, obtaining other loops of this kind. See Figure \ref{fig:lacoquadrante}. Therefore, the bound of Theorem \ref{thm:unico} seems to be sharp.

    \begin{figure}[htbp]
\centering
\includegraphics[width=\textwidth]{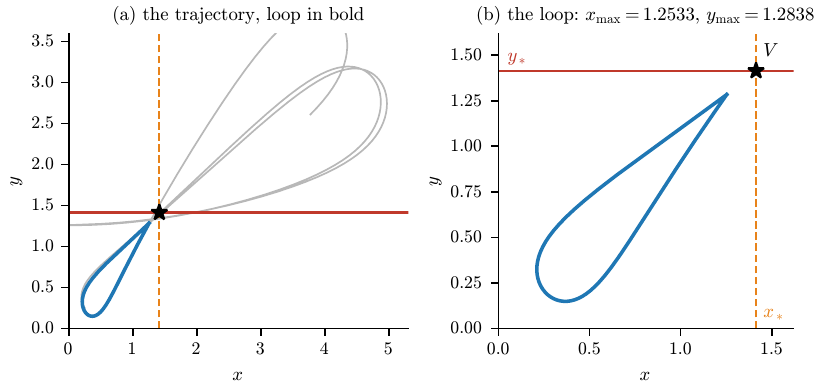}
\caption{An apparent loop confined to the quadrant
$\mathrm{Q_{++}}$, for $p=q=1$ and $c=-\tfrac12$. In (a), the trajectory issued from
$(0.250,0.250)$ on the diagonal, with tangent angle $\sigma=\tfrac{3\pi}4$, with the loop in bold.
The loop is the subarc $s\in[21.279,24.454]$. In (b), the loop alone, which stays strictly
below both separatrices $x=x_*$ and $y=y_*=\sqrt2$, whose intersection is the vertex $V$ (star).}
\label{fig:lacoquadrante}
\end{figure}
\end{remark}

\section{Excursions, crossings and winding}\label{sec:excursoes}

The crossings of the two separatrices divide each trajectory into excursions, and the global behavior is determined by their sequence. We first establish the boundary facts that govern a single excursion, then we eliminate the loops from the terminal excursions, and finally we bound the winding by the number of crossings.

\subsection{Behavior at the boundary of the strips}\label{sec:fronteira}

We collect the boundary facts used below. The first is classical, and we state it for completeness. For the proof, see \cite[Lemmas~6.3 and 6.5--6.7]{Teschl} or \cite[Chapter~I, Section~8, Theorem~8.1]{Hale}. See also \cite[Chapter~2, Theorem~5.2]{LaSalle}.

\begin{lemma}[Limit sets of precompact semi-orbits]\label{lem:omega}
If a positive semi-orbit of a $C^{1}$ field is contained in a compact subset $K$ of the domain, then it is defined for all $s\ge0$, its $\omega$-limit set is nonempty, compact, connected and invariant, contained in $K$, the orbit converges to it, and the complete trajectory of each of its points is defined for all time and remains in it.
\end{lemma}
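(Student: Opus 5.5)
This is the classical statement on $\omega$-limit sets of precompact semi-orbits. The plan is to prove it in the standard way for a $C^{1}$ field $F$ on an open set $D$, here $\mathcal Q\times\s^1$.

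\textbf{Step 1: global existence.} Suppose the maximal forward time $s_+$ of the solution through $v$ were finite while $\Gamma([0,s_+))\subset K$. Since $F$ is continuous, $|F|\le M$ on $K$, so $\Gamma$ is $M$-Lipschitz and $\Gamma(s)$ is Cauchy as $s\to s_+$. Its limit lies in $K\subset D$. Local existence at that limit point then extends the solution beyond $s_+$, contradicting maximality. Hence $s_+=\infty$.

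\textbf{Step 2: topological properties.} Write
\[
\omega(v)=\bigcap_{T\ge0}\overline{\Gamma([T,\infty))}.
\]
This is a decreasing intersection of nonempty compact subsets of $K$, so it is nonempty, compact and contained in $K$.

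For convergence, suppose $\operatorname{dist}(\Gamma(s_k),\omega(v))\ge\varepsilon$ along some $s_k\to\infty$. By compactness of $K$ a subsequence of $\Gamma(s_k)$ converges, and its limit belongs to $\omega(v)$ by definition, which is a contradiction.

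For connectedness, suppose $\omega(v)=A\cup B$ with $A,B$ disjoint, nonempty and compact, at distance $3\delta>0$. The curve enters the $\delta$-neighbourhoods of both $A$ and $B$ at arbitrarily large times. By continuity it must therefore pass, at times $s_k\to\infty$, through the compact set $\{w\in K:\operatorname{dist}(w,A)\ge\delta,\ \operatorname{dist}(w,B)\ge\delta\}$. A limit point of these $\Gamma(s_k)$ is a point of $\omega(v)$ lying in neither $A$ nor $B$, which is a contradiction.

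\textbf{Step 3: invariance.} Let $w\in\omega(v)$, so $w=\lim\Gamma(s_k)$ with $s_k\to\infty$, and fix $t\in\R$. For large $k$ we have $s_k+t\ge0$, so $\Gamma(s_k+t)=\phi_t(\Gamma(s_k))$ is defined and lies in $K$. I expect this step to be the main obstacle. The issue is to show that $\phi_t(w)$ is defined at all, which requires a uniform interval of existence.

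I would handle it as follows. Since $K$ is compact in the open set $D$, there is $\eta>0$ with $K_\eta=\{\operatorname{dist}(\cdot,K)\le\eta\}\subset D$ compact. On $K_\eta$ the field $F$ is bounded by some $M'$ and Lipschitz with some constant $L$. Every solution starting in $K$ therefore exists and stays in $K_\eta$ for $|t|\le\eta/M'$. Gronwall on $K_\eta$ gives
\[
|\phi_t(w)-\Gamma(s_k+t)|\le e^{L|t|}|w-\Gamma(s_k)|\to0 .
\]
Hence $\phi_t(w)=\lim\Gamma(s_k+t)$, and this limit belongs to $\omega(v)\subset K$.

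Since $\phi_t(w)$ lands back in $K$, the argument can be iterated in steps of length $\eta/M'$ in both time directions. So the trajectory of $w$ is defined for all $t\in\R$ and remains in $\omega(v)$, which proves invariance.
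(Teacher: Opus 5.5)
Your proof is correct and is the standard textbook argument: global existence from the a priori bound on $K$, $\omega(v)$ as a nested intersection of compact sets, connectedness by the separation argument, and invariance via dependence on initial data. The paper does not reprove the lemma but cites exactly this argument from Teschl, Hale and LaSalle. Your uniform-existence-time and Gronwall device in Step~3 works; a shorter route is that continuous dependence gives $\phi_t(w)=\lim\Gamma(s_k+t)\in\omega(v)\subset K$ for every $t$ in the maximal interval of $w$, and Step~1 applied in both time directions then forces that interval to be $\R$.
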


The second fact describes how a trajectory confined to a horizontal strip can end. Since $I$ is monotone and bounded there, it has limits at both endpoints, and their values distinguish the possible endpoints.

\begin{proposition}[Limit values of $I$ in a strip]\label{lem:Jfront}
Let $c<0$ and let $\gamma$ be a trajectory of \eqref{lambda-s} confined to $\mathcal S^{+}$ or to $\mathcal S^{-}$, distinct from the cylinders, with maximal interval $(s_-,s_+)$. Let $I_\pm$ be the limits of $I$ at the endpoints (which exist, by monotonicity and boundedness, see Corollary~\ref{cor:estrita}(i) and Lemma~\ref{lem:bound}). Then each endpoint falls into one of the following scenarios:
\begin{enumerate}
\item[\textup{(F1)}] if $x\to\infty$ or $x\to0$ along some sequence $s_k\to s_\pm$, then
$I_\pm=0$, since $|I|\le f(x)\to0$;
\item[\textup{(F2)}] the scenario in which $x$ remains in a compact subset of $(0,\infty)$ and $y$ remains bounded and away from $0$ cannot occur;
\item[\textup{(F3)}] if the endpoint lies on the $x$-axis, at $(x_0,0)$ (only possible in
$\mathcal S^{-}$, and in finite time), then $I_-=f(x_0)>0$ at the endpoint $\alpha$ and
$I_+=-f(x_0)<0$ at the endpoint $\omega$;
\item[\textup{(F4)}] $x$ remains bounded, while $\limsup y=\infty$, which is possible only in $\mathcal S^{+}.$
\end{enumerate}
\end{proposition}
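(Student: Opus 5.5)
The plan is to treat the four scenarios one at a time, using only that $I$ is strictly monotone in the strip (Corollary~\ref{cor:estrita}(i)), the bound $|I|\le f(x)$ (Lemma~\ref{prop:pit}), and the results on axis endpoints (Proposition~\ref{lem:cusp} and Lemma~\ref{rem:limites}). Monotonicity and boundedness give the limits $I_\pm$. Item (F1) is then immediate. Along the sequence $s_k$ with $x(s_k)\to0$ or $\infty$, Lemma~\ref{lem:bound} gives $f(x(s_k))\to0$, so $|I(s_k)|\to0$, and since the full limit exists it equals $0$.

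For (F3), an endpoint on the $x$-axis forces $y\to0<y_*$, so the trajectory lies in $\mathcal S^-$. Here $x\to x_0>0$, either by hypothesis or by Lemma~\ref{rem:limites} once $x$ stays in a compact interval. Proposition~\ref{lem:cusp} then gives finite time and the values $I_+=-f(x_0)$ at $\omega$ and $I_-=f(x_0)$ at $\alpha$, the latter by reversibility. For (F4), the only thing to check is that $\limsup y=\infty$ is incompatible with $\mathcal S^-$, which is trivial since $y<y_*$ there.

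The real content is (F2). Suppose that, near the endpoint $s_+$, the curve stays in a compact set $K=[a,b]\times[c_1,c_2]\subset\mathcal Q$, intersected with the closed strip. The phase point then stays in the compact set $K\times\s^1$, and by Lemma~\ref{lem:omega} the solution is defined for all $s\ge s_0$, so $s_+=\infty$. The $\omega$-limit set $\Omega$ is nonempty, compact and invariant, and each of its points has a complete trajectory lying in $\Omega$.

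Next we show that $I$ is constant on $\Omega$. The function $I$ is continuous on phase space, and $I(\Gamma(s))\to I_+$. Every point of $\Omega$ is a limit of points $\Gamma(s_k)$ with $s_k\to\infty$, so $I\equiv I_+$ on $\Omega$. Consequently, along the full trajectory through any point of $\Omega$, the function $I$ is constant. Proposition~\ref{prop:cyl} then says this trajectory is one of the two cylinders.

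It cannot be the vertical cylinder. Its profile is the whole line $x=x_*$, so its $y$ ranges over $(0,\infty)$. This leaves the bounded set $K$, and it also crosses $y=y_*$, leaving the closed strip (Remark~\ref{rem:vertex}). Hence $\Omega$ consists of horizontal-cylinder trajectories. But their $x$ ranges over $(0,\infty)$, which contradicts $x\in[a,b]$. This contradiction proves (F2). The endpoint $\alpha$ is handled by reversibility.

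The main subtlety is the limiting step in (F2). One must note that $\Omega$ lies in the closure of the strip, not in the strip itself, which is why the horizontal cylinder must be excluded by unboundedness in $x$ rather than by strip membership. One must also use that constancy of $I$ passes to $\Omega$ through the existence of $\lim I$, and not merely through monotonicity.
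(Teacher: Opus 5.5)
Your proofs of the individual items are correct. Your argument for (F2) is also a valid shortcut of the paper's. The paper deduces $\Lambda\subset Z_I\cup\{y=y_*\}$ and runs the four-case analysis (i)--(iv). You instead note that $I$ is constant along the complete trajectory through any point of the limit set, which stays there by Lemma~\ref{lem:omega}, and apply Proposition~\ref{prop:cyl} directly. Both cylinders are then excluded because they leave $K\times\s^1$.

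\textbf{The gap: exhaustiveness.} You never show that the scenarios cover every endpoint, which is what ``each endpoint falls into one of the following scenarios'' asserts. The later applications depend on this. Theorem~\ref{thm:cross} uses ``in $\mathcal S^{+}$ only (F1) and (F4) can occur.'' Proposition~\ref{thm:terminal} and Lemma~\ref{lem:excterminal} use the resulting list of admissible limit values of $I$.

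Your four items are conditional statements. Nothing in the proposal rules out an endpoint with $x$ in a compact interval $[a,b]\subset(0,\infty)$ and $y$ bounded but oscillating, with $\liminf y=0<\limsup y$. Such an endpoint fits none of the scenarios:
\begin{itemize}
\item not (F1), since no sequence has $x\to0$ or $x\to\infty$;
\item not (F2), since $y$ is not bounded away from $0$;
\item not (F4), since $y$ is bounded;
\item not (F3), since you reach the axis through Lemma~\ref{rem:limites}, which assumes $y\to0$.
\end{itemize}

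\textbf{The missing step.} On the tail inside the strip, $I$ is strictly monotone, so it has at most one zero. Its zeros are exactly those of $\sin\sigma=y'$. Hence $y$ is eventually monotone and converges to some $y_\infty\in[0,\infty]$. Now suppose $x$ stays in a compact subinterval of $(0,\infty)$ near the endpoint, which is the complement of (F1). Then:
\begin{itemize}
\item $y_\infty=0$ is possible only in $\mathcal S^-$, and it gives (F3) via Lemma~\ref{rem:limites} and Proposition~\ref{lem:cusp};
\item $y_\infty\in(0,\infty)$ is (F2);
\item $y_\infty=\infty$ is (F4).
\end{itemize}
With this paragraph added, your proof is complete.
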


\begin{proof}
The four scenarios exhaust the possibilities. Indeed, if $x$ tends neither to $0$ nor to $\infty$ along any sequence $s_k\to s_\pm$, then $x$ remains in a compact subset of $(0,\infty)$ near the endpoint. Since $I$ is strictly monotone (Corollary~\ref{cor:estrita}(i)), it has at most one zero, and its zeros are those of $\sin\sigma=y'$. Hence $y$ is eventually monotone and converges to some $y_\infty\in[0,\infty]$. The value $y_\infty=0$ is possible only in $\mathcal S^{-}$, and it gives an endpoint on the $x$-axis by Lemma~\ref{rem:limites} and Proposition~\ref{lem:cusp}, which is (F3). A value $y_\infty\in(0,\infty)$ is (F2), and $y_\infty=\infty$ is (F4). Case (F1) follows from Lemma~\ref{lem:bound} and case (F3) is Proposition~\ref{lem:cusp}. The situation described in (F4) will be treated in Propositions~\ref{lem:fuga} and \ref{prop:nao-eixo} below. Let us prove (F2), supposing for contradiction that, near
the endpoint, $x(s)\in[a,b]\subset(0,\infty)$ and $y(s)\in[y_{\flat},y_{\sharp}]\subset(0,\infty)$.

The hypothesis says that the semi-orbit is contained in the compact set
$\mathcal K:=[a,b]\times[y_{\flat},y_{\sharp}]\times\s^1$, which is a compact subset of the domain $\mathcal Q\times\s^1$ of the field. The level $\{y=y_*\}$ lies in this domain, although it bounds the strip, and the semi-orbit may accumulate on it. Hence the semi-orbit is precompact and Lemma~\ref{lem:omega} applies: its limit set $\Lambda$ is nonempty, compact,
connected and invariant, with $\Lambda\subset\mathcal K$, and the complete trajectory of each
$z\in\Lambda$ is defined for all time and remains in $\Lambda$. Since $I$ is monotone and
bounded, $I\equiv I_\pm$ on $\Lambda$ by continuity. Hence $I'\equiv0$ on $\Lambda$, that is,
\[
\Lambda\ \subset\ Z_I\cup\{y=y_*\},\quad\mbox{where}\quad  Z_I=\{\cos\sigma=0\}
\]
(Lemma~\ref{prop:sinais}). Fix $z=(x(z),y(z))\in\Lambda$ and analyze the possibilities, which are
exhaustive.

\begin{enumerate}
\item[(i)] \emph{$z\in Z_I$, with $y(z)\ne y_*$ and $x(z)\neq x_*$.} By
Lemma~\ref{lem:Z} (ii), the trajectory of $z$ meets $Z_I$ only at isolated instants: it leaves $Z_I$ immediately. And, by continuity, it keeps $y\ne y_*$ at nearby instants. Hence the orbit leaves $Z_I\cup\{y=y_*\}$, contradicting $\Lambda\subset Z_I\cup\{y=y_*\}$ and the invariance of $\Lambda$.

\item[(ii)] \emph{$z$ belongs to the trajectory of the vertical cylinder} ($x\equiv x_*$,
$\cos\sigma=0$). Along it $y'=\sin\sigma=\pm1$, so that $y$ ranges over the whole interval $(0,\infty)$: the trajectory does not fit in $[y_{\flat},y_{\sharp}]$, nor in the closure of $\mathcal S^{+}$ or of $\mathcal S^{-}$ (Rem.~\ref{rem:vertex}). This contradicts
$\Lambda\subset\mathcal K$.

\item[(iii)] \emph{$z$ has $y(z)=y_*$ and $\sin\sigma(z)\ne0$.} Then $y'=\sin\sigma\ne0$ at $z$, and the trajectory leaves $\{y=y_*\}$ immediately. To remain in $Z_I\cup\{y=y_*\}$, it would have to be in $Z_I$ at all nearby instants, that is, to belong to the largest invariant subset of $Z_I$, which is exactly the vertical cylinder (Lemma~\ref{lem:Z}(i)), a case already excluded in (ii). If, on the contrary, $\cos\sigma(z)\ne0$, then by continuity the orbit has $y\ne y_*$ and $\cos\sigma\ne0$ just after $z$, leaving the union. In either alternative, we have a contradiction.

\item[(iv)] \emph{$z$ has $y(z)=y_*$ and $\sin\sigma(z)=0$.} In this case, $\sigma(z)\in\{0,\pi\}$ and $z$ belongs to the trajectory of the horizontal cylinder, which is indeed invariant and $I=f(x)\sin\sigma\equiv0$ on it. But the trajectory of the horizontal cylinder leaves any compact set, contradicting that $\Lambda\subset\mathcal K.$

\end{enumerate}

Since the alternatives (i) to (iv) were exhaustive, no point can belong to $\Lambda$, that is, $\Lambda=\emptyset$, contradicting the nonemptiness of the limit set of a precompact semi-orbit. Hence the hypothesis of (F2) is untenable, which proves (F2).
\end{proof}

\begin{remark}\label{rem:Jfront-end}
The proofs of (F1)--(F3) use only a semi-orbit. Hence they hold for an end of any trajectory distinct from the cylinders which is eventually contained in $\mathcal S^{+}$ or in $\mathcal S^{-}$: on the corresponding tail, $I$ is strictly monotone and converges, (F2) cannot occur, and in (F3) the existence of $x_0$ is guaranteed by Lemma~\ref{rem:limites}.
\end{remark}

The following result shows that bi-rotational self-shrinkers distinct from the cylinders cannot have ends asymptotic to the cylinders. This shows that case (F4) of Proposition~\ref{lem:Jfront} is also impossible.

\begin{proposition}[Absence of vertical escape]\label{lem:fuga}
Let $c<0$ and let $\gamma$ be a trajectory of \eqref{lambda-s} distinct from the vertical cylinder. There is no endpoint of $\gamma$ near which $\gamma$ is contained in $\mathcal S^{+}$, with $\limsup y=\infty$ and $x(s)\in[a,b]\subset(0,\infty)$. In particular, no trajectory is confined to $\mathcal S^{+}$ with these properties.
\end{proposition}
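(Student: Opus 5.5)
The plan is to combine the monotonicity of $I$ in $\mathcal S^{+}$ with a slow–fast analysis of the tangent direction as $y\to\infty$. The aim is to show that the only possible limit of $x$ is $x_*$, and that this limit contradicts the sharp bound $|I|\le M_f$.

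\emph{Step 1 (reduction to a vertical end).} Near the endpoint, $\gamma\subset\mathcal S^{+}$, so by Corollary~\ref{cor:estrita}(i) $I$ is strictly decreasing and has at most one zero. Its zeros are those of $\sin\sigma=y'$, so $y$ is eventually monotone. Since $\limsup y=\infty$, we get $y\to\infty$, $\sin\sigma>0$ eventually, and hence $I>0$ decreasing to some $I_+\ge 0$. Because $x\in[a,b]$ while $y\to\infty$, the end has infinite arclength, and $B(y)=cy+q/y\to-\infty$ while $|A(x)|\le M_A$.

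\emph{Step 2 (the tangent is forced toward the vertical, along an unstable slow manifold).} Set $u=\cos\sigma$. From \eqref{eq:flow},
\[
u'=A(x)\sin^2\sigma+|B(y)|\sin\sigma\,u .
\]
This is linear in $u$ with a large positive coefficient $|B|\sin\sigma$, so the slow curve $u\approx A/B$ is repelling. If at some instant $|u|$ exceeds $2M_A/(|B|\sin\sigma)$, then $|u|$ grows at an exponential rate $\gtrsim|B|\sin\sigma$. The tangent then becomes horizontal within a short arclength, giving $\sin\sigma=0$, which contradicts Step 1. The delicate point here is that $\sin\sigma$ might degenerate while $u$ escapes. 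I would handle this by tracking $\sigma$ directly: $\sigma'\le -|B|\cos\sigma+M_A$ when $\cos\sigma>0$ (and symmetrically otherwise), which drives $\sigma$ monotonically to $0$ or $\pi$ in finite arclength, as in the proof of Lemma~\ref{rem:limites}. Consequently $u$ stays in the band $|u|\le C/y$ for the whole tail, so $x'=O(1/y)$ and $\sin\sigma\to1$.

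\emph{Step 3 (the horizontal motion is repelled from $x_*$).} Using $u=A(x)/B(y)+$(smaller error) along the tail, and the slow variable $\tau=\log y$ (note $d\tau/ds=\sin\sigma/y$), one gets
\[
\frac{d}{d\tau}(x^2-x_*^2)=2(x^2-x_*^2)+o(1),
\]
since $(p+cx^2)/c=x^2-x_*^2$. The tail is infinite in $\tau$, and $x$ stays in $[a,b]$. A solution of this perturbed linear equation with positive rate stays bounded only if it tends to $0$. Hence $x\to x_*$. This is the step I expect to be the main obstacle: the error terms must be quantified well enough, for example $O(y^{-2})$ in $d/d\tau$, so that the unstable linear growth dominates. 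I would obtain the quantitative bound from Step 2 by a second iteration of the Riccati comparison.

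\emph{Step 4 (contradiction with the bound on $I$).} From $x\to x_*$ and $\sin\sigma\to1$ we get $I=f(x)\sin\sigma\to f(x_*)=M_f$. But $I$ is strictly decreasing along the tail, so at earlier instants $I>M_f$. This contradicts Lemma~\ref{lem:bound}. The only way out is the vertical cylinder, on which $I\equiv M_f$ (Proposition~\ref{prop:cyl}), and it is excluded by hypothesis. The final sentence of the statement follows immediately, since a trajectory confined to $\mathcal S^{+}$ with these properties would have such an endpoint.
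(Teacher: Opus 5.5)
Your proposal follows essentially the same route as the paper's proof. Strict decrease of $I$ in $\mathcal S^{+}$ gives $\sin\sigma>0$ and $y\to\infty$; the slow curve $\cot\sigma\approx A(x)/B(y)$ is repelling; the induced drift of $x$ is unstable at $x_*$, so boundedness forces $x\to x_*$; and then $I\to M_f$ is incompatible with strict decrease of $I$ and $|I|\le M_f$ off the vertical cylinder. The step you flag as the main obstacle is carried out in the paper's Step~2 (with $y$ as independent variable and $\tan(\sigma-\tfrac\pi2)$ in place of $\cos\sigma$): the deviation from the slow curve solves a linear equation with rate $\gtrsim|c|y$ and small forcing, so boundedness selects the decaying integral solution and gives an $O(y^{-2})$ bound on the deviation, which is more than you need, since an $o(1)$ error in $d/d\log y$ already forces bounded solutions of your Step~3 equation to tend to $0$.
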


\begin{proof}
By reversibility (Lemma~\ref{lem:simetrias}(i)), we may assume that the endpoint is $\omega$, and we work on a semi-orbit $(s_0,s_+)$ contained in $\mathcal S^{+}$. Suppose, for contradiction, that $y$ is unbounded there, with
$x\in[a,b]$. Set
\[
\tau:=\sigma-\tfrac\pi2,
\qquad\text{so that}\qquad
\sin\sigma=\cos\tau,\quad \cos\sigma=-\sin\tau.
\]
Substituting into \eqref{eq:flow}, we obtain
\begin{equation}\label{eq:fuga-tau}
x'=-\sin\tau,\qquad y'=\cos\tau,\qquad
\tau'=\sigma'=-A(x)\cos\tau+\tilde B(y)\sin\tau,
\end{equation}
where
\[
\tilde B(y):=-B(y)=|c|y-\frac{q}{y}>0\quad\mbox{in}\quad\mathcal S^{+}.
\]
Note that $\tilde B$ is increasing and $\tilde B(y)\to\infty$. Thus, fix $y_1>y_*$ such that
\begin{equation}\label{eq:lamlb}
\tilde B(y) \ge \tfrac12|c|y\quad\mbox{for}\quad y\ge y_1 .
\end{equation}%
We claim that $\cos\tau>0$ along the whole semi-orbit. Indeed, by \eqref{eq:dJ}, and $B<0$ on $\mathcal S^{+}$, the function $I$ is strictly decreasing there and since $f>0$, the zeros of $I$ are exactly those of $\sin\sigma=\cos\tau$, so there is at most one. If there were
one, then $I<0$ after it, hence $y'=\sin\sigma<0$ and $y$ would be decreasing from that point on, contradicting that $\limsup y=\infty.$ If $I$ had no zero and were negative, the same conclusion would hold. 
Therefore $I>0$, that is, $\sin\sigma=\cos\tau>0$, on the whole semi-orbit, and $y$ is strictly increasing
throughout. 

Consequently, $y$ may be taken as the independent variable on the whole semi-orbit. Dividing the equations of \eqref{eq:fuga-tau} by $y'=\cos\tau$ gives
\begin{equation}\label{eq:fuga-red}
\frac{d\tau}{dy}=\tilde B(y)\tan\tau-A(x),\qquad \frac{dx}{dy}=-\tan\tau.
\end{equation}
Let $M_A:=\max_{[a,b]}|A|$.

\smallskip
\emph{Step 1 (the sliding manifold and its instability).} Replace $\tau$ by the variable
\[
u\ :=\ \tan\tau,
\]
which is legitimate as $|\tau|<\tfrac\pi2$ throughout. From
$\tfrac{du}{dy}=(1+u^{2})\tfrac{d\tau}{dy}$ and the first equation of \eqref{eq:fuga-red}, we have
\begin{equation}\label{eq:u-ode}
\frac{du}{dy}=(1+u^{2})\left(\tilde B(y) u-A(x)\right)
=(1+u^{2})\tilde B(y)\left(u-m\right),
\quad\mbox{for}\quad
m:=\frac{A(x)}{\tilde B(y)}.
\end{equation}
The curve $\{u=m\}$, where $du/dy$ vanishes, is the sliding manifold, and
\begin{equation}\label{eq:m-small}
|m|\le\frac{M_A}{\tilde B(y)}\le\frac{2M_A}{|c|y}\xrightarrow{y\to\infty}0,
\end{equation}
by \eqref{eq:lamlb}. So fix $y_1$ large enough that $|m(y)|\le\tfrac12$ for $y\ge y_1$. Thus the sliding manifold approaches $\{u=0\}$, the vertical direction.

We measure the departure from it by the deviation
\begin{equation}\label{eq:h-def}
\delta:=u-m .
\end{equation}
Differentiating and using \eqref{eq:u-ode}, we obtain
\begin{equation}\label{eq:h-ode}
\frac{d\delta}{dy}=(1+u^{2})\tilde B(y)\delta -\frac{dm}{dy}.
\end{equation}
Differentiating $m=A(x)/\tilde B(y)$ and using
$dx/dy=-u$ from \eqref{eq:fuga-red}, we have
\[
\frac{dm}{dy}=\frac{A'(x)}{\tilde B(y)}\frac{dx}{dy}-\frac{A(x)\tilde B'(y)}{\tilde B(y)^2}
=-\frac{A'(x) u}{\tilde B(y)}-\frac{A(x)\tilde B'(y)}{\tilde B(y)^{2}},
\]
whence, setting $M_A':=\max_{[a,b]}|A'|$ and noting that $\tilde B'(y)=|c|+q/y^{2}\le2|c|$ for $y$
large, together with $\tilde B(y)\ge\tfrac12|c|y$ from \eqref{eq:lamlb}, we obtain
\begin{equation}\label{eq:dm-bound}
\left|\frac{dm}{dy}\right|
 \le \frac{M_A'|u|}{\tilde B(y)}+\frac{2|c|M_A}{\tilde B(y)^{2}}
 \le \frac{2M_A'|u|}{|c|y}+\frac{8M_A}{|c|y^{2}}
\le \frac{K'\left(|u|+1\right)}{y},
\end{equation}
for a constant $K'$ depending only on $p,q,c,a,b$, for instance
\[
K'=\max\left\{\frac{2M_A'}{|c|},\ \frac{8M_A}{|c|y_1}\right\}.
\]

\emph{Step 2. One has $|u|<1$ on $[y_1,\infty)$, and
\begin{equation}\label{eq:h-small}
|\delta(y)| \le\ \frac{C_0}{y^{2}}
\qquad\text{for all }y\ge y_1,
\end{equation}
with $C_0$ depending only on $p,q,c,a,b$.}

First, let us prove that $|u|<1$. Suppose $|u(y_2)|\ge1$ for some $y_2\ge y_1$. Since $|m|\le\tfrac12$ there, $u-m$ has the sign of $u$, and whenever $|u|\geq 1,$
\[
|u-m|\ \ge\ |u|-|m|\ \ge\ |u|-\tfrac12\ \ge\ |u|-\tfrac{|u|}{2}\ =\ \tfrac{|u|}{2}.
\]
On the other hand, as $1+u^{2}\ge u^{2}$ and $\tilde B\ge\tfrac12|c|y$, equation \eqref{eq:u-ode} gives, for $u\ge1$,
\begin{equation}\label{eq:dudy1}
\frac{du}{dy}\ge u^{2}\cdot\frac{|c|y}{2}\cdot\frac u2=\frac{|c|y}{4}u^3 > 0 .
\end{equation}
Thus $u$ is increasing and the inequality $u\ge1$ perpetuates itself. Analogously, if $u\leq -1$ then
\begin{equation}\label{eq:dudy2}
\frac{du}{dy}=(1+u^{2})\tilde B(y)(u-m) \le -u^{2}\cdot\frac{|c|y}{2}\cdot\frac{|u|}{2} = -\frac{|c|y}{4}|u|^{3}<0,
\end{equation}
which gives that $u$ is decreasing and $u\leq-1$ also perpetuates itself. Thus, both inequalities give that $|u|\geq 1$ holds for every $y\ge y_2.$ Assuming $u(y_2)\geq 1$ and separating variables between $y_2$ and $y$ on \eqref{eq:dudy1}, we have
\[
\frac{|c|}{8}\left(y^{2}-y_2^{2}\right) \le \int_{u(y_2)}^{u(y)}\frac{dv}{v^{3}}
 \le \frac{1}{2u(y_2)^{2}}\le\frac12,
\]
but this is absurd as $y$ is assumed to be unbounded ($\limsup y=\infty$).
The case $u\le-1$ is symmetric.
Hence $|u|<1$ throughout.

Now, we prove \eqref{eq:h-small}. By the previous paragraph $|\delta|\le|u|+|m|\le\tfrac32$ is bounded and, by \eqref{eq:dm-bound},
\begin{equation}\label{eq:dm-bound2}
\left|\frac{dm}{dy}\right| \le \frac{2K'}{y}\quad\mbox{for every}\quad y\ge y_1.
\end{equation}
Equation \eqref{eq:h-ode} is linear in $\delta$ with positive coefficient
\[
a(y):=(1+u^{2})\tilde B(y)\ge\tfrac12|c|y.
\] 
Writing $\Theta(y):=\int_{y_1}^{y}a(r)dr$, integration of \eqref{eq:h-ode} gives
\begin{equation}\label{eq:h}
\delta(y)=e^{\Theta(y)}\left[\delta(y_1)-\int_{y_1}^{y}e^{-\Theta(t)}\frac{dm}{dy}(t)dt\right].
\end{equation}
Since 
\begin{equation}\label{eq:bound-A}
\Theta(t)\ge\tfrac{|c|}4(t^{2}-y_1^{2})\ge\tfrac{|c|y_1}{2}(t-y_1),
\end{equation}
the integral in \eqref{eq:h} converges as $y\to\infty$, to a limit $\ell$ say. If $\delta(y_1)\ne\ell$ then
\[
|\delta(y)|\sim|\delta(y_1)-\ell| e^{\Theta(y)}\to\infty,
\] 
contradicting the boundedness of $\delta$. Hence $\delta(y_1)=\ell$, and by \eqref{eq:dm-bound2} and \eqref{eq:bound-A},
\[
|\delta(y_1)|\ \le\ \sup_{t\ge y_1}\left|\frac{dm}{dy}\right|\int_{y_1}^{\infty}e^{-\frac{|c|y_1}{2}(t-y_1)}dt
\ \le\ \frac{2K'}{y_1}\cdot\frac{2}{|c| y_1}
\ =\ \frac{4K'}{|c| y_1^{2}} .
\]
The argument applies verbatim with $y_1$ replaced by any $y_0\ge y_1$, since \eqref{eq:dm-bound2} and \eqref{eq:bound-A} (with $y_0$ in place of $y_1$) remain valid on $[y_0,\infty)$. Hence $|\delta(y_0)|\le4K'/(|c| y_0^{2})$ for every $y_0\ge y_1$, which is \eqref{eq:h-small} with $C_0:=4K'/|c|$, and proves the claim.

\emph{Step 3. $x\to x_*$ as $y\to\infty$.} Substituting $\tan\tau=u=m+\delta$ into the second equation of \eqref{eq:fuga-red}, we obtain
\begin{equation}\label{eq:xdrift}
\frac{dx}{dy}=-\tan\tau=-\frac{A(x)}{\tilde B(y)}-\delta.
\end{equation}
Let $D:=x-x_*$. Since $c<0$ and $x_*^{2}=p/|c|$,
\[
A(x)=\frac{p+cx^{2}}{x}=-\frac{|c|(x^{2}-x_*^{2})}{x}=-|c|\frac{x+x_*}{x} D,
\]
so that \eqref{eq:xdrift} reads
\[
\frac{dD}{dy}=\frac{|c|}{\tilde B(y)}\frac{x+x_*}{x} D-\delta .
\]
Now $\dfrac{x+x_*}{x}\ge1$ and $\tilde B(y)=|c|y-\dfrac qy\le|c|y$, whence
\[
\frac{|c|}{\tilde B(y)}\frac{x+x_*}{x}\ \ge\ \frac1y,
\]
and multiplying by $D$ reverses the inequality when $D<0$. Since $|\delta|\le C_0/y^{2}$ by
\eqref{eq:h-small}, we obtain, for $y\ge y_1$,
\begin{equation}\label{eq:Ddrift}
\frac{dD}{dy} \ge \frac{D}{y}-\frac{C_0}{y^{2}}\quad\mbox{where }D>0,\quad\mbox{and}\quad
\frac{dD}{dy} \le \frac{D}{y}+\frac{C_0}{y^{2}}\quad\mbox{where }D<0 .
\end{equation}
Suppose that $D(y_2)\ge2C_0/y_2$ for some $y_2\ge y_1$. At the points where $D\ge2C_0/y$ one has $C_0/y^{2}\le D/(2y)$, hence
\[
\frac{dD}{dy} \ge \frac{D}{y}-\frac{D}{2y}=\frac{D}{2y} > 0 > \frac{d}{dy}\left(\frac{2C_0}{y}\right).
\]
Therefore $D\ge2C_0/y$ on $[y_2,\infty)$, and integrating $dD/dy\ge D/(2y)$ gives $D(y)\ge D(y_2)(y/y_2)^{1/2}\to\infty$, contradicting $x\le b$. Symmetrically, if $D(y_2)\le-2C_0/y_2$, then $C_0/y^2\le-D/(2y)$ and
\[
\frac{dD}{dy} \le \frac{D}{y}-\frac{D}{2y}=\frac{D}{2y} < 0 < \frac{d}{dy}\left(-\frac{2C_0}{y}\right)
\]
wherever $D\le-2C_0/y$, so that $D(y)\le D(y_2)(y/y_2)^{1/2}\to-\infty$, contradicting $x\ge a>0$. Hence $|x-x_*|<2C_0/y$ for every $y\ge y_1$, and $x\to x_*$.

\smallskip
\emph{Step 4 (the trajectory cannot approach $x_*$).} 
By Step 3, $x\to x_*$. We show that this is impossible. We use $I$ and the fact that the zero of $A$ is the maximum point of $f(x)=x^pe^{cx^2/2}$ (see Lemma \ref{lem:bound}). 

Since $x(y)\to x_*$ as $y\to\infty$, by \eqref{eq:xdrift},
\[
\frac{dx}{dy}=-\frac{A(x)}{\tilde B(y)}-\delta,
\]
where $A(x)\to A(x_*)=0$, $\tilde B(y)\to\infty$ and $|\delta|\le C_0y^{-2}\to0$ by
\eqref{eq:h-small}. Hence $\tfrac{dx}{dy}\to0$. Since $\tfrac{dx}{dy}=\cot\sigma$ and
$\sin\sigma=\cos\tau>0$ everywhere, this forces $\cos\sigma\to0$ and $\sin\sigma\to1$, whence
\[
I=f(x)\sin\sigma\longrightarrow f(x_*)\cdot1=M_f.
\]
On the other hand, since $I$ is decreasing in $\mathcal S^+$ along the trajectories (Corollary \ref{cor:estrita}), its convergence to $M_f$ implies that $I(y)\ge M_f$ for every $y$. But $I=f(x)\sin\sigma\le f(x)\le M_f$ always. Therefore $I\equiv M_f$, which forces $f(x)\equiv M_f$ and $\sin\sigma\equiv1$, that is, $x\equiv x_*$ and $\sigma\equiv\tfrac\pi2$, i.e., $\gamma$ is the profile curve of the vertical cylinder, excluded by hypothesis.

Therefore, in all cases we obtained a contradiction, which proves the Proposition.
\end{proof}

The mirror symmetry of Lemma~\ref{lem:simetrias}(ii) gives the analogous statement for the vertical strips.

\begin{corollary}[Absence of horizontal escape: the mirror of Proposition~\ref{lem:fuga}]\label{cor:fugaH}
Let $c<0$ and let $\gamma$ be confined to $\mathcal V^{+}=\{x>x_*\}$, distinct from the two
cylinders, with $y(s)\in[a,b]\subset(0,\infty)$ near one of the endpoints. Then $x$ is bounded in that neighborhood.
\end{corollary}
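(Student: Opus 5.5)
The statement is the mirror image of Proposition~\ref{lem:fuga}, so the plan is to transport that proposition through the mirror symmetry of Lemma~\ref{lem:simetrias}(ii) rather than redo the analysis. Let $\gamma$ be confined to $\mathcal V^{+}$, distinct from both cylinders, with $y(s)\in[a,b]\subset(0,\infty)$ near an endpoint, and suppose for contradiction that $x$ is unbounded in that neighborhood, i.e.\ $\limsup x=\infty$ along the endpoint.

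\textbf{Step 1: apply the mirror.} Set $(\tilde x,\tilde y,\tilde\sigma):=(y,x,\tfrac\pi2-\sigma)$. By Lemma~\ref{lem:simetrias}(ii), this is a trajectory of \eqref{eq:flow} with the parameters $(p,q)$ replaced by $(q,p)$. I do not need to reparametrize: I will check that arclength is preserved, since $\tilde x'=y'=\sin\sigma=\cos\tilde\sigma$ and $\tilde y'=x'=\cos\sigma=\sin\tilde\sigma$. Under the swap $p\leftrightarrow q$, the constants $x_*$ and $y_*$ exchange, because $\tilde y_*=\sqrt{-p/c}=x_*$. Hence the condition $x>x_*$ becomes $\tilde y>\tilde y_*$, so the mirrored curve is confined to $\mathcal S^{+}$ of the mirrored system.

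\textbf{Step 2: identify the excluded cylinder.} The vertical cylinder of the mirrored system, $\{\tilde x=\tilde x_*,\ \tilde\sigma=\tfrac\pi2\}$, corresponds to $\{y=y_*,\ \sigma=0\}$, which is the horizontal cylinder of the original system. This is excluded by hypothesis, so the mirrored trajectory is distinct from the mirrored vertical cylinder. It also satisfies $\tilde x\in[a,b]$ and $\limsup\tilde y=\infty$ near the corresponding endpoint.

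\textbf{Step 3: conclude.} These are exactly the conditions ruled out by Proposition~\ref{lem:fuga}, whose proof is uniform in $p,q\ge1$ and $c<0$. That gives the contradiction, so $x$ is bounded in the neighborhood.

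The only point needing care is the bookkeeping that the mirror preserves arclength and sends the vertical strip and the horizontal cylinder to the correct mirrored objects. Proposition~\ref{lem:fuga} is stated for arbitrary $p,q$, so no real obstacle remains.
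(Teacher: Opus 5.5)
Your proposal is correct and is the same argument as the paper's: you transport Proposition~\ref{lem:fuga} through the mirror symmetry of Lemma~\ref{lem:simetrias}(ii), which exchanges $p$ and $q$, $x_*$ and $y_*$, $\mathcal V^{+}$ and $\mathcal S^{+}$, and the two cylinders. Your bookkeeping spells out what the paper states in one line. It checks that arclength is preserved, that the horizontal cylinder corresponds to the mirrored vertical cylinder, and that $\limsup x=\infty$ becomes $\limsup\tilde y=\infty$ while $y\in[a,b]$ becomes $\tilde x\in[a,b]$.
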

\begin{proof}
It is Proposition~\ref{lem:fuga}, carried over by the mirror symmetry of Lemma~\ref{lem:simetrias}, item (ii), which sends $(x,y,\sigma;p,q)$ to $(y,x,\tfrac\pi2-\sigma;q,p)$ and, therefore, $\mathcal S^{+}$ to $\mathcal V^{+}$, $I$ to $J$, $A$ to $B$ and $y_*$ to $x_*$. Since the symmetry is a bijection between solutions of \eqref{eq:flow} with parameters $(p,q)$ and $(q,p)$, and Proposition~\ref{lem:fuga} holds for all $p,q\ge1$, the conclusion carries over without change.
\end{proof}

Combining both Proposition \ref{lem:fuga} and Corollary \ref{cor:fugaH} we obtain

\begin{corollary}[Rigidity of the cylindrical ends: equivariant case]\label{cor:rigidez-end}
Let $c<0$. No trajectory distinct from the cylinders has an endpoint along which $y\to\infty$ and $x\to r$, or $x\to\infty$ and $y\to r$, for some $r>0$. In particular, no bi-rotational self-shrinker other than the cylinders has an end asymptotic to a cylinder $\s^{p}(r)\times\R^{q+1}$ or $\s^{q}(r)\times\R^{p+1}$.
\end{corollary}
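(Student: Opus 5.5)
The plan is to reduce the corollary to Proposition~\ref{lem:fuga} and Corollary~\ref{cor:fugaH}, treating the two kinds of endpoint separately and exchanging them by the mirror symmetry of Lemma~\ref{lem:simetrias}(ii). By reversibility (Lemma~\ref{lem:simetrias}(i)) we may take the endpoint to be $\omega$.

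\emph{Vertical case: $y\to\infty$ and $x\to r>0$.} Since $y\to\infty$, there is $s_0$ with $y>y_*$ on $(s_0,s_+)$, so the tail lies in $\mathcal S^{+}$. Since $x\to r>0$, after enlarging $s_0$ we have $x(s)\in[r/2,2r]\subset(0,\infty)$ on the tail. Moreover, $y\to\infty$ gives $\limsup y=\infty$. These are exactly the hypotheses excluded by Proposition~\ref{lem:fuga}, so $\gamma$ is the vertical cylinder. The case $r\ne x_*$ needs no separate treatment: Step~3 of that proof already forces $x\to x_*$, and Step~4 then forces $\gamma$ to be the cylinder.

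\emph{Horizontal case: $x\to\infty$ and $y\to r>0$.} Apply the mirror map $(x,y,\sigma;p,q)\mapsto(y,x,\tfrac\pi2-\sigma;q,p)$. It sends solutions to solutions and exchanges the two cylinders, so the vertical case gives the conclusion. One could instead quote Corollary~\ref{cor:fugaH} directly. In that route, the tail lies in $\mathcal V^{+}$ with $y\in[r/2,2r]$, and boundedness of $x$ contradicts $x\to\infty$. The only point to check there is that the corollary's ``confined to $\mathcal V^{+}$'' hypothesis is used only near the endpoint, as in Remark~\ref{rem:Jfront-end}.

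\emph{Geometric statement.} An end of $\Sigma$ asymptotic to $\s^{p}(r)\times\R^{q+1}$ means that along the corresponding endpoint of the profile $x\to r$ and $y\to\infty$; similarly for the other cylinder. The previous paragraphs exclude both unless $\Sigma$ is one of the cylinders. The same argument forces $r=x_*$ or $r=y_*$ in the cylindrical cases.

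The step I expect to be the main obstacle is bookkeeping rather than analysis: I must make sure Proposition~\ref{lem:fuga} is applied only to a tail contained in $\mathcal S^{+}$, which it explicitly allows, and I must also handle $r$ arbitrary rather than just $r=x_*$.
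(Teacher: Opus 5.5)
Your proposal is correct and follows the paper's proof. The vertical case is Proposition~\ref{lem:fuga} applied to a tail in $\mathcal S^{+}$ with $x$ in a compact subinterval of $(0,\infty)$, and the horizontal case is its mirror, Corollary~\ref{cor:fugaH}; your observation that the confinement hypothesis there is only needed near the endpoint matches how the paper uses it.
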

\begin{proof}
In the first case, near the endpoint $y>y_*$ and $x$ lies in a compact subinterval of $(0,\infty)$, which Proposition~\ref{lem:fuga} excludes. The second case is excluded by Corollary~\ref{cor:fugaH}.
\end{proof}

Proposition~\ref{lem:fuga} assumes $x$ confined to a compact subset of $(0,\infty)$, so it says nothing when the trajectory drifts toward a coordinate axis while escaping. That complementary case is settled next, by a different argument, and the two together exclude every unbounded end running along an axis.

\begin{proposition}[Absence of escape along an axis]\label{prop:nao-eixo}
Let $c<0$ and let $\gamma$ be a trajectory distinct from the cylinders. There is no endpoint of
$\gamma$ along which
\[
x\to\infty\quad\text{and}\quad \liminf y=0,
\]
and none along which $y\to\infty$ and $\liminf x=0$.
\end{proposition}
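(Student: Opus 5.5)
By the mirror symmetry of Lemma~\ref{lem:simetrias}(ii), which exchanges $x$ and $y$, $I$ and $J$, and $A$ and $B$, it suffices to exclude an endpoint, say $\omega$ (reversibility handles $\alpha$), along which $x\to\infty$ and $\liminf y=0$. The plan is to first force the end into a rigid monotone shape using $J$ and $I$, and then to contradict that shape by integrating the equation for $\sigma$.

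\emph{Step 1: $J$ pins down the sign of $\cos\sigma$.} Since $x\to\infty$, the tail lies in $\mathcal V^{+}$. By Corollary~\ref{cor:estrita}(ii), $J$ is strictly decreasing there, and it is bounded by Lemma~\ref{lem:bound}, so it has a limit $J_+$. Along a sequence with $y\to0$ we have $|J|\le g(y)\to0$, hence $J_+=0$. A strictly decreasing function with limit $0$ is positive, so $J>0$, that is $\cos\sigma>0$, on the whole tail. Consequently $x$ is strictly increasing and $\gamma$ is a graph $y=y(x)$ with $y_x=\tan\sigma$, $\sigma\in(-\tfrac\pi2,\tfrac\pi2)$.

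\emph{Step 2: reduce to $y\downarrow0$ monotonically.} We also have $|I|\le f(x)\to0$, so $I\to0$. I would first show that the tail eventually stays in $\mathcal S^{-}$. The idea is that every excursion into $\mathcal S^{+}$ starts and ends with a crossing of $\{y=y_*\}$. On such an excursion $I$ is strictly decreasing, so it goes from positive values to negative ones. Meanwhile $g(y)\cos\sigma=J$ decreases to $0$, which forces $\cos\sigma\to0$ on the excursions, i.e.\ almost vertical tangents. I would then use the steep positive slope $|A|\sim|c|x$ in $\sigma'=-A\sin\sigma+B\cos\sigma$ to show that such excursions cannot recur while $x\to\infty$. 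If this bookkeeping is too delicate, I would instead run the argument below separately on each long excursion in $\mathcal S^{-}$.

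Once the tail lies in $\mathcal S^{-}$, $I$ is strictly increasing to $0$, so $I<0$. Hence $\sin\sigma<0$ and $y$ decreases, necessarily to $0$ because $\liminf y=0$. So we are left with a graph $y(x)$ that is decreasing, tends to $0$, and has $\sigma\in(-\tfrac\pi2,0)$.

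\emph{Step 3: contradiction (main obstacle).} Integrate $\sigma'=-A\sin\sigma+B\cos\sigma$ in the variable $x$, where $ds\,\cos\sigma=dx$:
\[
\Delta\sigma=\int B(y)\,dx-\int |A(x)|\,|y_x|\,dx .
\]
The left-hand side is bounded, since $\sigma$ stays in $(-\tfrac\pi2,0)$. The first term diverges, because $B\ge q/(2y)$ and $y\to0$. The danger is the second term, since $|A|\approx|c|x$ grows. My plan is to split the $x$-axis into the set $E$ where $|c|x|y_x|\ge q/(4y)$ and its complement.
\begin{itemize}
\item On the complement of $E$, the integrand is at least $q/(4y)$, which has divergent integral.
\item On $E$, we have $|(y^{2})_x|\ge q/(2|c|x)$. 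Since $y^{2}$ is monotone and bounded, this controls the logarithmic measure $\int_E dx/x$, which is finite.
\end{itemize}
The hard part is that the finiteness of $\int_E dx/x$ alone does not bound $\int_E|A||y_x|\,dx$. I would first try to close the estimate by noting that $\sigma$ is bounded below: on $E$ the tangent is steep, so $\sigma$ stays near $-\tfrac\pi2$. Then $\sigma'\ge B\cos\sigma-|A|$ together with a comparison argument in the spirit of Lemma~\ref{rem:limites} should show that $\sigma$ is pushed up by $B\sim q/y$ before it can reach $-\tfrac\pi2$. That would exclude $E$ for large $x$, and then $\Delta\sigma=+\infty$ gives the contradiction.
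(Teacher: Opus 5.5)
\textbf{Verdict: there is a genuine gap, in the closing step (Step 3).}

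Your Steps 1 and 2 are sound and follow the paper's Steps 1--3. You show that $J>0$ decreases to $0$. At upward crossings of $\{y=y_*\}$ one has $\cos\sigma\to0$, while $|A|$ is bounded below and $B\approx0$, which pushes $\sigma$ past $\tfrac\pi2$. Finally $I<0$ once the tail lies in $\mathcal S^{-}$. Your Step 2 is only a sketch, but the mechanism you name is the right one. Your bound $\int_E dx/x\le 2|c|y_1^2/q$ is also correct.

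The proposed mechanism for Step 3 fails for three reasons.
\begin{itemize}
\item[(1)] $E$ does not force a steep tangent. It only gives $|y_x|\ge q/(4|c|xy)$, and nothing controls $xy$.
\item[(2)] On $E$ the downward term satisfies $|A|\,|y_x|\ge\tfrac12|c|x|y_x|\ge q/(8y)\ge B/8$. So the identity $d\sigma/dx=B-|A|\,|y_x|$ gives no sign there, and the cruder bound $\sigma'\ge B\cos\sigma-|A|$ gives even less. Moreover, $\sigma$ reaching $-\tfrac\pi2$ is already excluded by $J>0$, so that is not the issue.
\item[(3)] This is the decisive point: $E$ cannot be excluded at large $x$ by any local comparison. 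By Proposition~\ref{lem:suave}, an arc ending orthogonally at $(x_0,0)$ with $x_0>x_*$ has $|y_x|\approx (q+1)/(|A(x_0)|\,y)$. Hence $|c|x|y_x|\gtrsim (q+1)/y$, and such arcs lie in $E$ near their endpoint. Lying in $E$ at small $y$ is exactly what a trajectory heading to the axis does. What separates your hypothetical end from those arcs is the global fact $x\to\infty$, so the quantity to control is $x$, not $\sigma$.
\end{itemize}

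The missing idea is to bound the growth of $\log x$ on $E^{c}$ too, using monotonicity of the slope rather than the angle. Your estimate on $E$ already says that $\log x$ increases by a bounded amount over $E$; this is the paper's Step 5.

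Set $v:=1/|y_x|=-dx/dy$, so that $dv/dy=(1+v^2)(|A|-Bv)$. On $E^{c}$ one has $v>4|c|xy/q\ge 2|A|/B$, so $v$ increases with $x$.
\begin{itemize}
\item[(a)] An unbounded component of $E^{c}$ is ruled out by your own estimate $d\sigma/dx\ge q/(4y)$ there, since $\sigma$ stays in $(-\tfrac\pi2,0)$.
\item[(b)] On a bounded component $(a_i,b_i)$ you get $v\le v(b_i)=4|c|x(b_i)y(b_i)/q$, hence
\[
x(b_i)-x(a_i)=\int_{y(b_i)}^{y(a_i)}v\,dy\le\frac{4|c|}{q}\,x(b_i)\,y(b_i)\bigl(y(a_i)-y(b_i)\bigr).
\]
Therefore, once $y$ is small,
\[
\log\bigl(x(b_i)/x(a_i)\bigr)\le\frac{8|c|}{q}\int_{y(b_i)}^{y(a_i)}t\,dt.
\]
\item[(c)] The $y$-intervals are disjoint, so these contributions sum to at most $\frac{4|c|}{q}y_1^2$. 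Together with the bound on $E$, $\log x$ is bounded, which contradicts $x\to\infty$.
\end{itemize}

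This is precisely the paper's Steps 5--7. The paper works with $y$ as the independent variable and uses the threshold $v=2|A|/B$. It excludes components reaching $y=0$ by the blow-up estimate $dv/dy\le-\frac q4 v^3/y$ instead of by your $\Delta\sigma$ argument.
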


\begin{proof}
We treat the first assertion. The second is the mirror image. Suppose, for contradiction, that $x\to\infty$ and $\liminf y=0$ at the endpoint $\omega$. The case $\alpha$ follows by reversibility (Lemma~\ref{lem:simetrias}, item (i)).

By hypothesis, $\liminf y=0$. Since
$g(y)=y^{q}e^{cy^{2}/2}\to0$ as $y\to0^{+}$, this gives
\begin{equation}\label{eq:liminf-g}
\liminf g(y)=0 .
\end{equation}

\smallskip
\emph{Step 1. $x$ is increasing, $\cos\sigma>0$, and $J\to0$ with $J\ge0$.} Near the endpoint, $x>x_*$ and $J$ is strictly decreasing by Corollary \ref{cor:estrita}. If $x$ had local maxima at a sequence $s_k\to s_+$, then $\cos\sigma(s_k)=0$ and $J(s_k)=0$ there, contradicting that $J$ is decreasing. Hence $x$ is increasing and $\cos\sigma>0$. Since $J$ is monotone and $|J|\le g(y)$, it follows from \eqref{eq:liminf-g} that $J\to0$, and therefore $J\ge0$.

\smallskip
\emph{Step 2. The endpoint is contained in $\mathcal S^{-}$.}
Suppose that there are upward crossings of $\{y=y_*\}$ at instants $s_k\to s_+$. There $y(s_k)=y_*$, hence $J(s_k)=g(y_*)\cos\sigma(s_k)$ with $g(y_*)>0$ constant, and from $J\to0$ we get $\cos\sigma(s_k)\to0$. Since $\cos\sigma>0$ and $\sin\sigma(s_k)>0$, this gives $\sigma(s_k)\to\tfrac\pi2^{-}$. 
As $x\to\infty$ implies $A(x)\to-\infty,$ we can fix $s_0$ and $a_0>0$ such that $|A(x(s))|\ge a_0>0$ for every $s\ge s_0.$ Now, let $L_B:=\sup\{|B'(y)|; \tfrac{y_*}{2}\leq y \leq 2y_*\}$ and put
\[
\delta:=\min\left\{\frac{y_*}{2},\frac{a_0}{4L_B}\right\}>0.
\]
Since $|y'(s)|=|\sin\sigma(s)|\le 1,$ by the mean value theorem, we have $|y(s)-y_*|\leq \delta$ in $[s_k,s_k+\delta]$. Thus, using the mean value theorem again gives
\[
|B(y(s))|=|B(y(s))-B(y_*)|\le L_B|y(s)-y_*|\le L_B\delta
\]
in $[s_k,s_k+\delta],$ as $s\in[s_k,s_k+\delta]$ implies $|y(s)-y_*|\leq \frac{y_*}{2}$, i.e., $y(s) \in [\tfrac{y_*}{2},\frac{3y_*}{2}]$. So, as long as $\sigma\in[\tfrac\pi3,\tfrac{2\pi}{3}]$, $\sin\sigma\geq \tfrac12$ and
\begin{equation}\label{eq:est-step-2}    
\sigma' = |A(x)|\sin\sigma+ B(y)\cos\sigma \geq \frac{a_0}{2} - L_B\delta\geq\frac{a_0}{2}-\frac{a_0}{4}=\frac{a_0}{4}>0,
\end{equation}
which is a lower bound independent of the index $k$. Now $\sigma(s_k)\to\tfrac\pi2^{-}$, so for $k$ large
$\sigma(s_k)\in[\tfrac\pi3,\tfrac\pi2)$ and, by integrating \eqref{eq:est-step-2}, the arclength needed to cover
$\tfrac\pi2-\sigma(s_k)$ at rate at least $\tfrac{a_0}{4}$ is at most
$4\left(\tfrac\pi2-\sigma(s_k)\right)/a_0$, which tends to $0$ and is therefore $<\delta$ for $k$
large. Hence $\sigma$ attains and exceeds $\tfrac\pi2$ within $[s_k,s_k+\delta]$. Therefore,
$\cos\sigma$ becomes negative and $x$ decreases, contradicting Step~1. Hence the crossings of $\{y=y_*\}$ do not accumulate at the endpoint, so the endpoint is eventually contained in $\mathcal S^{+}$ or in $\mathcal S^{-}$. The first is impossible, since there $y>y_*$ would contradict the hypothesis $\liminf y=0$. Therefore the endpoint is contained in $\mathcal S^{-}$.

\smallskip
\emph{Step 3. $y$ is decreasing. In particular, $y\to0^+.$}
In $\mathcal S^{-}$ one has that $I$ is strictly increasing by Corollary \ref{cor:estrita}. If $y$ had local maxima at a sequence $s_k\to s_+$, there $\sin\sigma(s_k)=0$ and $I(s_k)=0$. This contradicts the fact that $I$ is increasing. Hence
$y$ is monotone, and with $\liminf y=0$ we get $y\to0^{+}$.

\smallskip
\emph{Step 4. Reduction with $y$ as independent variable.}
By Steps 1 and 3, near the endpoint $y$ is strictly decreasing and $x$ strictly increasing. So we take $y\in(0,y_1]$ as the independent variable and set
\[
v:=-\frac{dx}{dy}=\frac{\cos\sigma}{-\sin\sigma}>0 .
\]
Dividing \eqref{eq:flow} by $y'=\sin\sigma$ one obtains the system
\begin{equation}\label{eq:w-ode}
\frac{dx}{dy}=-v,
\qquad
\frac{dv}{dy}=(1+v^2)(|A(x)|-B(y)v),
\quad\mbox{where}\quad |A|:=-A>0,
\end{equation}
and the hypothesis $x\to\infty$ is equivalent to
\begin{equation}\label{eq:w-diverge}
\int_{0}^{y_1}v(y)dy=+\infty.
\end{equation}
Write $v_*:=|A(x)|/B(y)>0$, where the right-hand side of \eqref{eq:w-ode} vanishes. We record two elementary bounds: since $|A(x)|=|c|x-p/x$ and $B(y)=\tfrac qy-|c|y$, we have
\begin{equation}\label{eq:AB-cotas}
|A(x)|\le|c|x,
\quad \mbox{and}\quad
B(y)\ge\frac{q}{2y} \quad \mbox{if}\quad y\le\frac{y_*}{\sqrt2}.
\end{equation}
Fix $\hat y\in(0,y_1]$ for which Steps 1--3 and \eqref{eq:AB-cotas} hold on $(0,\hat y]$.

\smallskip
\emph{Step 5: where $v\le2v_*$, the growth of $x$ is controlled.}
At these points, by \eqref{eq:AB-cotas},
\begin{equation}\label{eq:step5}
v\le \frac{2|A(x)|}{B(y)} \le\frac{2|c|x}{q/(2y)}=\frac{4|c|}{q}xy,
\end{equation}
whence
\begin{equation}\label{eq:estlog}
\frac{d}{dy}\log x=-\frac vx\ge-\frac{4|c|}{q}y .
\end{equation}
If $v\le2v_*$ on the whole interval $(0,\hat y]$, an integration from $y$ to $\hat y$ gives
\begin{equation}\label{eq:x-limitado}
\log x(y)\le\log x(\hat y)+\frac{2|c|}{q}\hat y^{2} \quad\mbox{for}\quad 0<y\le\hat y,
\end{equation}
and $x$ is bounded, contradicting $x\to\infty$.

\smallskip
\emph{Step 6. Where $v\ge2v_*$, the descent stops at the barrier $v=2v_*$.}
At these points $B v\ge2|A|$, hence $|A|-Bv\le-\tfrac12Bv$, and since $1+v^{2}\ge v^{2}$, from \eqref{eq:w-ode} and \eqref{eq:AB-cotas} we obtain
\begin{equation}\label{eq:w-blow}
\frac{dv}{dy} \le -\tfrac12 v^{3}B\le-\frac q4\frac{v^3}{y}.
\end{equation}
Suppose that $v\ge2v_*$ on a whole interval $(\beta,y_2]$. Separating variables in
\eqref{eq:w-blow} and integrating from $y$ to $y_2$, we have
\begin{equation}\label{eq:w-explode}
0 \le\frac{1}{2v(y)^{2}}\le \frac{1}{2v(y_2)^{2}}-\frac q4\log\frac{y_2}{y}
\quad\mbox{for}\quad \beta<y\le y_2.
\end{equation}
The right-hand side vanishes at $y_0:=y_2\exp\left(-2/(qv(y_2)^{2})\right)>0$ and is negative
below it. Hence $\beta\ge y_0>0$. Near the endpoint $I$ is strictly monotone with at most one zero, and $y$ decreases (Step~3), so $\sin\sigma<0$ on $(0,\hat y]$. Thus $v$ is finite and continuous there, and $v(\beta)=2v_*(\beta)$.

\smallskip
\emph{Step 7. Conclusion: Steps 5 and 6 imply that $x$ is bounded, reaching a contradiction.}
Let $\mathcal J:=\{y\in(0,\hat y]:v(y)\le2v_*(y)\}$. If $\mathcal J=(0,\hat y]$, Step~5 already gives the
contradiction. Otherwise, since the complement $\{y\in(0,\hat y]:v(y)>2v_*(y)\}$ is relatively open in $(0,\hat y],$ it is a countable union of disjoint open intervals
$(\beta_i,\gamma_i)$, and by Step~6 one has $\beta_i>0$ with $v(\beta_i)=2v_*(\beta_i)$.

On each $(\beta_i,\gamma_i)$ one has $v>2v_*>v_*$. Hence $dv/dy<0$ by \eqref{eq:w-ode}, i.e., the function $v$ is decreasing in $y$ and attains its maximum at the left endpoint. Thus, $v\le v(\beta_i)$ there, and by \eqref{eq:step5} applied at $\beta_i$, we obtain
\[
v(\beta_i)=2v_*(\beta_i)\le\frac{4|c|}{q}x(\beta_i)\beta_i.
\]
Therefore
\[
x(\beta_i)-x(\gamma_i)=\int_{\beta_i}^{\gamma_i}v dy
\ \le\ \frac{4|c|}{q} x(\beta_i)\beta_i(\gamma_i-\beta_i)
\ =:\ u_i x(\beta_i),
\]
that is, $x(\gamma_i)/x(\beta_i)\ge1-u_i$. Choosing $\hat y$ small enough that
$u_i\le\tfrac12$, and using $-\log(1-u)\le2u$ for $u\in[0,\tfrac12]$, we get
\[
\log\frac{x(\beta_i)}{x(\gamma_i)}\ \le\ 2u_i
=\frac{8|c|}{q}\beta_i(\gamma_i-\beta_i)
\ \le\ \frac{8|c|}{q}\int_{\beta_i}^{\gamma_i}t dt,
\]
where the last inequality is $\beta(\gamma-\beta)\le\tfrac12(\gamma^{2}-\beta^{2})$. Since the
intervals are disjoint,
\begin{equation}\label{eq:sumlog}
\sum_i\log\frac{x(\beta_i)}{x(\gamma_i)} \le \frac{8|c|}{q}\int_0^{\hat y}t dt
=\frac{4|c|}{q}\hat y^2.
\end{equation}
Now, since $\tfrac{d}{dy}\log x=-v/x$, integrating from $y$ to $\hat y$ we have
\[
\begin{aligned}
\log x(y)-\log x(\hat y)&=-\int_{y}^{\hat y}\tfrac{d}{dt}\log x dt\\
&=\int_{[y,\hat y]\cap \mathcal J}\frac vxdt+ \sum_i\int_{[y,\hat y]\cap(\beta_i,\gamma_i)}\left(-\tfrac{d}{dt}\log x\right) dt.\\
\end{aligned}
\]
Using \eqref{eq:estlog}, the first integral on the right-hand side can be estimated as
\[
\int_{[y,\hat y]\cap \mathcal J}\frac vx dt \le \frac{4|c|}{q}\int_{[y,\hat y]\cap \mathcal J}t dt \le \frac{4|c|}{q}\int_{0}^{\hat y}t dt
=\frac{2|c|}{q}\hat y^2.
\]
By \eqref{eq:sumlog}, the sum of integrals satisfies the estimate
\[
\begin{aligned}
\sum_i\int_{[y,\hat y]\cap(\beta_i,\gamma_i)}\left(-\tfrac{d}{dt}\log x\right)dt &\le \sum_i\int_{\beta_i}^{\gamma_i}\left(-\tfrac{d}{dt}\log x\right)dt\\
&=\sum_i\log\frac{x(\beta_i)}{x(\gamma_i)}\le \frac{4|c|}{q}\hat y^2.
\end{aligned}
\]
Adding the two bounds gives
\[
\log x(y) \le \log x(\hat y)+\frac{6|c|}{q}\hat y^2\quad\mbox{for all}\quad y\in(0,\hat y].
\]
Thus, $x$ is bounded, contradicting $x\to\infty$, or equivalently \eqref{eq:w-diverge}.
\end{proof}

An end of $\gamma$ is called non-escaping if it has infinite arclength and $\rho\not\to\infty$ along it. Recurrence, when used below, refers to the phase-space flow. The next result shows that a non-escaping end visits both strips infinitely often.

\begin{theorem}[Non-escaping ends cross both separatrices infinitely often]\label{prop:recorrente}
Let $c<0$ and let $\gamma$ be a trajectory distinct from the cylinders. If an end of $\gamma$ crosses $\{y=y_*\}$ only finitely many times, then it has finite arclength or $\rho\to\infty$ along it. The same holds for $\{x=x_*\}$. Consequently, every non-escaping end crosses each of the two separatrices infinitely often.
\end{theorem}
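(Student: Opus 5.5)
The plan is to reduce to the case analysis of Proposition~\ref{lem:Jfront}, applied to the tail of the end, via Remark~\ref{rem:Jfront-end}. By the mirror symmetry of Lemma~\ref{lem:simetrias}(ii), which exchanges $I\leftrightarrow J$ and $\mathcal S^{\pm}\leftrightarrow\mathcal V^{\pm}$, it suffices to treat $\{y=y_*\}$. By reversibility we may take the end to be $\omega$.

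If the end crosses $\{y=y_*\}$ only finitely often, then a tail $(s_0,s_+)$ lies in $\mathcal S^{+}$ or in $\mathcal S^{-}$. There $I$ is strictly monotone (Corollary~\ref{cor:estrita}(i)), so $\sin\sigma=y'$ has at most one zero. Hence $y$ is eventually monotone and $y\to y_\infty\in[0,\infty]$.

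Next I would record the behaviour of $x$. At a zero of $\cos\sigma$ one has $x''=-\sin\sigma\,\sigma'=A(x)\sin^2\sigma=A(x)$ (Lemma~\ref{lem:Z}). So every local maximum of $x$ lies in $\{x\ge x_*\}$ and every local minimum in $\{x\le x_*\}$, and these extrema are nondegenerate off the vertical cylinder. Thus either $x$ is eventually monotone, with $x\to x_\infty\in[0,\infty]$, or $x$ oscillates across $x_*$ infinitely often. Assuming infinite arclength, I then split according to $y_\infty$.

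\emph{Case $y_\infty=\infty$.} This happens only in $\mathcal S^{+}$. If $x$ stays in a compact subset of $(0,\infty)$, Proposition~\ref{lem:fuga} gives a contradiction. If $\liminf x=0$, Proposition~\ref{prop:nao-eixo} excludes it. Otherwise $x$ is bounded below, $y\to\infty$ forces $\rho\to\infty$, and we are done.

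\emph{Case $y_\infty=0$.} If $x$ stays in a compact set, Lemma~\ref{rem:limites} gives finite arclength. If $x\to\infty$ along a subsequence, I would first use $J$ to upgrade this to $x\to\infty$. Once $y$ is small, $y$ decreasing and the sign of $B$ control the tail, and in $\mathcal V^{+}$ the function $J$ is monotone, so $x$ cannot have local maxima there. Then Proposition~\ref{prop:nao-eixo} excludes this case as well. The remaining subcase $\liminf x=0$ would go into the obstacle below.

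\emph{Case $y_\infty\in(0,\infty)$.} If $x$ stays in a compact subset of $(0,\infty)$, the $\omega$-limit argument of (F2) applies verbatim and gives a contradiction. If $x\to\infty$, then $\rho\to\infty$. If $x$ is unbounded but oscillates, the local minima lie in $\{x\le x_*\}$. I would then look at the excursions into $\mathcal V^{+}$ with $y$ confined to $[a,b]\subset(0,\infty)$ and argue as in Corollary~\ref{cor:fugaH} and Step~2 of Proposition~\ref{prop:nao-eixo}: since $A(x)\to-\infty$ while $B$ stays bounded, $\sigma$ is pushed past $\tfrac\pi2$ quickly. This should show that the returns cannot reach arbitrarily far, which excludes unbounded oscillation.

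The step I expect to be the main obstacle is $\liminf x=0$ with $x$ not converging, with $y_\infty$ positive and finite, or with $y\to0$. My first attempt would use that $y$ is monotone and convergent, so $\int|\sin\sigma|\,ds<\infty$, while the arclength is infinite. Hence $\sigma$ is close to $0$ or $\pi$ except on a set of small measure, and the curve runs nearly horizontally between $x\approx0$ and $x\ge x_*$ infinitely often. Near $x=0$ the term $-A(x)\sin\sigma$ with $A\sim p/x$ dominates $\sigma'$. This should mirror the mechanism in the proof of Proposition~\ref{lem:cusp}: a horizontal approach to the $y$-axis with $\sin\sigma$ not tending to $0$ is forced into a cusp, in finite time, which terminates the trajectory. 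So I would aim to show that the first sufficiently deep approach to $x=0$ ends the trajectory at the $y$-axis with finite arclength, contradicting the assumed oscillation. This gives the first assertion. Applying the mirror argument to $\{x=x_*\}$ then yields that a non-escaping end, which has infinite arclength and $\rho\not\to\infty$, must cross both separatrices infinitely often.
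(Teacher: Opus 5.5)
Your reduction to a tail lying in one strip, with $y$ eventually monotone, is correct, and so are the subcases in which $x$ is eventually monotone (if $x$ and $y$ are both monotone and bounded, the arclength is finite since $|x'|+|y'|\ge1$). The gap is the case in which $x$ oscillates. That is the only substantive case, and your plan for it aims at the wrong mechanism.

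Near the $y$-axis nothing forces the trajectory to terminate. Proposition~\ref{lem:cusp} only says that a trajectory which \emph{converges} to an axis meets it orthogonally. At a local minimum of $x$ one has $\cos\sigma=0$ and $x''=A(x)>0$, so the curve simply turns back through a vertical tangent. This bounce is compatible with the constant sign of $\sin\sigma$ and with the monotonicity of $I$. Moreover, since $A\sim p/x$ is unbounded there, balancing $\int A|\sin\sigma|\,ds$ against $\int B|\cos\sigma|\,ds$ gives no lower bound on the change of $y$ during such a bounce. So no local argument at $x\approx0$ can produce the contradiction.

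Similarly, ``$\sigma$ is pushed past $\tfrac\pi2$ quickly'' fails when $\sin\sigma$ is small, because the curve can run far out almost horizontally. Finally, in the case $y_\infty=0$, the monotonicity of $J$ in $\mathcal V^{+}$ only limits $x$ to one maximum per excursion into $\mathcal V^{+}$. It does not upgrade $\limsup x=\infty$ to $x\to\infty$; Step~1 of Proposition~\ref{prop:nao-eixo} presupposes $x\to\infty$.

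Three ideas are missing.
\begin{enumerate}
\item Show first that $I_+=0$. Otherwise $f(x)\ge|I_+|/2$ confines $x$ to a compact interval, and then $y_\infty=\infty$, Lemma~\ref{rem:limites}, or (F2) disposes of the case.
\item At each extremum $t_k$ of $x$ one has $|\sin\sigma|=1$, hence $|I(t_k)|=f(x(t_k))\to0$. So the maxima satisfy $\hat x_k\to\infty$ and the minima tend to $0$. Your ``obstacle'' scenario is therefore forced, with ever longer excursions to the right.
\item The contradiction comes from those far excursions, not from the axis. Integrate $\sigma'=-A\sin\sigma+B\cos\sigma$ over the stretch $[u_k,\tau_k]$ on which $x$ rises from $x_*$ to $\hat x_k$. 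There the lift of $\sigma$ stays in $(\tfrac{3\pi}{2},2\pi)$, resp.\ $(0,\tfrac\pi2)$, and equals $\tfrac{3\pi}{2}$, resp.\ $\tfrac\pi2$, at $\tau_k$. On this stretch $|A|\le|c|\hat x_k$. On the whole tail $|B|\ge b_0>0$, because $y$ moves monotonically away from $y_*$ by the sign of $I$.
\end{enumerate}

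This gives $|c|\hat x_k\Delta_k>b_0(\hat x_k-x_*)$, where $\Delta_k$ is the change of $y$ on $[u_k,\tau_k]$. So each excursion with $\hat x_k\ge2x_*$ costs at least $b_0/(2|c|)$ of monotone $y$-variation. Infinitely many such excursions contradict the boundedness of $y$ in $\mathcal S^{-}$, and force $y\to\infty$, hence $\rho\to\infty$, in $\mathcal S^{+}$.
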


\begin{proof}
By reversibility we treat the end $s\to s_+$. The assertion for $\{x=x_*\}$ then follows from the mirror symmetry of Lemma~\ref{lem:simetrias}(ii) and the last assertion is the contrapositive of the first two. Since every contact of $\gamma$ with $\{y=y_*\}$ is transverse (Lemma~\ref{lem:transv}), an end with finitely many crossings of $\{y=y_*\}$ satisfies $\gamma((s_1,s_+))\subset\mathcal S^{+}$ or $\gamma((s_1,s_+))\subset\mathcal S^{-}$, for $s_1$ after the last crossing. We suppose that such an end is non-escaping and derive a contradiction.

\emph{Step 1: the extrema of $x$.} Suppose first that $x$ and $y$ are both monotone on some interval $(s_2,s_+)$. If one of them is unbounded, then it tends to $\infty$ and so does $\rho$. If both are bounded, then both converge, and the arclength of $(s_2,s_+)$ is at most the sum of their total variations, because $|x'|+|y'|=|\cos\sigma|+|\sin\sigma|\ge1$. In either case the end fails to be non-escaping.

By Remark~\ref{rem:Jfront-end}, $I$ is strictly monotone on $(s_1,s_+)$, and it is bounded by Lemma~\ref{lem:bound}, so it converges to some $I_+$. Being strictly monotone, $I$ has at most one zero on $(s_1,s_+)$. The zeros of $I$ are those of $\sin\sigma=y'$, so $y$ is monotone on some $(s_2,s_+)$. By the previous paragraph, $x$ is monotone on no such interval, that is, $x'=\cos\sigma$ changes sign infinitely often. Its zeros are isolated by Lemma~\ref{lem:Z}(ii) and between two consecutive zeros $x'$ keeps a constant sign, so these zeros form a sequence $t_k\to s_+$ at which $x$ attains local extrema, maxima and minima alternating.

\emph{Step 2: $I_+=0$.} Suppose that $I_+\ne0$. Since $|I|\le f(x)$, we have $f(x)\ge|I_+|/2$ on some $(s_2,s_+)$. By Lemma~\ref{lem:bound}, $f$ tends to $0$ both at $0$ and at $\infty$, so the set $\{x\in(0,\infty); f(x)\ge|I_+|/2\}$ is a compact subinterval $[a,b]$ of $(0,\infty)$, and $x$ remains in $[a,b]$. By Step~1, $y$ is eventually monotone, and it converges to some $y_\infty\in[0,\infty]$. If $y_\infty=\infty$, then $\rho\to\infty$. If $y_\infty=0$, then $s_+<\infty$ by Lemma~\ref{rem:limites}, since $x$ stays in $[a,b]$. If $y_\infty\in(0,\infty)$, then $x$ remains in $[a,b]$ and $y$ in a compact subinterval of $(0,\infty)$, which is scenario (F2) of Proposition~\ref{lem:Jfront}, excluded by Remark~\ref{rem:Jfront-end}. All three cases contradict the assumption that the end is non-escaping. Hence $I_+=0$, and $I$, being strictly monotone with limit $0$, has constant sign on $(s_1,s_+)$.

\emph{Step 3: the case $\mathcal S^{-}$.} Here $B>0$ on the strip, so $I$ is increasing, and by Step~2 it increases to $0$. Thus $I<0$, that is, $\sin\sigma<0$, and $y$ decreases to some $y_\infty\in[0,y_*)$.

We first locate the extrema of Step~1. At $t_k$ we have $\cos\sigma=0$ and $\sin\sigma=-1$, hence
\[
\sigma'=-A(x)\sin\sigma+B(y)\cos\sigma=A(x),\qquad
x''=(\cos\sigma)'=-\sin\sigma\sigma'=A(x).
\]
At a maximum $x''\le0$, so $A(x)\le0$. The equality $A(x(t_k))=0$ would give $\Gamma(t_k)=(x_*,y(t_k),\tfrac{3\pi}{2})$, a point of the trajectory of the vertical cylinder, and $\gamma$ would be that cylinder by uniqueness of solutions. Hence $x(t_k)>x_*$ at every maximum, and in the same way $x(t_k)<x_*$ at every minimum. Moreover $|I(t_k)|=f(x(t_k))\to0$, and $f$ decreases to $0$ on $(x_*,\infty)$ by Lemma~\ref{lem:bound}, so, writing $\tau_k$ for the maxima among the $t_k$, the maximum values $\hat x_k:=x(\tau_k)$ tend to $\infty$.

Now let $T_k<\tau_k$ be a minimum and the next maximum, with $\hat x_k\ge2x_*$, which holds for all large $k$. On $(T_k,\tau_k)$ there is no zero of $\cos\sigma$, so $\cos\sigma>0$ and $x$ increases from $x(T_k)<x_*$ to $\hat x_k$. Let $u_k\in(T_k,\tau_k)$ be the instant where $x(u_k)=x_*$. On $[u_k,\tau_k]$ we have $x_*\le x\le \hat x_k$, hence
\[
A(x)\le0,\qquad |A(x)|=|c|x-\frac px\le|c|\hat x_k .
\]
On $[u_k,\tau_k)$ both $\cos\sigma>0$ and $\sin\sigma<0$, so the continuous lift of $\sigma$ may be chosen in $(\tfrac{3\pi}{2},2\pi)$, with $\sigma(\tau_k)=\tfrac{3\pi}{2}$. Writing $\eta:=-\sin\sigma>0$ and integrating $\sigma'=-A\sin\sigma+B\cos\sigma=-|A|\eta+B\cos\sigma$ over $[u_k,\tau_k]$,
\[
\begin{aligned}
\int_{u_k}^{\tau_k}\left(B\cos\sigma-|A|\eta\right) ds
&=\sigma(\tau_k)-\sigma(u_k)<0,
\end{aligned}
\]
that is,
\[
\int_{u_k}^{\tau_k}|A|\eta ds>\int_{u_k}^{\tau_k}B\cos\sigma ds .
\]
On the tail $y\le y(s_1)<y_*$, and $B$ decreases on $(0,y_*)$, so $B(y)\ge b_0:=B(y(s_1))>0$. Using $\int_{u_k}^{\tau_k}\cos\sigma ds=\hat x_k-x_*$ on the right and $|A|\le|c|\hat x_k$ on the left, and setting $\Delta_k:=\int_{u_k}^{\tau_k}\eta ds=y(u_k)-y(\tau_k)$, we obtain
\[
\begin{aligned}
|c| \hat x_k\Delta_k\ \ge\ \int_{u_k}^{\tau_k}|A|\eta ds\ >\ b_0(\hat x_k-x_*),
\end{aligned}
\]
hence
\[
\Delta_k>\frac{b_0}{|c|}\left(1-\frac{x_*}{\hat x_k}\right)\ \ge\ \frac{b_0}{2|c|} .
\]
The intervals $[u_k,\tau_k]$ are pairwise disjoint and $y$ decreases along each of them, so the sum of the $\Delta_k$ is at most $y(s_1)$. This is impossible, because there are infinitely many of them and each exceeds $b_0/(2|c|)$.

\emph{Step 4: the case $\mathcal S^{+}$.} Here $B<0$ on the strip, so $I$ is decreasing, and by Step~2 it decreases to $0$. Thus $I>0$, that is, $\sin\sigma>0$, and $y$ increases to some $y_\infty\in(y_*,\infty]$. The value $y_\infty=\infty$ gives $\rho\to\infty$ and is excluded by the non-escaping assumption, so $y_\infty<\infty$.

At an extremum $t_k$ we now have $\sin\sigma=1$, hence $\sigma'=-A(x)$ and $x''=-\sin\sigma\sigma'=A(x)$, and the argument of Step~3 applies verbatim. The maxima occur at $x>x_*$, the minima at $x<x_*$, and the maximum values $\hat x_k$ tend to $\infty$. With $T_k<u_k<\tau_k$ as before, on $[u_k,\tau_k)$ both $\cos\sigma>0$ and $\sin\sigma>0$, so the lift of $\sigma$ may be chosen in $(0,\tfrac\pi2)$, with $\sigma(\tau_k)=\tfrac\pi2$. Writing now $\eta:=\sin\sigma>0$ and integrating $\sigma'=|A|\eta-|B|\cos\sigma$,
\[
\int_{u_k}^{\tau_k}\left(|A|\eta-|B|\cos\sigma\right) ds=\sigma(\tau_k)-\sigma(u_k)>0 .
\]
On the tail $y\ge y(s_1)>y_*$, and $|B(y)|=|c|y-q/y$ increases on $(y_*,\infty)$, so $|B(y)|\ge b_0:=|B(y(s_1))|>0$. Exactly as in Step~3, with $\Delta_k:=\int_{u_k}^{\tau_k}\eta ds=y(\tau_k)-y(u_k)$,
\[
|c| \hat x_k\Delta_k\ \ge\ \int_{u_k}^{\tau_k}|A|\eta ds\ >\ b_0(\hat x_k-x_*),
\]
and therefore $\Delta_k>b_0/(2|c|)$ again.
Since $y$ increases and the intervals are disjoint, $y_\infty\ge y(s_1)+\sum_k\Delta_k=\infty$, contradicting $y_\infty<\infty$.
\end{proof}

\subsection{Terminal excursions}\label{sec:terminais}

Theorem~\ref{thm:unico} bounds the multiplicity of loops in a strip. The proposition of this subsection eliminates them entirely in a geometrically natural class of excursions, by confronting the sign that the zero of $I$ imposes with the limit value of $I$ at the endpoint.

\begin{definition}[Terminal and intermediate excursions]\label{def:terminal}
Let $c<0$ and let $\gamma$ be a trajectory with maximal interval $(s_-,s_+)$. A horizontal excursion is a maximal subarc contained in a strip $\mathcal S^{\pm}$. It is called terminal if one of its endpoints is an endpoint of $\gamma$ (i.e.\ $s_-$ or $s_+$), and intermediate if both its endpoints are crossings of the level $\{y=y_*\}$. Analogously for
vertical excursions in $\mathcal V^{\pm}$, with $J$ and the level $\{x=x_*\}$.
\end{definition}

In a terminal excursion, the limit value of $I$ at the endpoint of $\gamma$ fixes the sign of $\sin\sigma$, and this leaves no room for a loop.

\begin{proposition}[Absence of loops in terminal excursions]\label{thm:terminal}
Let $c<0$ and let $\gamma$ be a trajectory distinct from the cylinders. Then no terminal
excursion contains a loop.
Consequently, every loop confined to a strip is contained in an intermediate excursion.
\end{proposition}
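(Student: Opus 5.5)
The plan is to confront the sign of $I$ forced by a loop with the limit value of $I$ at the endpoint of $\gamma$ that bounds the terminal excursion. By the mirror symmetry of Lemma~\ref{lem:simetrias}(ii), which exchanges $I$ and $J$ and the horizontal and vertical strips, it suffices to treat horizontal excursions. By reversibility, Lemma~\ref{lem:simetrias}(i), we may assume the terminal endpoint is $\omega$. Reversibility reverses time and sends $\sigma\mapsto\sigma+\pi$, hence $I\mapsto -I$, so monotonicity and signs transform consistently. If both endpoints of the excursion are endpoints of $\gamma$, the argument at $\omega$ alone suffices.

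\textbf{Forced sign from a loop.} Let the terminal excursion be $(e,s_+)$ with $\gamma((e,s_+))\subset\mathcal S^{\pm}$, and suppose it contains a loop $\gamma|_{[u,v]}$. By Lemma~\ref{prop:restricoes} there is $t_I\in(u,v)$ with $\sin\sigma(t_I)=0$, that is, $I(t_I)=0$. By Corollary~\ref{cor:estrita}(i), $I$ is strictly monotone on $(e,s_+)$, so $I$ has a definite sign after $t_I$ and converges to a limit $I_+$. In $\mathcal S^{-}$, $I$ is increasing, so $I_+>I(v)>0$. In $\mathcal S^{+}$, $I$ is decreasing, so $I_+<I(v)<0$. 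It then remains to show that $I_+\le0$ in $\mathcal S^{-}$ and $I_+\ge0$ in $\mathcal S^{+}$.

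\textbf{Case analysis of the endpoint.} I will go through the scenarios of Proposition~\ref{lem:Jfront}, which are valid on tails by Remark~\ref{rem:Jfront-end}.
\begin{itemize}
\item If $x\to0$ or $x\to\infty$ along some sequence $s_k\to s_+$, then (F1) gives $I_+=0$, which already contradicts the strict sign above. This covers endpoints on the $y$-axis and escapes with $\liminf x=0$.
\item Otherwise $x$ stays in a compact $[a,b]\subset(0,\infty)$ near $\omega$. Since $I$ has at most one zero, $y$ is eventually monotone and converges to $y_\infty\in[0,\infty]$.
\item A limit $y_\infty\in(0,\infty)$ is (F2), which is excluded.
\item A limit $y_\infty=0$ can occur only in $\mathcal S^{-}$. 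It is an endpoint on the $x$-axis by Lemma~\ref{rem:limites}, and (F3) gives $I_+=-f(x_0)<0$, a contradiction.
\item A limit $y_\infty=\infty$ can occur only in $\mathcal S^{+}$ with $x\in[a,b]$. This is exactly what Proposition~\ref{lem:fuga} rules out (Corollary~\ref{cor:rigidez-end}).
\end{itemize}
Hence every possible endpoint yields $I_+\le0$ in $\mathcal S^{-}$ and $I_+\ge0$ in $\mathcal S^{+}$, contradicting the forced sign. The final sentence of the proposition is then immediate: a loop confined to a strip lies in a single excursion, and that excursion cannot be terminal.

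\textbf{Main obstacle.} The step needing most care is showing that the endpoint case analysis is exhaustive, in particular the unbounded-$y$ situation in $\mathcal S^{+}$. I plan to split it according to whether $x$ accumulates at $0$ or $\infty$, which is handled by (F1), or stays in a compact set, which is handled by Proposition~\ref{lem:fuga}. I also need to check that $t_I$ lies strictly inside the excursion, so that $I(v)$ is strictly signed and not merely $I_+\ne0$. This is immediate because $t_I<v<s_+$ and $I$ is strictly monotone.
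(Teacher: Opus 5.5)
Your proposal is correct and follows essentially the same route as the paper: the zero of $I$ inside the loop (Lemma~\ref{prop:restricoes}), together with the strict monotonicity of $I$ on the strip, forces a strictly signed limit at the terminal endpoint, and this contradicts the admissible limit values given by Proposition~\ref{lem:Jfront} and Proposition~\ref{lem:fuga}. The differences are cosmetic. You reduce to the endpoint $\omega$ by reversibility, whereas the paper treats $\alpha$ and $\omega$ separately. Your split according to whether $x$ accumulates at $0$ or $\infty$ (handled by (F1)) also shows that Proposition~\ref{prop:nao-eixo}, which the paper cites as well, is not actually needed.
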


\begin{proof}
Let $\mathcal E$ be a terminal excursion containing a loop. By Lemma~\ref{prop:restricoes}, there is $t_I$ in the interior of the loop's subarc with $\sin\sigma(t_I)=0$, hence
\begin{equation}\label{eq:zeroJ}
I(t_I)=f\left(x(t_I)\right)\sin\sigma(t_I)=0 .
\end{equation}
Denote by $I_-$ and $I_+$ the limit values of $I$ at the endpoints $\alpha$ and $\omega$, i.e., $I_\pm=\lim_{s\to s_\pm}I(s).$ Proposition~\ref{lem:Jfront} enumerates the possible values. Scenario (F2) does not occur, (F1) gives the value $I_{\pm}=0$, and (F3), the endpoint on the $x$-axis, is only possible in $\mathcal S^{-}$ (since in $\mathcal S^{+}$ one has $y>y_*>0$) and gives $f(x_0)>0$ at the endpoint $\alpha$ and $-f(x_0)<0$ at the endpoint $\omega$. In $\mathcal S^{+}$, the vertical escape, the only remaining scenario, is excluded by Propositions~\ref{lem:fuga} and \ref{prop:nao-eixo} and therefore there the limit value is necessarily $0$.

\smallskip
\emph{Case $\mathcal E\subset\mathcal S^{-}$.} There $y<y_*$ and $I$ is strictly increasing along $\mathcal E$ (Corollary~\ref{cor:estrita}, item (i)). Combining this
with \eqref{eq:zeroJ}:
\begin{itemize}
\item if $\mathcal E$ reaches the endpoint $\alpha$, then $I(s)<0$ for all $s<t_I$ in $\mathcal E$, whence
$I_-\le I(s)<0$. But the admissible values at an endpoint $\alpha$ in $\mathcal S^{-}$ are $0$ (F1) and $f(x_0)>0$ (F3), both $\ge0$. A contradiction.
\item if $\mathcal E$ reaches the endpoint $\omega$, then $I(s)>0$ for all $s>t_I$ in $\mathcal E$, whence
$I_+\ge I(s)>0$. But the admissible values at an endpoint $\omega$ in $\mathcal S^{-}$ are $0$ (F1) and $-f(x_0)<0$ (F3), both $\le0$. A contradiction.
\end{itemize}

\smallskip
\emph{Case $\mathcal E\subset\mathcal S^{+}$.} There $y>y_*$ and $I$ is strictly decreasing along $\mathcal E$. As observed above, any limit value of $I$ at an endpoint contained
in $\mathcal S^{+}$ is $0$. Combining this with \eqref{eq:zeroJ}:
\begin{itemize}
\item if $\mathcal E$ reaches the endpoint $\omega$, then $I(s)<0$ for $s>t_I$, whence $I_+\le I(s)<0\ne0$. A contradiction.
\item if $\mathcal E$ reaches the endpoint $\alpha$, then $I(s)>0$ for $s<t_I$, whence $I_-\ge I(s)>0\ne0$. A contradiction.
\end{itemize}

\smallskip
Since the four alternatives exhaust the cases, no terminal excursion contains a loop. The last assertion follows by exclusion: every excursion is terminal or intermediate
(Definition~\ref{def:terminal}). The mirror statement is obtained by the symmetry of
Lemma~\ref{lem:simetrias}(ii).
\end{proof}

By Proposition~\ref{thm:terminal}, the terminal excursions have no self-intersections. This means that the portions of $\Sigma$ corresponding to these excursions are embedded. So we have

\begin{corollary}[The ends are embedded]\label{cor:end-mergulhado}
Let $c<0$ and let $\gamma$ be a trajectory distinct from the cylinders. Then the restriction of $\gamma$ to each terminal excursion is injective. Consequently, each end of $\Sigma$, in the sense of the portion corresponding to a terminal excursion, is embedded.
\end{corollary}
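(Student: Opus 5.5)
The plan is to reduce injectivity on a terminal excursion to Proposition~\ref{thm:terminal} via Remark~\ref{rem:umlauf}. Fix a terminal excursion $\mathcal E$, that is, a maximal parameter interval $E$ whose image lies in a single strip $\mathcal S^{\pm}$ (or $\mathcal V^{\pm}$). Suppose $\gamma|_E$ is not injective, so there are $s_1<s_2$ in $E$ with $\gamma(s_1)=\gamma(s_2)$.

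First I would rule out $\sigma(s_1)=\sigma(s_2)$ mod $2\pi$. In that case $\Gamma(s_1)=\Gamma(s_2)$, so the trajectory is periodic. Its whole image would then equal the image of $[s_1,s_2]$, which lies in the strip. This contradicts Lemma~\ref{prop:noloop}, or Corollary~\ref{cor:noloop2} in the vertical case. For a periodic trajectory the maximal interval is $\R$, and every excursion of a closed orbit is intermediate, so this case is also excluded directly. Hence $\sigma(s_1)\ne\sigma(s_2)$ mod $2\pi$, and the pair is an apparent loop.

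Next, Remark~\ref{rem:umlauf}(i) gives a pair $(u,v)$ with $s_1\le u<v\le s_2$ such that $\gamma|_{[u,v]}$ is simple with $\gamma(u)=\gamma(v)$ and $\sigma(u)\ne\sigma(v)$ mod $2\pi$. This is a loop in the sense of Definition~\ref{def:loop}, and its parameter interval $[u,v]\subset[s_1,s_2]\subset E$ lies inside the terminal excursion $\mathcal E$. Proposition~\ref{thm:terminal} says no terminal excursion contains a loop, so we have a contradiction. Therefore $\gamma|_E$ is injective.

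For the final assertion, the portion of $\Sigma$ over a terminal excursion is the image of $E\times\s^p\times\s^q$ under \eqref{eq:param}. Since orbits over distinct points of $\QQ$ are disjoint and $\gamma|_E$ is injective, the parametrization is injective there. Any endpoint on an axis closes off smoothly by Proposition~\ref{lem:suave}.

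The only delicate step is embeddedness rather than mere injectivity, since that also needs properness of the end. I would phrase the conclusion for the portion as the image of an injective immersion that is a homeomorphism onto its image. This holds because $\gamma|_E$ does not accumulate on itself: $\gamma$ is a graph over suitable intervals between the finitely many zeros of $\sin\sigma$ allowed by the strict monotonicity of $I$ (Corollary~\ref{cor:estrita}). If this is not fully available at this point, I would state embeddedness in the sense the paper intends, namely injectivity of the parametrization, with properness of conical ends deferred to the later graph result (Corollary~\ref{cor:grafico}).
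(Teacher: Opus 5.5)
Your injectivity argument is the paper's. A self-intersection inside $\mathcal E$ either makes the orbit closed and confined to a strip, which Lemma~\ref{prop:noloop} excludes, or, through Remark~\ref{rem:umlauf}(i), yields a loop inside $\mathcal E$, which Proposition~\ref{thm:terminal} forbids.

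The gap is in the passage from injectivity to embeddedness. You argue that $\gamma|_{\mathcal E}$ does not accumulate on itself because it is a graph over $y$-intervals between the zeros of $\sin\sigma$, and this does not work as stated. Strict monotonicity of $I$ still allows one zero, which splits the excursion into two graphs. Each graph is homeomorphic onto its image, but nothing you invoke prevents the piece running into the terminal endpoint from accumulating on the other piece. An infinite-length tail on which $y$ converges while $x$ oscillates would do exactly that.

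Your fallback does not close the gap either. Injectivity alone is weaker than the statement, since the introduction obtains embeddedness from injectivity together with properness. Moreover, Corollary~\ref{cor:grafico} only concerns ends along which $\rho\to\infty$. So it neither excludes a non-escaping terminal tail nor says anything about the rest of the excursion.

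The missing ingredient is Theorem~\ref{prop:recorrente}, which is how the paper concludes. A terminal excursion has no further crossings, so if its arclength is infinite then $\rho\to\infty$. If its arclength is finite, maximality of the solution places the limit on a coordinate axis. The other endpoint is a crossing of a separatrix. Together these make $\gamma|_{\mathcal E}$ proper as a map into the open strip, and a proper injective immersion is an embedding.

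Alternatively, your graph idea can be repaired with Lemma~\ref{lem:excterminal}. The limit value of $I$ at the terminal endpoint, combined with the strict monotonicity of $I$, shows that $\sin\sigma$ never vanishes on a terminal horizontal excursion (mirror: $\cos\sigma$ on a vertical one). Hence the whole excursion is a single graph over an interval of the $y$-axis. This gives injectivity directly, without any appeal to loops. It also gives a homeomorphism onto the image via the coordinate $y$, hence an embedding in the usual sense, though not the properness into the strip that the paper also obtains.
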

\begin{proof}
The injectivity follows from Proposition~\ref{thm:terminal}. Let $\mathcal E$ be a terminal excursion in a strip $\mathcal S^{\pm}$. We claim that $\gamma|_\mathcal E$ is proper as a map into the open strip $\mathcal S^{\pm}\subset\mathcal Q$. At the nonterminal endpoint of $\mathcal E$, the curve approaches the level $\{y=y_*\}$, which is outside the open strip. At the terminal endpoint, if the arclength is infinite, then $\rho\to\infty$ by Theorem~\ref{prop:recorrente}, since there are no further crossings of $\{y=y_*\}$. If the arclength is finite, the unit-speed curve has a limit in the closed quadrant. By maximality of the solution of \eqref{eq:flow}, that limit must lie on a coordinate axis, possibly at the origin. Thus neither endpoint can accumulate in a compact subset of $\mathcal S^{\pm}$. Consequently $\gamma|_\mathcal E$ is a proper injective immersion into the strip and hence an embedding. The product parametrization \eqref{eq:param} then gives an embedding of the corresponding regular portion of $\Sigma$, since $x,y>0$ there. At an endpoint on an axis, the smooth closure is provided by Proposition~\ref{lem:suave}. The Clifford vertex, when present, is not part of the regular hypersurface.
\end{proof}

Forbidding loops in the terminal excursions localizes every confined loop in the interior of the sequence of excursions, and therefore forces crossings on both sides of it.

\begin{corollary}[Localization of confined loops]\label{cor:localiza}
Let $c<0$, $\gamma$ be a trajectory distinct from the cylinders, and $\ell$ be a
loop of $\gamma$ confined to a horizontal strip $\mathcal S^{\pm}$. Then,
\begin{enumerate}
\item[(a)] $\ell$ is contained in an intermediate excursion of that strip, never in a
terminal one;
\item[(b)] consequently, $\gamma$ crosses the level $\{y=y_*\}$ at least twice, once
before and once after $\ell$ (the two endpoints of the excursion containing
it);
\item[(c)] if, moreover, $\ell$ is confined to a vertical strip $\mathcal V^{\pm}$, that
is, to a quadrant, then $\gamma$ also crosses $\{x=x_*\}$ at least twice, by the
mirror statement applied to $J$.
\end{enumerate}
In particular, a trajectory with $N_y\le1$ has no loop confined to a horizontal strip,
and one with $\min(N_x,N_y)\le1$ has no loop confined to a quadrant.
\end{corollary}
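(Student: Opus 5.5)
The statement is essentially a repackaging of Proposition~\ref{thm:terminal} together with the definition of excursions and the transversality of crossings. I would argue in the order of items (a), (b), (c), and then deduce the final sentence.

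For (a), let $\ell=\gamma|_{[u,v]}$ be confined to $\mathcal S^{\pm}$. Since $\gamma([u,v])\subset\mathcal S^{\pm}$ and $[u,v]$ is connected, $[u,v]$ lies in a single connected component of $\gamma^{-1}(\mathcal S^{\pm})$, that is, in a single horizontal excursion $\mathcal E$ (Definition~\ref{def:terminal}). By Proposition~\ref{thm:terminal}, $\mathcal E$ is not terminal. Every excursion is terminal or intermediate, so $\mathcal E$ is intermediate.

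For (b), by Definition~\ref{def:terminal} both endpoints $e_1<e_2$ of the intermediate excursion $\mathcal E=(e_1,e_2)$ are instants where $\gamma$ meets $\{y=y_*\}$. By Lemma~\ref{lem:transv} these contacts are transverse, hence genuine crossings. Since $[u,v]\subset(e_1,e_2)$, one crossing occurs before $\ell$ and one after, and they are distinct instants. If $\gamma$ is closed, the same reasoning applies within a period, and the two crossings are still distinct points of the period. The one subtle point is when an intermediate excursion could be the whole period minus one crossing; Lemma~\ref{prop:noloop} already gives an even number $\ge2$ of crossings there anyway.

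For (c), if $\ell$ is also confined to $\mathcal V^{\pm}$, I apply the mirror symmetry of Lemma~\ref{lem:simetrias}(ii). This symmetry exchanges $I$ and $J$, the horizontal and vertical strips, and $y_*$ and $x_*$, and it carries loops to loops, since it acts as a reflection of $\mathcal Q$ together with an isometric change of $\sigma$. Proposition~\ref{thm:terminal} already includes its mirror statement, so (a) and (b) for vertical strips give two crossings of $\{x=x_*\}$ around $\ell$.

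The final sentence is then the contrapositive: a loop confined to a horizontal strip forces $N_y\ge2$, and a quadrant loop forces both $N_y\ge2$ and $N_x\ge2$. I do not expect a real obstacle here. The only care needed is that "confined" means the whole closed subarc $[u,v]$, including the corner point, lies in the open strip, so that connectedness places it inside one excursion. With that fixed, the argument is bookkeeping on top of Proposition~\ref{thm:terminal}.
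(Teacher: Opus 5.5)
Your proposal is correct and follows the paper's proof: (a) comes from Proposition~\ref{thm:terminal} together with the dichotomy of Definition~\ref{def:terminal} that every excursion is terminal or intermediate, (b) from the endpoints of an intermediate excursion being crossings, (c) from the mirror statement with $J$, and the final sentence by contraposition. Your additions are refinements the paper leaves implicit and change nothing in the argument: connectedness placing the closed subarc in a single excursion, transversality via Lemma~\ref{lem:transv}, and the closed-orbit case handled by Lemma~\ref{prop:noloop}.
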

\begin{proof}
(a) By Proposition~\ref{thm:terminal}, no terminal excursion contains a loop. Since every excursion is terminal or intermediate (Definition~\ref{def:terminal}), the loop $\ell$ is in an intermediate one.

(b) By definition, the two endpoints of an intermediate excursion are crossings of the level
$\{y=y_*\}$ (Definition~\ref{def:terminal}). Hence, there is a crossing before and another after
$\ell$, and $N_y\ge2$. 

(c) If $\ell$ is also confined to a vertical strip, the mirror statement of Proposition~\ref{thm:terminal}, with $J$ in place of $I$ and $\{x=x_*\}$ in
place of $\{y=y_*\}$, places $\ell$ in an intermediate vertical excursion, whose endpoints are crossings of $\{x=x_*\}$. Hence $N_x\ge2$. The final assertions are the contrapositive: $N_y\le1$ prevents step (b), and $\min(N_x,N_y)\le1$ prevents (b) or (c).
\end{proof}

\subsection{Counting by crossings}

The results above say where a loop may sit, not how many turns a trajectory may make. Inside a strip the tangent sweeps less than a full turn, so winding can only be accumulated by passing from one strip to the next, and counting the crossings bounds the winding.

\begin{definition}[Winding and crossings]\label{def:winding}
For a compact subarc $\gamma|_{[a,b]}$ of a trajectory of \eqref{lambda-s} we set
\[
n[a,b]:=\left\lfloor\frac{|\sigma(b)-\sigma(a)|}{2\pi}\right\rfloor,
\]
the number of full turns of the net rotation of the tangent along it. If $\gamma$ is a closed orbit, $n(\gamma)$ denotes this quantity over one fundamental period. For a nonclosed trajectory we write $n(\gamma):=\left\lfloor|\Delta\sigma_{\mathrm{total}}|/2\pi\right\rfloor$, where $\Delta\sigma_{\mathrm{total}}$ is the total rotation between the two endpoints, whenever the tangent angle has limits at both of them. Otherwise no global winding number is attached to $\gamma$. Following Hsiang \cite[\S3]{Hsiang}, we call $n(\gamma)$ the winding number of $\gamma$. In the constant mean curvature case the limit directions are known a priori, so $\Delta\sigma$ itself determines an integer, whereas here the integer part is needed.
\end{definition}

The first step bounds the rotation of the tangent inside a single excursion, using only the monotonicity of $I$ in a strip.

\begin{lemma}[Bound per excursion]\label{lem:excursao}
Let $c<0$ and consider a horizontal excursion of $\gamma$, i.e., a maximal subarc contained in a strip $\mathcal S^{\pm}$. Then $|\Delta\sigma|<2\pi$ along the excursion. The same holds for vertical excursions in $\mathcal V^{\pm}$.
\end{lemma}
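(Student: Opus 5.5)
The plan is to use only the strict monotonicity of $I$ inside a horizontal strip (Corollary~\ref{cor:estrita}(i)), as in the proof of Proposition~\ref{thm:cota32}. Fix a horizontal excursion $\mathcal E=(e_1,e_2)$ with $\gamma(\mathcal E)\subset\mathcal S^{\pm}$, and assume $\gamma$ is not one of the cylinders. The vertical cylinder is not contained in a strip (Remark~\ref{rem:vertex}), and for the horizontal one there is nothing to prove. Then $I=f(x)\sin\sigma$ is strictly monotone on $\mathcal E$. Since $f>0$, the zeros of $I$ are exactly the zeros of $\sin\sigma$, so $\sin\sigma$ vanishes at most once on $\mathcal E$.

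The key step is a covering count. Suppose, for contradiction, that $|\sigma(t_2)-\sigma(t_1)|\ge 2\pi$ for some $t_1<t_2$ in $\mathcal E$; this covers the supremum over compact subarcs. By continuity, $\sigma$ takes every value in the closed interval between $\sigma(t_1)$ and $\sigma(t_2)$. This interval has length $\ge 2\pi$, so it contains two distinct integer multiples of $\pi$ (namely $m\pi$ and $(m+1)\pi$, since any closed interval of length $\ge 2\pi$ contains at least two consecutive multiples of $\pi$). Hence $\sin\sigma$ has two distinct zeros in $[t_1,t_2]$, and $I$ vanishes at two distinct instants, contradicting injectivity. The same argument gives the strict inequality on every compact subarc.

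For the whole excursion I then need $|\Delta\sigma|<2\pi$ where $\Delta\sigma$ is the limit at the ends. This step needs the most care.

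\emph{Intermediate excursion.} The endpoints are crossings, and $\sigma$ is defined there. The argument applies on the closed interval $[e_1,e_2]$, since $I$ remains strictly monotone on the closure by continuity (or because crossings are isolated). So two zeros of $\sin\sigma$ on $[e_1,e_2]$ are again impossible.

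\emph{Terminal excursion.} Here $\sigma$ may not converge at the endpoint, so I interpret $|\Delta\sigma|<2\pi$ as the total oscillation, $\sup|\sigma(t)-\sigma(t')|$ over the excursion. Because $\sin\sigma$ has at most one zero, the lift $\sigma$ stays inside an open interval $(m\pi-\pi,m\pi+\pi)$ around the unique zero $m\pi$, or inside a single $(m\pi,(m+1)\pi)$ if there is no zero. The oscillation is therefore at most $2\pi$. To exclude equality I will use the openness of these intervals, and note that $\sigma$ cannot approach both ends of such an interval without producing a second zero. More precisely, values arbitrarily close to $(m\pm1)\pi$ on both sides would require crossing $m\pi$ twice or touching $(m\pm1)\pi$. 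I expect this boundary bookkeeping to be the main (minor) obstacle. If it turns out delicate, I will state the bound as $\le 2\pi$ on the tail and strict on every compact subarc.

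The vertical case follows by the mirror symmetry of Lemma~\ref{lem:simetrias}(ii). There $J=g(y)\cos\sigma$ is strictly monotone in $\mathcal V^{\pm}$ (Corollary~\ref{cor:estrita}(ii)), and the zeros of $\cos\sigma$ at $\tfrac\pi2+m\pi$ play the role of those of $\sin\sigma$.
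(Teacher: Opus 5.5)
Your key step is exactly the paper's proof: strict monotonicity of $I$ in the strip (Corollary~\ref{cor:estrita}(i)) allows at most one zero of $\sin\sigma$, while a sweep of length $\ge 2\pi$ would hit two distinct multiples of $\pi$; the paper does not separate intermediate from terminal excursions. Your extra terminal-case bookkeeping reaches the right conclusion, but the strictness comes from the crossing endpoint, not from a ``second zero'': on the compact piece between the crossing and the zero of $\sin\sigma$, $\sigma$ attains its extremum strictly inside $((m-1)\pi,(m+1)\pi)$, and if there is no zero the sweep is at most $\pi$ (Lemma~\ref{lem:excterminal} in fact gives $|\Delta\sigma|<\pi$ on every terminal excursion).
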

\begin{proof}
In the strip $I$ is strictly monotone (Corollary~\ref{cor:estrita}, item (i)) and $\sgn I=\sgn(\sin\sigma)$ changes at most once. If $|\Delta\sigma|\ge2\pi$, then $\sigma$ sweeps a closed interval of length at least $2\pi$, which contains two distinct multiples of $\pi$. Hence $\sin\sigma$, and therefore $I$, would vanish at two distinct instants of the excursion, which is a contradiction. The vertical case is the mirror image, with $J$ and $\cos\sigma$.
\end{proof}

At the two ends of a nonclosed trajectory the bound improves to half a turn, because the limit values of $I$ given by Proposition~\ref{lem:Jfront} fix the sign of $\sin\sigma$ along a terminal excursion.

\begin{lemma}[Bound per terminal excursion]\label{lem:excterminal}
Let $c<0$ and consider a terminal horizontal excursion of $\gamma$, that is, one that
reaches one of the endpoints of $\gamma$. Then, along it, $\sin\sigma$ has constant sign and
\[
|\Delta\sigma|< \pi .
\]
The same holds, with $J$ and $\cos\sigma$, for terminal vertical excursions.
\end{lemma}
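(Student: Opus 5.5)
The plan is to redo the argument of Proposition~\ref{thm:terminal}, but applied to the whole terminal excursion instead of to a loop. By reversibility (Lemma~\ref{lem:simetrias}(i)) it suffices to treat an excursion $\mathcal E=(e,s_+)$ that reaches the endpoint $\omega$. The case $\alpha$ is symmetric, and the vertical case follows from the mirror symmetry (Lemma~\ref{lem:simetrias}(ii)) with $J$ and $\cos\sigma$. On $\mathcal E$ the function $I$ is strictly monotone (Corollary~\ref{cor:estrita}(i)), and its limit $I_+$ exists. By Proposition~\ref{lem:Jfront} and Remark~\ref{rem:Jfront-end}, combined with Propositions~\ref{lem:fuga} and~\ref{prop:nao-eixo} to exclude (F2) and (F4), the only admissible values are $I_+=0$ and, in $\mathcal S^-$ only, $I_+=-f(x_0)<0$. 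I would also note that an end of infinite length with $\rho\to\infty$ still falls under (F1): by Propositions~\ref{lem:fuga} and~\ref{prop:nao-eixo}, either $x\to\infty$ along a sequence, or $y\to\infty$ with $x\to0$ along a sequence. The same classification was already used in the proof of Proposition~\ref{thm:terminal}.

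Next I show that $\sin\sigma$ has constant sign on $\mathcal E$. Since $\sgn I=\sgn(\sin\sigma)$, it suffices that $I$ does not vanish on $\mathcal E$.

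In $\mathcal S^-$, $I$ is strictly increasing. If $I(t)=0$ for some $t\in\mathcal E$, then $I_+>I(t)=0$. This contradicts $I_+\le0$.

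In $\mathcal S^+$, $I$ is strictly decreasing and $I_+=0$. A zero at $t$ would give $I<0$ after $t$, hence $I_+<0$, again a contradiction.

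So $\sin\sigma\ne0$ throughout $\mathcal E$. By continuity $\sigma$ then stays in a single open interval $(m\pi,(m+1)\pi)$ for the continuous lift. Consequently, for any $a<b$ in $\mathcal E$ we get $|\sigma(b)-\sigma(a)|<\pi$, and the total variation, defined by limits when they exist, is at most $\pi$.

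The main obstacle is the strict inequality $|\Delta\sigma|<\pi$, when it is taken between the limiting values at the two ends of the excursion. At the crossing end $e$, transversality (Lemma~\ref{lem:transv}) gives $\sin\sigma(e)\neq0$, so that end lies strictly inside the open interval. It remains to see that the two extreme values are not both attained in the limit. For this I would use monotonicity once more. Since $I=f\sin\sigma$ is strictly monotone and moves away from $0$ toward $I(e)$ on one side, and $f$ is positive, the limit of $\sigma$ at $\omega$ can be a multiple of $\pi$ only in case (F1). In that case, $\sigma$ starting strictly inside the interval already gives a total variation strictly smaller than $\pi$.

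The step I am least sure of is whether the statement intends the supremum of $|\sigma(b)-\sigma(a)|$ over subarcs, or the net change. In either reading, confinement of the lift to an open interval of length $\pi$, together with the nonzero value at the crossing endpoint, gives the strict bound.
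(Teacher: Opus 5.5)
Your proposal is correct and essentially reproduces the paper's proof. The limit values of $I$ from Proposition~\ref{lem:Jfront}, propagated along the excursion by the monotonicity of $I$, confine $\sigma$ to an interval of length $\pi$, and transversality at the crossing endpoint (Lemma~\ref{lem:transv}) gives the strict inequality; your use of strict monotonicity to get $\sin\sigma\neq0$ on the whole open excursion sharpens the paper's weak-sign statement but does not change the argument.

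One side claim overreaches. For the reading as $\sup|\sigma(b)-\sigma(a)|$, confinement to an open interval plus an interior value at the crossing endpoint does not suffice, since the range of $\sigma$ could a priori fill the whole open interval; you would also need $\lim\sigma$ to exist at the terminal end. The lemma, however, concerns the net change $\Delta\sigma$, and for that your argument is complete.
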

\begin{proof}
Let $\mathcal E$ be the excursion and $I_{\mathrm{pt}}$ the limit value of $I$ at the endpoint $\mathcal E$
reaches. As in the proof of Proposition~\ref{thm:terminal}, Proposition~\ref{lem:Jfront} determines the
admissible values, and the monotonicity of $I$ on $\mathcal E$ propagates the sign from the endpoint.
Precisely, in the four cases:
\begin{itemize}
\item $\mathcal E\subset\mathcal S^{-}$ with endpoint $\alpha$: $I$ is increasing, hence $I\ge
I_{\mathrm{pt}}$ on $\mathcal E$, and $I_{\mathrm{pt}}\in\{0,f(x_0)\}$ is $\ge0$. Hence $I\ge0$ on $\mathcal E$.
\item $\mathcal E\subset\mathcal S^{-}$ with endpoint $\omega$: $I$ is increasing, hence
$I\le I_{\mathrm{pt}}$, and $I_{\mathrm{pt}}\in\{0,-f(x_0)\}$ is $\le0$. Hence $I\le0$ on $\mathcal E$.
\item $\mathcal E\subset\mathcal S^{+}$ with endpoint $\alpha$: $I$ is decreasing, hence
$I\le I_{\mathrm{pt}}=0$.
\item $\mathcal E\subset\mathcal S^{+}$ with endpoint $\omega$: $I$ is decreasing, hence
$I\ge I_{\mathrm{pt}}=0$.
\end{itemize}
Since $f>0$, one has $\sgn I=\sgn(\sin\sigma)$, and in all cases $\sin\sigma$ has constant sign (in the weak sense) along $\mathcal E$.

Suppose $\sin\sigma\ge0$ on $\mathcal E$ (the other case is identical). The set $\{\vartheta\in\R:\sin\vartheta\ge0\}$ is the disjoint union of the closed intervals $[2k\pi,(2k+1)\pi]$, $k\in\Z$, each of length $\pi$. Since $\sigma$ is continuous on $\mathcal E$ and does not leave this set, it remains in a single component $[2k\pi,(2k+1)\pi]$. Hence $|\Delta\sigma|\le\pi$ immediately.

The inequality is strict: the endpoint of $\mathcal E$ that is not an endpoint of $\gamma$ is a
crossing of the level $\{y=y_*\}$, and the crossings are transverse (Lemma~\ref{lem:transv}), i.e.\ there $y'=\sin\sigma\ne0$. Hence at that endpoint $\sigma$ is in the interior of $[2k\pi,(2k+1)\pi]$, and the variation to the other endpoint is less than the length of the interval.
\end{proof}

Adding these bounds over the excursions, which are separated by the crossings, gives the main estimate of this section. Figure~\ref{fig:duas} shows the crossings counted in it.

\begin{figure}[htbp]
\centering
\includegraphics[width=0.62\textwidth]{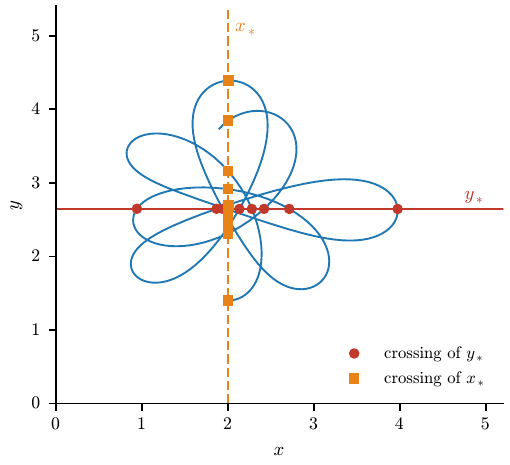}
\caption{A trajectory crossing both separatrices, for $p=4$, $q=7$, $c=-1$, with initial condition $(2,1.4,0)$ and $s\in[0,25.96]$. The crossings of $\{y=y_*\}$ (red circles) and of $\{x=x_*\}$ (orange squares) are marked. The initial point lies on $\{x=x_*\}$, since $x_*=2$.}
\label{fig:duas}
\end{figure}

\begin{theorem}[Counting: winding vs.\ crossings]\label{prop:contagem}
Let $c<0$ and let $\gamma$ be a trajectory distinct from the cylinders, with $N_y$ crossings of
$\{y=y_*\}$ and $N_x$ of $\{x=x_*\}$, and with a winding number in the sense of Definition~\ref{def:winding}, that is, either a closed orbit or a trajectory whose tangent angle has limits at both endpoints. For a closed orbit, $N_x$ and $N_y$ count the crossings over one period. Then
\begin{equation}\label{eq:contagem}
n(\gamma) \le \min(N_x,N_y)-1 .
\end{equation}
Equivalently, arbitrarily large winding requires an arbitrarily large number of crossings of both separatrices. In particular, a trajectory with $\min(N_x,N_y)\le1$ makes no full
turn.
\end{theorem}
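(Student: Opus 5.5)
We prove the bound for the horizontal separatrix, obtaining $n(\gamma)\le N_y-1$; the bound $n(\gamma)\le N_x-1$ then follows by the mirror symmetry of Lemma~\ref{lem:simetrias}(ii), and the two together give \eqref{eq:contagem}. We first treat a nonclosed trajectory whose tangent angle has limits at both endpoints. If $N_y=\infty$ there is nothing to prove, so assume $N_y<\infty$. By Theorem~\ref{thm:cross} we have $N_y\ge1$. The crossings at instants $c_1<\dots<c_{N_y}$ split $\gamma$ into $N_y+1$ horizontal excursions. Two of them are terminal and $N_y-1$ are intermediate.

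Since $\sigma$ is a continuous lift, $\Delta\sigma_{\mathrm{total}}$ is the sum of the variations over the excursions. We bound each piece as follows.
\begin{itemize}
\item By Lemma~\ref{lem:excterminal}, each terminal excursion contributes $|\Delta\sigma|<\pi$. The limits of $\sigma$ at the endpoints exist by hypothesis, so the lemma applies up to the endpoint, giving at most $\pi$ after passing to the limit.
\item By Lemma~\ref{lem:excursao}, each intermediate excursion contributes $|\Delta\sigma|<2\pi$.
\end{itemize}
This crude sum gives $|\Delta\sigma_{\mathrm{total}}|<2\pi+2\pi(N_y-1)=2\pi N_y$, hence only $n(\gamma)\le N_y-1$ in the strict-inequality sense, which is exactly what we need: $\lfloor|\Delta\sigma_{\mathrm{total}}|/2\pi\rfloor\le N_y-1$.

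The point requiring care is strictness when limits are taken at the ends. Each terminal piece is at most $\pi$, not strictly less, so in the worst case the two terminal pieces sum to $2\pi$. The total is then still strictly below $2\pi N_y$, because each intermediate piece is strictly below $2\pi$ and there is at least one such piece when $N_y\ge2$. When $N_y=1$ there are only the two terminal pieces. In that case we argue directly. At the single crossing $\sin\sigma\ne0$ by Lemma~\ref{lem:transv}, so the lift sits in the interior of the half-turn interval $[k\pi,(k+1)\pi]$ selected by each terminal piece. Both pieces share this interior point, so each contributes strictly less than $\pi$ in the limit, and $|\Delta\sigma|<2\pi$, which gives $n=0$. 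Alternatively, we can keep the strict bounds from Lemma~\ref{lem:excterminal}, since the crossing endpoint is interior.

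For a closed orbit of period $T$, Corollary~\ref{cor:noloop2} gives $N_y\ge2$, even. We start the period at a crossing. The $N_y$ crossings then divide one period into $N_y$ intermediate excursions, each with $|\Delta\sigma|<2\pi$, so $|\Delta\sigma|<2\pi N_y$. This gives only $n\le N_y-1$ because of the floor, which is again what we need.

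The main obstacle I expect is exactly this bookkeeping of strict versus non-strict inequalities: sums of strict inequalities $<2\pi$ yield $|\Delta\sigma|<2\pi N$, and the floor then gives $\le N-1$. I would check carefully that a limiting terminal piece cannot attain $\pi$ in a way that pushes the total to exactly $2\pi N_y$. This is handled by the strict contribution from the crossing endpoint, using transversality.
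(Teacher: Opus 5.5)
Your proof is correct and follows the paper's argument. You split $\gamma$ at the crossings and bound intermediate excursions by $2\pi$ (Lemma~\ref{lem:excursao}) and terminal ones by $\pi$ (Lemma~\ref{lem:excterminal}). Summing gives $|\Delta\sigma_{\mathrm{total}}|<2\pi N_y$, and the same bound over one period when the orbit is closed; the bound by $N_x$ then follows by the mirror symmetry.

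Your extra bookkeeping about strictness at the endpoints is harmless but not needed. The strict bound of Lemma~\ref{lem:excterminal} already survives the passage to the limit at the endpoint: $\sigma$ stays in a closed half-turn interval, and its value at the crossing is interior to that interval.
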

\begin{proof}
To prove the bound by $N_y$ we may assume $N_y<\infty$, the bound being void otherwise. Note first that $N_y\ge1$, by Theorem~\ref{thm:cross} below. The $N_y$ crossings, transverse and
isolated (Lemma~\ref{lem:transv}), divide $\gamma$ into horizontal excursions, and we sum the winding excursion by excursion.

\emph{Case 1: $\gamma$ is not closed.} There are exactly $N_y+1$ excursions, of which two are terminal (the one containing the endpoint $\alpha$ and the one containing the endpoint $\omega$) and $N_y-1$ are intermediate. By Lemma~\ref{lem:excursao}, each intermediate one contributes $|\Delta\sigma|<2\pi$, and by Lemma~\ref{lem:excterminal}, each terminal one contributes $|\Delta\sigma|<\pi$. Adding,
\[
|\Delta\sigma_{\text{total}}|<2\pi(N_y-1)+2\cdot\pi =2\pi N_y,
\]
whence $|\Delta\sigma_{\text{total}}|/2\pi<N_y$ and the number of full turns is $n(\gamma)\le N_y-1$.

\emph{Case 2: $\gamma$ is closed.} There are no endpoints and the $N_y$ crossings divide the closed curve
into $N_y$ arcs, each contained in a strip. Thus, Lemma~\ref{lem:excursao} gives
$|\Delta\sigma_{\text{total}}|<2\pi N_y$, with the same conclusion.

In both cases, the mirror argument with $J$ and the vertical excursions gives
$n(\gamma)\le N_x-1$ when $N_x<\infty$, that bound being void otherwise (see Figure~\ref{fig:winding}). Combining the two bounds gives \eqref{eq:contagem}.
\end{proof}

Since the crossings are transverse, they correspond to the connected components of the intersection of $\Sigma$ with the cylinders, and Theorem~\ref{prop:contagem} becomes a statement about $\Sigma$.

\begin{corollary}[Winding costs intersections]\label{cor:custo}
Let $c<0$ and let $\Sigma$ be a bi-rotational self-shrinker whose profile curve has a winding number in the sense of Definition~\ref{def:winding}. If $n(\gamma)=k$, then $\Sigma$
meets \emph{each} of the two cylinders in at least $k+1$ distinct connected components. In particular, a
self-shrinker meeting one of the cylinders in a single connected component has a profile curve with total turning
less than $2\pi$.
\end{corollary}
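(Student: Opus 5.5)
The plan is to read Corollary~\ref{cor:custo} off Theorem~\ref{prop:contagem} by identifying crossings of the profile curve with connected components of $\Sigma\cap\mathcal C_{\mathrm h}$ and $\Sigma\cap\mathcal C_{\mathrm v}$. First, since $n(\gamma)=k$ is defined, Theorem~\ref{prop:contagem} gives $k\le\min(N_x,N_y)-1$, so $N_x\ge k+1$ and $N_y\ge k+1$, with the counts taken over one period if $\gamma$ is closed. Here $\Sigma$ is distinct from the cylinders, since the cylinders have no finite winding attached in a way that contradicts this, and anyway Theorem~\ref{prop:contagem} assumes it; I will note that the cylinders are excluded implicitly, or handle them by noting $n=0$ and the cylinder meets itself in one component.

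Next, I identify the components. A crossing at the instant $s_0$ with $y(s_0)=y_*$ corresponds, via the parametrization \eqref{eq:param}, to the orbit $\s^p(x(s_0))\times\s^q(y_*)$. This orbit is a connected subset of $\Sigma\cap\mathcal C_{\mathrm h}$, and it is connected because $p,q\ge1$. Because $\mathcal C_{\mathrm h}=\pi^{-1}(\{y=y_*\})$ on the regular part, $\Sigma\cap\mathcal C_{\mathrm h}$ is the image of $\{s:y(s)=y_*\}\times\s^p\times\s^q$. By Lemma~\ref{lem:transv} this parameter set is discrete, since the zeros of $y-y_*$ are transverse and isolated. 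Hence the preimage in the parameter domain is a disjoint union of closed and open pieces $\{s_i\}\times\s^p\times\s^q$, and these are exactly the connected components of the preimage.

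The delicate point is passing from the parameter domain to $\Sigma$ when $\Sigma$ is only immersed. Distinct crossing instants $s_i\ne s_j$ could have the same image point $(x(s_i),y_*)$ through a self-intersection of $\gamma$ on the line $y=y_*$. In that case their orbits coincide in $\R^{n+1}$. I will therefore count components of the intersection in the immersed sense, that is, as components of $X^{-1}(\mathcal C_{\mathrm h})$ in the domain manifold. This is the natural reading, because $\Sigma$ is the immersed hypersurface, and with it each crossing gives a distinct component, yielding at least $N_y\ge k+1$.

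If instead one wants distinct image sets, I would try to argue that $k+1$ crossings with distinct $x$-values still exist. I expect this to be the main obstacle, and I would settle it by stating the immersed interpretation explicitly rather than proving it. The points on an axis are not an issue: there $y\to0$ or $x\to0$, so the axis lies off both separatrix lines and adds no components. The same argument with $\{x=x_*\}$ and $N_x$ handles $\mathcal C_{\mathrm v}$.

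For the final sentence, take the contrapositive. If $\Sigma$ meets one cylinder in a single component, then the corresponding $N\le1$, so $\min(N_x,N_y)\le1$. Theorem~\ref{prop:contagem} then gives $n(\gamma)=0$, that is, $|\Delta\sigma_{\mathrm{total}}|<2\pi$.
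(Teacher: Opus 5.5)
Your argument is the paper's: Theorem~\ref{prop:contagem} gives $N_x,N_y\ge k+1$, and the transversality of the crossings (Lemma~\ref{lem:transv}) identifies each crossing instant with a connected component of $\Sigma\cap\mathcal C_{\mathrm v}$ or $\Sigma\cap\mathcal C_{\mathrm h}$. Your choice to count components of the preimage in the parameter domain makes explicit what the paper's asserted bijection tacitly requires, since two crossings at the same point of $\{y=y_*\}$ give the same orbit in $\R^{n+1}$. Your remark that axis endpoints add no components is not justified as stated, because an endpoint at $(0,y_*)$ would contribute the orbit $\{0\}\times\s^q(y_*)$; this is harmless, since extra components only help a lower bound.
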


\begin{proof}
Immediate from $n(\gamma)\le\min(N_x,N_y)-1$ (Theorem~\ref{prop:contagem}), the
crossings of $\{x=x_*\}$ and $\{y=y_*\}$ being transverse and hence in bijection with the connected components in which $\Sigma$ meets $\mathcal C_{\mathrm v}$ and $\mathcal C_{\mathrm h}$, respectively.
\end{proof}

The same argument applies to a compact subarc, with no condition at its ends, at the cost of one more turn.

\begin{corollary}[Finite portions]\label{cor:porcao}
Let $c<0$ and let $\gamma|_{[a,b]}$ be a compact subarc of a trajectory distinct from the cylinders. Then the tangent angle satisfies
\[
|\Delta\sigma|<2\pi\left(\min(N_x[a,b],N_y[a,b])+1\right)
\]
on $[a,b]$. In particular, it makes at most $\min(N_x[a,b],N_y[a,b])$ full turns.
\end{corollary}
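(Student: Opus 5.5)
The plan is to repeat the excursion count of Theorem~\ref{prop:contagem} on the compact interval $[a,b]$, this time without the terminal-excursion improvement. Fix the horizontal separatrix first and set $N:=N_y[a,b]$, which is finite. By Lemma~\ref{lem:transv} the crossings of $\{y=y_*\}$ are transverse and isolated, so only finitely many instants $a<t_1<\dots<t_N<b$ lie in $(a,b)$. These instants split $[a,b]$ into $N+1$ closed subintervals $[a,t_1],[t_1,t_2],\dots,[t_N,b]$, and on each of them $\gamma$ has its interior in a single strip $\mathcal S^{+}$ or $\mathcal S^{-}$. An endpoint $a$ or $b$ may itself lie on $\{y=y_*\}$; this only puts a boundary point on the level and does not affect the argument.

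Each such subinterval is contained in the closure of one horizontal excursion. Since $\sigma$ is continuous, Lemma~\ref{lem:excursao} bounds the rotation along it by $|\Delta\sigma|\le 2\pi$. To be careful I would rerun the argument of Lemma~\ref{lem:excursao} directly on the subinterval. On its interior $I$ is strictly monotone by Corollary~\ref{cor:estrita}(i), so $\sin\sigma$ has at most one interior zero. At an endpoint that is a crossing, $\sin\sigma\neq0$ by transversality. If $|\Delta\sigma|\ge 2\pi$, the closed range of $\sigma$ would contain two consecutive multiples of $\pi$ attained at interior instants, which is a contradiction. The only delicate situation is an endpoint $a$ or $b$ where $\sin\sigma=0$ but which is not a crossing. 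Even then $\sigma$ can reach at most one multiple of $\pi$ in the interior, plus possibly the endpoint values, so the range has length at most $2\pi$. The strict bound $<2\pi$ follows because the interior contains at most one zero of $\sin\sigma$.

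Summing over the $N+1$ pieces by the triangle inequality gives $|\Delta\sigma|<2\pi(N_y[a,b]+1)$. The mirror argument, with $J$, $\cos\sigma$ and vertical excursions (Lemma~\ref{lem:simetrias}(ii)), gives the same bound with $N_x[a,b]$. Taking the minimum of the two yields the stated inequality. Finally, $|\Delta\sigma|/2\pi<\min+1$ gives $n[a,b]=\lfloor|\Delta\sigma|/2\pi\rfloor\le\min(N_x[a,b],N_y[a,b])$.

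The main obstacle is the strictness of the per-piece bound when an endpoint of $[a,b]$ is not a crossing and $\sin\sigma$ vanishes there. I will handle it by noting that a range of length exactly $2\pi$ would force two zeros of $\sin\sigma$, at least one of them interior. The other must then be an endpoint, and strict monotonicity of $I$ on the half-open piece, which extends up to that endpoint by continuity, still forbids a second value $0$ of $I$. A weaker fallback is also enough: a non-strict bound $\le2\pi$ on the two end pieces and $<2\pi$ on the interior pieces gives the strict total bound as soon as $N\ge1$. The case $N=0$ is a single piece in one strip, where strict monotonicity of $I$ directly gives $<2\pi$.
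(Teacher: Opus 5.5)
Your proposal is correct and is essentially the paper's own argument: cut $[a,b]$ at the $N_y[a,b]$ crossings, note that $I$ is strictly monotone on each closed piece (by continuity from its interior), so $\sin\sigma$ vanishes at most once there and $|\Delta\sigma|<2\pi$ per piece, then sum and repeat with $J$. The only slip is your fallback remark, which fails for $N_y[a,b]=1$ because both pieces are then end pieces; it is not needed anyway, since your continuity argument already gives strictness on every piece, as does transversality at the crossing endpoint of each end piece.
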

\begin{proof}
Let $k:=N_y[a,b]$. The $k$ crossings divide $[a,b]$ into $k+1$ closed subintervals, each contained in the closure of a horizontal excursion. On such a subinterval $I$ is strictly monotone in the interior, hence, by continuity, on the whole closed subinterval. Hence $I$ vanishes at most once there, and, as in the proof of Lemma~\ref{lem:excursao}, $|\Delta\sigma|<2\pi$ on it. Summing, $|\Delta\sigma|<2\pi(k+1)$. The mirror argument, with $J$, gives $|\Delta\sigma|<2\pi(N_x[a,b]+1)$. Finally, $m$ full turns require $|\Delta\sigma|\ge2\pi m$. Unlike in the proof of Theorem~\ref{prop:contagem}, the extreme subintervals are not terminal excursions of $\gamma$, so the sharper bound of Lemma~\ref{lem:excterminal} is not available for them.
\end{proof}

\begin{figure}[ht]
\centering
\includegraphics[width=\textwidth]{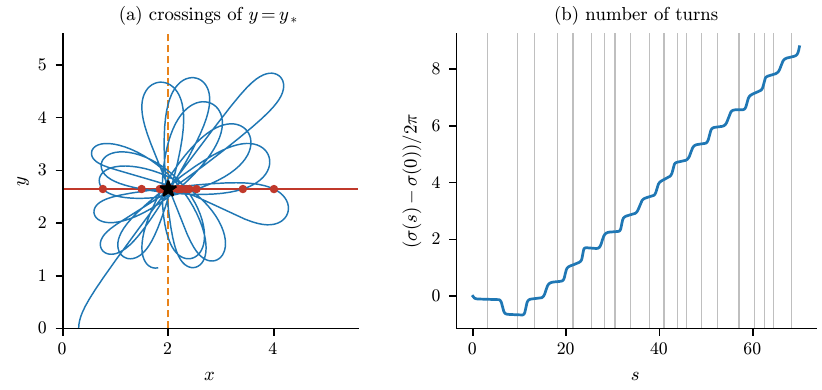}
\caption{A trajectory with high winding, for $p=4$, $q=7$, $c=-1$, leaving the $x$-axis at $x_0=0.3$, with $s\in[0,70]$. (a) The trajectory, with its $20$ crossings of $y_*$ marked (red dots) and the two cylinders. (b) The number of turns $(\sigma(s)-\sigma(0))/2\pi$ as a function of arclength. The vertical lines mark the crossings. Each strip between crossings corresponds to less than one turn
(Lemma~\ref{lem:excursao}), which produces \eqref{eq:contagem}.}
\label{fig:winding}
\end{figure}

\section{The crossing theorem}\label{sec:cross}

We now prove Theorems~\ref{thmA} and~\ref{thmB}. The limit values of $I$ and the exclusion of the escape scenarios in Section~\ref{sec:fronteira} give the crossing theorem, and the analysis at infinity in Section~\ref{sec:polar} classifies the unbounded ends.

\subsection{Crossing of the cylinders}\label{sec:cruzamento}

We also record a consequence of Proposition~\ref{lem:Jfront}, in the language of dynamical
systems.

\begin{corollary}[Localization of the limits]\label{prop:limites}
Let $c<0$ and let $\gamma$ be confined to $\mathcal S^{+}$ (resp. $\mathcal S^{-}$), distinct
from the vertical cylinder, on the maximal interval $(s_-,s_+)$. Then,
\begin{enumerate}
\item[(i)] $I$ is strictly monotone and bounded, $|I|\le M_f$. Hence there exist
$I_\pm=\lim_{s\to s_\pm}I$, finite, with $I_-\ne I_+$;
\item[(ii)] $\gamma$ leaves every compact subset of $\mathcal S^{+}$ (resp. $\mathcal S^{-}$),
that is, its $\alpha$- and $\omega$-limit sets are contained in $\partial\mathcal S^{+}$ (resp. 
$\partial\mathcal S^{-}$).
\end{enumerate}
\end{corollary}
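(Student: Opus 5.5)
Item (i) follows at once from Corollary~\ref{cor:estrita}(i) and Lemma~\ref{lem:bound}. Since $\gamma$ is confined to a horizontal strip and is distinct from the vertical cylinder, it cannot be the horizontal cylinder either: that trajectory lies on $\{y=y_*\}$, which is outside both open strips. Hence $I$ is strictly monotone on $(s_-,s_+)$. By Lemma~\ref{lem:bound} it satisfies $|I|\le f(x)\le M_f$, so the monotone limits $I_\pm$ exist and are finite. Strict monotonicity gives $I_-\ne I_+$: choose $s_-<a<b<s_+$; then, in the increasing case, $I_-\le I(a)<I(b)\le I_+$, and symmetrically in the decreasing case.

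For item (ii) I argue by contradiction. Suppose the $\omega$-limit set contains a point $z=(x_0,y_0,\sigma_0)$ with $(x_0,y_0)$ in the open strip, say $\mathcal S^-$; the $\alpha$ end follows by reversibility (Lemma~\ref{lem:simetrias}(i)). There is then a sequence $s_k\to s_+$ with $\Gamma(s_k)\to z$. First, $s_+=\infty$. Indeed, near the compact neighborhood $K$ of $z$ the field $F$ is smooth, so solutions starting near $z$ exist for a uniform time $\varepsilon$. If $s_+<\infty$, then $s_k+\varepsilon>s_+$ for large $k$, contradicting maximality.

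Next, $I(z)=I_+$ by continuity. Consider the full trajectory $\Gamma_z$ of $z$. By continuous dependence, $\Gamma(s_k+t)\to\Gamma_z(t)$ for every $t$ in the (open) domain of $\Gamma_z$. Since $s_k+t\to s_+$, this gives $I(\Gamma_z(t))=I_+$ for all such $t$. So $I$ is constant along $\Gamma_z$. By Proposition~\ref{prop:cyl}, $\Gamma_z$ must then be one of the two cylinders. The horizontal cylinder lies on $\{y=y_*\}$, which is excluded because $y_0<y_*$. The vertical cylinder crosses $\{y=y_*\}$ (Remark~\ref{rem:vertex}); let $t_0$ be the crossing time. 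Then $\Gamma(s_k+t_0)$ converges to a point with $y=y_*$ and $\sin\sigma=\pm1$. Because the crossing is transverse, $\gamma$ itself crosses $\{y=y_*\}$ near $s_k+t_0$ for large $k$, contradicting confinement to $\mathcal S^-$. The case $\mathcal S^+$ is identical.

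Consequently no limit point of $\gamma$ (in $\overline{\QQ}$, if it has one) lies in the open strip, so the limit sets lie in its boundary, as claimed. The only delicate step is the passage from "$I$ constant on $\Gamma_z$" to a contradiction, and in particular ruling out the vertical cylinder as a limiting trajectory. The transversality of the crossing at the vertex is what makes this work. This argument is essentially the $\Lambda$-analysis of (F2) in Proposition~\ref{lem:Jfront}, now localized near a single limit point instead of requiring precompactness of the whole semi-orbit.
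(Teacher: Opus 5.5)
Your proof is correct and follows the paper's argument. Item (i) comes from Corollary~\ref{cor:estrita}(i) and Lemma~\ref{lem:bound}. For item (ii), you extract a limit point $z$ over the open strip, deduce $s_+=\infty$ and that $I$ is constant on the orbit through $z$, and conclude that this orbit is the vertical cylinder.

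The only difference is minor. You exclude the vertical cylinder with one continuous-dependence argument at its crossing time $t_0$, which may have either sign; this is legitimate because $s_k\to\infty$. The paper instead splits by orientation: in the outward case it uses $I_+=\pm M_f$ against the strict monotonicity of $I$ and the bound $|I|\le M_f$, and in the inward case it argues by continuous dependence.
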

\begin{proof}
(i) It is Corollary~\ref{cor:estrita}, item (i), with the bound of Lemma~\ref{lem:bound}. (ii) Suppose that $\gamma$ meets some compact subset of the strip along a sequence $s_k\to s_+$. Passing to a subsequence and using the compactness of $\s^1$, we may assume that $\Gamma(s_k)\to z$, with $z$ lying over an interior point of the strip. By maximality, $s_+=+\infty$. Since $I(s)\to I_+$, continuous dependence implies that $I$ is constant on the local orbit through $z$. As $B(y)\ne0$ in the strip, \eqref{eq:dJ} gives $\cos\sigma=0$ along that orbit. Lemma~\ref{lem:Z}(i) then shows that it is the vertical cylinder. If its forward direction points away from $\{y=y_*\}$, then $I_+=M_f$ in $\mathcal S^+$ or $I_+=-M_f$ in $\mathcal S^-$, contrary to the strict monotonicity and boundedness of $I$ (item (i)). In the opposite direction the cylinder crosses $\{y=y_*\}$ in finite forward time, contradicting confinement by continuous dependence. The argument at $s_-$ follows by reversibility (Lemma~\ref{lem:simetrias}(i)).
\end{proof}

Corollary~\ref{prop:limites} is already the crossing theorem in disguise: a trajectory confined to a strip leaves every compact subset of it. Since the escapes toward $\partial\mathcal Q$ were excluded in Section~\ref{sec:fronteira}, it must cross the level $\{y=y_*\}$. This is the first half of Theorem~\ref{thmA}, the second half being its mirror, Corollary~\ref{cor:crossI}.

\begin{theorem}[Crossing of the horizontal cylinder]\label{thm:cross}
Let $c<0$. Every trajectory of \eqref{lambda-s}, distinct from the horizontal cylinder, intersects the level $\{y=y_*\}$, and does so transversely (Lemma~\ref{lem:transv}). Equivalently, no non-cylindrical trajectory is confined to $\mathcal S^{+}$ or to $\mathcal S^{-}$.
\end{theorem}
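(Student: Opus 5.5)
We argue by contradiction. Suppose $\gamma$ is a maximal trajectory distinct from the horizontal cylinder that never meets $\{y=y_*\}$. Then $\gamma$ is confined to $\mathcal S^{+}$ or to $\mathcal S^{-}$. Transversality of any contact is already Lemma~\ref{lem:transv}, so only existence needs proof.

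The vertical cylinder meets $\{y=y_*\}$ at $V$ (Remark~\ref{rem:vertex}), so we may assume $\gamma$ is distinct from both cylinders. Then $I$ is strictly monotone and bounded on the whole maximal interval (Corollary~\ref{cor:estrita}(i), Lemma~\ref{lem:bound}), with finite limits $I_-\ne I_+$ (Corollary~\ref{prop:limites}(i)). We analyze the two endpoints $\alpha$ and $\omega$ with Proposition~\ref{lem:Jfront}.

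By Corollary~\ref{prop:limites}(ii), each endpoint leaves every compact subset of the strip. Scenario (F2) is excluded. Scenario (F4) is excluded by Proposition~\ref{lem:fuga} when $x$ stays in a compact subset of $(0,\infty)$, and by Proposition~\ref{prop:nao-eixo} otherwise. There is one gap here: in (F4) with $x$ bounded but $\liminf x=0$, I would reduce via (F1) or by passing to the mirror form of Proposition~\ref{prop:nao-eixo}, noting that $y\to\infty$ holds once $y$ is eventually monotone, which follows because $I$ has at most one zero. Hence at each endpoint only (F1) or (F3) remains, so $I_\pm\in\{0\}\cup\{\pm f(x_0)\}$ with the sign pattern of (F3).

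\emph{Case $\mathcal S^{+}$.} Only (F1) is available, since (F3) needs $y\to0$. So $I_-=I_+=0$, which contradicts $I_-\ne I_+$.

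\emph{Case $\mathcal S^{-}$.} Here $I$ is strictly increasing, so $I_-<I_+$. But the admissible values give $I_-\in\{0,f(x_0)\}$, so $I_-\ge0$, and $I_+\in\{0,-f(x_1)\}$, so $I_+\le0$. Thus $I_-\ge0\ge I_+$, again a contradiction.

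The mirror statement for $\{x=x_*\}$ follows by Lemma~\ref{lem:simetrias}(ii).

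The main obstacle is the exhaustiveness of the endpoint scenarios when $\rho\to\infty$ along mixed directions, for instance $x\to\infty$ with $y$ bounded away from $0$ inside $\mathcal S^{-}$. This falls under (F1), which still gives $I_\pm=0$, so it is harmless. The care needed is in checking that (F1) captures every unbounded or axis-approaching behavior not already killed by Propositions~\ref{lem:fuga} and~\ref{prop:nao-eixo}. I would verify this through the case split in the proof of Proposition~\ref{lem:Jfront}, where $x$ either escapes $(0,\infty)$ along a sequence, which is (F1), or stays compact and forces $y$ to be monotone.
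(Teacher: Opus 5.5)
Your proof is correct and follows the paper's argument: you use confinement to a strip, strict monotonicity of $I$ with $I_-\neq I_+$, and the endpoint values from (F1)--(F4), which give $I_-=I_+=0$ in $\mathcal S^{+}$ and the sign clash $I_-\ge0\ge I_+$ in $\mathcal S^{-}$ (a one-line merger of the paper's subcases (a) and (b)). The gap you flag is not a real one: if $\liminf x=0$, the endpoint already falls under (F1), which only needs $x\to0$ along a sequence to give $I_\pm=0$. Moreover, in the case split of Proposition~\ref{lem:Jfront}, (F4) arises only with $x$ in a compact subset of $(0,\infty)$, so Proposition~\ref{lem:fuga} alone excludes it and Proposition~\ref{prop:nao-eixo} is not needed here.
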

\begin{proof}
If $\gamma$ never touches $\{y=y_*\}$, it is confined, by continuity, to $\mathcal S^{+}$ or to $\mathcal S^{-}$. The vertical cylinder touches the level at the vertex (Rem.~\ref{rem:vertex}), so we may suppose $\gamma$ distinct from both cylinders. By Corollary~\ref{cor:estrita}(i), $I$ is strictly monotone on the whole strip, and Corollary~\ref{prop:limites} applies. In the analysis below, we use the items (F1) to (F4) of Proposition~\ref{lem:Jfront}.

\emph{Step 1. Confinement to $\mathcal S^{+}$ is impossible.} By Proposition~\ref{lem:Jfront}, in $\mathcal S^{+}$ only the cases (F1) and (F4) can occur. Proposition~\ref{lem:fuga} excludes (F4). Since $I$ decreases strictly in $\mathcal S^{+}$, we get $I_->I_+$, but this contradicts (F1), which says that $I_-=0=I_+$. 

\emph{Step 2. Confinement to $\mathcal S^{-}$ is also impossible.} By Proposition~\ref{lem:Jfront}, in $\mathcal S^{-}$ only the cases (F1) and (F3) can occur. Since $I$ is strictly increasing, $I_-<I_+$ and, thus, (F1) is impossible. We divide the analysis of (F3) into two cases:
\begin{itemize}
\item[(a)] the endpoint $\alpha$ lies on the $x$-axis, at $(x_0,0)$. Then $I_-=f(x_0)>0$ by (F3), hence
$I_+>f(x_0)>0$, but the available endpoints $\omega$ have $I_+=0$ by (F1) or $I_+=-f(x_1)<0$ (endpoint $\omega$ on the $x$-axis), which is a contradiction;
\item[(b)] only the endpoint $\omega$ lies on the $x$-axis. Then $I_+=-f(x_0)<0$ and $I_-=0$, contradicting
$I_-<I_+$.
\end{itemize}
Therefore, neither of the situations can actually occur, and this proves the theorem.
\end{proof}

Applying Theorem~\ref{thm:cross} to the mirrored system yields the second half of
Theorem~\ref{thmA}. Figure~\ref{fig:cross} illustrates the crossing theorem.

\begin{figure}[htbp]
\centering
\includegraphics[width=\textwidth]{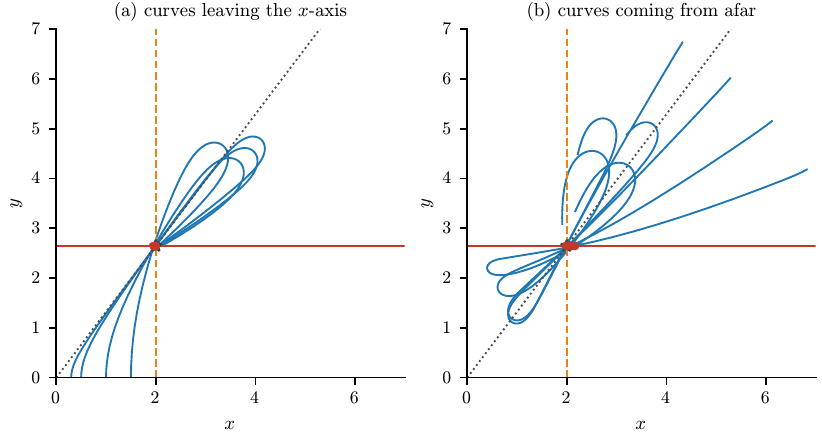}
\caption{The crossing theorem, for $p=4$, $q=7$, $c=-1$. (a) Curves leaving the $x$-axis at $x_0\in\{0.3,0.5,1,1.5\}$, up to their second crossing of $y_*$: all cross $y_*$ (red dots) near the vertex $V$. (b) Trajectories coming from afar align anti-radially (see \eqref{eq:wP}). Started at $\rho=8$ in the directions $\psi\in\{0.55,0.7,0.85,1\}$ with $\sigma=\psi+\pi$ and followed for $s\in[0,12.5]$, they cross $y_*$ (red dots) in the core. In red the horizontal cylinder, dashed orange the
vertical one, dotted the Clifford ray (Proposition~\ref{lem:origem}).}
\label{fig:cross}
\end{figure}

\begin{corollary}[Mirror: crossing of the vertical cylinder]\label{cor:crossI}
Let $c<0$. Every trajectory of \eqref{lambda-s} distinct from the vertical cylinder intersects the level $\{x=x_*\}$, transversely. Equivalently, no non-cylindrical trajectory is confined to $\mathcal V^{+}$ or to $\mathcal V^{-}$.
\end{corollary}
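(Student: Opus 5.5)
The proof will be a transfer of Theorem~\ref{thm:cross} through the mirror symmetry of Lemma~\ref{lem:simetrias}(ii), in the same way that Corollary~\ref{cor:fugaH} was obtained from Proposition~\ref{lem:fuga}. Let $\Gamma(s)=(x(s),y(s),\sigma(s))$ be a trajectory of \eqref{eq:flow} with parameters $(p,q)$, distinct from the vertical cylinder. The map $(x,y,\sigma;p,q)\mapsto(y,x,\tfrac\pi2-\sigma;q,p)$ sends it to $\tilde\Gamma(s)=(y(s),x(s),\tfrac\pi2-\sigma(s))$. First I will check, by direct substitution, that $\tilde\Gamma$ solves the system with parameters $(q,p)$. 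The third equation changes sign in the right way because $\cos(\tfrac\pi2-\sigma)=\sin\sigma$ and $\sin(\tfrac\pi2-\sigma)=\cos\sigma$, and because $A$ and $B$ are interchanged.

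Next I will record how the relevant objects correspond under this map. The constants satisfy $\tilde x_*=\sqrt{-q/c}=y_*$ and $\tilde y_*=\sqrt{-p/c}=x_*$. So the level $\{x=x_*\}$ of the original problem becomes the level $\{\tilde y=\tilde y_*\}$ of the mirrored one, and the strips $\mathcal V^\pm$ become the strips $\tilde{\mathcal S}^\pm$. The vertical cylinder $\{x=x_*,\sigma=\tfrac\pi2\}$ goes to $\{\tilde y=\tilde y_*,\tilde\sigma=0\}$, which is the horizontal cylinder of the mirrored system. Since the map is a bijection between solutions and is an involution, $\tilde\Gamma$ is distinct from that horizontal cylinder.

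Theorem~\ref{thm:cross} holds for all $p,q\ge1$, so it applies to $\tilde\Gamma$: there is an instant $s_0$ with $\tilde y(s_0)=\tilde y_*$, that is, $x(s_0)=x_*$. For transversality I will cite Lemma~\ref{lem:transv} directly: every contact of $\gamma$ with $\{x=x_*\}$ has $\cos\sigma\neq0$. Equivalently, the mirrored contact has $\sin\tilde\sigma\ne0$. The reformulation that no non-cylindrical trajectory is confined to $\mathcal V^\pm$ follows by continuity. If $\gamma$ never meets $\{x=x_*\}$, it lies in one of the two vertical strips, and conversely.

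I do not expect a real obstacle here. The only point needing care is the bookkeeping: the exceptional trajectory must be the vertical cylinder, not the horizontal one, because the mirror map interchanges the two cylinders.
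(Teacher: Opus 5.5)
Your proposal is correct and follows the paper's proof, which likewise obtains the corollary by transporting Theorem~\ref{thm:cross} through the mirror symmetry of Lemma~\ref{lem:simetrias}(ii), noting that inside the transported argument Corollary~\ref{cor:fugaH} plays the role of Proposition~\ref{lem:fuga}. Because you apply Theorem~\ref{thm:cross} as a black box to the $(q,p)$-system, that substitution happens automatically. Your bookkeeping ($\tilde y_*=x_*$, and the vertical cylinder going to the horizontal one) is accurate.
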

\begin{proof}
It is Theorem~\ref{thm:cross} under the mirror symmetry (Lemma~\ref{lem:simetrias}(ii)), with $J$ in place of $I$. The role of Proposition~\ref{lem:fuga} in the proof is taken by Corollary~\ref{cor:fugaH}.
\end{proof}

The two crossings may now be combined.

\begin{corollary}[Non-confinement and synthesis]\label{cor:sintese}
The only bi-rotational self-shrinker whose profile curve does not cross the level $y_*$ is the horizontal cylinder $\s^{q}(y_*)\times\R^{p+1}$ itself. In particular, every trajectory leaving the $x$-axis at $(x_0,0)$ crosses $\{y=y_*\}$, and the arcs from the $x$-axis contained in $\mathcal S^{-}$ exist only as terminal excursions, between the axis and the adjacent crossing.
\end{corollary}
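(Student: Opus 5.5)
The first assertion will follow directly from Theorem~\ref{thm:cross}. A bi-rotational self-shrinker is generated by a maximal trajectory. If its profile curve never meets the level $\{y=y_*\}$, then by continuity it is confined to $\mathcal S^{+}$ or to $\mathcal S^{-}$. Theorem~\ref{thm:cross} says that the only trajectory with this property is the horizontal cylinder itself, since the vertical cylinder meets the level at the vertex $V$ (Remark~\ref{rem:vertex}). Uniqueness of solutions of \eqref{eq:flow} then identifies the hypersurface with $\s^{q}(y_*)\times\R^{p+1}$. I will also point out that the same reasoning, applied to the closed sides, is the equivalent form stated in Theorem~\ref{thmA}. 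A tangential contact with the level is excluded by Lemma~\ref{lem:transv}, so touching without crossing cannot occur.

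For the second assertion, take a trajectory leaving the $x$-axis at $(x_0,0)$. By Proposition~\ref{lem:cusp} it is distinct from the horizontal cylinder, because its endpoint $\alpha$ lies on the axis, where $y\to0$. Near $\alpha$ it lies in $\mathcal S^{-}$, since $y$ is small there. By the first part it crosses $\{y=y_*\}$, and the crossings are transverse and isolated by Lemma~\ref{lem:transv}, so there is a first crossing. The arc from the axis up to that first crossing is then a maximal subarc in $\mathcal S^{-}$ containing the endpoint $\alpha$, which is a terminal excursion in the sense of Definition~\ref{def:terminal}.

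It remains to show that an arc contained in $\mathcal S^{-}$ that reaches the $x$-axis can only be a terminal excursion. Such an arc contains an endpoint of $\gamma$, because by Proposition~\ref{lem:cusp} the axis is reached only at an endpoint of the maximal trajectory, in finite time. Hence it is not intermediate, and its other end is the adjacent crossing, or an endpoint if there is none. The latter possibility is excluded by Theorem~\ref{thm:cross} (Step 2): a trajectory with both endpoints in $\mathcal S^{-}$ and no crossing is impossible.

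I do not expect a genuine obstacle here, since everything reduces to earlier results. The only point to state with care is that the axis is reached only at an endpoint, and that point is supplied by Proposition~\ref{lem:cusp} together with Lemma~\ref{rem:limites}.
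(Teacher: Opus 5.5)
Your proposal is correct and follows essentially the paper's own argument. The first assertion is read off from Theorem~\ref{thm:cross}, with Lemma~\ref{lem:transv} ruling out touching without crossing. For the second you observe, as the paper does, that a trajectory emanating from the $x$-axis cannot be the horizontal cylinder, so Theorem~\ref{thm:cross} applies. Your extra paragraph identifying the arc from the axis to the first crossing as a terminal excursion only makes explicit what the paper leaves implicit.

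One small precision: the horizontal cylinder is not itself confined to an open strip. So the correct reading is that no trajectory avoids the level at all, and it is the tangential-contact case that singles out the cylinder.
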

\begin{proof}
The first assertion is a reformulation of Theorem~\ref{thm:cross}: a profile curve that does not cross $\{y=y_*\}$ is confined to $\mathcal S^{+}$ or to $\mathcal S^{-}$, which is only possible for the horizontal cylinder. For the second, let $\gamma$ leave the $x$-axis at $(x_0,0)$. It cannot be the horizontal cylinder, whose profile lies on $\{y=y_*\}$ and hence does not emanate from $\{y=0\}$. Thus Theorem~\ref{thm:cross} implies that $\gamma$ crosses $\{y=y_*\}$.
\end{proof}

\subsection{Polar coordinates and the ends at infinity}\label{sec:polar}

The remaining unbounded ends are those along which $\rho\to\infty$. To study them we write the system in polar coordinates and isolate the alignment mechanism in the far region. Lemmas~\ref{lem:polar} and \ref{lem:rate} hold for every $c$. From \eqref{eq:wP} on, $c<0$.

\begin{lemma}[System in polar coordinates]\label{lem:polar}
Write $x=\rho\cos\psi$, $y=\rho\sin\psi$, with $\rho>0$ and $\psi\in(0,\tfrac\pi2)$. Along
\eqref{eq:flow} we have
\begin{equation}\label{eq:polar}
\left\{\begin{aligned}
\rho'&=\cos(\psi-\sigma);\\
\psi'&=-\dfrac{\sin(\psi-\sigma)}{\rho};\\
\sigma'&=c\rho\sin(\psi-\sigma)+\dfrac{E(\psi,\sigma)}{\rho},\\
\end{aligned}\right.
\end{equation}
where
\[
E(\psi,\sigma):=\frac{q\cos\sigma}{\sin\psi}-\frac{p\sin\sigma}{\cos\psi}.
\]
\end{lemma}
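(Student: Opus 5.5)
The plan is a direct change of variables: differentiate the polar relations $x=\rho\cos\psi$, $y=\rho\sin\psi$ along \eqref{eq:flow}, solve for $\rho'$ and $\psi'$, and then rewrite the third equation of \eqref{eq:flow} in the new variables.

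For the first two equations, differentiate $\rho^2=x^2+y^2$ to get $\rho\rho'=xx'+yy'=\rho(\cos\psi\cos\sigma+\sin\psi\sin\sigma)$. This gives $\rho'=\cos(\psi-\sigma)$. Next, from $\psi=\arctan(y/x)$ we have
\[
\rho^2\psi'=xy'-yx'=\rho(\cos\psi\sin\sigma-\sin\psi\cos\sigma)=-\rho\sin(\psi-\sigma),
\]
which is the second equation. Both computations use only $x'=\cos\sigma$ and $y'=\sin\sigma$, so they hold for every $c$.

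For the third equation, substitute the polar expressions into
\[
\sigma'=-\left(\frac{p}{x}+cx\right)\sin\sigma+\left(\frac{q}{y}+cy\right)\cos\sigma.
\]
Separate the terms carrying $c$ from the rest. The terms carrying $c$ are
\[
c(-x\sin\sigma+y\cos\sigma)=c\rho(\sin\psi\cos\sigma-\cos\psi\sin\sigma)=c\rho\sin(\psi-\sigma).
\]
The remaining terms are
\[
-\frac{p\sin\sigma}{\rho\cos\psi}+\frac{q\cos\sigma}{\rho\sin\psi}=\frac{E(\psi,\sigma)}{\rho}.
\]
Adding the two groups gives \eqref{eq:polar}. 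The expressions are well defined because $\psi\in(0,\tfrac\pi2)$ keeps $\sin\psi$ and $\cos\psi$ positive throughout $\mathcal Q$.

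There is no substantive obstacle here. The only point that needs care is the sign convention in $\sin(\psi-\sigma)$, since it must agree across all three equations. A quick check on the Clifford cone, where $\sigma=\psi=\psi_0$, confirms it: there $\rho'=1$ and $\psi'=0$, and $\sigma'=E(\psi_0,\psi_0)/\rho=(q-p\tan^2\psi_0)/\rho=0$ because $\tan^2\psi_0=q/p$. This agrees with the explicit solution (iii).
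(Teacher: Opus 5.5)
Your proof is correct and follows the paper's argument essentially line by line. You differentiate $\rho^2=x^2+y^2$ and $\psi=\arctan(y/x)$ using $x'=\cos\sigma$, $y'=\sin\sigma$, then split the third equation of \eqref{eq:flow} into the $c$-terms, which give $c\rho\sin(\psi-\sigma)$, and the singular terms, which give $E/\rho$.

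Your sanity check has one harmless slip: $E(\psi_0,\psi_0)=q\cot\psi_0-p\tan\psi_0=\cot\psi_0\,(q-p\tan^2\psi_0)$, so you dropped a factor $\cot\psi_0$. This does not affect the conclusion, since $q-p\tan^2\psi_0=0$.
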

\begin{proof}
From $\rho^2=x^2+y^2$ we get 
\[
\begin{aligned}
\rho\rho'&=xx'+yy'=x\cos\sigma+y\sin\sigma\\
&=\rho(\cos\psi\cos\sigma+\sin\psi\sin\sigma)\\
&=\rho\cos(\psi-\sigma),
\end{aligned}
\]
whence the first equation. From $\psi=\arctan(y/x)$ we get
\[
\psi'=\frac{xy'-yx'}{\rho^2}=\frac{\rho\cos\psi\sin\sigma-\rho\sin\psi\cos\sigma}{\rho^{2}}
=\frac{-\sin(\psi-\sigma)}{\rho},
\]
which is the second. For the third, we separate in \eqref{eq:flow} the part with $c$ and the
singular part:
\[
-\sin\sigma(cx)+\cos\sigma(cy)=c\rho\left(-\cos\psi\sin\sigma+\sin\psi\cos\sigma\right)
=c\rho\sin(\psi-\sigma)
\]
and
\[
-\frac{p\sin\sigma}{x}+\frac{q\cos\sigma}{y}
=\frac{1}{\rho}\left(\frac{q\cos\sigma}{\sin\psi}-\frac{p\sin\sigma}{\cos\psi}\right)
=\frac{E(\psi,\sigma)}{\rho}.
\]
\end{proof}

Away from the axes the function $E$ is bounded, so that near infinity the rotation of the tangent is driven by the term $c\rho\sin(\psi-\sigma)$.

\begin{lemma}[Asymptotic rotation rate]\label{lem:rate}
Fix $0<\delta<\tfrac\pi4$. For $\psi\in[\delta,\tfrac\pi2-\delta]$ one has
\begin{equation}\label{eq:rate}
|E(\psi,\sigma)| \le E_0:=\frac{p+q}{\sin\delta},
\end{equation}
and hence
\begin{equation}\label{eq:rate-2}
\sigma'=c\rho\sin(\psi-\sigma)+O(\rho^{-1})
\end{equation}
uniformly as $\rho\to\infty$. In particular, $\sigma'=O(\rho)$ uniformly for $\psi\in[\delta,\tfrac\pi2-\delta]$ as $\rho\to\infty$.
\end{lemma}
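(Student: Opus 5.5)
The plan is to bound the two terms of $E$ separately and then substitute into the third equation of \eqref{eq:polar} from Lemma~\ref{lem:polar}. For $\psi\in[\delta,\tfrac\pi2-\delta]$ we have $\sin\psi\ge\sin\delta>0$. Since $\tfrac\pi2-\psi\ge\delta$, also $\cos\psi=\sin(\tfrac\pi2-\psi)\ge\sin\delta$. Combining these with $|\cos\sigma|,|\sin\sigma|\le1$ and the triangle inequality gives
\[
|E(\psi,\sigma)|\le\frac{q}{\sin\psi}+\frac{p}{\cos\psi}\le\frac{p+q}{\sin\delta}=E_0,
\]
which is \eqref{eq:rate}. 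This bound is uniform in $\sigma$ and in $\psi$ on the closed interval.

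Next, Lemma~\ref{lem:polar} gives $\sigma'=c\rho\sin(\psi-\sigma)+E(\psi,\sigma)/\rho$. The remainder satisfies $|E/\rho|\le E_0\rho^{-1}$, where the constant depends only on $p,q,\delta$. This is exactly \eqref{eq:rate-2}, with uniformity in $(\psi,\sigma)$ over the stated range.

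Finally, $|c\rho\sin(\psi-\sigma)|\le|c|\rho$. Hence $|\sigma'|\le|c|\rho+E_0\rho^{-1}\le(|c|+E_0)\rho$ for $\rho\ge1$, which gives $\sigma'=O(\rho)$. There is no real obstacle here. The only point needing care is to note that the restriction $\psi\in[\delta,\tfrac\pi2-\delta]$ is what keeps both $\sin\psi$ and $\cos\psi$ away from zero. Outside this range, near the axes, $E$ is unbounded, which is why the uniformity is stated only on this compact range of $\psi$.
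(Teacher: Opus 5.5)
Your proposal is correct and is essentially the paper's proof. The paper likewise uses $\sin\psi\ge\sin\delta$ and $\cos\psi\ge\sin\delta$ on $[\delta,\tfrac\pi2-\delta]$ to get $|E|\le(p+q)/\sin\delta$, and then reads \eqref{eq:rate-2} off the third equation of \eqref{eq:polar}. Your explicit estimate $|\sigma'|\le(|c|+E_0)\rho$ for $\rho\ge1$ just spells out the final $O(\rho)$ claim, which the paper leaves implicit.
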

\begin{proof}
On $[\delta,\tfrac\pi2-\delta]$ one has $\sin\psi\ge\sin\delta$ and $\cos\psi\ge\sin\delta$,
whence $|E|\le(q+p)/\sin\delta$. The rest is \eqref{eq:polar}.
\end{proof}

The relative phase $w:=\psi-\sigma$ organizes the rotation. From \eqref{eq:polar}, and writing
$c=-|c|$, we have
\begin{equation}\label{eq:wP}
w'=\lambda(\rho)\sin w+P,
\end{equation}
where
\[
\lambda(\rho):=|c|\rho-\frac1\rho,\qquad P:=-\frac{E(\psi,\sigma)}{\rho}=\frac{p\sin\sigma}{x}-\frac{q\cos\sigma}{y},
\]
and the last expression follows from $x=\rho\cos\psi$ and $y=\rho\sin\psi$. In particular,
\begin{equation}\label{eq:Pbound}
|P|\le\frac px+\frac qy .
\end{equation}

Note that $\lambda>0$ for $\rho^{2}>1/|c|$, and $\lambda\ge\tfrac12|c|\rho$ for $\rho^{2}\ge2/|c|$. This means that the equilibrium $w=\pi$ of \eqref{eq:wP} is attracting and $w=0$ is repelling, both with a rate of order $|c|\rho$. Here, we say ``equilibrium'' in an approximate sense: for $\rho$ large, $|P|$ is small and $w'$ is near zero for $w$ near $0$ or $\pi.$

We first establish that both coordinates diverge, which makes the perturbation of the relative phase small.

\begin{proposition}[Both coordinates diverge and the ends are graphical]\label{lem:ambas}
Let $c<0$ and let $\gamma$ be a trajectory distinct from the cylinders, with an endpoint along which $\rho\to\infty$. Then, at that endpoint,
\[
x\to\infty,\qquad y\to\infty,
\]
and, near it, $\cos\sigma>0$ and $\sin\sigma>0$. In particular, both coordinates are eventually strictly monotone increasing and the end is graphical.
\end{proposition}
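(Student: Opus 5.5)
The plan is to work with an end along which $\rho\to\infty$, say at $\omega$ (the case $\alpha$ follows by reversibility, Lemma~\ref{lem:simetrias}(i)), and first show that neither coordinate can stay bounded along a subsequence. Since $\rho\to\infty$, at least one of $x,y$ is unbounded. I would show that $x$ is eventually monotone in this regime, and likewise $y$. If there were infinitely many crossings of $\{y=y_*\}$ near the endpoint, then $\liminf y\le y_*$ and $\rho\to\infty$ forces $x\to\infty$ along those instants. In $\mathcal V^{+}$ the function $J$ is strictly decreasing, so by the argument of Step~1 of Proposition~\ref{prop:nao-eixo} the local maxima of $x$ cannot accumulate. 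Hence $x$ is eventually monotone, so $x\to\infty$ with $\cos\sigma>0$, and $J$ converges. The obstruction to further crossings of $\{y=y_*\}$ is then Step~2 of Proposition~\ref{prop:nao-eixo}: at a crossing, $J=g(y_*)\cos\sigma$ must tend to $0$, and the strong rotation $\sigma'\ge a_0/4$ pushes $\sigma$ past $\tfrac\pi2$, contradicting $\cos\sigma>0$. So the crossings of $\{y=y_*\}$ are finite, and symmetrically those of $\{x=x_*\}$ are finite.

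The end therefore eventually lies in a single quadrant, where $I$ and $J$ are both strictly monotone (Corollary~\ref{cor:estrita}(iii)). Each has at most one zero, so $\sin\sigma$ and $\cos\sigma$ are eventually of constant sign, and $x,y$ are eventually monotone. Each coordinate then converges in $[0,\infty]$. I would rule out the possibility that a coordinate has a finite limit while $\rho\to\infty$ as follows. If $x\to r\in(0,\infty)$ and $y\to\infty$, this is Corollary~\ref{cor:rigidez-end}. If $x\to0$ and $y\to\infty$, this is Proposition~\ref{prop:nao-eixo}. The mirror cases are handled by the same results. Hence $x\to\infty$ and $y\to\infty$. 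Since both tend to $\infty$ monotonically, eventually $x'=\cos\sigma\ge0$ and $y'=\sin\sigma\ge0$. Strictness follows from Lemma~\ref{lem:Z}(ii): the zeros of $\cos\sigma$ and $\sin\sigma$ are isolated and simple, so they are sign changes, which is incompatible with constant sign. This gives $\cos\sigma>0$ and $\sin\sigma>0$ near the endpoint, and the end is a graph over each axis.

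The main obstacle is the first step, excluding infinitely many separatrix crossings as $\rho\to\infty$, because Step~2 of Proposition~\ref{prop:nao-eixo} presupposes that $x$ is eventually increasing. I would first establish that monotonicity from the limit of $J$ in $\mathcal V^{+}$, after checking that the end is eventually in $\mathcal V^{+}$. That check uses the same barrier argument with the roles of $I$ and $J$ swapped: near $\{x=x_*\}$ with $y$ large, $|B|$ is large and the tangent is forced to turn. If this circularity proves awkward, I would fall back on the polar phase equation \eqref{eq:wP}. Near the separatrices the term $P$ is bounded, so $w$ is rapidly attracted to $\pi$ or repelled from $0$. This forces the tangent to become asymptotically radial and pointing outward, $w\to0$, which is compatible with $\rho'>0$ only after the repelling equilibrium has been handled. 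From there, monotone growth of both coordinates follows, since $\psi$ stays in a compact subinterval of $(0,\tfrac\pi2)$.
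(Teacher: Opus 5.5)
Your second paragraph is sound. Once the end is known to lie eventually in a single open quadrant, the strict monotonicity of $I$ and $J$ makes $x,y$ eventually monotone, Corollary~\ref{cor:rigidez-end} and Proposition~\ref{prop:nao-eixo} (and their mirrors) exclude a finite limit of either coordinate, and the sign conclusions follow.

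\textbf{The gap is the first step, the finiteness of the crossings of both separatrices.} Your argument against infinitely many crossings of $\{y=y_*\}$ uses Steps~1--2 of Proposition~\ref{prop:nao-eixo}. These need the end to lie eventually in $\mathcal V^{+}$, so that $J$ is monotone. The mirror argument for $\{x=x_*\}$ needs the end to lie eventually in $\mathcal S^{+}$. So the scenario in which \emph{both} separatrices are crossed infinitely often is never excluded, and the proposed check is circular. In that scenario the curve eventually avoids the bounded quadrant $\mathrm{Q_{++}}$. A priori it could then swing back and forth through $\mathrm{Q_{--}}$ at ever larger radius, between crossings of $\{y=y_*\}$ with $x\to\infty$ and crossings of $\{x=x_*\}$ with $y\to\infty$. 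A local analysis at the crossings does not rule this out. It only shows that the tangent turns fast, so that $\rho$ starts to decrease. The contradiction needs the global fact that $\rho$ can never turn back up at large radius. (Separately, the claim that $J=g(y_*)\cos\sigma$ tends to $0$ at the crossings is unjustified here: in Proposition~\ref{prop:nao-eixo} it came from $\liminf y=0$. It can be repaired using $\lim J<M_g$, since otherwise $\gamma$ is the horizontal cylinder, but that does not touch the circularity.)

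\textbf{The missing ingredient} is the third monotone quantity $\mathcal F=\Phi\rho\rho'$ of Lemma~\ref{lem:terceira}. It is strictly decreasing outside the sphere $\{\rho=r_0\}$, so $\rho$ has no local minimum there (Theorem~\ref{thm:esfera}). Hence $\rho\to\infty$ forces $\rho'=\cos w\ge0$ eventually. The paper builds on this as follows:
\begin{itemize}
\item At critical points of $x$ one then has $\sigma=\tfrac\pi2$, with maxima at $x\ge x_*$ and minima at $x\le x_*$.
\item On the sector $S_\varepsilon=\{x>3\varepsilon,\ y>3\varepsilon\}$ the relative phase is trapped, $|\sin w|\le 2C_\varepsilon/\lambda(\rho)$. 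Escaping this barrier would drive $w$ to $\tfrac\pi2$ and make $\rho'$ negative.
\item This bounds the variation of $\psi$ inside $S_\varepsilon$ by $O(1/R)$. Outside $S_\varepsilon$, $\psi$ lies in one of two tiny windows near $0$ or $\tfrac\pi2$, so $\psi$ converges, which kills the swinging scenario.
\item The degenerate limits $\psi_\infty\in\{0,\tfrac\pi2\}$ are handled with Propositions~\ref{prop:nao-eixo} and~\ref{lem:fuga} and the monotonicity of $I$.
\end{itemize}

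Your fallback via \eqref{eq:wP} points in this direction but does not supply the argument. The repelling character of $w=0$ becomes usable only once $\rho'\ge0$ is known. The perturbation $P$ is bounded only away from the axes, which is why the sector and the windows are needed. Finally, the assertion that $\psi$ stays in a compact subinterval of $(0,\tfrac\pi2)$ is not available at this stage. The nondegeneracy of the limit direction is Theorem~\ref{cor:conico-direto}, which is proved using the present proposition.
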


\begin{proof}
Since $\rho\to\infty$, at least one coordinate is unbounded. By the mirror symmetry
(Lemma~\ref{lem:simetrias}, item (ii)) we may assume $\limsup x=\infty$.

We use Theorem~\ref{thm:esfera}, below, whose proof is independent of the present asymptotic analysis.

\emph{Step 1: $\rho'\ge0$ near the endpoint.} By Theorem~\ref{thm:esfera}, every local minimum of $\rho$ satisfies $\rho\le r_0$ and every local maximum $\rho\ge r_0$. Since $\rho\to\infty$, there is $s_1$ with $\rho>r_0$ on $(s_1,s_+)$, so $\rho$ has no local minimum there and, if $\rho'(s_2)<0$ for some $s_2>s_1$, then $\rho$ could never increase again, contradicting $\rho\to\infty$. Hence
\[
\rho'=\cos w\ge0\qquad\text{on }(s_1,s_+),
\]
that is, $|w|\le\tfrac\pi2$ modulo $2\pi$.

\emph{Step 2: the critical points of $x$ and of $y$.} At an instant where $x'=\cos\sigma=0$ one has $\rho'=y\sin\sigma/\rho\ge0$ by Step~1, hence $\sin\sigma=1$ and $\sigma=\tfrac\pi2$. Moreover, at this point, $\sigma'=-A(x)$ and
\[
x''=(\cos\sigma)'=-\sin\sigma\,\sigma'=A(x),
\]
so a local maximum of $x$ forces $A(x)\le0$, that is, $x\ge x_*$, and a local minimum forces $x\le x_*$. Symmetrically, at a critical point of $y$ one has $\sigma=0$ and $y''=B(y)$, so local maxima of $y$ occur where $y\ge y_*$ and local minima where $y\le y_*$.

\emph{Step 3: near the endpoint, the tangent is almost radial in $S_\varepsilon$.} Since $\rho\to\infty$, the endpoint is reached at $s_+=\infty$. Fix $\varepsilon\in(0,1)$ \textup{(}it stays fixed until the end of Step~5\textup{)} and set
\[
C_\varepsilon:=\frac{p+q}{\varepsilon},\qquad
U_\varepsilon:=\{x>\varepsilon,\ y>\varepsilon\},\qquad
S_\varepsilon:=\{x>3\varepsilon,\ y>3\varepsilon\}.
\]
On $U_\varepsilon$ one has $|P|\le p/x+q/y<C_\varepsilon$. Choose $R_\varepsilon$ so large that, for $\rho\ge R_\varepsilon$,
\[
\lambda(\rho)\ge\tfrac12|c|\rho>0
\qquad\text{and}\qquad
\frac{2C_\varepsilon}{\lambda(\rho)}\le\frac{\sqrt3}{2};
\]
this is possible because $\lambda(\rho)=|c|\rho-1/\rho$ is increasing and unbounded. We claim that
\begin{equation}\label{eq:trap-setor}
|\sin w(s)|\ \le\ \frac{2C_\varepsilon}{\lambda(\rho(s))}\qquad\text{whenever } s>s_1,\ \gamma(s)\in S_\varepsilon \text{ and } \rho(s)\ge R_\varepsilon .
\end{equation}

Suppose that $\sin w(\bar s)>2C_\varepsilon/\lambda(\rho(\bar s))$ at such an instant $\bar s$. The case $\sin w(\bar s)<-2C_\varepsilon/\lambda(\rho(\bar s))$ is treated in the same way, with $w\in[-\tfrac\pi2,0)$ and all monotonicities reversed. By Step~1, $|w|\le\tfrac\pi2$ modulo $2\pi$, so $w(\bar s)\in(0,\tfrac\pi2]$. Let $s^*$ be the supremum of the instants $s\ge\bar s$ such that, on $[\bar s,s]$,
\[
\gamma\in U_\varepsilon,\qquad w\in(0,\tfrac\pi2],\qquad \sin w>\frac{2C_\varepsilon}{\lambda(\rho)} .
\]
On $[\bar s,s^*)$ we have
\[
w'=\lambda(\rho)\sin w+P\ \ge\ \lambda(\rho)\sin w-C_\varepsilon\ \ge\ \tfrac12\lambda(\rho)\sin w\ \ge\ C_\varepsilon .
\]
Three consequences follow. First, $w$ increases, hence so does $\sin w$, since $w\in(0,\tfrac\pi2]$. Moreover, $2C_\varepsilon/\lambda(\rho)$ does not increase, since $\lambda$ is increasing and $\rho'\ge0$ by Step~1. Thus
\[
\sin w(s)\ \ge\ \sin w(\bar s)\ >\ \frac{2C_\varepsilon}{\lambda(\rho(\bar s))}\ \ge\ \frac{2C_\varepsilon}{\lambda(\rho(s))}\qquad\text{on }[\bar s,s^*],
\]
with a margin that does not shrink. Second, $w$ cannot remain in $(0,\tfrac\pi2]$ for an arclength larger than $\pi/(2C_\varepsilon)=\pi\varepsilon/(2(p+q))<\varepsilon$, so $s^*-\bar s<\varepsilon$. Third, since $|x'|,|y'|\le1$ and $\gamma(\bar s)\in S_\varepsilon$, we have $x,y>2\varepsilon$ on $[\bar s,s^*]$, so $\gamma$ stays in $U_\varepsilon$ with room to spare. Hence the only condition that can fail at $s^*$ is $w\in(0,\tfrac\pi2]$, which means $w(s^*)=\tfrac\pi2$. But then $\rho'(s^*)=\cos w(s^*)=0$ and $w'(s^*)\ge C_\varepsilon>0$, so $\rho'=\cos w<0$ immediately after $s^*$, contradicting Step~1. This proves \eqref{eq:trap-setor}.

\emph{Step 4: the variation of $\psi$ inside $S_\varepsilon$ is small.} For $R\ge R_\varepsilon$, let $s_R>s_1$ be an instant at which $\rho=R$. Since $\rho$ is nondecreasing after $s_1$, we have $\rho\ge R$ on $[s_R,\infty)$. At the instants after $s_R$ at which $\gamma\in S_\varepsilon$, \eqref{eq:trap-setor} and the choice of $R_\varepsilon$ give $|\sin w|\le\sqrt3/2$, hence $\cos w\ge\tfrac12$ by Step~1, that is, $\rho'\ge\tfrac12$ and $ds\le2\,d\rho$. Since $\psi'=-\sin w/\rho$ and $\lambda(\rho)\ge\tfrac12|c|\rho$, the variation of $\psi$ accumulated inside $S_\varepsilon$ after $s_R$ satisfies
\begin{equation}\label{eq:var-setor}
\int_{\{s\ge s_R:\ \gamma(s)\in S_\varepsilon\}}|\psi'|\,ds
\ \le\ \int_R^{\infty}\frac{2C_\varepsilon}{\lambda(\rho)\,\rho}\,2\,d\rho
\ \le\ \int_R^{\infty}\frac{8C_\varepsilon}{|c|\rho^{2}}\,d\rho=\frac{8C_\varepsilon}{|c|R}=:V_R .
\end{equation}

\emph{Step 5: $\psi$ converges.} At a point with $\rho\ge R$ outside $S_\varepsilon$ one has $x\le3\varepsilon$ or $y\le3\varepsilon$, that is, $\cos\psi\le3\varepsilon/R$ or $\sin\psi\le3\varepsilon/R$. Since $\psi\in(0,\tfrac\pi2)$ and $\arcsin t\le\pi t/3\le4t/3$ for $0\le t\le\tfrac12$, by convexity, for $R\ge6\varepsilon$ this places $\psi$ in one of the two intervals
\[
W_0:=[0,\,4\varepsilon/R],\qquad W_{\pi/2}:=[\tfrac\pi2-4\varepsilon/R,\,\tfrac\pi2].
\]
Going from $W_0$ to $W_{\pi/2}$, or back, after $s_R$ requires a variation of $\psi$ of at least $\tfrac\pi2-8\varepsilon/R$, all of it accumulated inside $S_\varepsilon$, and for $R$ large this exceeds $V_R$. Hence, after $s_R$, every point of $\gamma$ outside $S_\varepsilon$ has $\psi$ in the same interval, say $W$.

Now let $s_R\le s<s'$. If $\gamma$ stays in $S_\varepsilon$ on $[s,s']$, then, by \eqref{eq:var-setor},
\[
|\psi(s')-\psi(s)|\ \le\ \int_{s}^{s'}|\psi'|\,d\tau\ \le\ V_R .
\]
Otherwise, the set of instants of $[s,s']$ at which $\gamma\notin S_\varepsilon$ is closed and nonempty. Let $u_1\le u_2$ be its first and last elements. Then $\psi(u_1),\psi(u_2)\in W$, so that $|\psi(u_2)-\psi(u_1)|\le4\varepsilon/R$, the length of $W$, while $\gamma\in S_\varepsilon$ on $[s,u_1)$ and on $(u_2,s']$, so that, again by \eqref{eq:var-setor},
\[
|\psi(u_1)-\psi(s)|\le\int_{s}^{u_1}|\psi'|\,d\tau\le V_R,
\qquad
|\psi(s')-\psi(u_2)|\le\int_{u_2}^{s'}|\psi'|\,d\tau\le V_R .
\]
By the triangle inequality,
\[
\begin{aligned}
|\psi(s')-\psi(s)|
&\le |\psi(u_1)-\psi(s)|+|\psi(u_2)-\psi(u_1)|+|\psi(s')-\psi(u_2)|\\
&\le 2V_R+\frac{4\varepsilon}{R}
= \frac{16C_\varepsilon}{|c|R}+\frac{4\varepsilon}{R}=:\delta_R .
\end{aligned}
\]
In both cases $|\psi(s')-\psi(s)|\le\delta_R$ for all $s,s'\ge s_R$. Hence
\[
0\ \le\ \limsup_{s\to\infty}\psi(s)-\liminf_{s\to\infty}\psi(s)\ \le\ \delta_R
\]
for every sufficiently large $R$, and since $\delta_R\to0$ as $R\to\infty$, the upper and lower limits coincide. Thus $\psi$ converges to some $\psi_\infty\in[0,\tfrac\pi2]$.

\emph{Step 6: $x\to\infty$ and $y\to\infty$.} If $\psi_\infty\in(0,\tfrac\pi2)$, then $x=\rho\cos\psi\to\infty$ and $y=\rho\sin\psi\to\infty$, and we are done. Suppose $\psi_\infty=\tfrac\pi2$. The case $\psi_\infty=0$ is symmetric. Then $y=\rho\sin\psi\to\infty$, so $\liminf x>0$ by Proposition~\ref{prop:nao-eixo} and $x$ is not contained in a compact subinterval of $(0,\infty)$ near the endpoint: there $y>y_*$, so the trajectory is confined to $\mathcal S^{+}$ and $\limsup y=\infty$, and Proposition~\ref{lem:fuga} applies. Hence $\limsup x=\infty$.

It remains to exclude the possibility that $x$ oscillates, that is, that $\liminf x<\limsup x=\infty$. In that case $x$ has infinitely many local maxima $s_k$, which we may choose with $x(s_k)\to\infty$, and after each of them a local minimum $t_k$. By Step~2, $\sigma=\tfrac\pi2$ at all these instants, so that
\[
I(s_k)=f(x(s_k))\longrightarrow0,\qquad I(t_k)=f(x(t_k))\ \ge\ \min_{[m,x_*]}f>0,
\]
where $m>0$ is a lower bound for $x$ near the endpoint and $x(t_k)\le x_*$, again by Step~2. On the other hand $y\to\infty$ gives $y>y_*$ near the endpoint, so $B(y)<0$ and $I'=B(y)f(x)\cos^{2}\sigma\le0$ there: $I$ is nonincreasing. Since $t_k>s_k$, this gives $0<\min_{[m,x_*]}f\le I(t_k)\le I(s_k)\to0$, a contradiction. Hence $x\to\infty$.

Finally, near the endpoint, we have $y>y_*$, so $I$ is strictly decreasing and $|I|\le f(x)\to0$, so $I\to0$ and therefore $I>0$, that is, $\sin\sigma>0$. Moreover $x>x_*$ near the endpoint, since $x\to\infty$, so $x$ has no local minimum there by Step~2. As $x$ tends to $\infty$, it cannot be eventually decreasing, hence $x'=\cos\sigma>0$ near the endpoint as well.
\end{proof}

This is the technical core of the section: the tangent becomes radial, and the polar angle has finite total variation.

\begin{proposition}[Radial alignment]\label{prop:align}
Let $c<0$ and let $\gamma$ be a trajectory distinct from the cylinders. Let $x=\rho\cos\psi$ and $y=\rho\sin\psi$ be its polar coordinates and $w=\psi-\sigma$ its relative phase. Suppose $\gamma$ has an endpoint along which $\rho\to\infty$. Then, near that endpoint,
\begin{equation}\label{eq:sinw}
|\sin w|\ \le\ \frac{4}{|c|\rho}\left(\frac px+\frac qy\right),
\end{equation}
the tangent becomes radial ($w\to0$ at $\omega$, $w\to\pi$ at $\alpha$), and the polar angle $\psi$ has finite total variation. In particular, the polar angle $\psi$ of $\gamma$ converges to a limit $\psi_\infty\in[0,\tfrac\pi2]$.
\end{proposition}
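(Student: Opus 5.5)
By reversibility (Lemma~\ref{lem:simetrias}(i)), which replaces $\sigma$ by $\sigma+\pi$ and hence $w$ by $w-\pi$, I will treat the endpoint $\omega$ and deduce the statement at $\alpha$. The starting point is Proposition~\ref{lem:ambas}: near $\omega$ we have $x,y\to\infty$, $\cos\sigma>0$ and $\sin\sigma>0$, and $\rho'=\cos w\ge0$. Consequently $|P|\le p/x+q/y\to0$, while $\lambda(\rho)\ge\tfrac12|c|\rho$ for $\rho$ large. Moreover both $\sigma$ and $\psi$ lie in $(0,\tfrac\pi2)$, so $w=\psi-\sigma\in(-\tfrac\pi2,\tfrac\pi2)$ without any reduction modulo $2\pi$.

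The main step is the trapping estimate \eqref{eq:sinw}. It is the same barrier argument as in Step~3 of Proposition~\ref{lem:ambas}, now carried out with the decaying bound $\epsilon(s):=p/x+q/y$ in place of the constant $C_\varepsilon$. Suppose that at some late instant $\bar s$ we had $\sin w>2|P|/\lambda$; the negative case is symmetric. From \eqref{eq:wP},
\[
w'\ \ge\ \lambda\sin w-|P|\ \ge\ \tfrac12\lambda\sin w ,
\]
so $w$ increases. The threshold $2\epsilon/\lambda$ is nonincreasing, because $x$, $y$ and $\rho$ all increase. Hence the inequality persists and $w$ grows at rate at least $\tfrac14|c|\rho\sin w(\bar s)$. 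This forces $w$ to reach $\tfrac\pi2$ in finite arclength, where $\rho'=\cos w=0$ and $w'>0$, so $\rho'<0$ immediately afterwards, contradicting $\rho'\ge0$. Therefore $|\sin w|\le 2\epsilon/\lambda\le 4\epsilon/(|c|\rho)$ for every late instant, which is \eqref{eq:sinw}. I expect this to be the most delicate point. The case analysis must use the monotone, decaying majorant $\epsilon(s)$ rather than $|P(s)|$ itself, which need not be monotone, so that the margin gained at $\bar s$ does not shrink along the way.

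From \eqref{eq:sinw} and $|w|<\tfrac\pi2$ we get $w\to0$, that is, the tangent becomes radial. For the finite total variation of $\psi$, write $\psi'=-\sin w/\rho$ and $\rho'=\cos w\ge\tfrac12$ eventually, so that $ds\le2\,d\rho$. Then
\[
\int|\psi'|\,ds\ \le\ \int\frac{8}{|c|\rho^2}\left(\frac px+\frac qy\right)d\rho .
\]
Since $x,y\to\infty$, the integrand is eventually $O(\rho^{-2})$, and the integral is finite. Hence $\psi$ has finite total variation and converges to some $\psi_\infty\in[0,\tfrac\pi2]$. Showing that $\psi_\infty$ is interior is not required here; that is left to the later results.
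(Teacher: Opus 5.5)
Your proposal is correct and follows essentially the same route as the paper. You reduce to $\omega$ by reversibility, use Proposition~\ref{lem:ambas} to confine $w$ to $(-\tfrac\pi2,\tfrac\pi2)$ and to make the threshold $2(\tfrac px+\tfrac qy)/\lambda$ nonincreasing, and run the barrier argument that drives $w$ to $\tfrac\pi2$ in finite arclength. You then integrate $|\psi'|\le \tfrac{4}{|c|\rho^{2}}(\tfrac px+\tfrac qy)$ against $ds\le 2\,d\rho$.

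The only slip is notational. Your contradiction hypothesis should be $\sin w(\bar s)>2(\tfrac px+\tfrac qy)/\lambda$, which implies the version with $|P|$, rather than $\sin w(\bar s)>2|P|/\lambda$. You already point this out yourself at the end of that paragraph.
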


\begin{proof}
Set
\[
\Pi:=\frac px+\frac qy.
\]
By Proposition~\ref{lem:ambas}, we have $x\to\infty$ and $y\to\infty$ at the endpoint, so
$\Pi\to0$. Fix once and for all an instant beyond which
\begin{equation}\label{eq:align-setup}
\rho^{2}\ge\frac{2}{|c|},
\quad
\lambda(\rho)=|c|\rho-\frac1\rho \ge \tfrac12|c|\rho > 0,
\quad\mbox{and}\quad \Pi \le \tfrac14\lambda.
\end{equation}
This is possible because $\Pi\to0$ while $\lambda\to\infty$. We take the
endpoint to be $\omega$. The case $\alpha$ follows by reversibility (Lemma~\ref{lem:simetrias}, item (i)), which replaces $w$ by $w-\pi$. By \eqref{eq:wP} and \eqref{eq:Pbound}, the relative phase satisfies
\begin{equation}\label{eq:align-wode}
w'=\lambda(\rho)\sin w+P,\qquad |P|\le\Pi.
\end{equation}

\smallskip
\emph{Step 1: the trajectory is confined to $|\sin w|\le2\Pi/\lambda$.}

By Proposition~\ref{lem:ambas}, $\sigma\in(0,\tfrac\pi2)$ near the endpoint. Since also $\psi\in(0,\tfrac\pi2)$, the lift of $w=\psi-\sigma$ can be chosen in $(-\tfrac\pi2,\tfrac\pi2)$ there.

Suppose that $\sin w(s_1)>2\Pi/\lambda$ at some instant $s_1$ beyond \eqref{eq:align-setup}. At such a point,
\eqref{eq:align-wode} gives
\[
w' \ge \lambda\sin w-\Pi \ge \lambda\sin w-\tfrac12\lambda\sin w
= \tfrac12\lambda\sin w > 0,
\]
the middle inequality being $\Pi\le\tfrac12\lambda\sin w$, which is exactly the assumption
$\sin w>2\Pi/\lambda$. We claim this persists. Write $b:=2\Pi/\lambda$, so that the assumption reads $\sin w>b$. The barrier is nonincreasing while $w\in(0,\tfrac\pi2)$: there $\rho'=\cos w>0$, hence $\lambda'=(|c|+\rho^{-2})\cos w>0$, while $\Pi'=-\left(p\cos\sigma/x^{2}+q\sin\sigma/y^{2}\right)\le0$ by Proposition~\ref{lem:ambas}, so that $b'=2(\Pi'\lambda-\Pi\lambda')/\lambda^{2}\le0$. And at a crossing $\sin w=b$ one has $w'\ge\lambda b-\Pi=\Pi>0$, whence $\tfrac{d}{ds}\sin w=\cos w w'>0\ge b'$. Thus, the barrier cannot be crossed downward. 

We now prove that $w$ reaches $\pi/2$ at a finite arclength. Indeed, on that stretch $w$ increases, so $\sin w\ge\sin w(s_1)$ and $\lambda$ admits the uniform lower bound
\[
\lambda(\rho) \ge \tfrac12|c|\rho \ge \tfrac12|c|\sqrt{2/|c|}=\sqrt{|c|/2}=:\lambda_0>0
\]
by \eqref{eq:align-setup}. Therefore,
\[
w'\ge \tfrac12\lambda\sin w \ge \tfrac12\lambda_0\sin w(s_1)=:\kappa>0,
\]
a constant, and integrating, $w(s)\ge w(s_1)+\kappa(s-s_1)$. Hence $w$ reaches $\tfrac\pi2$ after arclength at most $\tfrac\pi{2\kappa}$, which is impossible since $w<\tfrac\pi2$. The case $\sin w<-2\Pi/\lambda$ is symmetric, with $w$ decreasing to $-\tfrac\pi2$.

Therefore $|\sin w|\le2\Pi/\lambda$ near the endpoint, and since
$\lambda\ge\tfrac12|c|\rho$ by \eqref{eq:align-setup}, we obtain \eqref{eq:sinw}.

\smallskip
\emph{Step 2: the tangent becomes radial.} By \eqref{eq:sinw} and $\Pi\to0$ we get $\sin w\to0$, and since $|w|<\tfrac\pi2$, $w\to0$: the tangent becomes radial, and in particular
\begin{equation}\label{eq:align-cosw}
\rho'=\cos w \ge \tfrac12
\end{equation}
near the endpoint.

\smallskip
\emph{Step 3: the polar angle has finite total variation.}
By \eqref{eq:align-cosw} the function $\rho$ is strictly increasing, so it may be used as the
variable, with $ds=d\rho/\rho'\le2 d\rho$. From the polar system \eqref{eq:polar}, we have $\psi'=-\sin w/\rho$, and by using \eqref{eq:sinw} we obtain
\[
|\psi'|\le \frac{|\sin w|}{\rho} \le \frac{4\Pi}{|c|\rho^2}.
\]
Since $\Pi\to0$, in particular $\Pi\le\Pi_0:=\sup\Pi<\infty$, and integrating from some
$\rho_1$ on,
\[
\int|\psi'|ds \le \int_{\rho_1}^{\infty}\frac{4\Pi_0}{|c|\rho^2}\cdot 2d\rho = \frac{8\Pi_0}{|c|\rho_1} < \infty.
\]
Hence $\psi'$ is integrable near the endpoint, and $\psi(s)=\psi(s_0)+\int_{s_0}^{s}\psi'\,d\tau$ converges, as $s\to s_+$, to some $\psi_\infty$. Finally $\psi_\infty\in[0,\tfrac\pi2]$, because $\psi(s)\in(0,\tfrac\pi2)$ for every $s$, the trajectory remaining in the open quadrant.
\end{proof}

Proposition~\ref{prop:align} leaves open whether the limit direction is degenerate, that is, whether $\psi_\infty=0$ or $\pi/2$. Propositions~\ref{lem:fuga} and~\ref{prop:nao-eixo} do not settle this. They rule out an end with one coordinate bounded while the other diverges, whereas $\psi_\infty=0$ only means $y/x\to0$, which is compatible with $y\to\infty$, for instance with $y\sim\sqrt{x}$. Such an end would be asymptotic to no cylinder and would not be conical. The next theorem excludes it.

\begin{theorem}[Conical ends, direct form]\label{cor:conico-direto}
Let $c<0$ and let $\gamma$ be a trajectory distinct from the cylinders, with an endpoint along which $\rho\to\infty$. Then that end is asymptotically conical, i.e., the tangent is asymptotically radial and the limit $\psi_\infty$ lies in the open interval $(0,\tfrac\pi2)$.
\end{theorem}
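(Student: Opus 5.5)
The plan is to argue by contradiction and to use the mirror symmetry of Lemma~\ref{lem:simetrias}(ii). Proposition~\ref{prop:align} already gives an asymptotically radial tangent and a limit $\psi_\infty\in[0,\tfrac\pi2]$, so it only remains to exclude $\psi_\infty=0$; the value $\tfrac\pi2$ is its mirror image. By reversibility we work at $\omega$. By Proposition~\ref{lem:ambas}, near the endpoint we have $x,y\to\infty$, $\cos\sigma>0$ and $\sin\sigma>0$. Hence $x$ can serve as the independent variable, and we set $u:=dy/dx=\tan\sigma>0$. Dividing \eqref{eq:flow} by $x'=\cos\sigma$ gives
\[
\frac{dy}{dx}=u,\qquad \frac{du}{dx}=(1+u^{2})\bigl(-A(x)u+B(y)\bigr)=(1+u^{2})\Bigl(\bigl(|c|x-\tfrac px\bigr)u-|c|y+\tfrac qy\Bigr).
\]
Assuming $\psi\to0$, we have $z:=y/x\to0$ and $u=\tan\sigma=\tan(\psi-w)\to0$.

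The first step is a slow-manifold reduction, modelled on Steps~1--2 of the proof of Proposition~\ref{lem:fuga}. Define the zero of the bracket,
\[
m:=\frac{|c|y-q/y}{|c|x-p/x}=z-\frac{q}{|c|xy}+O\!\left(\frac{z}{x^{2}}\right),
\]
and the deviation $\delta:=u-m$. Then $d\delta/dx=(1+u^{2})\,a(x)\,\delta-dm/dx$, with $a(x)\ge\tfrac12|c|x$. This is a linear equation with a large positive coefficient, and $\delta$ is bounded because $u,m\to0$. The variation-of-constants argument used for \eqref{eq:h} therefore forces
\[
|\delta(x)|\le \frac{C}{x}\sup_{t\ge x}\Bigl|\frac{dm}{dx}(t)\Bigr|.
\]
Next I estimate $dm/dx$ using $dy/dx=u$ and $dz/dx=(u-z)/x$. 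The target bound is $|dm/dx|=O\bigl(z/x+1/(x^{2}y)\bigr)$, which yields
\[
|\delta|=O\!\left(\frac{z}{x^{2}}+\frac{1}{x^{3}y}\right).
\]

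The second step is an integration of the squared slope. Write $Z:=z^{2}=y^{2}/x^{2}$. Then
\[
\frac{dZ}{dx}=\frac{2y}{x^{2}}\Bigl(u-\frac yx\Bigr)=\frac{2y}{x^{2}}\Bigl(-\frac{q}{|c|xy}+O\bigl(\tfrac{z}{x^{2}}\bigr)+\delta\Bigr)=-\frac{2q}{|c|x^{3}}+O\!\left(\frac{Z}{x^{3}}+\frac{1}{x^{5}}\right).
\]
Since $Z\to0$, we may integrate from $x$ to $\infty$. The error term is $o(x^{-2})$ because $Z\to0$, so
\[
Z(x)=\frac{q}{|c|x^{2}}\bigl(1+o(1)\bigr),\qquad\text{that is,}\qquad y^{2}=x^{2}Z\longrightarrow\frac{q}{|c|}=y_*^{2}.
\]
This contradicts $y\to\infty$ from Proposition~\ref{lem:ambas}; alternatively, it contradicts Corollary~\ref{cor:rigidez-end}, since the end would then be asymptotic to the horizontal cylinder. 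Hence $\psi_\infty>0$, and the mirror argument gives $\psi_\infty<\tfrac\pi2$.

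The main obstacle is the first step: bounding $dm/dx$ and $\delta$ sharply enough that the error in $dZ/dx$ is genuinely $o(x^{-3})$, uniformly, without knowing a priori how fast $y\to\infty$ or $z\to0$. I expect terms like $u/(|c|x^{2}y)$ and $z/x^{2}$ to appear, and I would handle them by a bootstrap: first use the crude bound $u,z\to0$ to get $\delta=o(x^{-2})$, then feed that back in. If this turns out to be delicate, a fallback is to track $J=g(y)\cos\sigma$ and $I$ together with the radial estimate \eqref{eq:sinw}. That estimate already gives $|\sin w|\lesssim q/(|c|\rho y)$ when $y\ll x$, and from it I would try to extract the same differential inequality for $\rho^{2}\sin^{2}\psi=y^{2}$ directly.
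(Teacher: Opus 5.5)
Your argument is correct, but it is organized differently from the paper's. The paper stays in polar coordinates and does no further alignment analysis. It takes the bound \eqref{eq:sinw} of Proposition~\ref{prop:align} as given and notes that, for $\psi\le\frac\pi4$, one has $\psi\bigl(\frac px+\frac qy\bigr)\le 2(p+q)/\rho$, so that $\bigl|\frac{d}{ds}\psi^{2}\bigr|\le 16(p+q)/(|c|\rho^{3})$. Integrating in $\rho$ (using $\rho'\ge\frac12$) then gives $\psi^{2}(\rho)\ge\rho_1^{-2}\bigl(y(\rho_1)^{2}-16(p+q)/|c|\bigr)$, which is bounded below by a positive constant once $y(\rho_1)$ is large.

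Your Step~1 re-derives, in the graph variable $x$ through the slow manifold $u=m$, essentially the estimate that \eqref{eq:sinw} already provides. Indeed $u-z=\tan\sigma-\tan\psi=-\sin w/(\cos\sigma\cos\psi)$, so \eqref{eq:sinw} directly gives $|u-z|\lesssim x^{-2}+(xy)^{-1}$. Hence $|dZ/dx|=\frac{2y}{x^{2}}|u-z|\lesssim x^{-3}$, and integrating yields $Z\lesssim x^{-2}$, i.e.\ $y$ bounded. This is your fallback, and it is the paper's proof in Cartesian form.

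What your longer route buys is the identification of the leading term. You keep $m-z=-\frac{q}{|c|xy}+O(z/x^{2})+O\bigl(1/(x^{3}y)\bigr)$ exactly and push $\delta$ below $x^{-2}$. This gives $dZ/dx=-2q/(|c|x^{3})+o(x^{-3})$ and hence $y\to y_*$. So a degenerate end would have to be asymptotic to the horizontal cylinder, which ties the result to Corollary~\ref{cor:rigidez-end}. The paper's cruder route needs no sign and no bootstrap.

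The bootstrap you anticipate does close. Since $|dm/dx|\lesssim(u+z)/x$ and $u\le|m|+|\delta|$, you get $\sup_{t\ge x}|\delta(t)|\lesssim x^{-2}\bigl(\bar z(x)+\sup_{t\ge x}|\delta(t)|\bigr)$, with $\bar z(x):=\sup_{t\ge x}z(t)$. Work with $\bar z$ rather than $z$ in your $\delta$ bound, because $z$ is not known to be monotone. This costs nothing in the final integration, since $\int_x^\infty\bar z(t)^{2}t^{-3}\,dt\le\bar z(x)^{2}/(2x^{2})=o(x^{-2})$.
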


\begin{proof}
By Proposition~\ref{prop:align} the limit $\psi_\infty\in[0,\tfrac\pi2]$ exists and the tangent is
radial. We need to show that $\psi_\infty\notin\{0,\tfrac\pi2\}$. By the mirror symmetry
(Lemma~\ref{lem:simetrias}, item (ii)) it suffices to exclude $\psi_\infty=0$.

Suppose $\psi\to0$, and work near the endpoint, where $\psi\le\tfrac\pi4.$ By Proposition~\ref{prop:align}, $\cos w\ge\tfrac12$ and
\[
|\psi'|=\frac{|\sin w|}{\rho}\le \frac{4}{|c|\rho^{2}}\left(\frac px+\frac qy\right).
\]
We now estimate the quantity $\psi\left(\tfrac px+\tfrac qy\right)$. Since $y=\rho\sin\psi$ and
$\sin\psi\ge\tfrac2\pi\psi$ on $[0,\tfrac\pi2]$, we get $\tfrac\psi y\le\tfrac{\pi}{2\rho}$. On the other hand, since $x=\rho\cos\psi$ with $\cos\psi\ge\tfrac{\sqrt2}2$ for $\psi\le\tfrac\pi4$, we get $\tfrac\psi x\le\tfrac{\pi\sqrt{2}}{4\rho}$. Hence
\[
\psi\left(\frac px+\frac qy\right) \le \frac{\pi\sqrt{2}p}{4\rho}+\frac{\pi q}{2\rho} \le \frac{2(p+q)}{\rho},
\]
and therefore
\[
\left|\frac{d}{ds}\psi^{2}\right|=2\psi|\psi'| \le \frac{8}{|c|\rho^{2}}\cdot\frac{2(p+q)}{\rho}=\frac{16(p+q)}{|c|\rho^3}.
\]
Since $\rho'=\cos w\ge\tfrac12$, we may integrate in $\rho$ with $ds\le2 d\rho$. Thus, for every $\rho\ge\rho_1$,
\[
\psi^2(\rho) \ge \psi^2(\rho_1)-\int_{\rho_1}^{\infty}\frac{32(p+q)}{|c|t^3}dt = \psi^2(\rho_1)-\frac{16(p+q)}{|c|\rho_1^2}.
\]
Finally $\psi(\rho_1)\ge\sin\psi(\rho_1)=y(\rho_1)/\rho_1$, so
\[
\psi^2(\rho)\ge\frac{1}{\rho_1^2}\left(y(\rho_1)^2-\frac{16(p+q)}{|c|}\right),
\]
which is a positive constant as soon as $y(\rho_1)^{2}>16(p+q)/|c|$, which holds for
$\rho_1$ large, because $y\to\infty$ by Proposition~\ref{lem:ambas}. Hence $\psi$ is bounded below by a positive constant, contradicting $\psi\to0$.
\end{proof}

Figure~\ref{fig:conico} shows two such ends. We can combine the results above into the following corollary.

\begin{figure}[htbp]
\centering
\includegraphics[width=\textwidth]{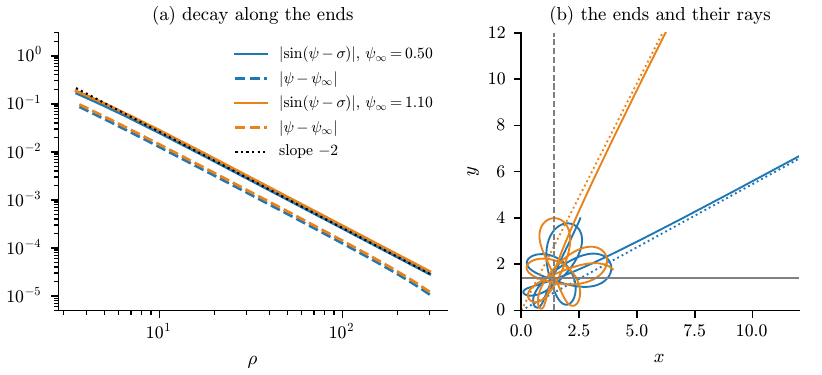}
\caption{Asymptotically conical ends (Theorem~\ref{cor:conico-direto} and Proposition~\ref{prop:decaimento}), for $p=q=1$, $c=-\tfrac12$. The ends are computed backward from $\rho=600$ with $\sigma=\psi$, for $\psi=0.5$ and $\psi=1.1$. Since the radial direction is repelling for the forward flow, the backward flow selects the conical ends. (a) On $\rho\in[30,200]$ the fitted slopes are $-2.00$ for $|\sin(\psi-\sigma)|$ and $-2.06$ for $|\psi-\psi_\infty|$, where $\psi_\infty$ is approximated by the value at $\rho=600$. The dotted line has slope $-2$. (b) Continued inward, both ends enter the core around the vertex and wind there.}
\label{fig:conico}
\end{figure}

\begin{corollary}[Conical ends are nondegenerate bigraphs]\label{cor:grafico}
Let $c<0$ and let $\gamma$ be a trajectory distinct from the cylinders, with an endpoint along which $\rho\to\infty$ and limit direction $\psi_\infty\in(0,\tfrac\pi2)$. Then, near that endpoint and parametrizing toward it, $x$ and $y$ are both strictly increasing, so that the end is simultaneously a graph $y=y(x)$ over the $x$-axis and a graph $x=x(y)$ over the $y$-axis. Moreover,
\[
\frac{dy}{dx}=\tan\sigma\longrightarrow\tan\psi_\infty\in(0,\infty),
\quad\mbox{and}\quad
\frac{dx}{dy}=\cot\sigma\longrightarrow\cot\psi_\infty\in(0,\infty),
\]
so that neither graph degenerates, i.e., the slopes stay bounded away from $0$ and from $\infty$.
\end{corollary}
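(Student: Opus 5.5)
This is essentially a combination of Proposition~\ref{lem:ambas}, Proposition~\ref{prop:align} and Theorem~\ref{cor:conico-direto}, so the plan is to assemble them in that order. We work at the endpoint $\omega$; the endpoint $\alpha$ then follows by reversibility (Lemma~\ref{lem:simetrias}(i)). Under reversal, $\sigma$ is replaced by $\sigma+\pi$, the orientation flips, and the parametrization toward the endpoint becomes the forward one.

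\textbf{Monotonicity.} First invoke Proposition~\ref{lem:ambas}: since $\rho\to\infty$ at $\omega$, it gives $x\to\infty$ and $y\to\infty$. It also gives $\cos\sigma>0$ and $\sin\sigma>0$ on a tail $(s_1,s_+)$. Because $x'=\cos\sigma$ and $y'=\sin\sigma$, both coordinates are strictly increasing on this tail. Hence $x$ and $y$ are diffeomorphisms of the tail onto intervals tending to $\infty$, and the end is a graph $y=y(x)$ and also a graph $x=x(y)$. The slopes are $dy/dx=y'/x'=\tan\sigma$ and $dx/dy=\cot\sigma$.

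\textbf{Limits of the slopes.} By Proposition~\ref{prop:align}, the relative phase $w=\psi-\sigma$ tends to $0$ at $\omega$. Since $\psi\to\psi_\infty$, this gives $\sigma\to\psi_\infty$. We use the continuous lift of $\sigma$ in $(0,\tfrac\pi2)$ given by the sign information above, and the lift of $w$ chosen in $(-\tfrac\pi2,\tfrac\pi2)$ in that proof. The hypothesis $\psi_\infty\in(0,\tfrac\pi2)$ is automatic by Theorem~\ref{cor:conico-direto}. Since $\tan$ and $\cot$ are continuous and finite and positive on $(0,\tfrac\pi2)$, we get $\tan\sigma\to\tan\psi_\infty\in(0,\infty)$ and $\cot\sigma\to\cot\psi_\infty\in(0,\infty)$. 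Consequently, on a smaller tail the slopes lie in a compact subset of $(0,\infty)$, which is the nondegeneracy claim.

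\textbf{Main point of care.} The only delicate step is the bookkeeping of angle lifts. One must make sure that $w\to0$ is meant modulo $2\pi$ with the correct branch, so that $\sigma\to\psi_\infty$ and not $\psi_\infty+\pi$. This is settled by the sign conditions $\cos\sigma,\sin\sigma>0$ from Proposition~\ref{lem:ambas}. For the $\alpha$ end, parametrizing toward the end corresponds to the reversed trajectory. For that trajectory, Proposition~\ref{prop:align} gives $w\to0$ after the shift by $\pi$, and the same conclusion follows.
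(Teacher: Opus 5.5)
Your proposal is correct and follows the paper's proof. Proposition~\ref{lem:ambas} gives $\cos\sigma>0$ and $\sin\sigma>0$ near the endpoint, hence strict monotonicity of both coordinates and the two graph descriptions, and Proposition~\ref{prop:align} gives $w\to0$, so $\sigma\to\psi_\infty$ and the limits of $\tan\sigma$ and $\cot\sigma$ follow. Your handling of the angle lifts and of the reversal at $\alpha$ is consistent with how those propositions are stated.
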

\begin{proof}
By Proposition~\ref{lem:ambas}, $\cos\sigma>0$ and $\sin\sigma>0$ near the endpoint, so $x$ and $y$ are strictly increasing and the end is a graph over each axis. By Proposition~\ref{prop:align}, $w=\psi-\sigma\to0$, hence $\sigma\to\psi_\infty$, which gives the limits of $\tan\sigma$ and $\cot\sigma$.
\end{proof}

The alignment also controls the second fundamental form along a conical end.

\begin{proposition}[Decay at the ends and weighted integrability]\label{prop:decaimento} Let $c<0$ and let $\gamma$ be a trajectory with an asymptotically conical end. Then, along
this end,
\begin{equation}\label{eq:A2decay}
|\mathcal A|^{2}=O(\rho^{-2})\quad\mbox{as}\quad \rho\to\infty.
\end{equation}
Moreover,
\begin{enumerate}
\item[(i)] $|\mathcal A|^{2}\ge C\rho^{-2}$ along the end, for some $C>0$, and the unweighted total $L^2$ curvature of the portion of $\Sigma$ corresponding to the end is infinite, and a fortiori $\int_\Sigma|\mathcal A|^{2} dV=\infty$;
\item[(ii)] the total $L^2$ curvature of that portion, weighted by the Gaussian weight, is finite. If every end of $\gamma$ is asymptotically conical, then
\begin{equation}\label{eq:A2finite}
\int_\Sigma|\mathcal A|^{2} e^{c|X|^{2}/2} dV<\infty .
\end{equation}
\end{enumerate}
\end{proposition}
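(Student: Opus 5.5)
The plan is to estimate the three terms of \eqref{eq:A2} separately along the end, using the alignment of Proposition~\ref{prop:align} and the nondegeneracy of Theorem~\ref{cor:conico-direto}. Since $\psi\to\psi_\infty\in(0,\tfrac\pi2)$, we have $x\ge k\rho$ and $y\ge k\rho$ near the endpoint for some $k>0$, so the rotational terms satisfy
\[
p\frac{\sin^2\sigma}{x^2}+q\frac{\cos^2\sigma}{y^2}\le\frac{p+q}{k^2\rho^2}.
\]
For the lower bound, $\sigma\to\psi_\infty$ by Corollary~\ref{cor:grafico}, so $\sin^2\sigma$ and $\cos^2\sigma$ stay bounded below. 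Combined with $x,y\le\rho$, the same terms are $\ge C\rho^{-2}$. Together with $(\sigma')^2\ge0$, this gives item (i)'s pointwise lower bound.

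The main step is to show $\sigma'=O(\rho^{-1})$. From \eqref{eq:polar},
\[
\sigma'=c\rho\sin(\psi-\sigma)+\frac{E(\psi,\sigma)}{\rho}.
\]
By Lemma~\ref{lem:rate}, $E$ is bounded because $\psi$ stays in a compact subinterval of $(0,\tfrac\pi2)$. By \eqref{eq:sinw},
\[
|\sin w|\le\frac{4}{|c|\rho}\left(\frac px+\frac qy\right)\le\frac{C'}{\rho^2}.
\]
Hence $|c\rho\sin w|=O(\rho^{-1})$ and $\sigma'=O(\rho^{-1})$. This is where the whole decay rests: the a priori trap \eqref{eq:sinw} gives $\rho^{-2}$ for $\sin w$, exactly what is needed to beat the factor $\rho$. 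This proves \eqref{eq:A2decay}.

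For the integrals, I would use the volume element of \eqref{eq:param}, namely $dV=x^py^q\,ds\,d\mathrm{vol}_{\s^p}\,d\mathrm{vol}_{\s^q}$, and $\rho'=\cos w\ge\tfrac12$, so $ds\le2\,d\rho$ and $ds\ge d\rho$. On the end, $x^py^q\ge k^{p+q}\rho^{p+q}$. The unweighted integral is therefore bounded below by a constant times $\int^\infty\rho^{p+q-2}\,d\rho$, which diverges since $p+q\ge2$. This gives the rest of (i).

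For (ii), the weight $e^{c\rho^2/2}$ makes $\int^\infty\rho^{p+q-2}e^{c\rho^2/2}\,d\rho$ finite. If all ends are conical, the remaining part of $\gamma$ is a compact arc, possibly with endpoints on the axes, or the Clifford cone. On the compact part, $|\mathcal A|^2$ is bounded, including near the axes where the closure is smooth by Proposition~\ref{lem:suave}. The Clifford vertex I would treat separately or exclude. This yields \eqref{eq:A2finite}.
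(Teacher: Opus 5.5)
Your proposal is correct and follows the paper's proof essentially step by step. You bound $x,y$ below by a multiple of $\rho$, use \eqref{eq:sinw} together with the boundedness of $E$ to get $\sigma'=O(\rho^{-1})$, take the lower bound from $\sin^2\sigma/x^2$ with $\sigma\to\psi_\infty$, and integrate against $dV\asymp\rho^{p+q}\,d\rho$.

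The Clifford vertex that you set aside is harmless. There $|\mathcal A|^2=(p+q)s^{-2}$ against $dV\asymp s^{p+q}\,ds$ with $p+q\ge2$, so the integral converges at the vertex. The paper's ``compact complement with bounded integrand'' remark actually glosses over this point.
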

\begin{proof}
Since $\psi\to\psi_\infty\in(0,\tfrac\pi2)$, there is $\delta\in(0,\tfrac\pi4)$ with $\psi\in[\delta,\tfrac\pi2-\delta]$ along the end, so that $x=\rho\cos\psi\ge\rho\sin\delta$ and $y\ge\rho\sin\delta$. By Proposition~\ref{prop:align},
\[
|\sin(\psi-\sigma)|\le\frac{4}{|c|\rho}\left(\frac px+\frac qy\right)\le\frac{4(p+q)}{|c|\sin\delta}\,\rho^{-2},
\]
and by \eqref{eq:polar} and \eqref{eq:rate},
\[
|\sigma'|\le|c|\rho|\sin(\psi-\sigma)|+\frac{|E|}{\rho}\le\left(\frac{4(p+q)}{\sin\delta}+\frac{p+q}{\sin\delta}\right)\rho^{-1}.
\]
Moreover, $\sin^2\sigma/x^2$ and
$\cos^2\sigma/y^2$ are both $O(\rho^{-2})$. By Eq. \eqref{eq:A2}, $|\mathcal A|^{2}=O(\rho^{-2})$,
which is \eqref{eq:A2decay}. The volume element of the hypersurface is
$dV=\omega_p\omega_q x^{p}y^{q} ds$, with $\omega_p=\mathrm{vol}(\s^p)$. Along each end,
$x^py^q\asymp\rho^{p+q}$ and $ds\asymp d\rho$ (since $\rho'=\cos(\psi-\sigma)\to1$). For the lower bound, $\sigma\to\psi_\infty\in(0,\tfrac\pi2)$ gives $\sin^{2}\sigma\ge\frac12\sin^{2}\psi_\infty$ along the end, and $x\le\rho$, so that, by \eqref{eq:A2},
\[
|\mathcal A|^{2}\ \ge\ p\frac{\sin^{2}\sigma}{x^{2}}\ \ge\ \frac{p\sin^{2}\psi_\infty}{2}\rho^{-2}=:C\rho^{-2} .
\]
Hence, denoting by $\Sigma_{\rm end}$ the portion of $\Sigma$ over the end,
\[
\int_{\Sigma_{\rm end}}|\mathcal A|^{2} dV\ \ge\ C\omega_p\omega_q\int^{\infty}\rho^{-2}\rho^{p+q} d\rho
=C\omega_p\omega_q\int^{\infty}\rho^{p+q-2} d\rho=+\infty
\]
since $p+q\ge2$, and the integrand is nonnegative, so the integral over $\Sigma$ is infinite as well. This proves (i). With the weight $e^{c|X|^2/2}=e^{-|c|\rho^2/2}$, the upper bound $|\mathcal A|^{2}=O(\rho^{-2})$ gives
\[
\int_{\Sigma_{\rm end}}|\mathcal A|^{2}e^{c|X|^{2}/2} dV\ \lesssim\ \int^{\infty}\rho^{p+q-2} e^{-|c|\rho^2/2} d\rho<\infty .
\]
If every end is asymptotically conical, the complement of the ends in $\Sigma$ is compact, on which the integrand is bounded, and \eqref{eq:A2finite} follows. This proves (ii).
\end{proof}

The limit direction in Theorem~\ref{cor:conico-direto} is only constrained to lie in $(0,\tfrac\pi2)$. The next result shows that each direction is realized by exactly one end. For $\psi\in(0,\tfrac\pi2)$ we write
\[
C_\psi:=\{(X,Y)\in\R^{p+1}\times\R^{q+1}:\ |Y|=\tan\psi\,|X|\}
\]
for the cone over $\s^p(\cos\psi)\times\s^q(\sin\psi)$, so that $C_{\psi_0}$ is the Clifford cone.

\begin{proposition}[Classification of the escaping ends]\label{prop:fins}
Let $c<0$.
\begin{enumerate}
\item[(i)] If two maximal trajectories have endpoints along which $\rho\to\infty$ with the same limit direction $\psi_\infty$, then they coincide, up to the reversal of Lemma~\ref{lem:simetrias}, item (i).
\item[(ii)] The Clifford ray is the only maximal trajectory, up to reversal, with an endpoint along which $\rho\to\infty$ with limit direction $\psi_0$.
\item[(iii)] For every $\psi_\infty\in(0,\tfrac\pi2)\setminus\{\psi_0\}$ there is a maximal trajectory with an endpoint along which $\rho\to\infty$ with limit direction $\psi_\infty$. Near that endpoint, the corresponding part of $\Sigma$ is, up to a homothety, the self-shrinking end asymptotic to $C_{\psi_\infty}$ constructed by Chang and Spruck \cite[Theorem~1.1]{ChangSpruck}.
\end{enumerate}
In particular, for every $\psi_\infty\in(0,\tfrac\pi2)$ there is, up to reversal, exactly one maximal trajectory with an escaping end of limit direction $\psi_\infty$.
\end{proposition}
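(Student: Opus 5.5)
The plan is to reduce everything to a fixed-point problem at infinity for the profile system, written with $\rho$ as the independent variable. By reversibility (Lemma~\ref{lem:simetrias}(i)) every escaping end may be taken to be at $\omega$. By Theorem~\ref{cor:conico-direto}, Proposition~\ref{prop:align} and Corollary~\ref{cor:grafico}, such an end eventually satisfies:
\begin{itemize}
\item $\rho'=\cos w\ge\tfrac12$;
\item $\psi\in[\delta,\tfrac\pi2-\delta]$ for some $\delta\in(0,\tfrac\pi4)$;
\item $|\sin w|\le C\rho^{-2}$.
\end{itemize}
Using \eqref{eq:polar}, with $\sigma=\psi-w$, the end is then a solution on $[R,\infty)$ of
\[
\frac{d\psi}{d\rho}=-\frac{\tan w}{\rho},\qquad
\frac{dw}{d\rho}=\frac{\lambda(\rho)\sin w+P(\rho,\psi,w)}{\cos w},
\]
where $P=-E(\psi,\psi-w)/\rho$ is smooth, with $|P|+|\partial_\psi P|+|\partial_wP|\le K/\rho$ on the region $\psi\in[\delta,\tfrac\pi2-\delta]$, $|w|\le\tfrac\pi4$ (Lemma~\ref{lem:rate}).

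The key observation is that $w=0$ is repelling at rate $\lambda\sim|c|\rho$. Hence a solution with $w$ bounded must be given by integrating its $w$-equation from infinity: writing $\Theta(\rho)=\int_R^\rho\lambda$, the same argument as in Step~2 of Proposition~\ref{lem:fuga} gives
\[
w(\rho)=-\int_\rho^\infty e^{-(\Theta(t)-\Theta(\rho))}\bigl(P+\text{(quadratic terms in }w)\bigr)\,dt ,
\]
and for $\psi$ we have
\[
\psi(\rho)=\psi_\infty+\int_\rho^\infty\frac{\tan w}{t}\,dt .
\]
Thus escaping ends with limit direction $\psi_\infty$ correspond exactly to fixed points $(\psi,w)$ of an integral operator $\mathcal T_{\psi_\infty}$. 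This operator acts on the space $X_R$ of pairs with $|\psi-\psi_\infty|\le\rho^{-2}$ and $|w|\le\rho^{-2}$ on $[R,\infty)$, with the sup norm weighted by $\rho^2$.

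For (i), take two escaping ends with the same $\psi_\infty$ and let $(\delta\psi,\delta w)$ be their difference. The $w$-equation gives $|\delta w(\rho)|\lesssim\rho^{-2}\sup_{[\rho,\infty)}(|\delta\psi|+|\delta w|)$, since $\int_\rho^\infty e^{-(\Theta(t)-\Theta(\rho))}t^{-1}\,dt\lesssim\rho^{-2}$. Feeding this into the $\psi$-equation gives $|\delta\psi(\rho)|\lesssim\rho^{-2}\sup(|\delta\psi|+|\delta w|)$. For $R$ large the constant is below $1$, so $\delta\psi\equiv\delta w\equiv0$ on $[R,\infty)$. The two trajectories then share a phase point $(x,y,\sigma)$, and they coincide by uniqueness for \eqref{eq:flow}.

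The main obstacle is making this contraction estimate honest. The nonlinearity involves $\sin w/\cos w$ and the $\psi$-dependence of $E$, and uniform Lipschitz bounds are needed on the region fixed above. One must also check that the a priori decay of Proposition~\ref{prop:align} places both ends in $X_R$ for a common $R$. I would first prove the weighted Gronwall-type inequality for the linearized difference, and only then absorb the quadratic terms.

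Item (ii) follows immediately from (i): the Clifford ray $(s\cos\psi_0,s\sin\psi_0,\psi_0)$ has $\rho\to\infty$, $w\equiv0$ and $\psi\equiv\psi_0$, so any other end with direction $\psi_0$ coincides with it. For (iii), the same operator $\mathcal T_{\psi_\infty}$ maps $X_R$ into itself and is a contraction for $R$ large. This is because the source term $P$ is $O(\rho^{-1})$, which yields $w=O(\rho^{-2})$ and then $\psi-\psi_\infty=O(\rho^{-2})$. Its fixed point is a solution near infinity, and extending it to a maximal trajectory of \eqref{eq:flow} gives an escaping end of direction $\psi_\infty$, which is nontrivial when $\psi_\infty\ne\psi_0$. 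To identify it with Chang--Spruck, rescale their end asymptotic to $C_{\psi_\infty}$ to our value of $c$. Their end is $G$-invariant, since it is unique by Wang's theorem \cite{Wang} and the cone is $G$-invariant. Its profile is therefore an escaping end of direction $\psi_\infty$, which equals ours by (i). The final assertion combines (i)--(iii).
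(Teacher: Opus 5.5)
Your proof is correct. Item (i) uses the same mechanism as the paper, while item (iii) takes a genuinely different route.

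For (i), the paper works in graph coordinates $y=u(x)$. It first shows $v=xu'-u=O(x^{-1})$, using the fact \eqref{eq:fato-W} that a bounded solution of $W'=aW+\Upsilon$ with $a\ge|c|x$ must be the integral from infinity. It then closes with the sup argument $Z(x)\le 3C_0|c|^{-1}x^{-2}Z(x)$ for $z=xD'-D$, where $D=u_1-u_2$. Since $xu'-u=-\rho\sin w/\cos\sigma$, your $(\psi,w)$ system with $\rho$ as parameter is the same proof in polar variables. It has one small advantage: the decay $|\sin w|\le C\rho^{-2}$ of Proposition~\ref{prop:align} is already stated in your variables, so you need no preliminary step like $v=O(x^{-1})$.

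For (iii), the paper constructs nothing. It imports the Chang--Spruck end, notes that their ansatz $e^{\varphi(d(z))}z$ is visibly $G$-invariant because $d$ depends only on $\psi$, and applies (i). Your contraction at infinity gives existence for every $\psi_\infty\in(0,\tfrac\pi2)$ with no external input, returning the Clifford ray at $\psi_0$. Chang--Spruck then enters only in the identification, where Wang's uniqueness theorem replaces knowledge of their ansatz to obtain invariance. Your route is more self-contained; the paper's is shorter.

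When you write the contraction out, three details need care.
\begin{enumerate}
\item[(1)] The ball in $X_R$ cannot be $|w|\le\rho^{-2}$. The source term gives $|\mathcal T_{\psi_\infty}w|\le(2K/|c|+o(1))\rho^{-2}$, and in fact $\rho^{2}w\to E(\psi_\infty,\psi_\infty)/|c|=(q\cot\psi_\infty-p\tan\psi_\infty)/|c|$, which need not be at most $1$. So take radius $M\rho^{-2}$ with $M\gtrsim K/|c|$, and a matching radius for $\psi$.
\item[(2)] In the $\rho^2$-weighted norm, the $\psi$-component of $\mathcal T_{\psi_\infty}$ has Lipschitz constant about $\tfrac12$ in $\delta w$, coming from $\int_\rho^\infty t^{-3}\,dt=\tfrac12\rho^{-2}$. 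It is not $O(R^{-2})$. The operator is still a contraction, for instance in the maximum of the two weighted norms, but not merely because $R$ is large.
\item[(3)] In (i), dispose of $\psi_\infty\in\{0,\tfrac\pi2\}$ first. By Theorem~\ref{cor:conico-direto} only the cylinders have such ends, and your a priori bounds with $\psi\in[\delta,\tfrac\pi2-\delta]$ presuppose this.
\end{enumerate}
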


\begin{proof}
(i) By Theorem~\ref{cor:conico-direto}, either both trajectories are the same cylinder, or $\psi_\infty\in(0,\tfrac\pi2)$ and neither is a cylinder. In the second case, applying the reversal if necessary, we may assume that $\rho\to\infty$ at the endpoint $\omega$ of both trajectories. Set $\mu:=\tan\psi_\infty$. By Corollary~\ref{cor:grafico}, both ends are graphs $y=u_1(x)$ and $y=u_2(x)$ over a common half-line $[x_1,\infty)$, with $u_i'\to\mu$, and hence $u_i/x\to\mu$. Along such a graph, \eqref{lambda-s} reads
\begin{equation}\label{eq:graph-end}
u''=(1+u'^2)\,\mathcal G(x,u,u'),\qquad \mathcal G(x,u,u'):=|c|\,(xu'-u)-\frac{p\,u'}{x}+\frac{q}{u}.
\end{equation}
Enlarging $x_1$, we may assume that $u_i'\le2\mu$ and $u_i\ge\tfrac\mu2x$ on $[x_1,\infty)$.

We use the following elementary fact. Let $a$ be continuous with $a(x)\ge|c|x$ on $[x_1,\infty)$, let $\Upsilon$ be bounded, and let $W$ be a bounded $C^1$ solution of $W'=aW+\Upsilon$. Then
\[
W(x)=-\int_x^\infty e^{K(x)-K(\tau)}\Upsilon(\tau)\,d\tau,\qquad K(x):=\int_{x_1}^xa,
\]
and therefore
\begin{equation}\label{eq:fato-W}
|W(x)|\le\frac{1}{|c|\,x}\,\sup_{\tau\ge x}|\Upsilon(\tau)|.
\end{equation}
Indeed, by variation of constants,
\[
e^{-K(x)}W(x)=W(x_1)+\int_{x_1}^xe^{-K}\Upsilon.
\]
The left-hand side tends to $0$ because $W$ is bounded and $K(x)\to\infty$, and the integral converges. The estimate follows from $K(\tau)-K(x)\ge|c|x(\tau-x)$ for $\tau\ge x$.

Set $v_i:=xu_i'-u_i$. Multiplying \eqref{eq:graph-end} by $x$ gives
\[
v_i'=|c|\,x\,(1+u_i'^2)\,v_i+(1+u_i'^2)\left(\frac{qx}{u_i}-p\,u_i'\right),
\]
whose last term is bounded. Since $v_i/x=u_i'-u_i/x\to0$, the same argument as in the fact, with $|v_i|\le Cx$ in place of boundedness, shows that $v_i(x)e^{-K_i(x)}\to0$, and we obtain $v_i=O(x^{-1})$. Hence $\mathcal G(x,u_2,u_2')=O(x^{-1})$.

Now let $D:=u_1-u_2$ and $z:=xD'-D=v_1-v_2=O(x^{-1})$. Subtracting the two equations \eqref{eq:graph-end} and multiplying by $x$, a direct computation gives
\[
z'=|c|\,x\,(1+u_1'^2)\,z+\chi,
\]
where
\[
\chi:=-(1+u_1'^2)\left(p\,D'+\frac{q\,x\,D}{u_1u_2}\right)+x\,(u_1'+u_2')\,D'\,\mathcal G(x,u_2,u_2').
\]
By the bounds above,
\[
|\chi|\le C_0\left(|D'|+\frac{|D|}{x}\right)\quad\mbox{on }[x_1,\infty).
\]
Let $Z(x):=\sup_{\tau\ge x}|z(\tau)|$, which is finite and nonincreasing. Since $(D/x)'=z/x^2$ and $D/x\to0$, we have
\[
\frac{|D(x)|}{x}\le\int_x^\infty\frac{|z(\tau)|}{\tau^{2}}\,d\tau\le\frac{Z(x)}{x},
\]
so that
\[
|D(x)|\le Z(x)\qquad\mbox{and}\qquad |D'(x)|=\frac{|z+D|}{x}\le\frac{2Z(x)}{x}.
\]
Hence, for $\tau\ge x$,
\[
|\chi(\tau)|\le\frac{3C_0\,Z(\tau)}{\tau}\le\frac{3C_0\,Z(x)}{x},
\]
and \eqref{eq:fato-W} gives
\[
|z(x)|\le\frac{3C_0}{|c|\,x^2}\,Z(x).
\]
Since the right-hand side is nonincreasing in $x$, taking the supremum yields
\[
Z(x)\le\frac{3C_0}{|c|\,x^2}\,Z(x)\qquad\mbox{for every }x\ge x_1.
\]
Thus $Z(x)=0$ for $x$ large, so $D/x$ is constant there, hence zero, and $u_1=u_2$ near infinity. By uniqueness for \eqref{eq:flow}, the two maximal trajectories coincide.

(ii) The Clifford ray $y=\tan\psi_0\,x$ solves \eqref{eq:graph-end}, since then $xu'-u=0$ and
\[
\frac qu-\frac{p\,u'}{x}=\frac{q-p\tan^2\psi_0}{x\tan\psi_0}=0.
\]
The claim follows from (i).

(iii) Let $\mathcal P=\s^p(\cos\psi_\infty)\times\s^q(\sin\psi_\infty)\subset\s^{p+q+1}$. It belongs to the isoparametric family of the products of spheres, and it is not minimal because $\psi_\infty\ne\psi_0$. By \cite[Theorem~1.1]{ChangSpruck}, after a homothety normalizing $c$, there is a self-shrinking end smoothly asymptotic to $C_{\psi_\infty}$, given as a radial graph
\[
\left\{e^{\varphi(d(z))}z\right\}
\]
over an annular region of the sphere adjacent to $\mathcal P$, where $d$ is the distance to $\mathcal P$ in $\s^{p+q+1}$. Since $d$ depends only on the polar angle of $z$, this end is $\Gpq$-invariant, and its profile is the polar graph
\[
\rho=e^{\varphi(|\psi-\psi_\infty|)}
\]
over an interval of angles in $(0,\tfrac\pi2)$ adjacent to $\psi_\infty$, along which $\rho\to\infty$ as $\psi\to\psi_\infty$. This profile is an arc of a trajectory, and its maximal extension has an endpoint along which $\rho\to\infty$ with limit direction $\psi_\infty$. The last assertion of (iii) and the final statement follow from (i) and (ii).
\end{proof}

\begin{remark}[On Proposition~\ref{prop:fins}]\label{rem:angulos}
Item (i) is a bi-rotational counterpart of the uniqueness theorem of Wang \cite[Theorem~1.3]{Wang} for smooth, properly embedded self-shrinking ends asymptotic to a regular cone. Within the symmetry class it follows from the profile equation alone, using only the first-order asymptotics of Corollary~\ref{cor:grafico}. In particular, the bi-rotational trumpets constructed by Park \cite[Proposition~3.1]{Park-fol} coincide near infinity with the ends of \cite{ChangSpruck}. The proposition concerns the ends themselves, and the global behavior of the maximal trajectories carrying them is a separate question. Combined with Corollary~\ref{thmD}, it shows that a properly immersed complete noncompact bi-rotational self-shrinker is determined, up to reversal, by the limit direction of any one of its ends. Finally, the trajectories with an escaping end form part of the exceptional set of Theorem~\ref{thmH}(a) and the limit direction parametrizes them surjectively, a trajectory with two escaping ends of distinct directions corresponding to both.
\end{remark}

Together with the description of the endpoints, Proposition~\ref{prop:fins} shows that each self-shrinking end constructed by Chang and Spruck extends to a unique complete bi-rotational self-shrinker.

\begin{corollary}[Completion of the Chang--Spruck ends]\label{cor:completion}
Let $c<0$ and $\psi_\infty\in(0,\tfrac\pi2)\setminus\{\psi_0\}$. Up to a homothety, the self-shrinking end asymptotic to $C_{\psi_\infty}$ constructed by Chang and Spruck \cite[Theorem~1.1]{ChangSpruck} is contained in a unique complete bi-rotational self-shrinker $\Sigma_{\psi_\infty}$ without boundary, namely the one generated by the maximal trajectory of Proposition~\ref{prop:fins}\textup{(iii)}. Exactly one of the following holds:
\begin{enumerate}
\item[(i)] the other endpoint of the profile curve lies on an axis, and $\Sigma_{\psi_\infty}$ is proper and diffeomorphic to $\R^{q+1}\times\s^{p}$ or to $\R^{p+1}\times\s^{q}$;
\item[(ii)] the other end is asymptotically conical, with limit direction different from $\psi_\infty$, and $\Sigma_{\psi_\infty}$ is proper and diffeomorphic to $\R\times\s^{p}\times\s^{q}$;
\item[(iii)] the other end is non-escaping, and $\Sigma_{\psi_\infty}$ is not proper, has infinite entropy, and meets both cylinders in infinitely many connected components.
\end{enumerate}
\end{corollary}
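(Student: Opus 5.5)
The plan is to start from the maximal trajectory $\Gamma$ of Proposition~\ref{prop:fins}(iii), whose endpoint (say $\omega$, after reversal) has $\rho\to\infty$ with limit direction $\psi_\infty$. Uniqueness of $\Sigma_{\psi_\infty}$ is then immediate from Proposition~\ref{prop:fins}(i). Any complete bi-rotational self-shrinker containing the Chang--Spruck end has a profile curve containing an arc of a trajectory that escapes in direction $\psi_\infty$. By uniqueness of solutions of \eqref{eq:flow}, that profile curve lies in the maximal trajectory $\Gamma$. Completeness and the absence of boundary come from Theorem~\ref{thmB}, which applies because $\psi_\infty\ne\psi_0$ rules out the Clifford cone and Theorem~\ref{cor:conico-direto} rules out the cylinders. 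Smooth closure at an axis endpoint is Proposition~\ref{lem:suave}.

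The trichotomy comes from analyzing the other endpoint $\alpha$. First I would show that $\alpha$ has either finite or infinite arclength. If the arclength is finite, the unit-speed curve converges to a point of $\partial\overline{\QQ}$. By the completeness part of Theorem~\ref{thmB}, together with the fact that only the Clifford cone reaches the vertex (Proposition~\ref{lem:origem}), this point is not the origin. So it lies on an axis, and Proposition~\ref{lem:cusp} gives the orthogonal endpoint. This is case (i), and Theorem~\ref{thmG} then gives the diffeomorphism type $\R^{q+1}\times\s^p$ or $\R^{p+1}\times\s^q$. If the arclength is infinite, Theorem~\ref{thmB} says the end is either asymptotically conical (case (ii)) or non-escaping (case (iii)). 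In case (ii), the limit direction differs from $\psi_\infty$ by Proposition~\ref{prop:fins}(i). Two ends of one trajectory with the same direction would force $\Gamma$ to coincide with its own reversal, and Remark~\ref{rem:umlauf}(ii) rules this out by producing a point with $\sigma=\sigma+\pi$. The topology in case (ii) is $\R\times\s^p\times\s^q$, again by Theorem~\ref{thmG}.

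The proper/nonproper and entropy assertions come from Corollary~\ref{thmD}, which I take as proved later. In cases (i) and (ii) every end is asymptotically conical, so $\Sigma_{\psi_\infty}$ is proper. In case (iii) there is a non-escaping end, so $\Sigma$ is not proper and has infinite entropy. Theorem~\ref{prop:recorrente} gives infinitely many transverse crossings of both separatrices, and these correspond to infinitely many components of the intersections with the cylinders. The cases are mutually exclusive by construction.

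The main obstacle is the case split at $\alpha$, in particular excluding a finite-arclength endpoint at the origin. I would rely on the completeness statement of Theorem~\ref{thmB} and on Proposition~\ref{lem:origem} for this. I would also need to check that the Chang--Spruck end, after the homothety, really is a subarc of the profile of $\Gamma$, and that it is not merely asymptotic to it. Proposition~\ref{prop:fins}(iii) already asserts this.
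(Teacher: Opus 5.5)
Your proposal is correct and follows essentially the same route as the paper. The steps match one by one: uniqueness from Proposition~\ref{prop:fins}(i) and ODE uniqueness; completeness from Theorem~\ref{thm:completude}, with the origin excluded by Proposition~\ref{lem:origem}; the trichotomy from Theorem~\ref{thmB}; properness, entropy and topology from the tame-or-wild and topology results; and distinct limit directions in case (ii) via the reversal/midpoint argument of Remark~\ref{rem:umlauf}(ii). The one nuance is that this last step needs the argument in the proof of Proposition~\ref{prop:fins}(i), applied to $\Gamma$ and its reversal to get $\Gamma(s_1+t)=R(\Gamma(s_2-t))$, rather than just the statement "they coincide up to reversal". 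That is exactly how the paper phrases it.
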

\begin{proof}
Uniqueness of the trajectory is Proposition~\ref{prop:fins}\textup{(i)}. It is not the Clifford ray, by Proposition~\ref{prop:fins}\textup{(ii)}, so $\Sigma_{\psi_\infty}$ is complete by Theorem~\ref{thm:completude}, and its profile curve does not reach the origin, by Proposition~\ref{lem:origem}. The profile curve is not closed, since it has an escaping end. By Theorem~\ref{thmB}, the other extremity is an endpoint on an axis, an asymptotically conical end, or a non-escaping end. In the first two cases $\Sigma_{\psi_\infty}$ is proper by Corollary~\ref{cor:dicotomia}, and the topology is given by Theorem~\ref{thm:completas}. In the third case the conclusions are Corollaries~\ref{cor:dicotomia} and~\ref{cor:entropia}.

It remains to see that in case (ii) the two limit directions differ. If both ends had direction $\psi_\infty$, then, by Corollary~\ref{cor:grafico} and the argument in the proof of Proposition~\ref{prop:fins}\textup{(i)}, both ends would be the same graph $y=u(x)$ near infinity, traversed in opposite directions. Hence $\Gamma(s_1+t)=R(\Gamma(s_2-t))$ for some $s_1,s_2$ and all admissible $t$, where $R(x,y,\sigma)=(x,y,\sigma+\pi)$, and, as in Remark~\ref{rem:umlauf}\textup{(ii)}, the midpoint $m$ of $s_1$ and $s_2$ would satisfy $\Gamma(m)=R(\Gamma(m))$, which is absurd.
\end{proof}


\begin{remark}[Numerical examples]\label{rem:numerical-escaping}
Numerical evidence suggests that alternatives (i) and (ii) of Corollary~\ref{cor:completion} occur. For $p=q=1$ and $c=-\tfrac12$, the trajectory with $\psi_\infty\approx0.505$ appears to leave the $x$-axis at $x_0\approx1.334$, and those with $\psi_\infty\approx0.684$ and $\psi_\infty\approx0.352$ appear to cross the diagonal orthogonally. Since the reflection of Lemma~\ref{lem:simetrias}(ii) composed with the reversal then preserves the trajectory, the other end is conical, with limit direction $\tfrac\pi2-\psi_\infty$. These examples are analogous to the immersed rotational self-shrinkers of Drugan and Kleene \cite{DrKl}, and the same search produced many further candidates. Figure~\ref{fig:escaping} shows the three trajectories.
\end{remark}

\begin{figure}[htbp]
\centering
\includegraphics[width=\textwidth]{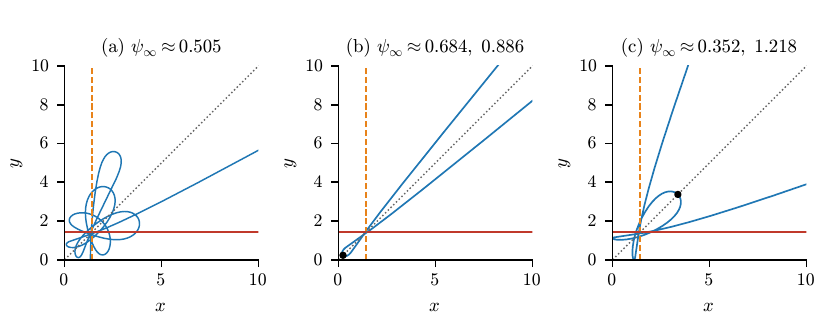}
\caption{Numerical examples for Corollary~\ref{cor:completion}, for $p=q=1$ and $c=-\tfrac12$ (Remark~\ref{rem:numerical-escaping}). (a) A trajectory leaving the $x$-axis at $x_0\approx1.334$, with an asymptotically conical end of limit direction $\psi_\infty\approx0.505$. (b) and (c) Trajectories with two asymptotically conical ends, symmetric with respect to the diagonal, which they cross orthogonally at the black dot. The ends are computed backward from $\rho=40$. The separatrices are the solid and the dashed lines, and the Clifford ray is dotted.}
\label{fig:escaping}
\end{figure}

\section{Complete hypersurfaces}\label{sec:completas}

This section proves Theorem~\ref{thmG}, completes the proof of Theorem~\ref{thmB}, and proves Corollary~\ref{thmD} and Theorem~\ref{thmF}. Before describing the topological types, we establish that the hypersurfaces in question are always complete, with the single exception of the Clifford minimal cone. Proposition~\ref{lem:suave}, Proposition~\ref{lem:origem} and Theorem~\ref{thm:completude} hold for every $c\ne0$, and Lemma~\ref{lem:terceira} for every $c\in\R$, since their proofs do not use the sign of $c$. The remaining results of this section assume $c<0$.

\begin{proposition}[Smooth closure at an axis]\label{lem:suave}
Let $c\ne0$ and let $\gamma$ be a trajectory of \eqref{lambda-s} whose endpoint reaches the
$x$-axis at $(x_0,0)$, $x_0>0$ (orthogonal endpoint, Proposition~\ref{lem:cusp}). Then, near the
endpoint,
\begin{equation}\label{eq:parabola}
x-x_0=\frac{A(x_0)}{2(q+1)} y^{2}+o(y^{2}).
\end{equation}
In particular, writing the profile curve for $y>0$ as $x=\tilde x(y)$, one has $\tilde x'(y)\to0$ and the even extension $\tilde x(-y)=\tilde x(y)$ is initially of class $C^{1,1}$. This extension is in fact $C^\infty$, and consequently the immersion \eqref{eq:param} closes off smoothly over the singular orbit $\s^{p}(x_0)\times\{0\}$, which becomes a regular interior subset of $\Sigma$. The mirror statement holds on the $y$-axis, with $p$ in place of $q$.
\end{proposition}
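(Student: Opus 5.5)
By Proposition~\ref{lem:cusp} the endpoint is reached in finite arclength with $\sin\sigma\to-1$. Near it, $y$ is therefore strictly monotone, and I will use $y$ as the independent variable. The profile is then a graph $x=\tilde x(y)$ with $\tilde x(0)=x_0$ and $\tilde x'(y)=\cot\sigma\to0$.

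\textbf{The ODE for the graph.} Dividing \eqref{lambda-s} by $y'$, the graph equation is
\[
\tilde x''=(1+\tilde x'^2)\Big(A(\tilde x)-\frac{q}{y}\tilde x'-cy\,\tilde x'\Big),
\]
up to the sign convention, which will be fixed by matching the cylinder solution $\tilde x\equiv x_*$. This is the mirror of \eqref{eq:graph-end}, with the roles of the variables swapped.

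\textbf{Regularity at $y=0$.} Multiply by $y^q$. Since $(y^q\tilde x')'=y^q\tilde x''+qy^{q-1}\tilde x'$, the singular term is absorbed, and one obtains the exact form
\[
(y^q\tilde x')'=y^q\Big[(1+\tilde x'^2)\big(A(\tilde x)-cy\tilde x'\big)-\tilde x'^2\,\frac{q}{y}\,\tilde x'\Big].
\]
Fix the correct signs, check the leftover term $q\tilde x'^3/y$, and integrate from $0$, using $y^q\tilde x'\to0$. This gives
\[
\tilde x'(y)=y^{-q}\int_0^y t^q\big(A(x_0)+o(1)\big)\,dt=\frac{A(x_0)}{q+1}\,y+o(y).
\]
Integrating once more yields \eqref{eq:parabola}. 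In particular $\tilde x'$ is Lipschitz, so the even extension is $C^{1,1}$. A further bootstrap on the integral identity gives $\tilde x'(y)/y$ continuous, i.e.\ $C^2$.

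\textbf{Smoothness.} This is the main obstacle. The equation is a singular ODE of Fuchsian type, $\tilde x''+\frac{q}{y}\tilde x'=F(y,\tilde x,\tilde x')$. My first attempt is to write $\tilde x'=y\,w$. The equation then becomes
\[
y\,w'+(q+1)w=G(y,\tilde x,y w),
\]
with $G$ smooth, and one looks for solutions that are even in $y$. Because the indicial root $-(q+1)$ is negative, the regular solution is the unique bounded one. A standard fixed point (Baouendi--Goulaouic, or Picard iteration in the weighted space) then gives a solution analytic/smooth in $y^2$ with $\tilde x(0)=x_0$ and $\tilde x'(0)=0$. By uniqueness of bounded solutions, our $\tilde x$ equals it, so it is a smooth function of $y^2$.

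An alternative I would keep in reserve is geometric. View $\Sigma$ near the orbit $\s^p(x_0)\times\{0\}$ as the graph of $\tilde x(|V|)$ over $V\in\R^{q+1}$ in rotational coordinates. The self-shrinker equation is then a uniformly elliptic quasilinear equation for $u(V)=\tilde x(|V|)$. This $u$ is $C^{1,1}$ and a weak solution across $V=0$, since the origin is a removable set of codimension $q+1\ge2$ and $u$ is $C^1$. Elliptic regularity then bootstraps $u$ to $C^\infty$.

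Either route shows that the parametrization \eqref{eq:param} extends smoothly across the singular orbit, which becomes a regular part of $\Sigma$. The $y$-axis case follows by the mirror symmetry of Lemma~\ref{lem:simetrias}(ii), with $p$ and $q$ interchanged.
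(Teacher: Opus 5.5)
Your proposal is correct in outline, and its main smoothness argument differs genuinely from the paper's.

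\textbf{The expansion.} The paper uses $J=g(y)\cos\sigma$. Up to sign this equals $e^{cy^2/2}y^q\tilde x'/\sqrt{1+\tilde x'^2}$, and it satisfies the exact identity $dJ/dy=A(x)g(y)\sin\sigma$, so no remainder term appears. Your integrating factor $y^q$ leaves the term $-qy^{q-1}\tilde x'^3$.

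\textbf{Smoothness.} The paper never returns to the ODE. It shows that $U(v)=\tilde x(|v|)$ is $C^{1,1}$, writes $\Sigma$ near the singular orbit as a $C^{1,1}$ graph solving a quasilinear elliptic equation almost everywhere (the orbit is a null set), and bootstraps through difference quotients, interior $W^{2,r}$ estimates, Sobolev embedding and Schauder theory. This is essentially your reserve route.

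Your main route is also valid. With $\tilde x'=yw$ you get a Briot--Bouquet system in which $y$ enters only through $y^2$. In $t=y^2$ it becomes $tZ_t=\hat F(t,Z)$ for $Z=(\tilde x,w)$. At $t=0$, $Z_0=(x_0,A(x_0)/(q+1))$, the matrix $D_Z\hat F$ has eigenvalues $0$ and $-(q+1)/2$, neither a positive integer, so an analytic solution in $t$ exists.

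The identification with your solution also works. Take two solutions with $\tilde x\to x_0$ and $w$ bounded, and set $D(y)=\sup_{(0,y]}|\Delta w|$. Then $(y^{q+1}\Delta w)'=y^q(G_1-G_2)$, with $|G_1-G_2|\le C(|\Delta\tilde x|+y^2|\Delta w|)$ and $|\Delta\tilde x|\le y^2D(y)/2$. This gives $D(y)\le Cy^2D(y)$, so $D\equiv 0$ for small $y$. The zero eigenvalue reflects the family parametrized by $x_0$; it is harmless because $\Delta\tilde x(0)=0$.

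Your route gives more by one-variable means: real-analyticity of $\tilde x$ in $y^2$, hence of $U(v)=\phi(|v|^2)$. The paper's route is heavier, but it uses only $C^{1,1}$ regularity and ellipticity, with no analytic structure and no uniqueness argument for a singular ODE.

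\textbf{Two details need repair.}

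First, the remainder term. Knowing only $\tilde x'\to0$, the quantity $qy^{-q}\int_0^y t^{q-1}\tilde x'(t)^3\,dt$ is $o(1)$, not $o(y)$, so your formula for $\tilde x'$ does not yet follow. Set $m(y)=\sup_{(0,y]}|\tilde x'|$. The integral identity gives $m(y)\le Cy+m(y)^3$, hence $m(y)\le 2Cy$ for small $y$, and then the remainder is $O(y^3)$. This $O(y)$ bound is also exactly what makes $w$ bounded, which the uniqueness step requires.

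Second, the sign of the graph equation. The cylinder $\tilde x\equiv x_*$ cannot fix it, since both sides vanish there for either sign choice. The sign matters: with the opposite sign the singular term becomes $+\frac qy\tilde x'$, whose homogeneous solution $\tilde x'=y^q$ is regular, and the uniqueness argument breaks down. Your displayed equation is nevertheless the right one. This follows from $\sigma'=-\sin^3\sigma\,\tilde x''$ together with the third equation of the system, or by testing it on the sphere $\tilde x=\sqrt{r_0^2-y^2}$.
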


\begin{proof}
By Proposition~\ref{lem:cusp}, near the endpoint one has $\sin\sigma\to-1$. In particular, $y'=\sin\sigma<0$ and $y$ is strictly decreasing to $0$, so that $y$ serves as the independent variable.

\emph{Step 1: the asymptotics of $J$.} By Lemma~\ref{prop:dIdJ}, $J'=A(x)g(y)\sin^{2}\sigma$. Dividing by $y'=\sin\sigma$, we obtain
\begin{equation}\label{eq:dIdy}
\frac{dJ}{dy}=A(x)g(y)\sin\sigma.
\end{equation}
Since $g(y)=y^{q}e^{cy^{2}/2}\to0$ and $|\cos\sigma|\le1$, one has $J\to0$ at the endpoint.
Integrating \eqref{eq:dIdy} from $0$ to $y$, we obtain
\[
J(y)=\int_0^yA(x(t))g(t)\sin\sigma(t)dt.
\]
Now, as $x(t)\to x_0$ and $A$ is continuous at $x_0>0$, we have $A(x(t))\to A(x_0)$ and
$\sin\sigma(t)\to-1$. Writing $A(x(t))\sin\sigma(t)=-A(x_0)+o(1)$ and $g(t)=t^q(1+O(t^2))$, and using that
\[
\int_0^{y}(L+o(1))t^{q}dt=(L+o(1))\tfrac{y^{q+1}}{q+1}
\] 
for any constant $L$, we obtain
\begin{equation}\label{eq:Iasym}
J(y)=-\frac{A(x_0)}{q+1}y^{q+1}+o(y^{q+1}),\quad\mbox{as}\quad y\to0^{+}.
\end{equation}

\emph{Step 2: the asymptotics of $\cos\sigma$ and the parabola.} From $J=g(y)\cos\sigma$ and
$g(y)=y^{q}(1+O(y^{2}))$ we get, by \eqref{eq:Iasym},
\[
\cos\sigma=\frac{J(y)}{g(y)}=-\frac{A(x_0)}{q+1}y+o(y).
\]
Hence, since $\sin\sigma=-1+o(1)$,
\[
\frac{dx}{dy}=\frac{\cos\sigma}{\sin\sigma}=\frac{-\frac{A(x_0)}{q+1}y+o(y)}{-1+o(1)} =\frac{A(x_0)}{q+1}y+o(y),
\]
and integrating from $0$ to $y$ (with $x(0)=x_0$) one obtains exactly \eqref{eq:parabola}. In particular, $dx/dy\to0$, i.e., the curve meets the axis perpendicularly, with the quadratic expansion stated above.

\emph{Step 3: smoothness across the singular orbit.}
By Step~2 and \eqref{eq:flow},
\[
\sigma'=-A(x)\sin\sigma+\left(\frac qy+cy\right)\cos\sigma
\longrightarrow A(x_0)-\frac{qA(x_0)}{q+1}=\frac{A(x_0)}{q+1},
\]
so the curvature of the profile curve stays bounded up to the axis. Write the curve near the endpoint as a graph $x=\tilde x(y)$, $0<y<\delta$, and extend $\tilde x$ evenly to $(-\delta,\delta)$. Since $\tilde x''=\pm\sigma'(1+\tilde x'^2)^{3/2}$ is bounded and $\tilde x'(y)\to0$, the even extension satisfies $\tilde x\in C^{1,1}(-\delta,\delta)$ and $|\tilde x'(t)|\le L_0|t|$.

Set $U(v):=\tilde x(|v|)$ for $v\in\R^{q+1}$. For $v\ne0$,
\[
\nabla U(v)=\phi(|v|)v,
\qquad
\phi(t):=\frac{\tilde x'(t)}{t}.
\]
The bound $|\tilde x'(t)|\le L_0|t|$ gives $|\phi|\le L_0$, and for $0<t\le r$,
\[
t|\phi(r)-\phi(t)|
=\left|\frac tr\left(\tilde x'(r)-\tilde x'(t)\right)
+\tilde x'(t)\left(\frac tr-1\right)\right|
\le2L_0(r-t).
\]
Together with the radial formula for $\nabla U$, this shows that $\nabla U$ is locally Lipschitz at the origin. Hence $U\in C^{1,1}$ near $0$.

Near a point of $\s^p(x_0)\times\{0\}$, the immersed hypersurface can therefore be written, after a fixed Euclidean rotation, as a graph of a function
\[
u\in C^{1,1}=W^{2,\infty}_{\mathrm{loc}}
\]
over its tangent hyperplane. Let $\mathcal Z$ denote the subset of the graph domain corresponding to the singular orbit. It has Lebesgue measure zero (indeed codimension $q+1\ge2$ in the graph domain). On the complement of $\mathcal Z$, the original smooth immersion satisfies \eqref{eq:soliton}. In graph coordinates this is a quasilinear equation
\begin{equation}\label{eq:graph-shrinker}
a^{ij}(z,u,Du)D_{ij}u=b(z,u,Du),
\end{equation}
whose coefficients are smooth (in fact analytic) in their arguments. Since $u\in W^{2,\infty}_{\mathrm{loc}}$, equation \eqref{eq:graph-shrinker} holds almost everywhere off $\mathcal Z$, and therefore almost everywhere on the whole graph domain. Thus the $C^{1,1}$ extension is a strong solution across $\mathcal Z$.

We now make the regularity step explicit. Shrinking the graph neighborhood if necessary, $Du$ is bounded, and the matrix $a^{ij}(z,u,Du)$ is uniformly elliptic there. Because $u\in C^{1,1}$ and the coefficients in \eqref{eq:graph-shrinker} are smooth, the functions
\[
\widetilde a^{ij}(z):=a^{ij}(z,u(z),Du(z)),
\qquad
\widetilde b(z):=b(z,u(z),Du(z))
\]
are locally Lipschitz. Apply difference quotients to \eqref{eq:graph-shrinker} in a coordinate direction. More explicitly, fixing a coordinate direction $e_k$ and setting
\[
\delta_{h,k}v(z):=\frac{v(z+he_k)-v(z)}{h},
\qquad \vartheta_h:=\delta_{h,k}u
\]
for $h\ne0$ sufficiently small, we obtain, almost everywhere on a smaller neighborhood,
\begin{equation}\label{eq:GT-difference-quotients}
\widetilde a^{ij}(z+he_k)D_{ij}\vartheta_h(z)
=\delta_{h,k}\widetilde b(z)-\left(\delta_{h,k}\widetilde a^{ij}(z)\right)D_{ij}u(z).
\end{equation}
Thus the difference quotients satisfy uniformly elliptic linear equations whose right-hand sides are locally bounded uniformly in $h$, since $\widetilde a^{ij}$ and $\widetilde b$ are locally Lipschitz and $D^2u\in L^\infty_{\mathrm{loc}}$. For each fixed nonzero increment, these difference quotients belong to $W^{2,r}_{\mathrm{loc}}$ for every $1<r<\infty$, since $u\in W^{2,\infty}_{\mathrm{loc}}$. Moreover, the ellipticity constants and moduli of continuity of the translated leading coefficients, as well as the local $L^r$ bounds of the right-hand sides, are uniform in the increment. The $L^r$ norms of the difference quotients are uniformly bounded because $u$ is Lipschitz. Hence the interior $W^{2,r}$ estimates \cite[Theorem~9.11]{GT} apply uniformly to the difference quotients. Passing to the limit as the increment tends to zero yields
\[
u\in W^{3,r}_{\mathrm{loc}}
\qquad\text{for every }1<r<\infty.
\]
Choosing $r$ larger than the dimension of the graph and using Sobolev embedding gives $u\in C^{2,\alpha}_{\mathrm{loc}}$ for some $\alpha\in(0,1)$. Equation \eqref{eq:graph-shrinker} is then a uniformly elliptic equation with $C^{1,\alpha}$ coefficients, and the interior Schauder estimates bootstrap successively to $u\in C^{k,\alpha}_{\mathrm{loc}}$ for every $k$, hence $u\in C^\infty$.

Therefore the even radial extension closes the immersion smoothly over $\s^p(x_0)\times\{0\}$. Once this smoothness has been established, the radial symmetry of $U(v)=\tilde x(|v|)$ implies, in particular, the usual vanishing of the odd derivatives of the one-variable even profile $X$ at $0$.

\end{proof}

We need a third monotone quantity. Recall that $n=p+q+1$, and set
\[
h(\psi):=\cos^{p}\psi\sin^{q}\psi,\qquad \psi\in(0,\tfrac\pi2).
\]
Since $(\log h)'=q\cot\psi-p\tan\psi$, the function $h$ attains its maximum exactly at the Clifford angle $\psi_0$.

\begin{lemma}[A third monotone quantity]\label{lem:terceira}
Let $c\in\R$ and, along a solution of \eqref{eq:flow}, let
\[
\mathcal F:=\Phi(x,y)\langle\gamma,\gamma'\rangle=\Phi(x,y)(x\cos\sigma+y\sin\sigma).
\]
Then
\begin{equation}\label{eq:F}
\mathcal F'=\Phi(n+c\rho^{2}).
\end{equation}
Moreover, with $w=\psi-\sigma$, the function $L:=h(\psi)\cos w=\mathcal F/(\rho^{n}e^{c\rho^{2}/2})$ satisfies
\begin{equation}\label{eq:L}
\rho\frac{dL}{ds}=(n+c\rho^{2}) h(\psi)\sin^{2}w .
\end{equation}
\end{lemma}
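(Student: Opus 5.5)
The plan is a direct computation in two stages: first the derivative of $\mathcal F$ in Cartesian coordinates, then the conversion of $\mathcal F$ into the normalized quantity $L$ in polar coordinates.

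\emph{The identity \eqref{eq:F}.} Write $\mathcal F=\Phi\,\phi$ with $\phi:=x\cos\sigma+y\sin\sigma$. The logarithmic derivative of $\Phi$ along the flow is
\[
\frac{\Phi'}{\Phi}=\Big(\frac px+cx\Big)\cos\sigma+\Big(\frac qy+cy\Big)\sin\sigma=A\cos\sigma+B\sin\sigma .
\]
Differentiating $\phi$ and using $x'=\cos\sigma$, $y'=\sin\sigma$ gives
\[
\phi'=1+\sigma'(y\cos\sigma-x\sin\sigma),
\]
and we substitute $\sigma'=-A\sin\sigma+B\cos\sigma$. In the resulting expansion of $(A\cos\sigma+B\sin\sigma)\phi+\phi'$:
\begin{itemize}
\item the $A$-terms combine to $A\,x(\cos^2\sigma+\sin^2\sigma)=Ax=p+cx^2$;
\item the $B$-terms combine to $By=q+cy^2$;
\item the mixed terms $Ay\sin\sigma\cos\sigma$ and $Bx\sin\sigma\cos\sigma$ cancel pairwise.
\end{itemize}
Hence $\mathcal F'=\Phi(1+p+q+c\rho^2)=\Phi(n+c\rho^2)$, which is \eqref{eq:F}. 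No sign of $c$ enters, so this holds for every $c\in\R$.

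\emph{The identification of $L$.} In polar coordinates $\Phi=\rho^{p+q}h(\psi)e^{c\rho^2/2}$ and $\langle\gamma,\gamma'\rangle=\rho\cos(\psi-\sigma)=\rho\cos w$. Therefore $\mathcal F=\rho^{n}e^{c\rho^2/2}h(\psi)\cos w$, which gives the stated formula for $L$.

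\emph{The identity \eqref{eq:L}.} We differentiate $L=\mathcal F/(\rho^ne^{c\rho^2/2})$ by the quotient rule. Since $\rho'=\cos w$,
\[
\big(\rho^ne^{c\rho^2/2}\big)'=\rho^{n-1}(n+c\rho^2)\cos w\;e^{c\rho^2/2}.
\]
Using $\mathcal F'=\Phi(n+c\rho^2)$ with $\Phi=\rho^{n-1}h\,e^{c\rho^2/2}$, we get
\[
L'=\frac{(n+c\rho^2)\,h}{\rho}-\frac{n+c\rho^2}{\rho}\,\cos w\cdot h\cos w=\frac{(n+c\rho^2)\,h}{\rho}\,(1-\cos^2w).
\]
Multiplying by $\rho$ gives exactly \eqref{eq:L}.

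The only place requiring care is the bookkeeping of the powers of $\rho$: $\Phi$ contributes $\rho^{p+q}=\rho^{n-1}$, and the extra factor $\rho$ in $\langle\gamma,\gamma'\rangle$ accounts for $\rho^n$ in the normalization. The rest is mechanical, so no genuine obstacle arises.
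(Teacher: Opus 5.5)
Your proof is correct and follows essentially the same route as the paper: the same logarithmic derivative of $\Phi$, the same expansion in which the mixed terms cancel to give $1+Ax+By=n+c\rho^{2}$, and the same polar rewriting $\mathcal F=\rho^{n}e^{c\rho^{2}/2}L$. The only cosmetic difference is that you use the quotient rule for $L'$, whereas the paper applies the product rule to $\mathcal F=\rho^{n}e^{c\rho^{2}/2}L$ and compares the result with $\rho\mathcal F'=(n+c\rho^{2})\rho\Phi$.
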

\begin{proof}
Using $\Phi'=\Phi(A\cos\sigma+B\sin\sigma)$, the identity
\[
\frac{d}{ds}(x\cos\sigma+y\sin\sigma)=1+\sigma'(-x\sin\sigma+y\cos\sigma)
\]
and \eqref{eq:flow}, we obtain
\[
\begin{aligned}
\frac{\mathcal F'}{\Phi}&=(A\cos\sigma+B\sin\sigma)(x\cos\sigma+y\sin\sigma)\\
&\quad+1+(-A\sin\sigma+B\cos\sigma)(-x\sin\sigma+y\cos\sigma)\\
&=1+Ax+By,
\end{aligned}
\]
the mixed terms canceling, and $1+Ax+By=1+p+q+c(x^{2}+y^{2})$, which is \eqref{eq:F}. Next, $\langle\gamma,\gamma'\rangle=\rho\cos w$ by \eqref{eq:polar}, and $\Phi=\rho^{p+q}h(\psi)e^{c\rho^{2}/2}$, so that $\mathcal F=\rho^{n}e^{c\rho^{2}/2}L$. Since $\rho'=\cos w$, differentiation gives
\[
\rho \mathcal F'=(n+c\rho^{2})\cos w\;\rho^{n}e^{c\rho^{2}/2}L+\rho^{n}e^{c\rho^{2}/2}\rho L',
\]
while \[\rho \mathcal F'=(n+c\rho^{2})\rho\Phi=(n+c\rho^{2})\rho^{n}e^{c\rho^{2}/2}h\] by \eqref{eq:F}. Hence \[\rho L'=(n+c\rho^{2})(h-L\cos w)=(n+c\rho^{2})h\sin^{2}w.\]
\end{proof}

The function $\mathcal F$ turns the round sphere into a third separatrix, playing for the radial motion the role that the lines $\{y=y_*\}$ and $\{x=x_*\}$ play for $I$ and $J$. Let $c<0$ and $r_0:=\sqrt{n/|c|}$, so that $\s^{p+q+1}(r_0)$ is the round sphere solution. Its profile curve is the arc $\{\rho=r_0\}$ of $\QQ$, traversed with $\sigma=\psi\pm\frac\pi2$. By \eqref{eq:F},
\[
\mathcal F'=|c|\Phi\left(r_0^{2}-\rho^{2}\right),
\]
so $\mathcal F$ is strictly increasing along every subarc inside the open ball $\{\rho<r_0\}$ and strictly decreasing along every subarc outside the closed ball $\{\rho\le r_0\}$. Since $\mathcal F=\Phi\rho\rho'$ with $\Phi,\rho>0$, the critical points of $\rho$ along $\gamma$ are exactly the zeros of $\mathcal F$.

\begin{theorem}[The sphere as a third separatrix]\label{thm:esfera}
Let $c<0$ and let $\gamma$ be a profile curve distinct from that of the round sphere. Then:
\begin{enumerate}
\item[(i)] every contact of $\gamma$ with the circle $\{\rho=r_0\}$ is transverse, that is, $\rho'\ne0$ there;
\item[(ii)] every critical point of $\rho$ along $\gamma$ is nondegenerate: it is a strict local minimum if it lies in $\{\rho<r_0\}$ and a strict local maximum if it lies in $\{\rho>r_0\}$. In particular, every local minimum of $\rho$ lies in $\{\rho<r_0\}$ and every local maximum in $\{\rho>r_0\}$;
\item[(iii)] between two consecutive crossings of $\{\rho=r_0\}$, $\rho$ has exactly one critical point, a minimum if the arc lies inside the sphere and a maximum if it lies outside. Between an endpoint of $\gamma$ and the adjacent crossing, $\rho$ has at most one critical point. In particular, the crossings of $\{\rho=r_0\}$ and the critical points of $\rho$ alternate along $\gamma$;
\item[(iv)] every loop of $\gamma$ contained in $\{\rho<r_0\}$ contains exactly one critical point of $\rho$, a minimum, and every loop contained in $\{\rho>r_0\}$ contains exactly one, a maximum;
\item[(v)] if $\gamma$ is closed, it crosses $\{\rho=r_0\}$ an even number of times, and at least twice.
\end{enumerate}
\end{theorem}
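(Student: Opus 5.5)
The whole proof will run on the third monotone quantity $\mathcal F=\Phi\rho\rho'$ of Lemma~\ref{lem:terceira}. Its derivative is $\mathcal F'=|c|\Phi(r_0^2-\rho^2)$, and its zeros are exactly the critical points of $\rho$. We fix a trajectory distinct from the sphere and treat the items in order.

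\emph{Items (i) and (ii).} For (i), suppose there is a contact with $\rho=r_0$ and $\rho'=0$. Then $\cos w=0$, so $\sigma=\psi\pm\tfrac\pi2$ at a point of the circle. This is a phase point of the sphere trajectory $(r_0\cos(s/r_0),r_0\sin(s/r_0),\tfrac\pi2+s/r_0)$ or of its reversal (Lemma~\ref{lem:simetrias}(i)). By uniqueness of solutions, $\gamma$ would then be the sphere, which is excluded.

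For (ii), at a critical point of $\rho$ we have $\mathcal F=0$ and $\rho'=0$. Differentiating $\mathcal F=\Phi\rho\rho'$ gives
\[
\mathcal F'=\Phi(\rho')^2+\Phi\rho\rho''+\Phi'\rho\rho'=\Phi\rho\rho'' .
\]
Hence $\rho\rho''=|c|(r_0^2-\rho^2)$ there. By (i) this is nonzero, which gives nondegeneracy, with a minimum inside the sphere and a maximum outside.

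\emph{Item (iii).} Take an arc between consecutive crossings. It lies entirely inside or entirely outside the sphere, so $\mathcal F$ is strictly monotone on it and has at most one zero; this gives at most one critical point. For existence, consider the arc inside the sphere. At the entering crossing $\rho'<0$, so $\mathcal F<0$; at the exiting crossing $\rho'>0$, so $\mathcal F>0$. The crossings are transverse by (i), so these signs hold at the endpoints, and by the intermediate value theorem $\mathcal F$ vanishes once. The outside arc is symmetric. On a terminal arc, strict monotonicity alone gives at most one zero. Alternation follows by combining the two facts.

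\emph{Item (iv).} A loop inside $\{\rho<r_0\}$ has $\mathcal F$ strictly increasing along it. The endpoints of the loop coincide, so $\rho$ takes equal values there. Since $\rho$ is continuous on a compact arc with equal endpoint values and is nonconstant, it has an interior extremum. By (ii) every interior critical point inside the sphere is a minimum, and monotonicity of $\mathcal F$ gives at most one zero; so there is exactly one critical point, a minimum. The outer case is the same with a maximum.

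\emph{Item (v).} A closed orbit has $\rho$ periodic and nonconstant, so it has both a maximum and a minimum. By (ii) the minimum lies strictly inside the sphere and the maximum strictly outside, so the orbit crosses the circle. The crossings are transverse, and the orbit returns to its starting side, so their number is even, hence at least two.

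The main obstacle is (i): we must check that a tangential contact really forces the orientation $\sigma=\psi\pm\tfrac\pi2$ to match one of the two sphere orientations. We must also make sure that (iv) does not secretly need a crossing; strict monotonicity of $\mathcal F$ suffices for that.
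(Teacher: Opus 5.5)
Your proposal is correct and follows the paper's proof essentially step by step. You use uniqueness against the sphere trajectory (in either orientation) for (i), the identity $\rho\rho''=|c|(r_0^2-\rho^2)$ at critical points for (ii), and the strict monotonicity of $\mathcal F=\Phi\rho\rho'$ on each side of the circle for (iii)--(v), just as the paper does. The differences are cosmetic: you get existence in (iii) from the signs of $\mathcal F$ at the two transverse crossings rather than from an interior minimum of $\rho$, and the paper's preliminary argument that every non-spherical trajectory actually meets $\{\rho=r_0\}$ is not needed for items (i)--(v) as stated.
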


\begin{proof}
We first show that a maximal trajectory distinct from the sphere must meet $\{\rho=r_0\}$. Since $c<0$ and $p,q\ge1$, the function $\Phi\rho$ is bounded on $\mathcal Q$, and we set $K:=\sup_{\mathcal Q}\Phi\rho<\infty$. Suppose, to the contrary, that $\gamma$ never meets $\{\rho=r_0\}$. Then either $\rho<r_0$ everywhere or $\rho>r_0$ everywhere. By \eqref{eq:F}, $\mathcal F$ is strictly increasing in the first case and strictly decreasing in the second, so we may choose $s_0$ with $\mathcal F(s_0)\ne0$.

If $\rho<r_0$, follow the trajectory forward when $\mathcal F(s_0)>0$, or backward when $\mathcal F(s_0)<0$. The monotonicity of $\mathcal F$ gives $|\mathcal F(s)|\ge |\mathcal F(s_0)|$ in the chosen direction, and $\rho$ increases toward $r_0$, with $|\rho'|=|\mathcal F|/(\Phi\rho)\ge |\mathcal F(s_0)|/K$. If $\rho>r_0$, follow the trajectory backward when $\mathcal F(s_0)>0$, or forward when $\mathcal F(s_0)<0$. Again $|\mathcal F(s)|\ge |\mathcal F(s_0)|$, while $\rho$ decreases toward $r_0$ at speed at least $|\mathcal F(s_0)|/K$. In both cases, since no crossing occurs, the chosen endpoint must be attained in finite arclength, and $\rho$ remains bounded. The unit-speed curve then has a limit in the closed quadrant. By maximality and smoothness of the vector field in $\mathcal Q\times\mathbb S^1$, this limit must lie on a coordinate axis, possibly at the origin. But $\mathcal F=\Phi\rho\rho'\to0$ there, since $|\rho'|\le1$, $\rho$ is bounded and $\Phi=x^p y^q e^{c\rho^2/2}\to0$. This contradicts $|\mathcal F(s)|\ge |\mathcal F(s_0)|>0$. Hence every such trajectory meets $\{\rho=r_0\}$.

(i) If a contact is not transverse, then $\rho=r_0$ and $\rho'=\cos(\psi-\sigma)=0$, so $\sigma=\psi\pm\frac\pi2$ and $\Gamma$ lies on the trajectory of the round sphere. By uniqueness of solutions, $\gamma$ would be the profile curve of the sphere.

(ii) Since $\mathcal F'=\Phi'\rho\rho'+\Phi\left(\rho'^{2}+\rho\rho''\right)$, at a critical point $s_0$ of $\rho$ we get, from \eqref{eq:F},
\[
\rho\rho''=n+c\rho^{2}=|c|\left(r_0^{2}-\rho^{2}\right).
\]
By (i), $\rho(s_0)\ne r_0$, so $\rho''(s_0)\ne0$, and the second derivative test gives the conclusion.

(iii) By (ii), the critical points of $\rho$ are isolated strict local extrema. On a maximal interval where $\rho<r_0$, $\mathcal F$ is strictly increasing and vanishes at most once and, as $\mathcal F=\Phi\rho\rho'$, we have that $\rho$ has at most one critical point there. If the interval is bounded by two crossings, then $\rho$ equals $r_0$ at both ends and is smaller in between, so it attains an interior minimum, which is a critical point. This gives exactly one critical point, a minimum. Outside the sphere the argument is the same, with $\mathcal F$ strictly decreasing and an interior maximum. For the alternation, two consecutive critical points are a minimum and a maximum, since between two local minima there is a local maximum, and by (ii) $\rho$ passes from $\rho<r_0$ to $\rho>r_0$ or back between them. As $\rho$ is strictly monotone between consecutive critical points, it crosses $\{\rho=r_0\}$ exactly once there.

(iv) Let $\gamma|_{[a,b]}$ be a loop contained in $\{\rho<r_0\}$. Then $\rho(a)=\rho(b)$, and $\rho$ is not constant on $[a,b]$, since otherwise $\mathcal F\equiv0$ there, whereas $\mathcal F'>0$. Hence $\rho$ has an interior extremum on $[a,b]$, which is a critical point, and by (iii) it is the only one, a minimum. The case $\{\rho>r_0\}$ is analogous.

(v) Along a closed profile curve $\rho$ attains a global maximum and a global minimum, which are local extrema. By (ii) the first exceeds $r_0$ and the second is smaller than $r_0$, so $\gamma$ crosses $\{\rho=r_0\}$. By (i) the crossings are transverse, so $\rho-r_0$ changes sign at each of them, and it returns to its initial value after one period, so their number is even.

\end{proof}

Figure~\ref{fig:rho-critico} illustrates Theorem~\ref{thm:esfera} along a numerically computed trajectory.

\begin{figure}[ht]
\centering
\includegraphics[width=\textwidth]{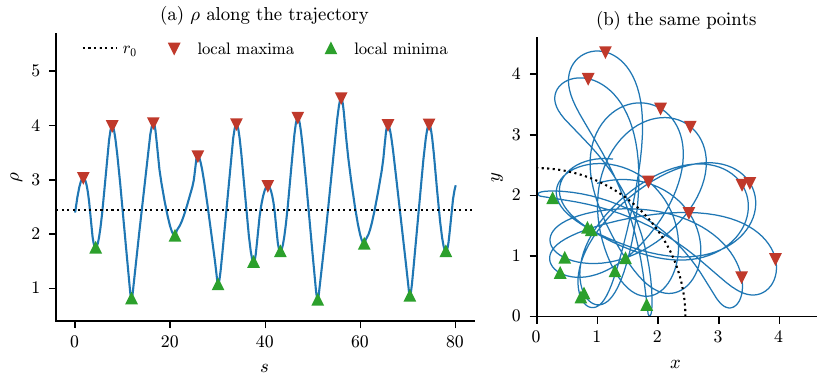}
\caption{Theorem~\ref{thm:esfera} for $p=q=1$, $c=-\tfrac12$, along the trajectory with initial condition $(x,y,\sigma)=(1,2.2,0.3)$, $s\in[0,80]$. (a) The ten local maxima of $\rho$ satisfy $\rho\ge2.89$ and the ten local minima satisfy $\rho\le1.97$, on either side of $r_0=\sqrt{n/|c|}=\sqrt6\approx2.45$ (dotted). (b) The same points in the plane, with the circle of radius $r_0$, which is the profile of the round sphere.}
\label{fig:rho-critico}
\end{figure}

It remains to describe the endpoints at the origin. As in the minimal case, only the Clifford ray reaches it.

\begin{proposition}[The origin is reached only by the Clifford ray]\label{lem:origem}
Let $c\ne0$ and let $\gamma$ be a maximal solution of \eqref{eq:flow} whose profile curve
satisfies $x(s)\to0$ and $y(s)\to0$ as $s\to s_\pm$. Then $\gamma$ is the Clifford ray
\[
\gamma_{\mathrm C}(s)=\left(s\cos\psi_0,\ s\sin\psi_0\right),\quad \sigma\equiv\psi_0,
\quad s\in(0,\infty),
\quad
\tan\psi_0:=\sqrt{q/p},
\]
which gives rise to the Clifford minimal cone $\mathcal C:=\left\{(u,v):\ |u|/|v|=\sqrt{p/q}\right\}$, a cone over $\s^{p}\left(\sqrt{p/(p+q)}\right)\times\s^{q}\left(\sqrt{q/(p+q)}\right),$ minus the vertex.
\end{proposition}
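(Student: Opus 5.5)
We argue at one endpoint, which by Lemma~\ref{lem:simetrias}(i) we may take to be $\omega$. The plan is to work in the polar variables of Lemma~\ref{lem:polar} and use the third monotone quantity of Lemma~\ref{lem:terceira} to force the radial direction. Then a blow-up in logarithmic time reduces the problem to the minimal cone equation, whose linearization at the Clifford ray has no decaying modes toward the vertex.

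\emph{Step 1 (monotone radial motion).} Near the endpoint $\rho<r_0$, so $\mathcal F'=\Phi(n+c\rho^2)>0$. Since $\mathcal F=\Phi\rho\rho'$ with $|\rho'|\le1$, and $\Phi\to0$ as $x,y\to0$ (here $p,q\ge1$), we get $\mathcal F\to0$ at $\omega$. Because $\mathcal F$ is strictly increasing with limit $0$, it is negative on the tail, so $\rho'=\cos w<0$. Hence $\rho$ decreases strictly to $0$ and $s_+<\infty$. Then $L=h(\psi)\cos w$ is increasing by \eqref{eq:L}, and it is bounded because $|L|\le h(\psi_0)$. Therefore $L$ converges and
\[
\int^{s_+}\frac{(n+c\rho^2)\,h(\psi)\sin^2 w}{\rho}\,ds<\infty .
\]

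\emph{Step 2 (blow-up and convergence to the Clifford direction).} We introduce $t=\log\rho\to-\infty$, which is a legitimate variable since $\rho'<0$. In this variable, \eqref{eq:polar} becomes an autonomous system in $(\psi,w)$, namely the system of the minimal cone ($c=0$), plus terms of size $O(e^{2t})$ coming from $c\rho\sin w$. For the limiting system $L$ is a strict Lyapunov function. Its derivative vanishes only on $\{\sin w=0\}$, and the largest invariant subset there consists of the points with $w=\pi$ (the tangent points inward) and $E(\psi,\psi+\pi)=0$, that is, $\psi=\psi_0$. The same holds for $w=0$, but that is excluded by $\cos w<0$.

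Arguing as in Lemma~\ref{lem:omega}, with an asymptotically autonomous version of LaSalle's principle, we conclude that $(\psi,w)\to(\psi_0,\pi)$. For this, $\psi$ must stay away from $0$ and $\tfrac\pi2$. We would exclude the degenerate directions using $L$: since $L$ is increasing, $h(\psi)\ge|L|\ge|L(s_1)|>0$ provided $L(s_1)\ne0$, and $L\neq0$ holds on the tail because $\cos w<0$.

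\emph{Step 3 (rigidity: the main obstacle).} We compare $\gamma$ with the Clifford ray itself, which is an exact solution of \eqref{eq:flow} for every $c$. Consequently the deviation $\xi=(\psi-\psi_0,\,w-\pi)$ satisfies
\[
\xi_t=M\xi+O(|\xi|^2)+O(e^{2t})|\xi|,
\]
with no inhomogeneous forcing. Here $M$ is the linearization of the minimal-cone flow at the Clifford ray. Its eigenvalues are the roots of the Jacobi indicial equation $\gamma^2+(n-2)\gamma+(n-1)=0$, where $n-1=p+q$ is $|\mathcal A|^2$ of the link. Since $n\ge3$, both roots have real part $-(n-2)/2<0$, or more generally both real parts are negative.

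Thus, as $t\to-\infty$, every nonzero solution of the linear part grows like $e^{\operatorname{Re}\gamma\, t}$. A Gronwall or cone-invariance argument should then show that $\xi\to0$ as $t\to-\infty$ forces $\xi\equiv0$. Concretely, we would take $|\xi|^2$ in an adapted norm in which $M$ is negative definite, obtaining $\tfrac{d}{dt}|\xi|^2\le-\kappa|\xi|^2$ once $|\xi|$ and $e^{t}$ are small. Integrating backward in $t$ then gives $|\xi(t)|\ge|\xi(t_1)|e^{\kappa(t_1-t)/2}$, which contradicts $\xi\to0$ unless $\xi\equiv0$.

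Once $\xi\equiv0$ on the tail, uniqueness of solutions identifies $\gamma$ with the Clifford ray, and the maximal interval is $(0,\infty)$. The delicate points are the exact form of $M$, namely computing $d/dt$ of $(\psi,w)$ from \eqref{eq:polar} and checking that its eigenvalues are the stated Jacobi exponents, and obtaining uniform control of the $O(e^{2t})$ terms.
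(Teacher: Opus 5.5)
The genuine gap is in Step~2, where you exclude the degenerate directions $\psi\to0$ and $\psi\to\tfrac\pi2$. Your Step~1 is fine: using $\mathcal F<0$ to get $\cos w<0$ on the tail is a neat substitute for the paper's monotonicity of $I$ and $J$, and the claim $s_+<\infty$, though unjustified, is never used.

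In Step~2 the inequality $|L|\ge|L(s_1)|$ goes the wrong way. In your orientation $L$ increases toward the origin and is negative on the tail. Hence $|L|=-L$ \emph{decreases} toward the origin, and $L$ tends to some $L_+\in[-h(\psi_0),0]$ with $L_+=0$ not excluded. In either orientation, $|L|$ is smallest near the origin. So $h(\psi)\ge|L|$ gives no lower bound on $h(\psi)$. A priori the limit set of $(\psi,w)$ may lie entirely on $\{\psi=0\}$ or $\{\psi=\tfrac\pi2\}$, where $E$ is singular and no LaSalle-type argument applies. Consequently the convergence $(\psi,w)\to(\psi_0,\pi)$, on which your Step~3 rests, is not established.

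The paper handles this case separately (its Step~5), using the blow-up of $B$ near the axis rather than $L$. Suppose $y/x\to0$. Take $x$ as parameter and use $I\to0$ at the origin, $B\ge q/(2y)$ and $f\ge \xi^p/2$. This gives $|I(x)|\ge\frac q4\int_0^x\xi^p\cos\sigma\,y^{-1}\,d\xi$. Since $y\le\varepsilon\xi$ and $\cos\sigma\ge\tfrac12$ off a set of measure $O(\varepsilon x)$, this is incompatible with $|I|\le f(x)\le2x^p$ for small $\varepsilon$. An argument of this kind is the missing idea.

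Once the limit set is known to contain a point with $\psi\in(0,\tfrac\pi2)$, your Step~3 works, but it is heavier than needed. A small correction there: in the variables $(\psi-\psi_0,w-\pi)$ with time $\log\rho$, the linearization has trace $-n$ and determinant $2(n-1)$. Its eigenvalues are therefore $\gamma-1$, not the Jacobi exponents $\gamma$. They still lie in the open left half-plane, so your adapted-norm argument goes through.

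The same monotonicity that defeats your boundary exclusion gives rigidity immediately. The limit set is invariant under the reduced flow on $\{\rho=0\}$ and $L$ is constant on it. Hence any interior limit point has $\sin w\equiv0$ along its reduced trajectory, so it equals $(\psi_0,\pi)$, and $L_+=-h(\psi_0)$. This value is the global minimum of $L$. Since $L$ increases toward the origin, $L\le L_+$ on the tail, so $L\equiv-h(\psi_0)$, forcing $\psi\equiv\psi_0$ and $w\equiv\pi$. This is how the paper concludes (its Step~4, in the reversed orientation).

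Finally, the time $\log\rho$ makes the reduced field singular where $\cos w=0$, which the limit set could a priori touch. The paper's time $dt=ds/\rho$, with $\rho$ kept as a third variable, gives an autonomous analytic system across $\{\rho=0\}$. It avoids both this singularity and the need for an asymptotically autonomous LaSalle principle.
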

\begin{proof}
By reversibility we may assume that the origin is reached at the endpoint $\alpha$, as $s\to s_-$.

\emph{Step 1: monotone approach.} Near the origin, $x<x_*$ and $y<y_*$ if $c<0$, so $A,B>0$ there. For $c>0$ this holds everywhere. By \eqref{eq:dJ}, and Lemma~\ref{lem:Z}(ii), $I$ and $J$ are strictly increasing near $\alpha$, and $|I|\le f(x)\to0$, $|J|\le g(y)\to0$. Hence $I>0$ and $J>0$ near $\alpha$, that is, $\sigma\in(0,\tfrac\pi2)$, and $x$, $y$ are strictly increasing. In particular the arclength near $\alpha$ is finite, $s_->-\infty$, and, since also $\psi\in(0,\tfrac\pi2)$, the relative phase $w=\psi-\sigma$ lies in $(-\tfrac\pi2,\tfrac\pi2)$.

\emph{Step 2: logarithmic time.} Fix $s_1$ close to $s_-$ and define
\[
t(s):=\int_{s_1}^{s}\frac{dr}{\rho(r)},\qquad s\in(s_-,s_1].
\]
Since $|\gamma'|=1$ and $\gamma(r)\to0$ as $r\to s_-$, the length of $\gamma|_{(s_-,r]}$ is $r-s_-$, so that
\[
\rho(r)=|\gamma(r)-0|\le r-s_-,
\]
so that
\[
-t(s)=\int_{s}^{s_1}\frac{dr}{\rho(r)}\ \ge\ \int_{s}^{s_1}\frac{dr}{r-s_-}=\log\frac{s_1-s_-}{s-s_-}\ \xrightarrow[s\to s_-]{}\ \infty .
\]
Thus $t$ is an increasing diffeomorphism from $(s_-,s_1]$ onto $(-\infty,0]$, and $t\to-\infty$ as $s\to s_-$. By \eqref{eq:polar}, $(\psi,w,\rho)$ solves, in the variable $t$, the autonomous system
\begin{equation}\label{eq:Z}
\left\{
\begin{aligned}
\frac{d\psi}{dt}&=-\sin w,\\
\frac{dw}{dt}&=-(1+c\rho^{2})\sin w-E(\psi,\psi-w),\\
\frac{d\rho}{dt}&=\rho\cos w,
\end{aligned}\right.
\end{equation}
whose right-hand side is analytic on the open set $(0,\tfrac\pi2)\times\R\times\R$ of the variables $(\psi,w,\rho)$. Note that $\rho$ is now merely a coordinate of this phase space, in which the system extends analytically across $\rho=0$, although the corresponding curve in $\QQ$ degenerates to the origin there. The coordinate plane $\{\rho=0\}$ is invariant, since $d\rho/dt=\rho\cos w$ vanishes on it, and on that plane \eqref{eq:Z} is the system of the minimal case $c=0$. Along our solution, $\rho(t)\to0$ as $t\to-\infty$, so the orbit accumulates on $\{\rho=0\}$.

\emph{Step 3: a Lyapunov function.} By Lemma~\ref{lem:terceira}, $L=h(\psi)\cos w$ satisfies 
\[
\frac{dL}{dt}=(n+c\rho^{2})h(\psi)\sin^{2}w\ge0
\] as long as $c\rho^{2}>-n$, which holds near $\alpha$. Moreover, along our solution, \[0<L\le h(\psi)\le h(\psi_0),\] since $h$ attains its maximum at the Clifford angle. Hence $L(t)$ converges, as $t\to-\infty$, to some $L_{-\infty}\in[0,h(\psi_0)]$.

\emph{Step 4: interior limit points.} By Step~1, $(\psi(t),w(t))$ belongs to the compact set $K:=[0,\tfrac\pi2]\times[-\tfrac\pi2,\tfrac\pi2]$ for every $t\le0$. Let
\[
\Lambda:=\bigcap_{T\le0}\overline{\{(\psi(t),w(t)):\ t\le T\}}\ \subset\ K
\]
be its set of limit points as $t\to-\infty$. The sets in the intersection are nonempty compact subsets of $K$, they decrease as $T$ decreases, and each of them is connected, being the closure of a continuous image of an interval. A nested intersection of nonempty compact connected sets is nonempty, compact and connected, so $\Lambda$ has these three properties. Moreover $L$ is continuous on $K$ and $L(t)\to L_{-\infty}$, so
\begin{equation}\label{eq:Lconst}
L\equiv L_{-\infty}\quad\text{on }\Lambda .
\end{equation}

Suppose that $\Lambda$ contains a point $z=(\psi_1,w_1)$ with $\psi_1\in(0,\tfrac\pi2)$, say $(\psi,w)(t_k)\to z$ with $t_k\to-\infty$. Since $\rho(t)\to0$, the full state satisfies
\[
Z(t_k):=(\psi,w,\rho)(t_k)\ \longrightarrow\ (z,0).
\]
The point $(z,0)$ lies in the domain of analyticity of \eqref{eq:Z}, so the solution $\zeta$ of \eqref{eq:Z} with $\zeta(0)=(z,0)$ is defined on $[-\tau_0,\tau_0]$ for some $\tau_0>0$, and it stays in $\{\rho=0\}$ by the invariance of that plane. By continuous dependence on the initial condition, uniform on the compact interval $[-\tau_0,\tau_0]$,
\[
Z(t_k+\tau)\ \longrightarrow\ \zeta(\tau)\qquad\text{uniformly for }|\tau|\le\tau_0 .
\]
Since $t_k+\tau\to-\infty$ for each fixed $\tau$, the $(\psi,w)$-part of $\zeta(\tau)$ belongs to $\Lambda$ for every $|\tau|\le\tau_0$. In other words, $\Lambda$ contains a whole trajectory of the reduced system.

By \eqref{eq:Lconst}, the function $\tau\mapsto L(\zeta(\tau))$ is constant on $[-\tau_0,\tau_0]$. Lemma~\ref{lem:terceira} gives the derivative of $L$ along the flow of \eqref{eq:Z} only where $\rho>0$, which is where that flow is the flow of the profile curves:
\[
\frac{dL}{d\tau}=(n+c\rho^{2})h(\psi)\sin^{2}w .
\]
Both sides of this identity are continuous on the whole domain of \eqref{eq:Z}, and $\{\rho>0\}$ is dense in it, so the identity also holds on $\{\rho=0\}$. Along $\zeta$ we have $\rho\equiv0$, hence
\[
0=\frac{d}{d\tau}L(\zeta(\tau))=n h(\psi)\sin^{2}w\qquad\text{on }[-\tau_0,\tau_0].
\]
Shrinking $\tau_0$ if necessary, $\psi$ stays in $(0,\tfrac\pi2)$ along $\zeta$, where $h>0$. Hence $\sin w\equiv0$ and, since $|w|\le\tfrac\pi2$, $w\equiv0$ along $\zeta$. The second equation of \eqref{eq:Z}, with $\rho=0$ and $w\equiv0$, then gives
\[
0=\frac{dw}{d\tau}=-E(\psi,\psi)=p\tan\psi-q\cot\psi,
\]
so $\psi\equiv\psi_0$ along $\zeta$. In particular $z=(\psi_0,0)$ and, by \eqref{eq:Lconst},
\[
L_{-\infty}=h(\psi_0)\cos0=h(\psi_0).
\]

For every $t\le0$,
\[
h(\psi_0)=L_{-\infty}\le L(t)=h(\psi(t))\cos w(t)\le h(\psi(t))\le h(\psi_0),
\]
where the first inequality holds because $L$ is nondecreasing and $L_{-\infty}$ is its limit at $-\infty$. Hence equality holds throughout, that is, $\cos w(t)=1$ and $h(\psi(t))=h(\psi_0)$ for every $t\le0$. Since $h$ attains its maximum only at $\psi_0$, this gives $w\equiv0$ and $\psi\equiv\psi_0$, that is, $\sigma\equiv\psi_0$ and $\gamma$ is a ray through the origin. By uniqueness of solutions, $\gamma=\gamma_{\mathrm C}$.

\emph{Step 5: boundary limit points.} There remains the case $\Lambda\subset\{\psi=0\}\cup\{\psi=\tfrac\pi2\}$. By connectedness $\Lambda$ lies in one of them, say $\Lambda\subset\{\psi=0\}$, that is, $y/x\to0$ at $\alpha$. The other case is the mirror image, with $J$ in place of $I$. Near $\alpha$ we have $B(y)\ge q/(2y)$ and $\tfrac12x^{p}\le f(x)\le2x^{p}$. We use $\xi=x$ as the parameter, which is legitimate by Step~1, with $d\xi=\cos\sigma ds$. Since $I\to0$ at $\alpha$, \eqref{eq:dJ}, gives
\begin{equation}\label{eq:borda0}    
2x^{p}\ \ge\ f(x)\ \ge\ I=\int_0^{x}B f\cos\sigma d\xi\ \ge\ \frac q4\int_0^{x}\frac{\xi^{p}\cos\sigma}{y} d\xi.
\end{equation}

Let $\varepsilon(x):=\sup_{0<\xi\le x}y(\xi)/\xi$, so that $\varepsilon(x)\to0$ as $x\to0$. Then $y\le\varepsilon\xi$, and \eqref{eq:borda0} yields
\begin{equation}\label{eq:borda1}
\int_0^{x}\xi^{p-1}\cos\sigma d\xi\ \le\ \frac{8\varepsilon(x)}{q} x^{p}.
\end{equation}
On the other hand, let $\Omega:=\{\xi\in(0,x]:\cos\sigma\ge\tfrac12\}$. On $(0,x]\setminus \Omega$ one has $\sigma>\tfrac\pi3$, so $\tan\sigma>\sqrt3$. Since $dy/d\xi=\tan\sigma>0$ on $(0,x]$ and $y\to0$ at $\alpha$,
\[
\sqrt3\left|(0,x]\setminus \Omega\right|\ \le\ \int_{(0,x]\setminus \Omega}\tan\sigma d\xi\ \le\ \int_0^{x}\tan\sigma d\xi=y(x)\le\varepsilon(x) x,
\]
so the measure of $(0,x]\setminus \Omega$ is at most $\varepsilon(x) x/\sqrt3$. As $\xi^{p-1}$ is nondecreasing,
\[
\begin{aligned}
\int_0^{x}\xi^{p-1}\cos\sigma d\xi\ &\ge\ \frac12\int_\Omega\xi^{p-1}d\xi\ \ge\ \frac12\int_0^{x(1-\varepsilon/\sqrt3)}\xi^{p-1}d\xi\\
&=\frac{x^{p}}{2p}\left(1-\frac{\varepsilon(x)}{\sqrt3}\right)^{p}.
\end{aligned}
\]
Together with \eqref{eq:borda1}, $(1-\varepsilon(x)/\sqrt3)^{p}\le16 p\varepsilon(x)/q$ for all small $x$, which is impossible since $\varepsilon(x)\to0$.
\end{proof}

The descriptions of the endpoints on the axes, at the origin and at infinity combine into the completeness of the hypersurfaces.

\begin{theorem}[Completeness]\label{thm:completude}
Let $c\ne0$. Every bi-rotational hypersurface satisfying \eqref{eq:soliton} and associated with a maximal solution of \eqref{eq:flow}, different from the Clifford cone, is complete.
\end{theorem}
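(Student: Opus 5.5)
The plan is to show that every divergent path in $\Sigma$ has infinite length, by reducing completeness to a statement about the endpoints $\alpha$ and $\omega$ of the maximal trajectory. The metric of $\Sigma$ is $ds^2+x^2g_{\s^p}+y^2g_{\s^q}$, so the projection $\pi$ onto the orbit space does not increase length. Hence a divergent path of finite length projects to a curve of finite length in $\overline{\QQ}$. Its arclength parameter along $\gamma$ then stays in a bounded parameter window unless it escapes to a finite-arclength endpoint of $\gamma$. I will therefore classify each endpoint with $s_\pm$ finite and check that $\Sigma$ extends across it, so that it is not really an end.

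Take the endpoint $\omega$ and suppose $s_+<\infty$. Since $|\gamma'|=1$, the profile curve converges to a point $P$ of $\overline{\QQ}$. By maximality of the solution and smoothness of $F$ on $\QQ\times\s^1$, the point $P$ cannot lie in $\QQ$. Indeed, if $P\in\QQ$, the phase points $\Gamma(s)$ would stay in a compact subset of $\QQ\times\s^1$ near $s_+$, and the solution would extend. This is the same argument already used in Corollary~\ref{cor:end-mergulhado} and Theorem~\ref{thm:esfera}. So $P$ lies on an axis or at the origin.

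If $P=(x_0,0)$ with $x_0>0$, Proposition~\ref{lem:cusp} gives an orthogonal arrival. Proposition~\ref{lem:suave} then shows that $\Sigma$ closes off smoothly over $\s^p(x_0)\times\{0\}$, which becomes an interior subset of the hypersurface, so this endpoint contributes no boundary. The $y$-axis case is the mirror image. If $P$ is the origin, Proposition~\ref{lem:origem} forces $\gamma$ to be the Clifford ray, which is excluded by hypothesis. When $s_\pm=\pm\infty$ there is nothing to check: the corresponding tail of $\Sigma$ has infinite distance to any fixed point, since the distance is at least the difference in $s$.

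It remains to assemble these facts. After the smooth closings we get a boundaryless immersed manifold. A Cauchy sequence in it projects to a Cauchy sequence in the $s$-parameter space, which is $[s_-,s_+]$ with the finite axis endpoints adjoined. By the classification above, it converges either to an interior point of $\gamma$ or to a point of a closed-off singular orbit, and in both cases the limit lies in $\Sigma$. A closed profile curve gives a compact hypersurface, which is trivially complete.

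The main obstacle is making the projection argument rigorous near the singular orbits. A geodesic could approach $\s^p(x_0)\times\{0\}$, and there the parametrization degenerates. I will avoid working with the parametrization at that point by using the smooth closure from Proposition~\ref{lem:suave}, which makes a neighborhood of that orbit a genuine compact piece of the manifold. The only other delicate point is checking that the origin can be reached only in finite arclength. This is automatic, since $\rho\to0$ is bounded, and Proposition~\ref{lem:origem} already handles that case.
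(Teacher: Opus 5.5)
Your proposal is correct and follows the paper's proof. Both reduce completeness to the endpoints with $s_\pm$ finite, use maximality to place the limit point $P$ on $\partial\QQ$, and then invoke Propositions~\ref{lem:cusp} and~\ref{lem:suave} on the axes and Proposition~\ref{lem:origem} at the origin; your Cauchy-sequence argument via the $1$-Lipschitz $s$-coordinate only spells out the reduction the paper states in one line. One side remark is wrong but harmless: reaching the origin in finite arclength is not ``automatic since $\rho\to0$ is bounded'' (a bounded curve can have infinite length), but you do not need it, since an end with infinite $s$ is already complete and the finite case is covered by Proposition~\ref{lem:origem}.
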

\begin{proof}
The profile curve is parametrized by arclength ($|(x',y')|=1$, Lemma~\ref{lem:noeq}), and this parameter coincides with the arclength of the induced metric along $\gamma$. Hence $\Sigma$ is complete if and only if each endpoint of the maximal solution of \eqref{eq:flow} is either (a) reached at parameter $s\to\pm\infty$, or (b) an orthogonal endpoint on an axis, over which $\Sigma$ closes off smoothly. Consider the endpoint $s_+$. The endpoint $s_-$ is analogous. If $s_+=+\infty$, we are in case (a). If $s_+<\infty$, then, since $|\gamma'|=1$,
\[
|\gamma(s)-\gamma(r)|=\left|\int_r^{s}\gamma'(t) dt\right|\le|s-r|
\]
for $r,s$ near $s_+$, so $\gamma$ is $1$-Lipschitz on a bounded interval, hence uniformly continuous, and extends continuously to $s_+$, converging to some $P\in\overline{\mathcal Q}$. Moreover, $P\notin\mathcal Q$, since otherwise $\Gamma(s)$ would remain, near $s_+$, in the compact set $\overline{B(P,r)}\times\s^1\subset\mathcal Q\times\s^1$ for small $r>0$, whereas a maximal solution with $s_+<\infty$ leaves every compact subset of the domain \cite[Corollary~2.16]{Teschl}. Thus $P\in\partial\mathcal Q$. If $P=(x_0,0)$ with $x_0>0$, Proposition~\ref{lem:cusp} and Proposition~\ref{lem:suave} show that it is an orthogonal endpoint over which $\Sigma$ closes off smoothly, and symmetrically if $P=(0,y_0)$. This is case (b). If $P$ is the origin, $\gamma$ is the Clifford ray by Proposition~\ref{lem:origem}. Hence, away from the Clifford cone, every endpoint is of type (a) or (b), and $\Sigma$ is complete.
\end{proof}

\subsection{The compact case}\label{sec:compactas}

A compact hypersurface is automatically complete. The relevant question is what its topology is and where, in the phase plane, its profile curve lives. As a bounded curve does not escape to infinity, the arguments below are shorter than those of the general case.

\begin{proposition}[Topology of the compact ones]\label{prop:topcompacta}
If $\Sigma$ is a compact bi-rotational self-shrinker, its profile curve is of one of three types, and $\Sigma$ is the corresponding smooth closed manifold:
\begin{enumerate}
\item[\textup{(I)}] a closed loop in the interior of $\mathcal Q$, with $\Sigma\cong\s^{1}\times\s^{p}\times\s^{q}$;
\item[\textup{(IIa)}] an arc with one endpoint on the $x$-axis and the other on the $y$-axis, with
$\Sigma\cong\s^{p+q+1}$;
\item[\textup{(IIb)}] an arc with both endpoints on the $x$-axis, with $\Sigma\cong\s^{q+1}\times\s^{p}$, or both endpoints on the $y$-axis, with $\Sigma\cong\s^{p+1}\times\s^{q}$.
\end{enumerate}
In all cases, Proposition~\ref{lem:suave} guarantees the $C^\infty$ closure at the endpoints on the axes, so that $\Sigma$ is a closed manifold without boundary.
\end{proposition}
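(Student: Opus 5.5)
The plan is to classify the maximal trajectory by where its two endpoints go, using compactness to rule out every kind of end except a point on an axis.

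\emph{Step 1 (compactness bounds the curve).} Since $\Sigma$ is compact and $\pi$ is continuous, the profile curve $\gamma=\pi(\Sigma)$ is bounded. It also stays away from the origin: the only trajectory reaching the origin is the Clifford ray (Proposition~\ref{lem:origem}), and the Clifford cone is not compact.

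\emph{Step 2 (closed orbit or arc).} If the maximal trajectory is periodic, the phase curve is a closed orbit in $\mathcal Q\times\s^1$. Its profile is then a closed curve in the interior of $\mathcal Q$, and the product parametrization \eqref{eq:param} over one period gives $\s^1\times\s^p\times\s^q$. This is type (I).

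Otherwise the orbit is injective, and I examine each endpoint, say $s_+$. If $s_+=\infty$, the unit-speed curve has infinite length inside the compact set $\Sigma$, so the corresponding portion of $\Sigma$ has infinite volume, since $dV=\omega_p\omega_q x^py^q\,ds$. To make this precise I first need the end to stay away from the axes.

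\emph{Step 3 (excluding infinite-length ends; main obstacle).} I would argue differently. Boundedness gives $\rho\not\to\infty$, so an end of infinite length is non-escaping. By Theorem~\ref{prop:recorrente}, such an end crosses both separatrices infinitely often. At each crossing of $\{x=x_*\}$ the distance from the axes is controlled, since $x=x_*$ there. I then need $y$ to be bounded below along the crossings, or at least infinitely many crossings happening at points bounded away from both axes. If so, $x^py^q$ is bounded below on arcs of fixed length near those crossings, and infinitely many disjoint such arcs would make the volume of $\Sigma$ infinite, contradicting compactness.

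The difficulty is that $y$ could tend to $0$ along the crossings. My first attempt to exclude this is an alternative: a compact $\Sigma$ is a smooth closed hypersurface, so the orbit closure in $\Sigma$ of the profile is compact in $\overline{\mathcal Q}$ together with its tangents. The phase curve then accumulates either at interior phase points, which is fine for the volume argument, or at axis points. Near an axis point, Proposition~\ref{lem:cusp} and Lemma~\ref{rem:limites} force the curve to hit the axis in finite time, which is incompatible with returning infinitely often. Hence $s_\pm$ are finite.

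\emph{Step 4 (finite endpoints and topology).} With $s_\pm$ finite, the proof of Theorem~\ref{thm:completude} shows that $\gamma$ converges at each endpoint to a point of $\partial\mathcal Q$, and the origin is excluded by Step 1. By Proposition~\ref{lem:cusp}, each endpoint is an orthogonal endpoint on an axis, and Proposition~\ref{lem:suave} closes $\Sigma$ smoothly there. The topology then follows from standard equivariant gluing:
\begin{itemize}
\item an arc from the $x$-axis to the $y$-axis collapses $\s^q$ at one end and $\s^p$ at the other, giving the join $\s^p*\s^q=\s^{p+q+1}$ (type IIa);
\item an arc with both ends on the $x$-axis collapses $\s^q$ at both ends, giving $\s^{q+1}\times\s^p$ (type IIb);
\item symmetrically, both ends on the $y$-axis give $\s^{p+1}\times\s^q$.
\end{itemize}
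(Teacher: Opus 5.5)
Your Steps~1, 2 and~4 match the paper's argument. Step~3, the exclusion of ends of infinite arclength, does not go through.

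\textbf{Why Step~3 fails.}
\begin{itemize}
\item The volume route is unfinished, as you say yourself.
\item The alternative misuses Proposition~\ref{lem:cusp} and Lemma~\ref{rem:limites}. Both assume that $y\to0$ along the entire tail $s\to s_+$. You only have accumulation at an axis point along a subsequence.
\item A curve can in fact approach the $x$-axis arbitrarily closely and turn back. At a horizontal tangent at height $y_0<y_*$ one has $y''=B(y_0)\approx q/y_0>0$, a strict local minimum of $y$. So accumulation on the axes is compatible with infinitely many returns.
\item Since $x^py^q$ can be small near the axes, the integral $\int x^py^q\,ds$ need not diverge either.
\end{itemize}
Your wording (``$\pi(\Sigma)$ is bounded'', ``infinite length inside the compact set $\Sigma$'') suggests you are only using that the image is compact. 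If so, this step amounts to controlling bounded non-escaping ends, and the paper explicitly leaves their behaviour undecided (Remark~\ref{rem:generic}).

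\textbf{The missing observation.} No dynamics is needed here. A bi-rotational self-shrinker is the manifold built from the maximal parameter interval, closed off at the axis endpoints. Its compactness refers to that manifold, not to the image. For a nonperiodic trajectory, the arclength $s$ is a well-defined continuous function on $\Sigma$. It is even $1$-Lipschitz for the intrinsic distance, because the induced metric is $ds^2+x^2g_{\s^p}+y^2g_{\s^q}$. Hence $s$ is bounded on a compact $\Sigma$, and $s_\pm$ are finite.

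This is what the paper's short proof takes for granted when it says that a compact profile curve is a closed loop or an arc whose endpoints reach the axes. With this replacing Step~3, your Step~4 finishes the proof as in the paper. That is: convergence to $\partial\mathcal Q$ as in the proof of Theorem~\ref{thm:completude}, the origin excluded by Proposition~\ref{lem:origem}, orthogonal endpoints by Proposition~\ref{lem:cusp}, smooth closure by Proposition~\ref{lem:suave}, and the topology read off from \eqref{eq:param}.
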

\begin{proof}
A compact profile curve in $\overline{\mathcal Q}$ is either a closed loop in the interior (it does not touch the axes) or an arc whose endpoints reach the axes. In the second case, each endpoint is orthogonal to its axis (Proposition~\ref{lem:cusp}), and the two endpoints are on the same axis (IIb) or on distinct axes (IIa). The topological type follows from \eqref{eq:param}: an endpoint on the $x$-axis collapses the factor $\s^q$ (filling in a disk $\R^{q+1}$), and one on the $y$-axis collapses $\s^p$. In (I) no factor collapses, giving $\s^1\times\s^p\times\s^q$. In (IIa) both collapse, giving a sphere, and in (IIb) $\s^q$ collapses at both endpoints, giving $\s^{q+1}\times\s^p$.
\end{proof}

The next result shows that compact bi-rotational self-shrinkers cannot occur just anywhere, but must be contained within a specific region of space.

\begin{corollary}[Compact ones encircle the vertex]\label{thm:compacta-vertice}
Let $c<0$. The profile curve of \emph{every} compact bi-rotational self-shrinker crosses both separatrices $\{y=y_*\}$ and $\{x=x_*\}$. Consequently, the
vertex $V=(x_*,y_*)$ lies in the interior of the bounding box $[x_{\min},x_{\max}]\times[y_{\min},y_{\max}]$ of the profile curve, i.e., the compact one ``encircles'' $V$.
\end{corollary}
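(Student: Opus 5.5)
The plan is to read this off the crossing theorems already proved, rather than redo any dynamics. A compact bi-rotational self-shrinker is, by Proposition~\ref{prop:topcompacta}, generated either by a closed orbit (type I) or by a compact arc with both endpoints on the axes (types IIa, IIb). First I check that it is neither cylinder: both cylinders are noncompact, their profiles being the full lines $\{y=y_*\}$ and $\{x=x_*\}$ in $\QQ$. The two cases then go as follows.

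\begin{enumerate}
\item[(I)] Corollary~\ref{cor:noloop2} already gives that the closed orbit crosses each level an even number $\ge2$ of times.
\item[(II)] Theorem~\ref{thm:cross} gives an intersection with $\{y=y_*\}$, and Corollary~\ref{cor:crossI} gives one with $\{x=x_*\}$. Both intersections are transverse by Lemma~\ref{lem:transv}.
\end{enumerate}

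Since the contacts are transverse, each intersection is a genuine crossing: $y-y_*$ changes sign there, and so does $x-x_*$ at a crossing of the vertical separatrix. For the bounding box, pick a crossing $s_0$ of $\{y=y_*\}$. Transversality ($\sin\sigma(s_0)\ne0$) gives parameters just before and after $s_0$ with $y<y_*$ and with $y>y_*$. Hence $y_{\min}<y_*<y_{\max}$ strictly, and the extremes are attained because the profile curve is compact. The mirror argument at a crossing of $\{x=x_*\}$ gives $x_{\min}<x_*<x_{\max}$. Together these say that $V=(x_*,y_*)$ lies in the open rectangle $(x_{\min},x_{\max})\times(y_{\min},y_{\max})$, which is the interior of the bounding box.

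There is one subtlety in type II. The endpoints lie on the axes, where $y=0$ or $x=0$, so the extremes over the closed arc include axis values. This is harmless, since it only pushes $y_{\min}$ or $x_{\min}$ further away from $y_*$ or $x_*$. The main (mild) obstacle is making sure that the crossing theorems apply to the profile curve of a compact example, in particular to an arc ending on the axes. This is fine: Theorem~\ref{thm:cross} covers every maximal trajectory other than the horizontal cylinder, including those with (F3)-type endpoints, which it treats explicitly.
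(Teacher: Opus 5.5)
Your proposal is correct and follows the paper's proof: you exclude the cylinders by noncompactness, apply Theorem~\ref{thm:cross} and its mirror Corollary~\ref{cor:crossI}, and get the strict inequalities from transversality of the crossings, which the paper leaves implicit. Your split into closed orbits (via Corollary~\ref{cor:noloop2}) and arcs ending on the axes is harmless but unnecessary, since Theorem~\ref{thm:cross} and Corollary~\ref{cor:crossI} already cover every non-cylindrical maximal trajectory, closed or not.
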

\begin{proof}
As $\Sigma$ is non-cylindrical (the cylinders are noncompact) and bounded, by Theorem~\ref{thm:cross}, its profile curve crosses $\{y=y_*\}$. For $\{x=x_*\}$ we apply the mirror statement (Corollary~\ref{cor:crossI}), which is equally unconditional (Corollary~\ref{cor:fugaH}). Since $\gamma$ crosses $x=x_*$, one has $x_{\min}<x_*<x_{\max}$, and analogously $y_{\min}<y_*<y_{\max}$, i.e., $V$ is interior to the box.
\end{proof}

In terms of the hypersurface, Corollary~\ref{thm:compacta-vertice} says that a compact $\Sigma$ meets both cylinders. Combined with Corollaries~\ref{cor:custo} and~\ref{cor:noloop2}, if $n(\gamma)=k$, then $\Sigma$ meets each of $\mathcal C_{\mathrm v}$ and $\mathcal C_{\mathrm h}$ in at least $k+1$ connected components, and if, moreover, $\Sigma\cong\s^{1}\times\s^{p}\times\s^{q}$, the number of these components is even and at least $\max(2,k+1)$.

\begin{remark}
    The round sphere $\s^{p+q+1}(r_0)$, which meets each cylinder in a single connected component (since $x_*,y_*<r_0$), shows that the parity statement does not extend to the other compact types.
\end{remark}

\subsection{The noncompact case}\label{sec:naocompactas}

We turn to the hypersurfaces that are not compact. Their classification is a direct reading of the end structure of Section~\ref{sec:cross}, together with the smooth closure of Proposition~\ref{lem:suave}.

\begin{theorem}[Topological type of the noncompact ones]\label{thm:completas}
Let $c<0$ and let $\Sigma$ be a noncompact bi-rotational self-shrinker, distinct from the Clifford cone. Then $\Sigma$ is complete and diffeomorphic to one of the following models, according to how many endpoints of the profile curve reach an axis, and to which axis:
\[
\R\times\s^{p}\times\s^{q},
\qquad
\R^{p+1}\times\s^{q},
\qquad
\R^{q+1}\times\s^{p},
\]
according as its profile curve has no endpoint on a symmetry axis, one endpoint on the
$y$-axis, or one endpoint on the $x$-axis. 

Moreover, each end of $\Sigma$ is asymptotically conical or non-escaping.
\end{theorem}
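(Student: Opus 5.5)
The proof will be a case analysis on the two endpoints $\alpha,\omega$ of the maximal trajectory, built on the endpoint analysis already carried out.

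\emph{Completeness.} Theorem~\ref{thm:completude} applies directly, since $\Sigma$ is not the Clifford cone.

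\emph{Classification of the endpoints.} Since $\Sigma$ is noncompact, the profile curve is not a closed orbit. Take an endpoint, say $\omega$. If $s_+<\infty$, the argument in the proof of Theorem~\ref{thm:completude} gives a limit point on $\partial\mathcal Q$. The origin is excluded by Proposition~\ref{lem:origem}, because the only trajectory reaching it is the Clifford ray. Hence the limit lies on an open half-axis, where Propositions~\ref{lem:cusp} and~\ref{lem:suave} give an orthogonal endpoint with smooth closure. If $s_+=\infty$, the end has infinite arclength. Then either $\rho\to\infty$, and Theorem~\ref{cor:conico-direto} makes it asymptotically conical, or $\rho\not\to\infty$, and it is non-escaping by definition. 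For the cylinders, every end is of the first type, and the cylinders lie in the list, being $\R^{p+1}\times\s^q$ and $\R^{q+1}\times\s^p$. This already proves the final assertion about the ends.

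\emph{Topology.} Next, rule out two endpoints on the axes. In that case the profile curve would be a compact arc, since both $s_\pm$ are finite, and $\Sigma$ would be compact by Proposition~\ref{prop:topcompacta}, which is a contradiction. So at most one endpoint lies on an axis, and the other has infinite length.
\begin{itemize}
\item With no endpoint on an axis, $\gamma$ is an injective-in-parameter immersion of $\R$, and \eqref{eq:param} gives a manifold diffeomorphic to $\R\times\s^p\times\s^q$. Here the statement is about the abstract domain of the immersion, not about embeddedness.
\item With one endpoint on the $x$-axis, the $\s^q$ factor is collapsed smoothly there, as in the proof of Proposition~\ref{prop:topcompacta}. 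The domain is a half-line times $\s^q$ capped off into $\R^{q+1}$, giving $\R^{q+1}\times\s^p$.
\item Symmetrically, an endpoint on the $y$-axis gives $\R^{p+1}\times\s^q$.
\end{itemize}

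\emph{Main obstacle.} The key point is making the smooth identification of the capped domain with $\R^{q+1}\times\s^p$ rigorous. I would take it from Proposition~\ref{lem:suave}: near the singular orbit the hypersurface is a smooth graph, so the abstract manifold is the standard disk bundle. Checking that the origin case and the compactness case are fully excluded is routine given the cited results.
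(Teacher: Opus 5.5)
Your argument is essentially the paper's proof. Completeness comes from Theorem~\ref{thm:completude}, and the endpoint dichotomy from its proof together with Proposition~\ref{lem:origem}. Two axis endpoints are excluded via Proposition~\ref{prop:topcompacta}, and the collapse of $\s^q$ (resp.\ $\s^p$) is justified by Proposition~\ref{lem:suave}. For the ends, your direct use of Theorem~\ref{cor:conico-direto} and of the definition of a non-escaping end is exactly the content of Theorem~\ref{thmB}, which the paper cites.

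The sentence to correct is the one about the cylinders. Their escaping end has $\rho\to\infty$, but the polar angle tends to $0$ for the horizontal cylinder and to $\tfrac\pi2$ for the vertical one. So that end is neither asymptotically conical, which requires $\psi_\infty\in(0,\tfrac\pi2)$, nor non-escaping. Moreover, Theorem~\ref{cor:conico-direto} explicitly excludes the cylinders.

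Hence the claim that your case analysis ``already proves the final assertion'' fails for the cylinders; the assertion is simply false for them. It holds only for $\Sigma$ distinct from the cylinders, which is the hypothesis under which Theorem~\ref{thmB} delivers it and which the paper's citation implicitly carries. Your topological identification of the cylinders as $\R^{p+1}\times\s^q$ and $\R^{q+1}\times\s^p$ is correct.
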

\begin{proof}
Completeness is Theorem~\ref{thm:completude}. The profile curve is not closed, since $\Sigma$ is noncompact, and by the proof of Theorem~\ref{thm:completude} each endpoint of its maximal interval is either reached as $s\to\pm\infty$ or is an orthogonal endpoint on an axis, reached at a finite parameter. If both endpoints were on the axes, the parameter interval would be bounded and $\Sigma$ would be one of the closed manifolds of Proposition~\ref{prop:topcompacta}. Hence at most one endpoint lies on an axis. With no endpoint on an axis, $\Sigma$ is parametrized by $\R\times\s^{p}\times\s^{q}$. With one endpoint on the $x$-axis, the factor $\s^{q}$ collapses there, and by Proposition~\ref{lem:suave} $\Sigma$ is a smooth manifold diffeomorphic to $\R^{q+1}\times\s^{p}$. The $y$-axis is analogous. The description of the ends is Theorem~\ref{thmB}.
\end{proof}

\subsection{Tame or wild}

Our classification splits the ends into conical and non-escaping ones. The intersections with the cylinders distinguish the two cases: the only alternative to conical ends is meeting both cylinders infinitely often.

\begin{corollary}[Tame or wild]\label{cor:dicotomia}
Let $c<0$ and let $\Sigma$ be a complete, noncompact bi-rotational
self-shrinker, distinct from the two cylinders. Then only one of the following holds:
\begin{enumerate}
\item[(i)] Every end of $\Sigma$ is asymptotically conical, with limit direction $\psi_\infty\in(0,\tfrac\pi2)$, and it is embedded. Along each of them the tangent sweeps less than half a turn and $|\mathcal A|^{2}=O(\rho^{-2})$. In this case $\Sigma$ is proper, and it meets each cylinder in finitely many connected components;
\item[(ii)] $\Sigma$ has a non-escaping end. In this case $\Sigma$ is not proper, and it meets both cylinders transversely in infinitely many connected components.
\end{enumerate}
In particular, a properly immersed complete noncompact bi-rotational self-shrinker has only asymptotically conical embedded ends.
\end{corollary}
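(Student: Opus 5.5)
The plan is to split according to the end structure already established. By Theorem~\ref{thm:completas}, the profile curve $\gamma$ is not closed, at most one endpoint lies on an axis, and every end of infinite arclength is either asymptotically conical or non-escaping. Define alternative (i) as ``every end of infinite arclength is conical'' and alternative (ii) as ``some end is non-escaping.'' These two alternatives are then exhaustive and mutually exclusive. The work consists in deriving the listed properties in each case.

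\emph{Case (i).} The limit direction lies in $(0,\tfrac\pi2)$ by Theorem~\ref{cor:conico-direto}, embeddedness comes from Corollary~\ref{cor:end-mergulhado}, and the curvature decay $|\mathcal A|^2=O(\rho^{-2})$ from Proposition~\ref{prop:decaimento}. For the ``less than half a turn'' claim, observe that a conical end lies eventually in $\mathcal S^+$, since $y\to\infty$ by Proposition~\ref{lem:ambas}. Hence it has finitely many crossings, its terminal excursion is well defined, and Lemma~\ref{lem:excterminal} gives $|\Delta\sigma|<\pi$.

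Finiteness of the crossings follows from Proposition~\ref{lem:ambas}: near each escaping end we have $x,y\to\infty$, so no crossings accumulate there. At an axis endpoint the curve reaches the axis in finite arclength, and transverse crossings are isolated (Lemma~\ref{lem:transv}). Consequently only a compact parameter interval remains, and it contains finitely many crossings. Transversality puts crossings in bijection with the components of $\Sigma\cap\mathcal C_{\mathrm h}$ and $\Sigma\cap\mathcal C_{\mathrm v}$.

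For properness, I show that $\rho\to\infty$ along each end of infinite arclength, which holds in the conical case, and that at an axis endpoint $\Sigma$ closes smoothly (Proposition~\ref{lem:suave}). The preimage of a compact ball is then a compact part of the parameter domain times spheres.

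\emph{Case (ii).} Theorem~\ref{prop:recorrente} shows that a non-escaping end crosses both separatrices infinitely often, transversely by Lemma~\ref{lem:transv}, which gives infinitely many components. For non-properness, the end has infinite arclength but $\rho\not\to\infty$. Hence there are $s_k\to s_\pm$ with $\gamma(s_k)$ in a bounded set, so the preimage of a closed ball is not compact in the parameter domain. One point needs care: $\gamma(s_k)$ could accumulate on the axis or the origin, but a ball in $\R^{n+1}$ is still compact there, so this causes no problem.

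The final sentence of the statement is the contrapositive of (ii): a proper $\Sigma$ cannot have a non-escaping end, so it falls into case (i).

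The main obstacle I expect is making properness in case (i) rigorous. The key step is to check that an end of infinite arclength with $\rho\to\infty$ really leaves every compact set of $\R^{n+1}$ uniformly, and that no interior portion of finite arclength approaches a point of the boundary of $\mathcal Q$ other than a smooth axis closure. The Clifford cone is excluded by completeness.
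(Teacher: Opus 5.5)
Your proposal is correct in outline and follows the paper's proof essentially step for step. You split into ``no non-escaping end'' versus ``some non-escaping end'' via Theorem~\ref{thmB}. You get properness from $\rho\to\infty$ along the ends. In case (ii) you get non-properness from $\liminf\rho<\infty$, and the infinitely many transverse crossings from Theorem~\ref{prop:recorrente} and Lemma~\ref{lem:transv}. You are more explicit than the paper about the half-turn claim, which indeed comes from Lemma~\ref{lem:excterminal} applied to the terminal excursion in $\mathcal S^{+}$, not from Theorem~\ref{thmB}.

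One inference does not hold as written, although the paper's proof is equally silent on it. At an axis endpoint, reached at a finite parameter $s_+$, Lemma~\ref{lem:transv} only says that crossings are isolated in the open interval $(s_-,s_+)$. Finite arclength does not prevent them from accumulating at $s_+$, so ``only a compact parameter interval remains'' is not yet justified. At an $x$-axis endpoint $(x_0,0)$, the level $\{y=y_*\}$ causes no trouble because $y\to0$, and $\{x=x_*\}$ causes none if $x_0\ne x_*$.

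The case $x_0=x_*$ needs an argument. Use $y$ as the parameter and set $D:=x-x_*$ and $M(y):=\sup_{(0,y]}|D|$.
\begin{itemize}
\item Since $J\to0$ at the axis, $J(y)=\int_0^yA(x)g\sin\sigma\,dt$. Together with $|A(x)|\le L|D|$ near $x_*$, this gives $|J(y)|\le LM(y)y^{q+1}$.
\item Near the axis, $|dD/dy|=|J|/(g|\sin\sigma|)\le CM(y)y$.
\item Integrating from $0$, where $D\to0$, gives $M(y)\le Cy^{2}M(y)$.
\end{itemize}
Hence $D\equiv0$ near the axis, so $\cos\sigma\equiv0$ there. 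Lemma~\ref{lem:Z}(i) then makes $\gamma$ the vertical cylinder, which is excluded. The $y$-axis endpoint is the mirror case, with $I$ in place of $J$. With this added, the finiteness of the crossings in case (i) is complete.
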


\begin{proof}
By Theorem~\ref{thmB} each end is of type (a) or (b), and $\Sigma$ has at least one end since it is noncompact. The two alternatives are ``no end of type (b)'' and ``some end of type (b)'', which are exclusive and exhaustive.

In case (i) the description of the ends is Theorem~\ref{thmB}. Properness follows because $\rho\to\infty$ along each of the finitely many ends, so the preimage of a compact set
is a compact subset of the parameter interval. Finiteness of the number of intersections with each cylinder is then immediate: crossings are isolated (Lemma~\ref{lem:transv}) and cannot accumulate at an end where $\rho\to\infty$.

In case (ii), let the end be $s\to s_+$ with infinite arclength and $\rho\not\to\infty$. Then $\liminf\rho<\infty$, so there are $s_k\to s_+$ and $R<\infty$ with $\rho(s_k)\le R$. Hence the preimage under \eqref{eq:param} of the compact set generated by $\overline{B(0,2R)}\cap\mathcal Q$ contains the points $s_k$, and it is a closed unbounded subset of the parameter interval. Therefore it is not compact and $\Sigma$ is not proper. The fact that the end crosses both separatrices infinitely often is Theorem~\ref{prop:recorrente} and each crossing is transverse by Lemma~\ref{lem:transv}. Crossings of $\{y=y_*\}$ and $\{x=x_*\}$ are precisely the intersections of $\Sigma$ with $\mathcal C_{\mathrm h}$ and $\mathcal C_{\mathrm v}$.
\end{proof}

The dichotomy may now be restated in the language in which such hypotheses usually appear in the literature.

\begin{corollary}[Properness, volume growth and entropy]\label{cor:entropia}
Let $c<0$ and let $\Sigma$ be a complete, noncompact bi-rotational
self-shrinker, distinct from the two cylinders. Then the following are equivalent:
\begin{enumerate}
\item[(a)] every end of $\Sigma$ is asymptotically conical;
\item[(b)] $\Sigma$ is proper;
\item[(c)] $\Sigma$ has polynomial volume growth;
\item[(d)] $\Sigma$ has finite Gaussian area at its natural shrinker scale, equivalently finite entropy;
\item[(e)] $\Sigma$ meets each of the two cylinders in finitely many connected components.
\end{enumerate}
When they fail, $\Sigma$ has a non-escaping end, meets both cylinders transversely in
infinitely many connected components, is not proper, and has infinite entropy.
\end{corollary}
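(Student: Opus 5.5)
The plan is to take Corollary~\ref{cor:dicotomia} as the organizing dichotomy and attach (c), (d) to (b) through the general theory. First I will show (a)$\Leftrightarrow$(b)$\Leftrightarrow$(e). Corollary~\ref{cor:dicotomia} says exactly one of two alternatives holds. In alternative (i) every end is asymptotically conical, $\Sigma$ is proper, and it meets each cylinder in finitely many components. In alternative (ii) some end is non-escaping, $\Sigma$ is not proper, and it meets both cylinders in infinitely many components. Hence (a), (b) and (e) each hold precisely in alternative (i), so they are equivalent. The failure statement is read off from alternative (ii), apart from the entropy claim.

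The core is (b)$\Leftrightarrow$(c)$\Leftrightarrow$(d). For (b)$\Leftrightarrow$(c) I will quote Cheng--Zhou \cite[Theorem~1.3]{ChengZhou}: a complete self-shrinker is proper if and only if it has polynomial (indeed Euclidean) volume growth. The sign normalization $c<0$ is fixed by a homothety to $H=-\tfrac12\langle X,\nu\rangle$. For (c)$\Rightarrow$(d) I will use the standard argument of \cite[\S7.2]{CM}. Polynomial volume growth makes the Gaussian area $\int_\Sigma e^{c|X|^2/2}dV$ finite. For a shrinker, the functional $F_{x_0,t_0}$ is maximized at the natural center and scale, so the entropy equals this Gaussian area up to normalization and is finite. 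For (d)$\Rightarrow$(c), finite entropy gives $\mathrm{Vol}(\Sigma\cap B_R(x_0))\le C\lambda(\Sigma)R^n$ by taking $t_0\sim R^2$ in $F_{x_0,t_0}$, which is polynomial growth.

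I could instead avoid citing Cheng--Zhou by working directly in the bi-rotational class. If all ends are conical, then $dV=\omega_p\omega_q x^py^q\,ds$, $x^py^q\asymp\rho^{p+q}$ and $ds\asymp d\rho$ along each end, as in Proposition~\ref{prop:decaimento}. Together with compactness of the core, this gives volume growth $O(R^{n})$, so (a)$\Rightarrow$(c). Conversely, a non-escaping end has infinite arclength with $\rho(s_k)\le R$ along a sequence. I would then like infinite length inside a fixed ball. That needs more than the sequence, since the curve could make long excursions outward. So the safe route is to cite Cheng--Zhou for (c)$\Rightarrow$(b), and this is the step I expect to need care with normalizations.

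Finally, the infinite entropy in the failure case follows from $\neg$(b)$\Rightarrow\neg$(d), via the equivalence just established.
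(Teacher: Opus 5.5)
Your treatment of (a)$\Leftrightarrow$(b)$\Leftrightarrow$(e) via Corollary~\ref{cor:dicotomia} is the same as the paper's. So are (a)$\Rightarrow$(c), (b)$\Leftrightarrow$(c) via Cheng--Zhou, and (c)$\Rightarrow$(d) via \cite[\S7.2]{CM}.

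Your argument for finite entropy $\Rightarrow$ (c) is a different route, and it is correct. The bound $F_{x_0,R^2}(\Sigma)\ge(4\pi R^2)^{-n/2}e^{-1/4}\,|X^{-1}(B_R(x_0))|$ gives Euclidean volume growth without using the shrinker equation. The paper instead passes through $\lambda(\Sigma)\ge F_{0,t_c}(\Sigma)$ and then Cheng--Zhou.

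There is, however, a gap. Item (d) is stated as \emph{finite Gaussian area at the natural shrinker scale, equivalently finite entropy}, and you never show that finiteness of $F_{0,t_c}(\Sigma)=(4\pi t_c)^{-n/2}\int_\Sigma e^{c|X|^2/2}\,dV$ alone implies the other conditions. Your volume estimate cannot supply this, for two reasons:
\begin{itemize}
\item It needs $F_{x_0,t_0}$ bounded at every scale $t_0\sim R^2$. A single weight $e^{c|X|^2/2}$ at the fixed scale $t_c$ decays like a Gaussian and gives no control of $|X^{-1}(B_R)|$ as $R\to\infty$.
\item For a general immersed hypersurface, finite weighted volume does not imply polynomial growth. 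The implication is specific to shrinkers.
\end{itemize}
What closes the gap is the weighted-volume part of \cite[Theorem~1.3]{ChengZhou}. After the normalization $\widetilde X=t_c^{-1/2}X$, a complete immersed shrinker with $\int_\Sigma e^{-|\widetilde X|^2/4}<\infty$ is proper. This gives the chain $F_{0,t_c}<\infty\Rightarrow$(b)$\Rightarrow$(c)$\Rightarrow\lambda<\infty$, which is exactly how the paper handles (d). Citing that part of the theorem completes your argument.

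A smaller point: \cite{CM} works with embedded shrinkers. You should note, as the paper does, that the proof that the entropy is attained at the natural center and scale uses only the shrinker equation and integrations by parts justified by polynomial volume growth. It therefore applies to the immersed, possibly non-embedded, $\Sigma$ considered here.
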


\begin{proof}
The equivalence of (a) and (b) is Corollary~\ref{cor:dicotomia}, as is the
equivalence with (e). Along a conical end $x\asymp\rho$ and $y\asymp\rho$, so that
$dV=\omega_p\omega_q x^{p}y^{q} ds\asymp\rho^{p+q}d\rho$ and the volume of
$\Sigma\cap B(0,\rho)$ grows like $\rho^{p+q+1}$. Hence (a)$\Rightarrow$(c).

To compare (b)--(d) with the standard shrinker literature, set
\[
t_c:=-\frac1{2c}>0.
\]
Then $H=c\langle X,\nu\rangle=-\frac1{2t_c}\langle X,\nu\rangle$, and the dilation $\widetilde X=t_c^{-1/2}X$ satisfies the standard normalization $\widetilde H=-\frac12\langle\widetilde X,\widetilde\nu\rangle$. Properness and polynomial volume growth are invariant under this fixed dilation. Hence the theorem of Cheng and Zhou \cite[Theorem~1.3]{ChengZhou}, applied after normalization, gives the equivalence between properness, polynomial volume growth, and finiteness of the natural weighted volume
\[
\int_\Sigma e^{c|X|^2/2}\,dV.
\]
At the natural center and scale $(0,t_c)$, the Gaussian area is
\[
F_{0,t_c}(\Sigma)
=(4\pi t_c)^{-n/2}\int_\Sigma e^{-|X|^2/(4t_c)}\,dV
=(4\pi t_c)^{-n/2}\int_\Sigma e^{c|X|^2/2}\,dV.
\]
If $\Sigma$ has polynomial volume growth, its entropy is finite. Indeed, by \cite[\S7.2, Lemma~7.10]{CM} the entropy of such a self-shrinker is attained at its natural center and scale, so it equals $F_{0,t_c}(\Sigma)$ (see also \cite[\S0]{CMP}, and \cite[Appendix~A]{Stone} for the values on generalized cylinders). Although \cite{CM} works with embedded hypersurfaces, the proof of \cite[Lemma~7.10]{CM} uses only the self-shrinker equation and integrations by parts on $\Sigma$, which are justified by the polynomial volume growth. It therefore applies verbatim to the immersed self-shrinkers considered here. Conversely, finite entropy implies $F_{0,t_c}(\Sigma)<\infty$ because the entropy is the supremum of the $F$-functionals. Therefore (b), (c), and (d) are equivalent. Finally, if (a) fails, then $\Sigma$ is not proper by Corollary~\ref{cor:dicotomia}, so Cheng--Zhou gives $F_{0,t_c}(\Sigma)=\infty$, and hence the entropy is infinite.
\end{proof}

\section{Almost every trajectory is recurrent}\label{sec:generico}

This section proves Theorem~\ref{thmH}. Figure~\ref{fig:recorrencia} shows a trajectory with the behavior that the theorem predicts for almost every initial condition. Let $c<0$, $M:=\mathcal Q\times\s^1$, and let $\varphi_s$ be the local flow of \eqref{eq:flow}. For $v\in M$ let $(s_-(v),s_+(v))$ be the maximal interval of the solution through $v$, and
\[
C^{\pm}:=\{v\in M:\ s_\pm(v)\ \text{is finite}\},\qquad M_\infty:=M\setminus(C^{+}\cup C^{-}).
\]
We use the following standard facts and results. The set $\mathcal D=\{(s,v):s\in(s_-(v),s_+(v))\}$ is open, and each $\varphi_s$ is a diffeomorphism from $\{v:(s,v)\in\mathcal D\}$ onto its image \cite[Theorem~6.1 and the discussion following it]{Teschl}. We also apply the Poincaré recurrence theorem for measure-preserving maps of finite measure spaces \cite[Theorem~1.4]{Walters}.

\begin{figure}[ht]
\centering
\includegraphics[width=0.66\textwidth]{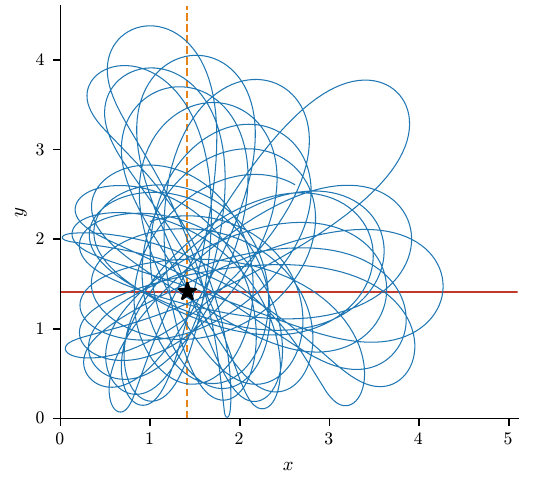}
\caption{The trajectory of $(x,y,\sigma)=(1,2.2,0.3)$ for $p=q=1$, $c=-\tfrac12$ and $s\in[0,200]$, with the behavior that Theorem~\ref{thmH} predicts for almost every initial condition. It stays in $0.55\le\rho\le5.23$, crosses $\{y=y_*\}$ (solid) $54$ times and $\{x=x_*\}$ (dashed) $55$ times, and returns at $s\approx92.3$ to a distance of about $0.14$ from its initial phase point. The star is the vertex $V$.}
\label{fig:recorrencia}
\end{figure}

The functions $s_\pm$ are semicontinuous, and therefore $C^{\pm}$ and $M_\infty$ are Borel sets. Indeed, if $s_+(v_0)>T$, the solution through $v_0$ is defined on $[0,T]$ and its trace is a compact subset of $M$, so by the continuous dependence on the initial condition every solution starting close enough to $v_0$ is also defined on $[0,T]$, that is, $\{s_+>T\}$ is open. Thus $s_+$ is lower semicontinuous, $\{s_+\le T\}$ is closed, and
\[
C^{+}=\{s_+<\infty\}=\bigcup_{n\in\N}\{s_+\le n\}
\]
is an $F_\sigma$ set, hence Borel. The same argument, applied to the reversed flow, shows that $s_-$ is upper semicontinuous and that $C^{-}$ is Borel, and $M_\infty$ is the complement of $C^{+}\cup C^{-}$.

\begin{proposition}[Invariant measure of finite mass]\label{prop:medida}
The flow of \eqref{eq:flow} preserves $\mathfrak m=\Phi dx dy d\sigma$, $\Phi=x^{p}y^{q}e^{c(x^{2}+y^{2})/2}$, and
\[
\mathfrak m(M)=2\pi\frac{2^{\frac{p+q}{2}}\Gamma\left(\tfrac{p+1}{2}\right)\Gamma\left(\tfrac{q+1}{2}\right)}{2|c|^{\frac{p+q}{2}+1}}<\infty .
\]
\end{proposition}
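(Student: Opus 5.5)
The proof has two parts: invariance of $\mathfrak m$, via Liouville's theorem applied to the density, and finiteness of the total mass, via a direct product integral.

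\emph{Invariance.} By the Liouville transport criterion, a smooth density $\Phi\,dx\,dy\,d\sigma$ is invariant under the local flow of $F$ if and only if $\dive(\Phi F)=0$ on $M$. Here the divergence is taken in the coordinates $(x,y,\sigma)$, with $\sigma$ a local coordinate on $\s^1$. This criterion applies to local flows: for every Borel set $E$ contained in the domain of $\varphi_s$, one gets $\frac{d}{ds}\mathfrak m(\varphi_s E)=\int_{\varphi_s E}\dive(\Phi F)\,dx\,dy\,d\sigma$. To compute the divergence, use $\partial_x\Phi=A\Phi$ and $\partial_y\Phi=B\Phi$, which follow from $\Phi=f(x)g(y)$ together with $f'=Af$ and $g'=Bg$. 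Then
\[
\dive(\Phi F)=\partial_x(\Phi\cos\sigma)+\partial_y(\Phi\sin\sigma)+\partial_\sigma\bigl(\Phi(-A\sin\sigma+B\cos\sigma)\bigr),
\]
which expands to
\[
\Phi\bigl(A\cos\sigma+B\sin\sigma-A\cos\sigma-B\sin\sigma\bigr)=0.
\]
This is precisely the statement that $\Phi$ is the conformal factor making \eqref{lambda-s} a geodesic equation. It is also the three-dimensional version of the integrating-factor computation already carried out for $W_1$ and $W_2$ in Section~\ref{sec:duas}. Hence $\mathfrak m(\varphi_s E)=\mathfrak m(E)$ whenever $\varphi_s$ is defined on $E$.

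\emph{Finite mass.} The density factorizes, so Fubini gives
\[
\mathfrak m(M)=2\pi\int_0^\infty x^pe^{cx^2/2}dx\int_0^\infty y^qe^{cy^2/2}dy.
\]
Substituting $t=|c|x^2/2$ in the first factor yields $\int_0^\infty x^pe^{-|c|x^2/2}dx=\tfrac12(2/|c|)^{(p+1)/2}\Gamma(\tfrac{p+1}{2})$, and the second factor is the same with $q$ in place of $p$. Multiplying the two factors gives
\[
\frac{2^{(p+q)/2}\,\Gamma(\tfrac{p+1}2)\Gamma(\tfrac{q+1}2)}{2|c|^{(p+q)/2+1}},
\]
and multiplying by the factor $2\pi$ from the $\sigma$-integral gives exactly the stated constant. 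This is finite because $c<0$ and $p,q\ge1$.

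\emph{Main obstacle.} The only delicate point is the measure-theoretic formulation of invariance, since the flow is not complete: trajectories can reach the axes in finite time. The step above handles this by restricting to sets on which $\varphi_s$ is a diffeomorphism onto its image, which is possible by the cited facts about the open domain $\mathcal D$. The change-of-variables formula together with the Jacobian identity $\frac{d}{ds}\bigl(\Phi(\varphi_s)\det D\varphi_s\bigr)=\dive(\Phi F)(\varphi_s)\det D\varphi_s=0$ then gives the invariance.
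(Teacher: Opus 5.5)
Your proposal is correct and follows essentially the same route as the paper: the divergence computation $\dive(\Phi F)=0$ from $\partial_x\Phi=A\Phi$, $\partial_y\Phi=B\Phi$, the Liouville--Jacobi identity $\frac{d}{ds}\bigl(\Phi(\varphi_s)\det D\varphi_s\bigr)=0$ to handle the incomplete local flow, and the Fubini factorization with $\int_0^\infty x^pe^{-|c|x^2/2}dx=\tfrac12(2/|c|)^{(p+1)/2}\Gamma(\tfrac{p+1}{2})$. Your constant agrees with the stated one.
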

\begin{proof}
Since $\partial_x\log\Phi=A$, $\partial_y\log\Phi=B$ and $\Phi$ does not depend on $\sigma$,
\[
\begin{aligned}
&\dive\left(\Phi(\cos\sigma,\sin\sigma,-A\sin\sigma+B\cos\sigma)\right)\\
&\qquad=\Phi\left(A\cos\sigma+B\sin\sigma-A\cos\sigma-B\sin\sigma\right)=0,
\end{aligned}
\]
the divergence being taken in the coordinates $(x,y,\sigma)$.
To justify measure preservation directly, let $F$ be the vector field of
\eqref{eq:flow}. For every $s$ for which the local flow is defined,
Liouville--Jacobi's determinant identity gives
\[
\frac{d}{ds}\left[\Phi(\varphi_s(v))\det D_v\varphi_s\right]
=\left(\dive(\Phi F)\right)(\varphi_s(v))\det D_v\varphi_s=0.
\]
At $s=0$ the expression in brackets is $\Phi(v)$. Hence the local flow
preserves $\Phi\,dx\,dy\,d\sigma$ on its domain of definition.
The mass factors as
\[
\mathfrak m(M)=2\pi\int_0^\infty x^{p}e^{-|c|x^{2}/2}dx\int_0^\infty y^{q}e^{-|c|y^{2}/2}dy,
\]
and
\[
\int_0^\infty x^{p}e^{-|c|x^{2}/2}dx=2^{\frac{p-1}{2}}|c|^{-\frac{p+1}{2}}\,\Gamma\left(\tfrac{p+1}{2}\right).
\]
\end{proof}

\begin{remark}[$\mathfrak m$ as a Liouville measure]\label{rem:liouville}

The measure $\mathfrak m$ is the Liouville measure of the geodesic flow of $g_\Phi$, written in the time $s$. In the coordinates $(x,y,\sigma)$, the Liouville measure of $(\QQ,g_\Phi)$ is $\Phi^{2}dx\,dy\,d\sigma$, and it is invariant under the geodesic flow parametrized by $g_\Phi$-arclength $\tau$. Since $d\tau=\Phi\,ds$, the field of \eqref{eq:flow} is $\Phi$ times the geodesic field, so its invariant density is $\Phi^{2}/\Phi=\Phi$.

\end{remark}

The trajectories that reach the boundary of the phase space in finite arclength form a negligible set.

\begin{proposition}[Incomplete trajectories are negligible]\label{lem:nulo}
$\mathfrak m(C^{+})=\mathfrak m(C^{-})=0$.
\end{proposition}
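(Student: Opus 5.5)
By reversibility (Lemma~\ref{lem:simetrias}(i)), which preserves $\mathfrak m$ because it only shifts $\sigma$ by $\pi$, it suffices to prove $\mathfrak m(C^{+})=0$. The plan is to write $C^{+}$ as the union of the sets $C^{+}_{\mathrm{ax}}$ and $C^{+}_{0}$. Here $C^{+}_{\mathrm{ax}}$ is the set of points whose forward trajectory reaches an axis at a point $(x_0,0)$ or $(0,y_0)$ with positive coordinate, and $C^{+}_{0}$ is the set whose forward trajectory reaches the origin. This splitting is justified by the proof of Theorem~\ref{thm:completude}, which shows that a finite $s_+$ forces the profile curve to converge to a point of $\partial\mathcal Q$. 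By Proposition~\ref{lem:origem}, $C^{+}_0$ lies in the Clifford ray, a one-dimensional curve in the three-dimensional phase space, so it is $\mathfrak m$-null.

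For $C^{+}_{\mathrm{ax}}$ I would parametrize the set by its hitting data and use invariance of $\mathfrak m$. By Proposition~\ref{lem:cusp}, a trajectory reaching $(x_0,0)$ arrives with $\sigma=\tfrac{3\pi}{2}$, and by Proposition~\ref{lem:suave} it extends smoothly across the axis. Running the reversed flow from the axis, the set of trajectories leaving the $x$-axis is the image of the map
\[
(x_0,s)\mapsto \varphi_{s}\bigl(\text{trajectory issued from }(x_0,0)\text{ with }\sigma=\tfrac\pi2\bigr).
\]
This map is defined on a two-parameter family, and the parametrization is locally Lipschitz because the profile curves from the axis depend smoothly on $x_0$. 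The smooth regularization of Proposition~\ref{lem:suave} gives this dependence, since the family is the desingularized problem $x=\tilde x(y)$, an even solution of a regular ODE in $y$ near $0$. A Lipschitz image of a two-dimensional set in a three-manifold has zero Lebesgue measure, hence zero $\mathfrak m$-measure, since $\mathfrak m$ is absolutely continuous. The $y$-axis is handled by the mirror symmetry.

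If the smooth dependence on $x_0$ is awkward to justify, I would use a measure-theoretic alternative based on invariance. The sets $C^{+}_{n}:=\{v: s_+(v)\in(n,n+1]\}$ satisfy $\varphi_{1}(C^{+}_{n+1})=C^{+}_{n}$, so they all have the same measure. Their union is $C^{+}\cap\{s_+>0\}$, whose measure is at most $\mathfrak m(M)<\infty$ by Proposition~\ref{prop:medida}. Hence each $C^{+}_n$ is null for $n\ge0$. The remaining case $s_+\le 0$ cannot occur for a point $v$ in the domain, since $s_+(v)>0$ always. To be precise, I would instead use $\{s_+\in(k\epsilon,(k+1)\epsilon]\}$ and the flow map $\varphi_\epsilon$, which is defined on these sets. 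The shifted sets are pairwise disjoint, so the countably many equal measures sum to at most $\mathfrak m(M)$, forcing each to vanish. Summing over $k\ge0$ then gives $\mathfrak m(C^{+})=0$.

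The main obstacle is checking that $\varphi_\epsilon$ maps $\{s_+\in(k\epsilon+\epsilon,k\epsilon+2\epsilon]\}$ bijectively and measure-preservingly onto $\{s_+\in(k\epsilon,k\epsilon+\epsilon]\}$. This follows from the diffeomorphism property of $\varphi_s$ on its open domain \cite[Theorem~6.1]{Teschl}, from the Liouville computation in Proposition~\ref{prop:medida}, and from the Borel measurability of $s_+$ already established. I expect this invariance argument to be cleaner than the geometric one, so I would adopt it as the main proof. Its key input is finiteness of $\mathfrak m(M)$.
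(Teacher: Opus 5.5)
\paragraph{The invariance argument does not close.} The argument you adopt as the main proof breaks exactly at the step you flag as the main obstacle. Write $E_k:=\{v:\ s_+(v)\in(k\epsilon,(k+1)\epsilon]\}$. The map $\varphi_\epsilon$ is a measure-preserving diffeomorphism of $\{s_+>\epsilon\}$ onto $\{s_-<-\epsilon\}$, so $\varphi_\epsilon(E_{k+1})=E_k\cap\{s_-<-\epsilon\}$, not $E_k$.

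Points of $E_k$ whose backward trajectory also leaves $\QQ$ within arclength $\epsilon$ have no preimage. An example is a point near $(r_0,0)$ on the profile arc of the round sphere. To recover equality of measures you would need $E_k\cap\{-s_-\le\epsilon\}$ to be null, which is essentially the statement being proved. As it stands you only get $\mathfrak m(E_{k+1})\le\mathfrak m(E_k)$, and a nonincreasing summable sequence need not vanish.

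In fact invariance together with $\mathfrak m(M)<\infty$ cannot suffice. The flow $x'=1$ on $(0,1)$ preserves the finite Lebesgue measure, yet every orbit is incomplete in both directions.

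\paragraph{The missing ingredient.} What is missing is control of the inflow through $\partial\QQ$. It comes from the vanishing of $\Phi=x^py^qe^{c\rho^2/2}$ on the axes, and this is the paper's proof.

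\begin{itemize}
\item If $-s_-(u)\le\varepsilon$, the profile curve of $u$ converges to a point of $\partial\QQ$ at distance at most $\varepsilon$ from $(x(u),y(u))$. Hence $x(u)\le\varepsilon$ or $y(u)\le\varepsilon$.
\item The set $G_\varepsilon$ of such $u$ therefore satisfies $\mathfrak m(G_\varepsilon)\le K\varepsilon^{\min(p,q)+1}\le K\varepsilon^2$, which is $o(\varepsilon)$.
\item The set $C^-\cap\{-s_-\le T\}$ is covered by the $\lceil T/\varepsilon\rceil$ sets $\varphi_{k\varepsilon}(G_\varepsilon\cap\{s_+>k\varepsilon\})$. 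Its measure is thus at most $(T/\varepsilon+1)K\varepsilon^2\to0$.
\item The reflection $R$ then handles $C^+$.
\end{itemize}

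Your telescoping can be repaired with the same input. First, $\mathfrak m(E_k)=\mathfrak m(E_{k+1})+\mathfrak m(E_k\cap G_\epsilon)$. Second, $\mathfrak m(E_j)\to0$, because the $E_j$ are disjoint and $\mathfrak m(M)<\infty$. Together these give $\mathfrak m(E_k)\le\mathfrak m(G_\epsilon)\le K\epsilon^2$, and summing over $k<T/\epsilon$ concludes.

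\paragraph{Your geometric route.} The idea that trajectories meeting an axis form a one-parameter family, hence a null set, is genuinely different. However, Proposition~\ref{lem:suave} does not supply its key step. That proposition is a regularity statement for a single trajectory already known to reach the axis. It gives neither uniqueness of the trajectory arriving at $(x_0,0)$ nor Lipschitz dependence on $x_0$.

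Because of the term $q\cos\sigma/y$, this is a singular initial value problem at $y=0$, outside the standard dependence theorems. It would have to be solved separately, for instance by a contraction argument for the integral equation for $J$ from Step~1 of that proof. The collar estimate avoids this entirely.
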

\begin{proof}
Let $\ell:=\min(p,q)\ge1$. Fix $\varepsilon\in(0,1)$ and let $G_\varepsilon:=\{u\in C^{-}:-s_-(u)\le\varepsilon\}$. Let $u\in G_\varepsilon$. As in the proof of Theorem~\ref{thm:completude}, applied in the backward time direction, the profile curve $\gamma_u$ of $u$, with $\gamma_u(0)=(x(u),y(u))$, converges as $s\to s_-(u)$ to a point $P\in\partial\mathcal Q$, and, again because $|\gamma_u'|=1$,
\[
\left|P-(x(u),y(u))\right|=\lim_{s\to s_-(u)}\left|\gamma_u(s)-\gamma_u(0)\right|\le\lim_{s\to s_-(u)}|s|=-s_-(u)\le\varepsilon .
\]
Hence $y(u)\le\varepsilon$ or $x(u)\le\varepsilon$, and
\[
\begin{aligned}
\mathfrak m(G_\varepsilon)&\le 2\pi\left(\int_0^\infty x^{p}e^{cx^{2}/2}dx\int_0^{\varepsilon}y^{q}dy+\int_0^\infty y^{q}e^{cy^{2}/2}dy\int_0^{\varepsilon}x^{p}dx\right)\\
&\le K\varepsilon^{\ell+1},
\end{aligned}
\]
with $K$ depending only on $p,q,c$.

Fix $T>0$, let $C^{-}_T:=\{v\in C^{-}:-s_-(v)\le T\}$ and $N:=\lceil T/\varepsilon\rceil$. Given $v\in C^{-}_T$, let $k\in\{0,\dots,N-1\}$ be such that $-s_-(v)\in(k\varepsilon,(k+1)\varepsilon]$. Then $u:=\varphi_{-k\varepsilon}(v)$ is defined, $-s_-(u)=-s_-(v)-k\varepsilon\in(0,\varepsilon]$ and $s_+(u)=s_+(v)+k\varepsilon>k\varepsilon$. Hence
\[
C^{-}_T\ \subset\ \bigcup_{k=0}^{N-1}\varphi_{k\varepsilon}\left(G_\varepsilon^{k}\right),
\qquad G_\varepsilon^{k}:=\{u\in G_\varepsilon:\ s_+(u)>k\varepsilon\},
\]
where $\varphi_{k\varepsilon}$ is defined on $G_\varepsilon^{k}$ and preserves $\mathfrak m$ by Proposition~\ref{prop:medida}. Therefore
\[
\mathfrak m(C^{-}_T)\ \le\ N\mathfrak m(G_\varepsilon)\ \le\ \left(\frac T\varepsilon+1\right)K\varepsilon^{\ell+1}\ \xrightarrow[\varepsilon\to0]{}\ 0,
\]
because $\ell+1\ge2$. Thus $\mathfrak m(C^{-}_T)=0$ for every $T$, and $\mathfrak m(C^{-})=0$, since $C^{-}=\bigcup_{T\in\N}C^{-}_T$. Finally, $R(x,y,\sigma)=(x,y,\sigma+\pi)$ preserves $\mathfrak m$ and, by Lemma~\ref{lem:simetrias}(i), maps $C^{-}$ onto $C^{+}$. Hence $\mathfrak m(C^{+})=0$.
\end{proof}

Since $\mathfrak m$ is finite and almost every solution is defined for all time, the Poincaré recurrence theorem applies.

\begin{theorem}[Recurrence]\label{thm:recorrencia}
For $\mathfrak m$-almost every $v\in M$, the solution through $v$ is defined on $\R$, and there are sequences $n_k,m_k\to\infty$ of integers with $\varphi_{n_k}(v)\to v$ and $\varphi_{-m_k}(v)\to v$.
\end{theorem}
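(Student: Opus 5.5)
The plan is to apply the Poincaré recurrence theorem to the time-one map restricted to the invariant set $M_\infty$ of globally defined solutions. By Proposition~\ref{lem:nulo}, $\mathfrak m(C^{+}\cup C^{-})=0$, so $M_\infty$ has full measure. It is a Borel set, because $s_\pm$ are semicontinuous, as shown above. Moreover $M_\infty$ is invariant: a solution through $v\in M_\infty$ is defined on all of $\R$, hence so is the solution through $\varphi_s(v)$ for every $s$.

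First, on $M_\infty$ the maps $\varphi_n$, $n\in\Z$, are globally defined bijections. The time-one map $T:=\varphi_1|_{M_\infty}$ is a measurable bijection of $M_\infty$ with inverse $\varphi_{-1}$. By Proposition~\ref{prop:medida}, the local flow preserves $\mathfrak m$ on its domain of definition. For Borel $E\subset M_\infty$ the restriction $\varphi_1|_E$ is a diffeomorphism onto its image with Jacobian density preserving $\Phi$, so $\mathfrak m(T^{-1}E)=\mathfrak m(E)$. Thus $(M_\infty,\mathfrak m,T)$ is a measure-preserving system of finite mass.

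Next, we fix a countable basis $\{U_j\}$ of the topology of $M$, for instance balls with rational centers and radii. For each $j$, Poincaré recurrence \cite[Theorem~1.4]{Walters} gives a null set $N_j\subset U_j\cap M_\infty$ such that every $v\in (U_j\cap M_\infty)\setminus N_j$ returns to $U_j$ under $T^n$ for infinitely many $n\ge1$. Applying the same theorem to $T^{-1}$, which also preserves $\mathfrak m$, gives a null set $N_j'$ for the negative iterates. Let
\[
N:=(M\setminus M_\infty)\cup\bigcup_j (N_j\cup N_j').
\]
This is a null set. For $v\notin N$ and any $\epsilon>0$, choose $U_j\ni v$ of diameter less than $\epsilon$. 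Then $\varphi_n(v)\in U_j$ for infinitely many positive $n$, and the same holds for infinitely many negative $n$. A diagonal choice over $\epsilon=1/k$ produces sequences $n_k,m_k\to\infty$ with $\varphi_{n_k}(v)\to v$ and $\varphi_{-m_k}(v)\to v$.

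The only delicate step is the measure preservation of $T$ as a genuine map of a finite measure space. We must make sure that the local-flow statement of Proposition~\ref{prop:medida} (a change of variables on the open set where $\varphi_1$ is defined, which contains $M_\infty$) yields invariance of $\mathfrak m$ on Borel subsets of $M_\infty$. Invariance of $M_\infty$ itself is what reduces this to the change-of-variables formula for a diffeomorphism between open sets. The remaining steps are a routine countable-basis argument.
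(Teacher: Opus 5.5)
Your proposal is correct and follows the same route as the paper. You restrict $T=\varphi_1$ to the invariant Borel set $M_\infty$, which has full measure by Proposition~\ref{lem:nulo}; you use Proposition~\ref{prop:medida} to see that $T$ and $T^{-1}$ preserve the finite measure $\mathfrak m$; and you apply Poincar\'e recurrence in both time directions over a countable basis. Your change-of-variables argument on the open domain of $\varphi_1$ and your diagonal extraction of $n_k,m_k$ spell out what the paper's short proof leaves implicit.
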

\begin{proof}
The set $M_\infty$ is invariant, and $T:=\varphi_1$ is a bijection of $M_\infty$ onto itself, with inverse $\varphi_{-1}$, which preserves $\mathfrak m$. By Proposition~\ref{lem:nulo} and Proposition~\ref{prop:medida}, $\mathfrak m(M_\infty)=\mathfrak m(M)<\infty$. Since $M$ has a countable basis of open sets, the Poincaré recurrence theorem applied to $T$ and to $T^{-1}$ gives the conclusion for $\mathfrak m$-almost every $v\in M_\infty$, hence for $\mathfrak m$-almost every $v\in M$.
\end{proof}

Recurrence alone does not exclude closed orbits. The analyticity of the flow shows that they are either negligible or fill an open set.

\begin{proposition}[Closed trajectories]\label{prop:fechadas}
The union of the closed orbits of \eqref{eq:flow} is either $\mathfrak m$-null or contains a nonempty open subset of $M$.
\end{proposition}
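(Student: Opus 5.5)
The plan is to realize the union $\mathcal P$ of the closed orbits as the projection of a real-analytic set. Then a stratification argument, combined with Sard's theorem, gives the alternative: either some stratum projects onto an open set, or every stratum has $\mathfrak m$-null image.

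\emph{Step 1: analyticity of the flow.} The field $F$ of \eqref{eq:flow} is real-analytic on $M=\mathcal Q\times\s^1$. To avoid the ambiguity of $\sigma$ modulo $2\pi$, I regard $M$ as the analytic submanifold $\{(x,y,\cos\sigma,\sin\sigma)\}\subset\R^4$. Consider the open set $\mathcal D^+:=\{(s,v)\in\mathcal D:\ s>0\}\subset\R\times M$. By analytic dependence on initial data, the map $(s,v)\mapsto\varphi_s(v)$ is real-analytic on $\mathcal D$; this is the analytic version of \cite[Theorem~6.1]{Teschl}. Hence
\[
\Psi(s,v):=\varphi_s(v)-v\in\R^4
\]
is analytic on $\mathcal D^+$, and its zero set $Z:=\Psi^{-1}(0)$ is a real-analytic subset of the $4$-dimensional manifold $\mathcal D^+$. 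By Definition~\ref{def:lacos}(L1), $v$ lies on a closed orbit exactly when $\varphi_s(v)=v$ for some $s>0$. Therefore $\mathcal P=\pi(Z)$, where $\pi(s,v)=v$ is the projection onto $M$.

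\emph{Step 2: stratification.} By the Łojasiewicz stratification theorem, $Z$ is a locally finite, hence countable (since $\mathcal D^+$ is second countable), union of connected real-analytic submanifolds $Z_j\subset\mathcal D^+$, each of dimension at most $4$. For each $j$, I consider the smooth map $\pi_j:=\pi|_{Z_j}:Z_j\to M$, whose target $M$ has dimension $3$. There are two cases.

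\emph{Step 3: the dichotomy.} Suppose first that some $\pi_j$ has rank $3$ at a point $z\in Z_j$. By the constant rank (submersion) theorem, $\pi_j$ maps a neighborhood of $z$ in $Z_j$ onto an open neighbourhood of $\pi(z)$ in $M$. That open set is contained in $\pi(Z)=\mathcal P$, which gives the second alternative. Otherwise, every $\pi_j$ has rank at most $2$ everywhere, so every point of $Z_j$ is a critical point of $\pi_j$ into the $3$-manifold $M$. By Sard's theorem, $\pi_j(Z_j)$ has Lebesgue measure zero. Since $\mathfrak m$ has a smooth density $\Phi\,dx\,dy\,d\sigma$, this image is also $\mathfrak m$-null. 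A countable union of null sets is null, so $\mathfrak m(\mathcal P)=0$, which is the first alternative. Measurability of $\mathcal P$ is not an issue: it is a countable union of continuous images of $\sigma$-compact sets, hence analytic (Suslin) and in particular $\sigma$-compact.

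I expect the main obstacle to be Step 1, namely setting up the analytic framework correctly. The flow must be analytic jointly in $(s,v)$ on the open domain $\mathcal D$, and the circle factor must be handled so that $Z$ really is an analytic set; the embedding into $\R^4$ takes care of the latter. A naive approach would instead fix a period $T$ and use the fixed-point sets of $\varphi_T$. That runs into two problems: periods vary continuously, and projecting a null set of $\R\times M$ to $M$ need not give a null set. Passing to strata and using Sard's theorem is precisely what bypasses both problems. Once this framework is in place, the rest is a direct application of Sard's theorem.
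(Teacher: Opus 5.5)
Your argument is correct, and it takes a genuinely different route from the paper.

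\textbf{The paper's route.} The paper uses the specific geometry of the system. Every closed orbit crosses $\{y=y_*\}$ upward (Corollary~\ref{cor:noloop2} and Lemma~\ref{lem:transv}), so the half-plane $S=\{y=y_*,\ 0<\sigma<\pi\}$ is a global cross-section for the closed orbits. The $j$-th return maps $P^j$ are real-analytic on open subsets of this $2$-dimensional section, via the analytic implicit function theorem for the return time. The points of $S$ on closed orbits are $\bigcup_j\mathrm{Fix}(P^j)$. On each component of the domain of $P^j$, either $P^j\equiv\mathrm{id}$, whose flow-out is an open set of closed orbits, or some scalar component of $P^j-\mathrm{id}$ is a non-identically-vanishing analytic function on a connected open subset of $\R^2$, whose zero set is null. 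The resulting null subset of $S$ is then flowed out by the $C^1$ map $S\times\R\to M$.

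\textbf{Your route.} You work in $(s,v)$-space with the analytic set $Z=\{\varphi_s(v)=v,\ s>0\}$, which absorbs the variation of the period directly. You replace the zero-set fact by \L{}ojasiewicz stratification together with Sard's theorem applied to each $\pi|_{Z_j}$.

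\textbf{Comparison.} Your approach needs neither a cross-section nor any of the crossing results. It therefore proves the dichotomy for an arbitrary real-analytic flow on a $3$-dimensional analytic manifold, at the price of heavier machinery. The paper's approach is more elementary and gives more structure in the non-null case: an open set filled by closed orbits on which a single return map $P^j$ is the identity.

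\textbf{A minor slip.} Being Suslin does not imply $\sigma$-compactness. The correct reasoning is that $Z$ is closed in the $\sigma$-compact open set $\mathcal D^+$, so $\pi(Z)$ is $\sigma$-compact as a continuous image, hence Borel. This does not affect the proof.
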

\begin{proof}
Let $S:=\{(x,y_*,\sigma):x>0,\ 0<\sigma<\pi\}$, a surface transverse to the flow, since $y'=\sin\sigma>0$ on it. By Corollary~\ref{cor:noloop2} and Lemma~\ref{lem:transv}, every closed orbit crosses $\{y=y_*\}$ upward, hence meets $S$. For $j\ge1$, let $D_j\subset S$ be the set of points whose forward trajectory meets $S$ at least $j$ more times, and $P^{j}:D_j\to S$ the $j$-th return map. These maps are real-analytic. Indeed, the field \eqref{eq:flow} is real-analytic on $M$, so its local flow $(s,v)\mapsto\varphi_s(v)$ is real-analytic in both variables: locally, one complexifies the analytic coefficients and applies \cite[Theorem~4.2]{Teschl}, whose Picard-iteration proof applies componentwise to systems. Dependence at any other time in the domain follows by composing short-time analytic flow maps. Given $z\in S$ whose forward trajectory returns to $S$ at the instant $\tau_0>0$, the function $F(v,s):=y(\varphi_s(v))-y_*$ is analytic near $(z,\tau_0)$, vanishes there, and satisfies $\partial_sF=\sin\sigma>0$ at that point, by the transversality of the crossings (Lemma~\ref{lem:transv}). The analytic implicit function theorem \cite[\S2.3]{KrantzParks} gives an analytic first return time $\tau(v)$ near $z$, and $P(v)=\varphi_{\tau(v)}(v)$ is analytic near $z$. The maps $P^{j}$ are compositions of such maps, hence analytic on the open set $D_j$ where they are defined.

Let $\mathrm{Fix}(P^{j}):=\{z\in D_j: P^{j}(z)=z\}$. A point $z\in S$ lies on a closed orbit if and only if the trajectory of $z$ returns to the phase point $z$ itself, that is, if and only if $z\in\mathrm{Fix}(P^{j})$ for some $j\ge1$. Hence the points of $S$ on closed orbits form $\bigcup_{j\ge1}\mathrm{Fix}(P^{j})$.

If, for some $j$ and some connected component $W$ of $D_j$, $P^{j}\equiv\mathrm{id}$ on $W$, then the flow-out $\{\varphi_t(z):z\in W,\ t\in\R\}$ is a nonempty open subset of $M$ consisting of closed orbits. Otherwise, on each component $W$ the analytic map $P^{j}-\mathrm{id}$ does not vanish identically, so at least one of its two scalar components does not, and $\mathrm{Fix}(P^{j})\cap W$ is contained in the zero set of that component. The zero set of a real-analytic function which does not vanish identically on a connected open subset of $\R^{2}$ has Lebesgue measure zero \cite[Proposition~1]{Mityagin}. Since $S$ is second countable, each $D_j$ has at most countably many connected components, so $N:=\bigcup_j\mathrm{Fix}(P^{j})$ is a countable union of null sets, hence a null subset of $S$. The union of the closed orbits is the image of $N\times\R$ under the flow map $(z,t)\mapsto\varphi_t(z)$, which is $C^{1}$ between manifolds of dimension $3$. For each $k$, the set $N\times[-k,k]$ is null in $S\times\R$, so its image is null, a $C^{1}$ map being locally Lipschitz (see \cite[Lemma~7.25]{RudinRCA}). Taking the union over $k\in\N$ gives that the whole flow-out is $\mathfrak m$-null.
\end{proof}

We can now prove Theorem~\ref{thmH}.

\begin{theorem}[Theorem~\ref{thmH}]\label{thm:generico}
Let $c<0$. The conclusions of Theorem~\ref{thmH} hold.
\end{theorem}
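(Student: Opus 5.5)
The proof is an assembly of results already established in Sections~\ref{sec:completas} and~\ref{sec:generico}. First, $\mathfrak m$ is invariant and of finite mass by Proposition~\ref{prop:medida}. Then Theorem~\ref{thm:recorrencia} gives a full-measure set $G_0\subset M$ on which the solution is defined on all of $\R$ and is recurrent in both time directions, along integer times $n_k,m_k\to\infty$. Item (a) is handled first, since it identifies the exceptional set to be removed.

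\emph{Trajectories reaching an axis or the origin.} These have $s_+$ or $s_-$ finite. Indeed, an endpoint on an axis is reached in finite arclength by Proposition~\ref{lem:cusp} and Lemma~\ref{rem:limites}, and the origin is reached only by the Clifford ray, in finite time. Hence this set lies in $C^+\cup C^-$, which is null by Proposition~\ref{lem:nulo}.

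\emph{Asymptotically conical ends.} An end with $\rho\to\infty$ forces $\varphi_s(v)$ to leave every compact subset of $M$ in that time direction. This is incompatible with $\varphi_{n_k}(v)\to v$, or with $\varphi_{-m_k}(v)\to v$ after using reversibility. So every $v\in G_0$ has no escaping end, and the escaping set is contained in $M\setminus G_0$, which is null. Measurability of these sets is not needed, since they sit inside null Borel sets.

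For $v\in G_0$ the trajectory is therefore complete in both directions and has no escaping end. If it is closed, Proposition~\ref{prop:topcompacta}(I) gives $\Sigma_v\cong\s^1\times\s^p\times\s^q$, since it has no endpoint on an axis. Otherwise both ends have infinite arclength, because $s_\pm=\pm\infty$, and $\rho\not\to\infty$ on each of them. So both ends are non-escaping.

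The remaining conclusions then come from the earlier results. Completeness is Theorem~\ref{thm:completude}. Each end crosses both separatrices infinitely often, and transversely, by Theorem~\ref{prop:recorrente} and Lemma~\ref{lem:transv}. Corollary~\ref{cor:dicotomia}(ii) gives non-properness, and Corollary~\ref{cor:entropia} gives infinite entropy. The alternatives (i) and (ii) are mutually exclusive, since a closed orbit is compact and an orbit with infinite arclength ends is not. The cylinders, the sphere and the Clifford ray are single trajectories; each has an escaping end, an axis endpoint, or reaches the origin, so they are already excluded by (a). Item (b) is exactly Proposition~\ref{prop:fechadas}.

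The only delicate step is the incompatibility between recurrence and escape. I would argue it as follows. Take a compact neighborhood $K$ of $v$ in $M$. The points $\varphi_{n_k}(v)$ lie in $K$ for infinitely many large $n_k$, so $\rho(n_k)$ stays bounded along a sequence tending to $+\infty$. This contradicts $\rho\to\infty$ at $\omega$. The same argument applied to $\varphi_{-m_k}(v)$ handles $\alpha$.
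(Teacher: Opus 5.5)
Your proposal is correct and follows essentially the same route as the paper: invariance and finiteness of $\mathfrak m$, Poincaré recurrence on the full-measure set of globally defined solutions, and exclusion of escaping ends via $\rho(\varphi_{n_k}(v))\to\rho(v)$ and $\rho(\varphi_{-m_k}(v))\to\rho(v)$, followed by Theorem~\ref{prop:recorrente}, Corollaries~\ref{cor:dicotomia} and~\ref{cor:entropia}, Lemma~\ref{lem:transv} and Proposition~\ref{prop:fechadas}. One small point: the cylinders are excluded by your general argument against any end with $\rho\to\infty$, not literally by item (a), since their ends are not asymptotically conical; the paper handles them in exactly this way.
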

\begin{proof}
The invariance and the finiteness of $\mathfrak m$ are Proposition~\ref{prop:medida} and the statement on closed orbits is Proposition~\ref{prop:fechadas}. Let $v$ be as in Theorem~\ref{thm:recorrencia}, and let $\gamma$ be its profile curve, defined on $\R$. If $\gamma$ is closed, $\Sigma_v\cong\s^{1}\times\s^{p}\times\s^{q}$ by Proposition~\ref{prop:topcompacta}. Otherwise, both ends of $\gamma$ have infinite arclength, and $\rho(\varphi_{n_k}(v))\to\rho(v)$ and $\rho(\varphi_{-m_k}(v))\to\rho(v)$, so $\rho\not\to\infty$ along either end: both ends are non-escaping. The trajectory is not the Clifford ray, along which $\rho\to\infty$ as $s\to\infty$, nor a cylinder, along which $\varphi_{n}(v)$ leaves every compact set. Hence $\Sigma_v$ is complete by Theorem~\ref{thm:completude} and noncompact. By Corollaries~\ref{cor:dicotomia}, and~\ref{cor:entropia}, $\Sigma_v$ is not proper and has infinite entropy, and by Theorem~\ref{prop:recorrente}, each end crosses both separatrices infinitely often, transversely by Lemma~\ref{lem:transv}. Finally, if the trajectory of $v$ reaches an axis or the origin, then $v\in C^{+}\cup C^{-}$. If it has an asymptotically conical end, then $\rho\to\infty$ along that end, so that $v$ is not recurrent in the corresponding time direction. By Proposition~\ref{lem:nulo} and Theorem~\ref{thm:recorrencia}, these initial conditions form an $\mathfrak m$-null set.
\end{proof}

\FloatBarrier


\begin{thebibliography}{99}

\bibitem{AbreschLanger} U.~Abresch, J.~Langer, \emph{The normalized curve shortening flow and homothetic solutions}, J.\ Differential Geom.\ \textbf{23} (1986), 175--196. \href{https://doi.org/10.4310/jdg/1214440025}{\nolinkurl{doi:10.4310/jdg/1214440025}}.
\bibitem{Angenent} S.~B.~Angenent, \emph{Shrinking doughnuts}, in: Nonlinear Diffusion Equations
and their Equilibrium States, 3 (Gregynog, 1989), Progr.\ Nonlinear Differential Equations Appl.\
\textbf{7}, Birkhäuser, 1992, 21--38. \href{https://doi.org/10.1007/978-1-4612-0393-3_2}{\nolinkurl{doi:10.1007/978-1-4612-0393-3_2}}.
\bibitem{BdCH} A.~Back, M.~do Carmo, and W.~Y.~Hsiang, \emph{On some fundamental equations of
equivariant Riemannian geometry}, Tamkang J.\ Math.\ \textbf{40} (2009), 343--376. \href{https://doi.org/10.5556/j.tkjm.40.2009.601}{\nolinkurl{doi:10.5556/j.tkjm.40.2009.601}}.
\bibitem{CE} M.~Cavalcante, J.~M.~Espinar, \emph{Halfspace type theorems for self-shrinkers},
Bull.\ Lond.\ Math.\ Soc.\ \textbf{48} (2016), 242--250. \href{https://doi.org/10.1112/blms/bdv099}{\nolinkurl{doi:10.1112/blms/bdv099}}.
\bibitem{ChangSpruck} P.-Y.~Chang, J.~Spruck, \emph{Self-shrinkers to the mean curvature flow asymptotic to isoparametric cones}, Trans.\ Amer.\ Math.\ Soc.\ \textbf{369} (2017), 6565--6582. \href{https://doi.org/10.1090/tran/6957}{\nolinkurl{doi:10.1090/tran/6957}}.
\bibitem{ChengOgata} Q.-M.~Cheng, S.~Ogata, \emph{$2$-dimensional complete self-shrinkers in $\R^{3}$}, Math.\ Z.\ \textbf{284} (2016), 537--542. \href{https://doi.org/10.1007/s00209-016-1665-2}{\nolinkurl{doi:10.1007/s00209-016-1665-2}}.
\bibitem{ChengZhou} X.~Cheng, D.~Zhou, \emph{Volume estimate about shrinkers}, Proc.\ Amer.\
Math.\ Soc.\ \textbf{141} (2013), 687--696. \href{https://doi.org/10.1090/S0002-9939-2012-11922-7}{\nolinkurl{doi:10.1090/S0002-9939-2012-11922-7}}.
\bibitem{CMP} T.~H.~Colding, T.~Ilmanen, W.~P.~Minicozzi II, B.~White, \emph{The round
sphere minimizes entropy among closed self-shrinkers}, J.\ Differential Geom.\ \textbf{95}
(2013), 53--69. \href{https://doi.org/10.4310/jdg/1375124609}{\nolinkurl{doi:10.4310/jdg/1375124609}}.
\bibitem{CM} T.~H.~Colding, W.~P.~Minicozzi II, \emph{Generic mean curvature flow I: generic
singularities}, Ann.\ of Math.\ \textbf{175} (2012), 755--833. \href{https://doi.org/10.4007/annals.2012.175.2.7}{\nolinkurl{doi:10.4007/annals.2012.175.2.7}}.
\bibitem{CMfrankel} T.~H.~Colding, W.~P.~Minicozzi II, \emph{A strong Frankel theorem for
shrinkers}, Compositio Math.\ \textbf{161} (2025), 947--958. \href{https://doi.org/10.1112/S0010437X24007747}{\nolinkurl{doi:10.1112/S0010437X24007747}}.
\bibitem{doCarmoCurvas} M.~P.~do Carmo, \emph{Differential Geometry of Curves and Surfaces},
2nd ed., Dover, 2016. ISBN 978-0-486-80699-0.
\bibitem{DrKl} G.~Drugan, S.~J.~Kleene, \emph{Immersed self-shrinkers}, Trans.\ Amer.\ Math.\ Soc.\ \textbf{369} (2017), 7213--7250. \href{https://doi.org/10.1090/tran/6907}{\nolinkurl{doi:10.1090/tran/6907}}.
\bibitem{DLN} G.~Drugan, H.~Lee, X.~H.~Nguyen, \emph{A survey of closed self-shrinkers with
symmetry}, Results Math.\ \textbf{73} (2018), Paper No.~32, 32 pp. \href{https://doi.org/10.1007/s00025-018-0763-3}{\nolinkurl{doi:10.1007/s00025-018-0763-3}}.
\bibitem{DN} G.~Drugan and X.~H.~Nguyen, \emph{Shrinking doughnuts via variational methods},
J.~Geom.\ Anal.\ \textbf{28} (2018), 3725--3746. \href{https://doi.org/10.1007/s12220-017-9976-z}{\nolinkurl{doi:10.1007/s12220-017-9976-z}}.
\bibitem{GT} D.~Gilbarg, N.~S.~Trudinger, \emph{Elliptic Partial Differential Equations of Second Order}, reprint of the 1998 ed., Classics in Mathematics, Springer, 2001. \href{https://doi.org/10.1007/978-3-642-61798-0}{\nolinkurl{doi:10.1007/978-3-642-61798-0}}.
\bibitem{Hale} J.~K.~Hale, \emph{Ordinary Differential Equations}, 2nd ed., Robert E.~Krieger
Publishing, 1980. ISBN 0-89874-011-8.
\bibitem{Halldorsson} H.~P.~Halldorsson, \emph{Self-similar solutions to the curve shortening flow}, Trans.\ Amer.\ Math.\ Soc.\ \textbf{364} (2012), 5285--5309. \href{https://doi.org/10.1090/S0002-9947-2012-05632-7}{\nolinkurl{doi:10.1090/S0002-9947-2012-05632-7}}.
\bibitem{HTY} W.-Y.~Hsiang, Z.-H.~Teng, W.-C.~Yu, \emph{New examples of constant mean curvature immersions of $(2k-1)$-spheres into Euclidean $2k$-space}, Ann.\ of Math.\ (2) \textbf{117} (1983), 609--625. \href{https://doi.org/10.2307/2007036}{\nolinkurl{doi:10.2307/2007036}}.
\bibitem{Hsiang} W.-Y.~Hsiang, \emph{Generalized rotational hypersurfaces of constant mean curvature in the Euclidean spaces.~I}, J.\ Differential Geom.\ \textbf{17} (1982), 337--356. \href{https://doi.org/10.4310/jdg/1214436924}{\nolinkurl{doi:10.4310/jdg/1214436924}}.
\bibitem{Huisken} G.~Huisken, \emph{Asymptotic behavior for singularities of the mean curvature
flow}, J.\ Differential Geom.\ \textbf{31} (1990), 285--299. \href{https://doi.org/10.4310/jdg/1214444099}{\nolinkurl{doi:10.4310/jdg/1214444099}}.
\bibitem{Ilmanen95} T.~Ilmanen, \emph{Singularities of mean curvature flow of surfaces}, preprint, 1995 (arXiv version posted 2026). \href{https://arxiv.org/abs/2601.21133}{arXiv:2601.21133}. \href{https://doi.org/10.48550/arXiv.2601.21133}{\nolinkurl{doi:10.48550/arXiv.2601.21133}}.
\bibitem{IPR} D.~Impera, S.~Pigola, M.~Rimoldi, \emph{The Frankel property for self-shrinkers
from the viewpoint of elliptic PDE's}, J.\ Reine Angew.\ Math.\ \textbf{773} (2021), 1--20. \href{https://doi.org/10.1515/crelle-2020-0044}{\nolinkurl{doi:10.1515/crelle-2020-0044}}.
\bibitem{KM} S.~Kleene, N.~M.~Møller, \emph{Self-shrinkers with a rotational symmetry}, Trans.\
Amer.\ Math.\ Soc.\ \textbf{366} (2014), 3943--3963. \href{https://doi.org/10.1090/S0002-9947-2014-05721-8}{\nolinkurl{doi:10.1090/S0002-9947-2014-05721-8}}.
\bibitem{KrantzParks} S.~G.~Krantz, H.~R.~Parks, \emph{A Primer of Real Analytic Functions}, 2nd ed., Birkh\"auser Advanced Texts, Birkh\"auser, Boston, 2002. \href{https://doi.org/10.1007/978-0-8176-8134-0}{\nolinkurl{doi:10.1007/978-0-8176-8134-0}}.
\bibitem{LaSalle} J.~P.~LaSalle, \emph{The Stability of Dynamical Systems}, with an appendix by Z.~Artstein, CBMS-NSF Regional
Conference Series in Applied Mathematics \textbf{25}, SIAM, Philadelphia, 1976. \href{https://doi.org/10.1137/1.9781611970432}{\nolinkurl{doi:10.1137/1.9781611970432}}.
\bibitem{McGrath} P.~McGrath, \emph{Closed mean curvature self-shrinking surfaces of generalized
rotational type}, preprint, arXiv:1507.00681. \href{https://doi.org/10.48550/arXiv.1507.00681}{\nolinkurl{doi:10.48550/arXiv.1507.00681}}.
\bibitem{Mityagin} B.~S.~Mityagin, \emph{The zero set of a real analytic function}, Math.\ Notes \textbf{107} (2020), 529--530. \href{https://doi.org/10.1134/S0001434620030189}{\nolinkurl{doi:10.1134/S0001434620030189}}.
\bibitem{Park-Simons} J.~Park, \emph{Ancient mean curvature flow asymptotic to a minimal quadratic cone},
preprint, arXiv:2608.11011. \href{https://doi.org/10.48550/arXiv.2608.11011}{\nolinkurl{doi:10.48550/arXiv.2608.11011}}.
\bibitem{Park-fol} J.~Park, \emph{Foliation by bi-rotational self shrinkers in Euclidean
space}, preprint, arXiv:2608.11046. \href{https://doi.org/10.48550/arXiv.2608.11046}{\nolinkurl{doi:10.48550/arXiv.2608.11046}}.
\bibitem{PR} S.~Pigola, M.~Rimoldi, \emph{Complete self-shrinkers confined into some regions of the space}, Ann.\ Global Anal.\ Geom.\ \textbf{45} (2014), 47--65. \href{https://doi.org/10.1007/s10455-013-9387-8}{\nolinkurl{doi:10.1007/s10455-013-9387-8}}.
\bibitem{Riedler} O.~Riedler, \emph{Closed embedded self-shrinkers of mean curvature flow},
J.~Geom.\ Anal.\ \textbf{33} (2023), Paper No.~172. \href{https://doi.org/10.1007/s12220-023-01217-w}{\nolinkurl{doi:10.1007/s12220-023-01217-w}}.
\bibitem{RudinRCA} W.~Rudin, \emph{Real and Complex Analysis}, 3rd ed., McGraw--Hill, 1987. ISBN 978-0-07-054234-1.
\bibitem{SNexpanders} G.~Silva Neto, \emph{The global structure of bi-rotational self-expanders}, preprint, 2026.
\bibitem{Stone} A.~Stone, \emph{A density function and the structure of singularities of the
mean curvature flow}, Calc.\ Var.\ Partial Differential Equations \textbf{2} (1994),
443--480. \href{https://doi.org/10.1007/BF01192093}{\nolinkurl{doi:10.1007/BF01192093}}.
\bibitem{Teschl} G.~Teschl, \emph{Ordinary Differential Equations and Dynamical Systems},
Graduate Studies in Mathematics \textbf{140}, Amer.\ Math.\ Soc., 2012. \href{https://doi.org/10.1090/gsm/140}{\nolinkurl{doi:10.1090/gsm/140}}.
\bibitem{Velazquez} J.~J.~L.~Vel\'azquez, \emph{Curvature blow-up in perturbations of minimal cones evolving by mean curvature flow}, Ann.\ Scuola Norm.\ Sup.\ Pisa Cl.\ Sci.\ (4) \textbf{21} (1994), 595--628. Available at \url{https://www.numdam.org/item/ASNSP_1994_4_21_4_595_0/}.
\bibitem{Walters} P.~Walters, \emph{An Introduction to Ergodic Theory}, Graduate Texts in Mathematics \textbf{79}, Springer, 1982. \href{https://link.springer.com/book/9780387951522}{\nolinkurl{doi:10.1007/978-1-4612-5775-2}}.
\bibitem{Wang} L.~Wang, \emph{Uniqueness of self-similar shrinkers with asymptotically conical ends}, J.\ Amer.\ Math.\ Soc.\ \textbf{27} (2014), 613--638. \href{https://doi.org/10.1090/S0894-0347-2014-00792-X}{\nolinkurl{doi:10.1090/S0894-0347-2014-00792-X}}.
\bibitem{WangAsym} L.~Wang, \emph{Asymptotic structure of self-shrinkers}, preprint, arXiv:1610.04904. \href{https://doi.org/10.48550/arXiv.1610.04904}{\nolinkurl{doi:10.48550/arXiv.1610.04904}}.
\bibitem{White97} B.~White, \emph{Stratification of minimal surfaces, mean curvature flows, and harmonic maps}, J.\ Reine Angew.\ Math.\ \textbf{488} (1997), 1--35. \href{https://doi.org/10.1515/crll.1997.488.1}{\nolinkurl{doi:10.1515/crll.1997.488.1}}.
\end{thebibliography}
\end{document}